\newtheorem{thm}{THEOREM}[section]
\newtheorem{conj}[thm]{CONJECTURE}
\newtheorem{cor}[thm]{COROLLARY}
\newtheorem{definition}[thm]{DEFINITION}
\newtheorem{defn}[thm]{DEFINITION}
\newtheorem{ex}[thm]{EXAMPLE}
\newtheorem{lemma}[thm]{LEMMA}
\newtheorem{prob}[thm]{PROBLEM}
\newtheorem{prop}[thm]{PROPOSITION}
\newtheorem{remark}[thm]{REMARK}
\newcommand{\ds}{\displaystyle}
\newcommand{\mC}{{\mathbb C}}
\newcommand{\mN}{{\mathbb N}}
\newcommand{\mQ}{{\mathbb Q}}
\newcommand{\mZ}{{\mathbb Z}}
\newcommand{\ZZ}{\mathbb Z}
\newcommand{\cA}{{\mathcal A}}
\newcommand{\cG}{{\mathcal G}}
\newcommand{\cH}{{\mathcal H}}
\newcommand{\cK}{{\mathcal K}}
\newcommand{\cM}{{\mathcal M}}
\newcommand{\fG}{\mathfrak{G}}
\def\Aut{{\rm Aut}}
\def\Homeo{{\rm Homeo}}
 \def\gcd{{\rm gcd}}
\providerobustcmd*{\bigcupdot}{%
   \mathop{%
     \mathpalette\bigop@dot\bigcup
   }%
}
\newrobustcmd*{\bigop@dot}[2]{%
   \setbox0=\hbox{$\m@th#1#2$}%
   \vbox{%
     \lineskiplimit=\maxdimen
     \lineskip=-0.7\dimexpr\ht0+\dp0\relax
     \ialign{%
       \hfil##\hfil\cr
       $\m@th\cdot$\cr
       \box0\cr
     }%
   }%
}
\begin{document}

\title{Settled elements in profinite groups}

\begin{abstract}
{ Given a polynomial of degree $d$ over a number field, the image of the associated arboreal representation of the absolute Galois group of the field is a profinite group acting on the $d$-ary tree. Boston and Jones conjectured that for a quadratic polynomial, the image of such a representation contains a dense set of settled elements. Here an element is settled if it exhibits a certain pattern of growth of cycles at finite levels of the tree. In this paper, we prove the conjecture of Boston and Jones generically in the case when the quadratic polynomial has a strictly pre-periodic post-critical orbit of length $2$, and provide new evidence that the conjecture holds for quadratic polynomials with strictly pre-periodic post-critical orbits of length at least $3$. To prove our results, we introduce a new dynamical method, which uses the notions of a maximal torus and its Weyl group. These notions are analogous to the notions of maximal tori and Weyl groups in the theory of compact Lie groups, where they are fundamental. For profinite groups, maximal tori and Weyl groups contain the information about settled elements, and this is the foundation of our method. 
}
\end{abstract}

\author{Mar\'{\i}a Isabel Cortez}
\email{maria.cortez@mat.uc.cl}
\address{Facultad de Matem\'aticas, Pontificia Universidad Cat\'olica de Chile, Edificio Rolando Chuaqui, Vicuña Mackenna 4860, Stgo-Chile}

\author{Olga Lukina}
 \email{olga.lukina@univie.ac.at}
\address{Faculty of Mathematics, University of Vienna, Oskar-Morgensternplatz 1, 1090 Vienna, Austria}

 \date{}

 \thanks{2020 {\it Mathematics Subject Classification}. Primary 37P05, 37P15, 37E25, 20E18, 20E08, 20E28, 22B05; Secondary 37F10, 37B05, 11R09, 11R32, 22A05, 20E22.}

\thanks{Version date: June 15, 2021. Revision: April 11, 2022.}

\thanks{The research of M-I. Cortez was supported by proyecto Fondecyt Regular No. 1190538; the research of O. Lukina is supported by the FWF Project P31950-N35.}

\date{}

\keywords{Automorphisms of trees, settled automorphisms, equicontinuous group actions, odometers, profinite groups, arboreal representations, Galois groups, cycle structure, maximal torus, Weyl group, normalizers of profinite groups}

\maketitle

\section{Introduction}

An arboreal representation of the absolute Galois group  of a number field is defined by a polynomial, and it is a profinite group acting on a rooted tree. Boston and Jones \cite{BosJon07,BosJon09,BosJon12} hypothesized that the image of a Frobenius element under such a representation has the structure of a \emph{settled element} (see Definition \ref{defn-settled}), and formulated a conjecture (see Conjecture \ref{conj-BJ} below) that settled elements are dense in the image of an arboreal representation given by a quadratic polynomial over $\mathbb{Q}$. We consider this conjecture in a wider framework of polynomials over algebraic number fields, and prove that it holds generically for Chebyshev polynomials in Theorems \ref{thm-main2} and \ref{thm-main25}. We provide evidence towards the statement that the conjecture holds generically for representations associated to post-critically finite quadratic polynomials with strictly pre-periodic post-critical orbits of arbitrary length in Theorems \ref{thm-main3} - \ref{thm-main4}. 

The density of settled elements in the image of an arboreal representation associated to a quadratic polynomial over $\mathbb{Q}$ was studied by Boston and Jones \cite{BosJon07,BosJon09,BosJon12}, Goksel, Xia and Boston \cite{GXB2015}, Goksel \cite{Gok2019,Gok2019-1,GokThesis} using factorizations of polynomials modulo primes, and Markov processes. To prove our results, we introduce a new technique, which uses the dynamical and group-theoretical properties of arboreal representations rather than Number Theory. Our main tools are maximal abelian subgroups, called the \emph{maximal tori} (see Definition \ref{defn-maxtorus}), and their \emph{Weyl groups} (see Definition \ref{defn-Weyl}). These notions originate in the theory of compact Lie groups, where they have a fundamental role.  Our application of the Weyl group framework  to the study of arboreal representations, or  more generally groups acting on trees,  is one of the novel concepts in  this work. In the process of proving our results,  we also develop some properties of maximal tori and their Weyl groups in the setting of groups acting on Cantor sets.

The rest of the introduction expands on each of these themes above. In Section \ref{subsec-arbsettled} we   outline the construction of an arboreal representation, given a polynomial, and discuss the problem of settled elements in arboreal representations. In Section \ref{subsec-toriWeyl} we introduce maximal tori and Weyl groups, and discuss their properties. We present our main results and discuss open problems in Section \ref{subsec-mainresults}.

\bigskip

\subsection{Settled elements and arboreal representations}\label{subsec-arbsettled} An arboreal representation is a map into the automorphism group of a spherically homogeneous rooted tree, which we briefly define now.

A \emph{spherically homogeneous rooted tree} is an infinite rooted tree $T$ with the vertex set $\bigcupdot_{n \geq 0} V_n$ such that $V_0$ contains a single vertex called the \emph{root}, and for each $n \geq 1$, each vertex $v \in V_{n-1}$ at level $n-1$ is joined to the same number $m_n\geq 1$ of vertices $w \in V_n$ at level $n$, see Section \ref{rooted} for details. We denote by $\partial T$ the set of all infinite paths of edges without backtracking in $T$, which is a Cantor set. We denote by $\Aut(T)$ the set of automorphisms of $T$, that is, all maps which preserve the structure of the tree, see Section \ref{subsec-group} for details. 

We call a spherically homogeneous rooted tree a $d$-\emph{ary tree} if $m_n = d$ for all $n \geq 1$, where $d \geq 2$ is a positive integer. For $k>n$ we say that $w \in V_{k}$ \emph{lies above} $v \in V_n$ if there is a path of edges which joins $v$ and $w$, and which contains precisely one vertex at each level $V_n,V_{n+1},\ldots, V_k$. 

The following three definitions can be given for a spherically homogeneous rooted tree but we formulate them for a $d$-ary tree $T$ for simplicity. We remark that for $\sigma \in \Aut(T)$ the restriction $\sigma|V_n$, for $n \geq 1$, is a permutation of a set of $d^n$ elements, and so we can talk about the cycle decomposition of $\sigma|V_n$.

\begin{definition}\label{defn-cycles}\cite{BosJon09}
Let $T$ be a $d$-ary tree, where $d \geq 2$ is a positive integer, and let $\sigma\in \Aut(T)$. Let $n\geq 1$ and let $v\in V_n$ be a vertex. 

We say that $v$ is in a \emph{cycle} of length $k\geq 1$ of $\sigma$ if $\sigma^k(v)=v$ and $\sigma^{j}(v)\neq v$ for every $1\leq j<k$. 

We say that $v$ is in a \emph{stable cycle} of length $k$ if $v$ is in a cycle of length $k$ and for every $m>n$ all the vertices in $V_m$ which lie above $\{v, \sigma(v), ..., \sigma^{k-1}(v)\}$ are in the same cycle of length $d^{m-n}k$ in $V_m$.
\end{definition}

In more detail, for a cycle $\tau$ of length $k$ in $\sigma|V_n$ there are $kd$ vertices in $V_{n+1}$  
which lie above the vertices in $\tau$. These vertices may be in at most $d$ disjoint cycles for the action of $\sigma|V_{n+1}$. If $\tau$ is a stable cycle, then for $m \geq 1$ the $kd^m$ vertices in $V_{n+m}$ which lie above the vertices in $\tau$ are in a single cycle.

\begin{definition}\label{defn-settled}\cite{BosJon09}
Let $T$ be a $d$-ary tree, where $d \geq 2$ is a positive integer. We say that $\sigma\in \Aut(T)$ is \emph{settled} if 
$$
\lim_{n\to \infty}\frac{|\{v\in V_n: v \mbox{ is in a stable cycle}\}|}{|V_n|}=1.
$$
We say that $\sigma\in \Aut(T)$ is \emph{strongly settled} if there exists $n\geq 1$ such that every vertex in $V_n$ is in a stable cycle.
\end{definition}

\begin{definition}\label{defn-denselysettled}
Let $T$ be a $d$-ary tree, where $d \geq 2$ is a positive integer. We say that a profinite subgroup $\cG \subset \Aut(T)$ is \emph{densely settled} if the set of settled elements of $\cG$ is dense in $\cG$. 
\end{definition}

Examples of settled and strongly settled elements are given in Section \ref{sec-stablesettled}. The simplest example of a densely settled group is $\Aut(T)$ itself, see Section \ref{subsec-AutTsettled}. The question whether an arbitrary profinite subgroup of $\Aut(T)$ is densely settled is considerably more complicated.  

\bigskip
We now define arboreal representations and discuss Conjecture \ref{conj-BJ} which is due to Boston and Jones.

Let $K$ be an algebraic number field, and denote by $\overline K$ a separable closure of $K$. Given a polynomial $f(x)$ of degree $d \geq 2$ with coefficients in $K$, an \emph{arboreal representation} of $\overline K$ associated to $f(x)$ is a homomorphism $\rho_{f,\alpha}: {\rm Gal}(\overline K/K) \to \Aut(T)$, where $T$ is a $d$-ary tree, and $\alpha \in K$, see Section \ref{sec-arboreal}. A similar construction over the field of rational functions $K(t)$ where $t$ is transcendental, gives a subgroup $\fG_{\rm arith}(f) \subset \Aut(T)$, called the \emph{profinite arithmetic iterated monodromy group} associated to $f(x)$. The arboreal representation $\rho_{f,\alpha}$ for a given $\alpha \in K$ is related to $\fG_{\rm arith}(f)$ via a \emph{specialization}, and $\fG_{\rm arith}(f)$ is thought of as the image of $\rho_{f,\alpha}$ for a generic choice of $\alpha$. The \emph{profinite geometric iterated monodromy group} $\fG_{\rm geom}(f)$ is a normal subgroup of $\fG_{\rm arith}(f)$, which is known to be isomorphic to the closure of the discrete iterated monodromy group of $f(x)$ in Geometric Group Theory, see \cite[Chapter 5]{Nekr} for a discussion of discrete iterated monodromy groups. 

Arboreal representations of Galois groups have been extensively studied in the recent years, but there still remain many open questions, see \cite{Jones2013,Betal2019} for recent surveys. One of the main questions considered in the literature is, what are the conditions under which the image of $\rho_{f,\alpha}$ has finite index in $\Aut(T)$? This problem is motivated by questions about the density of prime divisors in the forward orbits of points under iterations of a polynomial $f(x)$, a problem first studied by Odoni \cite{Odoni1985}. The methods used to study arboreal representations in the literature are mostly number-theoretical, although techniques from Geometric Group Theory were used by Jones \cite{Jones2015}, Pink \cite{Pink13} and the second author \cite{Lukina2021}. Jones \cite{Jones2008,Jones2013,Jones2015} also applied the theory of stochastic processes to arboreal representations. Some topological dynamical properties of arboreal representations and profinite iterated monodromy groups were studied by the second author in \cite{Lukina2019,Lukina2021}. 

Boston and Jones formulated the following conjecture in \cite{BosJon07,BosJon09,BosJon12}.

\begin{conj}\cite{BosJon07,BosJon09,BosJon12}\label{conj-BJ}
Let $f(x) \in \mQ[x]$ be a quadratic polynomial, let $\alpha \in \mQ$, let $T$ be the binary tree and let $\rho_{f,\alpha}: {\rm Gal}(\overline \mQ/\mQ) \to \Aut(T)$ be the associated arboreal representation. Then the set of settled elements is dense in the image of $\rho_{f,\alpha}$.
\end{conj}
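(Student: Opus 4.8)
The plan is to convert the asymptotic condition of Definition \ref{defn-settled} into a statement about membership in conjugates of maximal tori, and then to deduce density from a level-by-level surjectivity property governed by the Weyl group. Write $\cG \subset \Aut(T)$ for the image of $\rho_{f,\alpha}$, which for generic $\alpha$ coincides with $\fG_{\rm arith}(f)$, and let $\cG_n \subset \cG$ be the kernel of the restriction map $\cG \to \cG|V_n$ onto level $n$. The subgroups $\cG_n$ form a neighborhood basis of the identity, so density of settled elements is equivalent to the assertion that every coset $g\,\cG_n$ contains a settled element. The first step is to pin down the maximal tori of $\Aut(T)$ for the binary tree: these are the procyclic groups generated by odometers (adding machines), which are self-centralizing and whose defining feature is that the generator acts as a single $2^n$-cycle on each level $V_n$. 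Such an odometer is strongly settled, and more generally I would show that $\sigma$ is strongly settled exactly when, below some level $n$, its restriction to each block of subtrees lying above a cycle of $\sigma|V_n$ is conjugate to an odometer --- that is, $\sigma$ lies in a conjugate of a finite product of maximal tori --- while $\sigma$ is settled when this holds outside a set of vertices whose density tends to $0$.

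The second step is to reformulate the stabilization of cycles group-theoretically. A cycle $\tau$ of $\sigma|V_n$ becomes stable precisely when the restriction of $\sigma$ to the union of subtrees hanging below the vertices of $\tau$ acts as a single cycle on every subsequent level, i.e. is conjugate to an odometer, which is the same as saying that this restriction lies in a maximal torus of the corresponding subtree automorphism group. Thus, upgrading a given permutation $g|V_n$ to a strongly settled element amounts to choosing, above each cycle of $g|V_n$, a higher-level continuation that lies in a torus; the Weyl group $N_{\cG}(\cT)/\cT$ of a maximal torus $\cT$ measures exactly how much freedom $\cG$ allows in making such continuations.

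The third step reduces density to an abundance, or transitivity, statement. Given $g \in \cG$ and a level $n$, I seek a settled $h \in g\,\cG_n$. Since stable cycles over disjoint blocks of subtrees can be prescribed independently, it suffices that for each cycle of $g|V_n$ the group $\cG$ supplies a higher-level continuation lying in a torus; equivalently, that the maximal tori of $\cG$ are plentiful enough for the settled elements to surject onto every finite quotient $\cG|V_n$. This is precisely the condition the Weyl group framework is built to detect, and for $\cG = \Aut(T)$ it is immediate because every required odometer is already present, which recovers the known density of settled elements in $\Aut(T)$.

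The main obstacle is carrying out the third step for a \emph{proper} image $\cG$, because this demands control of the maximal tori inside $\cG$, and hence of the fine structure of $\fG_{\rm arith}(f)$ together with its specialization at $\alpha$. That structure is dictated by the post-critical dynamics of $f$: when the post-critical orbit is strictly pre-periodic of small length, the associated iterated monodromy group is self-similar with explicit generators, so one can exhibit the needed odometers and verify that settled elements surject onto each $\cG|V_n$, which is the content of the cases established in this paper. For post-critical orbits that are periodic or infinite, and for large pre-periods, the maximal tori of $\cG$ and their Weyl groups are not yet understood well enough to guarantee this abundance, which is why the conjecture remains open in general; it is also the reason the tractable cases still require the qualifier \emph{generically}, since a non-generic specialization can shrink the image and eliminate the tori on which the argument depends.
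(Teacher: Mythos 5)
The statement you are addressing is Conjecture \ref{conj-BJ}, which the paper itself does not prove: it establishes the densely settled property only generically in the case of a strictly pre-periodic post-critical orbit of length $r=2$ (Theorems \ref{thm-main2} and \ref{thm-main25}), and for $r\geq 3$ it offers only evidence, in the form of finite-index statements about relative Weyl groups (Theorem \ref{thm-main4}). Your proposal is likewise not a proof, as your own final paragraph concedes, so the real question is whether your outline would close the gap; it would not. Your first two steps are consistent with the paper's dynamical characterizations (Lemma \ref{almosteverywhere}, Proposition \ref{strongly-settled-characterization}): an element is settled when its orbit closures are clopen off a set of $\mu$-measure zero, and on each clopen minimal component it acts as an odometer. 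One caveat even here: your claim that the maximal tori of $\Aut(T)$ \emph{are} exactly the procyclic odometer closures is stronger than what is known; the paper shows only that odometer closures are maximal tori, and explicitly leaves open whether there exist maximal tori with more than one topological generator (see the discussion after Corollary \ref{Pink-result}).

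The genuine gap is your third step. The assertion that stable cycles over disjoint blocks of subtrees can be prescribed independently, so that it suffices for the image $\cG$ to supply, above each cycle of $g|V_n$, some continuation lying in a torus, is true in $\Aut(T)$ (this is precisely the construction of Lemma \ref{density-1}) and in infinite wreath products (Remark \ref{remark-settled}), but it fails in proper closed subgroups, and that failure is the entire difficulty. The paper's own $r=2$ analysis provides the counterexample: the profinite geometric group $\fG_{\rm geom}(f)$ is conjugate to the closure $\overline G$ of the infinite dihedral group, which contains the maximal torus $\overline{\langle a\rangle}$ topologically generated by the odometer $a=u_1u_2$, so torus continuations exist above every cycle at every level; nevertheless $\overline G$ is \emph{not} densely settled (Proposition \ref{prop-dihedral-nds}), because every element of $\overline G \setminus \overline{\langle a\rangle}$ has order $2$, and since $\overline{\langle a\rangle}$ is closed, each such element has a neighborhood in $\overline G$ consisting entirely of order-$2$, hence non-settled, elements. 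So ``plentiful tori'' does not imply that settled elements surject onto the finite quotients; what matters is where the \emph{normalizers} of the tori sit inside the group. The paper's actual mechanism in the proved case is different from your step three: it shows $N(\fG_{\rm geom}(f))=N(\overline{\langle a\rangle})$, proves that this normalizer is densely settled by explicit cycle-length computations on the rational spanning set (Theorem \ref{thm-main1}), and then transfers the property to $\fG_{\rm arith}(f)$ because it is a finite-index, hence open, subgroup of that normalizer. For $r\geq 3$ even this route is unavailable, since branch-group elements fixing infinitely many clopen sets are not known to be approximable by settled elements in normalizers of tori (Problem \ref{prob-clopensubset}); read as a program rather than a proof, your step three is essentially a restatement of the paper's open Problems \ref{prob-main} and \ref{prob-clopensubset}.
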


As explained in \cite{BosJon07,BosJon09,BosJon12}, the conjecture is motivated by the study of the images of the Frobenius elements in the Galois group under $\rho_{f,\alpha}$. We refer to \cite{BosJon07,BosJon09,BosJon12} for details and the intuitive ideas behind the conjecture, and also for a discussion of its significance for the development of a theory of arboreal representations similar to that of linear representations of absolute Galois groups of fields. Of course, the densely settled property can be studied for any profinite subgroup of $\Aut(T)$, where $T$ is a spherically homogeneous tree (see also the discussion and remarks in \cite[p.30]{BosJon07}, \cite[p.223]{BosJon09}). This motivates the following question.

\begin{prob}\label{prob-main}
Let $K$ be an algebraic number field, let $f(x)$ be a polynomial of degree $d \geq 2$ with coefficients in $K$, let $\alpha \in K$. For which arboreal representations $\rho_{f,\alpha}$ is the set of settled elements dense in the image of $\rho_{f,\alpha}$?
\end{prob}

As it was remarked already in \cite{BosJon07}, the images of arboreal representations which have finite index in $\Aut(T)$ are densely settled since $\Aut(T)$ is densely settled. We show in Remark \ref{remark-settled} that if the image of an arboreal representation is isometrically isomorphic to the infinite wreath product of finite groups satisfying an additional condition, then it is densely settled (for instance, some of the profinite iterated monodromy groups of the normalized Belyi polynomials in \cite{BEK2018} belong to this class). Among the most difficult and interesting cases is that of the arboreal representations associated to post-critically finite (PCF) polynomials. 

\begin{definition}\label{defn-pcf}
Let $f(x)$ be a quadratic polynomial, and let $c$ be the critical point of $f(x)$. The \emph{post-critical orbit} is the orbit $P_c = \{f^n(c) \mid n \geq 1\}$ of the critical point $c$.  The quadratic polynomial $f(x)$ is \emph{post-critically finite (PCF)} if $P_c$ is a finite set. 
\end{definition}

For quadratic polynomials, there are two possibilities for the dynamics on $P_c$: either the post-critical orbit is periodic and consists of a single cycle of length $\# P_c$, or it is strictly pre-periodic, with a periodic cycle of length strictly smaller than $\#P_c$. For instance, if $f(x) \in \mZ[x]$ is PCF, then either $f(x)$ has a periodic post-critical orbit of length $1$ or $2$, or $f(x)$ has a strictly pre-periodic post-critical orbit of length $2$ with a cycle of length $1$ \cite{BosJon09}. For the cases when there is a cycle of length $1$, it was shown in \cite{BosJon09} that the corresponding profinite arithmetic iterated monodromy group is densely settled. 
When the post-critical orbit is periodic of length $2$, then
$\fG_{\rm arith} (f)$ contains a subgroup conjugate to the Basilica group \cite{BosJon09,Pink13}. The question whether $\fG_{\rm arith} (f)$ is densely settled in this case is still open. 

If $K$ is an algebraic number field, then there are further conjugacy classes of quadratic polynomials, and one expects more diverse behavior. Pink \cite{Pink13} carried out an extensive study of the properties of the profinite arithmetic and geometric iterated monodromy groups for quadratic $f(x)$ over any number field $K$. Paper \cite{Pink13} gives an explicit representation of $\fG_{\rm geom}(f)$ using its topological generators, and studies the properties of the normalizers of these groups (which contain $\fG_{\rm arith}(f)$) in $\Aut(T)$. We use some of Pink's results in \cite{Pink13} in our study of profinite iterated monodromy groups in this paper.

\subsection{Maximal tori and Weyl groups}\label{subsec-toriWeyl} Let $T$ be a spherically homogeneous rooted tree, as defined in Section \ref{subsec-arbsettled} and, in more detail, in Section \ref{rooted}. Recall that $\Aut(T)$ is the group of automorphisms of $T$, that is, of maps of $T$ which take paths to paths and restrict to permutations at finite vertex levels $V_n$ of $T$, for $n \geq 1$.

We now introduce maximal tori and minimal elements, which we use in the proofs of our results.

\begin{definition}\label{defn-maxtorus}
Let $\cG$ be a compact subgroup of $\Aut(T)$. 
A \emph{maximal torus} is a maximal closed abelian subgroup $\cA$ of $\cG$ which acts freely and transitively on $\partial T$.
\end{definition}

Throughout the paper, for an element $a \in \Aut(T)$ we denote by $\langle a \rangle$ the cyclic subgroup of $\Aut(T)$ generated by $a$, and we denote by $\overline{\langle a \rangle}$ the closure of $\langle a \rangle$ in ${\rm Homeo}(\partial T)$. We study such procyclic groups in detail in Section \ref{procyclic_groups}.

\begin{definition}
We say that $a \in \Aut(T)$ is a \emph{minimal element}, if $\langle a \rangle$ acts transitively on each level $V_n$, $n \geq 1$. 

For a set $S \subset \Aut(T)$ we denote by ${\rm Min}(S)$ the subset of $S$ which consists of minimal elements.
\end{definition}

We show in Lemma \ref{minimal-elements} that if $a$ is minimal, then the closure $\overline{\langle a\rangle}$ in ${\rm Homeo}(\partial T)$ is a maximal torus. 

For a group $S$, we denote by $N(S)$ the normalizer of $S$ in $\Aut(T)$. We refer to Definitions \ref{defn-settled} and \ref{defn-denselysettled} for definitions of a settled element and a densely settled subgroup of $\Aut(T)$. In Sections \ref{sec_normalizer} and \ref{DenselyNormalizer}, we give a dynamical proof of the following statement.

\begin{thm}\label{thm-main1}
Let $T$ be a $d$-ary tree, for $d$ prime, and let $a \in \Aut(T)$ be a minimal element. Then the normalizer $N(\overline{\langle a\rangle})$ of the maximal torus $\overline{\langle a\rangle}$  in $\Aut(T)$
is densely settled.  
\end{thm}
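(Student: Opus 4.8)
The plan is to realize the normalizer concretely as the affine group of the $d$-adic integers, and then to show that every affine map whose linear part has infinite multiplicative order is settled; since such maps are dense, this will give the theorem.

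First I would identify $\partial T$ with $A=\overline{\langle a\rangle}$. Because $a$ is minimal, $A$ is the $d$-adic odometer and, $d$ being prime, $A\cong\ZZ_d$ acting on $\partial T\cong\ZZ_d$ by translation; by Lemma \ref{minimal-elements} this $A$ is a maximal torus. Any $g\in N(A)$ induces by conjugation a continuous automorphism of $A\cong\ZZ_d$, that is, multiplication by a unit $u\in\ZZ_d^\times$, and it sends the basepoint to some $s\in\ZZ_d$; equivariance of the action then forces $g$ to act on $\partial T\cong\ZZ_d$ as the affine map $x\mapsto ux+s$. Conversely, every such affine map is a $d$-adic isometry, hence a tree automorphism, and it normalizes $A$. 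Thus $N(A)\cong\ZZ_d\rtimes\ZZ_d^\times$, with Weyl group $W=N(A)/A\cong\ZZ_d^\times$, and I will write $g=(s,u)$.

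Next I would reduce the theorem to two statements: (i) the set of $g=(s,u)$ with $u$ of infinite order in $\ZZ_d^\times$ is dense in $N(A)$, and (ii) every such $g$ is settled. Statement (i) is immediate, since the torsion of $\ZZ_d^\times$ is the finite group $\mu$ of roots of unity, so the infinite-order units are dense in $\ZZ_d^\times$ while $s$ ranges freely over $\ZZ_d$. For (ii) I would compute the cycle structure directly. A vertex at level $n$ is a class $x\in\ZZ/d^n$, and its $g$-period is the least $k\ge1$ with $g^k(x)\equiv x$; using $g^k(x)=u^kx+sS_k$ with $S_k=1+u+\dots+u^{k-1}$ and $u^k-1=(u-1)S_k$, this becomes $v_d(S_k)+v_d\big((u-1)x+s\big)\ge n$, where $v_d$ is the $d$-adic valuation. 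Lifting-the-exponent gives $v_d(u^k-1)=\mathrm{const}+v_d(k)$ along the progression on which $u^k\equiv1\pmod d$, the constant being finite precisely because $u$ has infinite order; hence $v_d(S_k)$ increases by $1$ each time $v_d(k)$ does. Writing $j^*(x)=v_d\big((u-1)x+s\big)$, one finds that the period is $P_n(x)=e\,d^{\max(0,\,n-j^*(x)-c)}$ for suitable constants $e\ge1$, $c\ge0$ depending only on $u$.

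The point is that $P_{m+1}(x)=d\,P_m(x)$ for every $m\ge n$ as soon as $j^*(x)\le n-c$; and whenever consecutive periods grow by the factor $d$, the level-$(n{+}1)$ cycle through $x$ maps $d$-to-$1$ onto the level-$n$ cycle under the equivariant truncation $\ZZ/d^{n+1}\to\ZZ/d^n$, so it is exactly the set of $d\,P_n(x)$ vertices lying above it, i.e.\ the cycle is stable. The unstable vertices are therefore those with $j^*(x)>n-c$; when $u\not\equiv1\pmod d$ (so $u-1$ is a unit) these form a ball $x\equiv x_*\pmod{d^{\,n-r}}$ of radius $r$ bounded independently of $n$ around the unique fixed point $x_*=s/(1-u)$, so their number $d^{\,r}$ is bounded and their proportion $d^{\,r-n}\to0$; when $u\equiv1\pmod d$ with the translation part dominating ($v_d(s)<v_d(u-1)$) there is no fixed point, $j^*$ is constant, and every vertex is stable for large $n$, so $g$ is even strongly settled. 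In all cases $g$ is settled, proving (ii), and (i)+(ii) give the theorem. The hardest part will be this uniform cycle-length-and-stability computation, since it must treat together the cases where a fixed point exists or not and where $u\equiv1\pmod d$ or not, and in particular handle the anomalous $d=2$ valuations (where, for example, $u=-1\equiv1\pmod2$ is a nontrivial root of unity that must be excluded) via the correct form of lifting-the-exponent; a secondary technical point is to verify that $N(A)$ is \emph{exactly} the affine group, with no further normalizing tree automorphisms, which uses the free transitivity and maximality of the torus from Lemma \ref{minimal-elements}.
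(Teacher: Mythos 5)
Your proposal is correct, and its computational core coincides with the paper's: elements of $N(\overline{\langle a\rangle})$ act on $\partial T\cong\ZZ_d$ as affine maps, one has $g^k(x)-x=S_k\bigl((u-1)x+s\bigr)$ with $S_k=1+u+\cdots+u^{k-1}$, the $d$-adic valuation of $S_k$ is controlled by lifting-the-exponent (the paper's Lemma \ref{auxiliar-5p}), and the vertices not in stable cycles form a set of bounded cardinality concentrated near the fixed point (compare \eqref{equation-k3}, \eqref{equation-k4} and Lemmas \ref{density}, \ref{cycle-0}, \ref{cycle-2}). The architecture, however, differs genuinely in two places. First, the dense subset: you take all affine maps whose linear part $u$ has infinite order in $\ZZ_d^\times$, so density is immediate because the torsion of $\ZZ_d^\times$ is finite; the paper instead takes the countable rational spanning set $\{\sigma_{m,k}\}$ with integer parameters and must prove its density by a compactness argument (Theorem \ref{thm-skeleton}). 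Second, general multipliers: you run lifting-the-exponent along the progression $k\in m\ZZ$, where $m$ is the order of $u$ modulo $d$ (modulo $4$ when $d=2$), which treats every infinite-order $u$ at once; the paper proves its valuation lemma only for multipliers $\equiv 1\pmod d$ (resp. $\pmod 4$) and reduces a general integer multiplier to that case by raising $\sigma$ to the power $\varphi(d)$ (resp. $\varphi(4)$) and invoking the fact that settledness of a power implies settledness (Proposition \ref{densidad-completa} via Lemma \ref{settled-iteration}). Your route buys a trivial density step, no power trick, and a stronger intermediate statement (settledness of \emph{every} affine map with infinite-order linear part, not only those with integer parameters); the paper's route buys a simpler, citable valuation lemma, avoids needing the full identification $N(\overline{\langle a\rangle})\cong\ZZ_d\rtimes\ZZ_d^\times$ (the coset description of Lemma \ref{lemma-norm2} suffices), and its rational spanning set is reused heavily in Section \ref{sec-img} (e.g.\ Lemmas \ref{lemma-twoparts}--\ref{lemma-twopartsr3s2}, \ref{minimal-final}, Proposition \ref{prop-applminimal}). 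When writing yours up, make explicit the intermediate case $u\equiv 1\pmod d$ with $v_d(s)\geq v_d(u-1)$ (a fixed point exists but $u-1$ is not a unit), which your stated case split skips although your ball-around-the-fixed-point count handles it verbatim, and carry out the verification you flag that $N(\overline{\langle a\rangle})$ is exactly the affine group, for which maximality and free transitivity as in Lemmas \ref{minimal-elements} and \ref{lemma-norm2} are what is needed.
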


Problem \ref{prob-main} asks if the subgroups of $\Aut(T)$ which arise as images of arboreal representations are densely settled. Theorem \ref{thm-main1} states that the normalizers of maximal tori in $\Aut(T)$ which are generated by minimal elements, contain many settled elements. Our approach to Problem \ref{prob-main} is to find out if the elements in the image of an arboreal representation can be approximated by settled elements contained in the normalizers of such maximal tori.

For this task, both the maximal tori and their normalizers are important. Minimal elements and their powers are strongly settled (see Example \ref{ex-minimal} and Lemma \ref{settled-iteration}), and so they have no fixed points in $V_n$ for $n$ sufficiently large. On the other hand, the normalizers of maximal tori generated by minimal elements contain settled elements with fixed points, see Section \ref{subsec-proofsNa}. In Section \ref{sec-img} we study the profinite iterated monodromy groups associated to quadratic PCF polynomials with strictly pre-periodic post-critical orbits, which may act on $V_n$ in a complicated way. To approximate the action of the elements in these groups on $V_n$, $n \geq 1$, we need settled elements with fixed points. In order to keep track of the settled elements in normalizers of maximal tori we introduce the \emph{Weyl groups} below. 

\begin{definition}\label{defn-Weyl}
Let $\cG \subset \Aut(T)$ be a compact group, and let $\cA \subset \cG$ be a maximal torus. Then:
\begin{enumerate}
\item The \emph{absolute Weyl group} of the maximal torus $\cA$ is the quotient $W(\cA) = N(\cA)/\cA $. 
\item The \emph{relative Weyl group}, or simply the \emph{Weyl group} of $\cA$ in $\cG$ is the quotient 
 $$W(\cA,\cG) = (N(\cA) \cap \cG) /\cA.$$
\end{enumerate}
\end{definition}

We now use the Weyl groups to formulate and prove our main results.  

\subsection{Main results}\label{subsec-mainresults}

We consider the case when $f(x)$ is a quadratic PCF polynomial (see Definition \ref{defn-pcf}) over an algebraic number field $K$ with strictly pre-periodic orbit of length $r \geq 2$, with periodic cycle of length $r-s$, for $s \geq 1$, $r,s \in \mN$. That is, $r$ and $s$ are the smallest positive integers such that $r>s \geq 1$ and $f^{r+1}(c) = f^{s+1}(c)$, where $c$ is the critical point of $f(x)$.

Recall from Section \ref{subsec-arbsettled} that, associated to $f(x)$, there are profinite geometric and arithmetic iterated monodromy groups, denoted by $\fG_{\rm geom}(f)$ and $\fG_{\rm arith}(f)$ respectively, and that we have the subgroup inclusions $\fG_{\rm geom}(f) \leq \fG_{\rm arith}(f) \leq N(\fG_{\rm geom}(f))$. Our results below show how the normalizers of the maximal tori in $\fG_{\rm geom}(f)$ and $N(\fG_{\rm geom}(f))$ are positioned inside these three groups. As a consequence, we make conclusions about the densely settled property of these three groups.

Recall from \cite{Pink13} that for $r = 2$ and $s=1$, the polynomial $f(x)$ is conjugate to the quadratic Chebyshev polynomial, and its profinite geometric iterated monodromy group $\fG_{\rm geom}(f)$ is conjugate in $\Aut(T)$ to the closure $\overline G$ of the action of the dihedral group $G = \langle m,n \mid n^2 = 1, nmn=m^{-1}\rangle$.

\begin{thm}\label{thm-main2}
Let $f(x)$ be a quadratic PCF polynomial over an algebraic number field $K$ with strictly pre-periodic post-critical orbit of length $r=2$, and consider $\fG_{\rm geom}(f)$ and $\fG_{\rm arith}(f)$. 

Then for any $a \in {\rm Min}(\fG_{\rm geom}(f))$ and the relative Weyl groups of the maximal torus $\overline{\langle a \rangle}$ in $\fG_{\rm geom}(f)$, $N(\fG_{\rm geom}(f))$ and $\fG_{\rm arith}(f)$ we have the following:
\begin{enumerate}
\item The relative Weyl group $W(\overline{\langle a \rangle},\fG_{\rm geom}(f))$ is finite of order $2$. 
\item We have the equality of the relative and the absolute Weyl groups 
 $$W(\overline{\langle a \rangle}, N(\fG_{\rm geom}(f))) = W(\overline{\langle a \rangle}).$$
\item The relative Weyl group $W(\overline{\langle a \rangle}, \fG_{\rm arith}(f))$ has finite index in the absolute Weyl group $W(\overline{\langle a \rangle})$. 
\end{enumerate}
\end{thm}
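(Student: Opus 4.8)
The plan is to reduce all three statements to Pink's explicit description of $\fG_{\rm geom}(f)$ together with one structural observation about regular actions. By the remarks preceding the theorem, for $r=2$ the group $\fG_{\rm geom}(f)$ is conjugate to the closure $\overline G$ of the dihedral group $G=\langle m,n\mid n^2=1,\ nmn=m^{-1}\rangle$ on the binary tree $T$. Here $\overline{\langle m\rangle}$ is procyclic, isomorphic to $\mZ_2$, and acts freely and transitively on $\partial T$, while $\overline{\langle m\rangle}\,n$ is the complementary coset, so $\fG_{\rm geom}(f)\cong\mZ_2\rtimes\mZ/2\mZ$ with $n$ acting by inversion. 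A minimal element $a\in{\rm Min}(\fG_{\rm geom}(f))$ cannot lie in the reflection coset (reflections have order $2$ and are not transitive on deep levels), so $a$ is a topological generator of the rotation subgroup and $\overline{\langle a\rangle}=\overline{\langle m\rangle}$; by Lemma \ref{minimal-elements} this is the maximal torus. I then record the two facts that drive everything. First, since $\overline{\langle a\rangle}$ acts freely and transitively on $\partial T$, the standard regular-action argument gives $C(\overline{\langle a\rangle})=\overline{\langle a\rangle}$: after identifying $\partial T$ with $\mZ_2$ so that $\overline{\langle a\rangle}$ acts by translations, any automorphism commuting with all translations is determined by the image of a base point and is itself a translation. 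Hence conjugation induces an injection $W(\overline{\langle a\rangle})=N(\overline{\langle a\rangle})/\overline{\langle a\rangle}\hookrightarrow\Aut(\overline{\langle a\rangle})=\mZ_2^\times$, which is also onto: for $u\in\mZ_2^\times$ the multiplication-by-$u$ map of $\mZ_2$ is a tree automorphism conjugating translation-by-$1$ to translation-by-$u$. Thus $W(\overline{\langle a\rangle})\cong\mZ_2^\times$.

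Parts (1) and (2) are then immediate. For (1), $\overline{\langle a\rangle}$ is normal of index $2$ in $\fG_{\rm geom}(f)$, so $N(\overline{\langle a\rangle})\cap\fG_{\rm geom}(f)=\fG_{\rm geom}(f)$ and $W(\overline{\langle a\rangle},\fG_{\rm geom}(f))=\fG_{\rm geom}(f)/\overline{\langle a\rangle}\cong\mZ/2\mZ$ has order $2$. For (2) it suffices to prove $N(\overline{\langle a\rangle})\subseteq N(\fG_{\rm geom}(f))$, since then $N(\overline{\langle a\rangle})\cap N(\fG_{\rm geom}(f))=N(\overline{\langle a\rangle})$ and the two Weyl groups coincide. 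Given $g\in N(\overline{\langle a\rangle})$ with $gag^{-1}=a^{u}$, $u\in\mZ_2^\times$, a direct computation using $nan^{-1}=a^{-1}$ yields $(gng^{-1})\,a\,(gng^{-1})^{-1}=a^{-1}$, so $gng^{-1}$ inverts the torus. Then $n^{-1}gng^{-1}$ centralizes $\overline{\langle a\rangle}$, hence lies in $\overline{\langle a\rangle}$, so $gng^{-1}\in\overline{\langle a\rangle}\,n$. Since $g$ preserves both cosets $\overline{\langle a\rangle}$ and $\overline{\langle a\rangle}\,n$ of $\fG_{\rm geom}(f)$, it normalizes $\fG_{\rm geom}(f)$.

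For (3) I would use the inclusions $\fG_{\rm geom}(f)\le\fG_{\rm arith}(f)\le N(\fG_{\rm geom}(f))$ together with part (2), which identifies $W(\overline{\langle a\rangle},N(\fG_{\rm geom}(f)))$ with the full $W(\overline{\langle a\rangle})\cong\mZ_2^\times$. Because the conjugation map is injective on Weyl groups, it is enough to show that the subgroup $U_{\rm arith}\subseteq\mZ_2^\times$ of units $u$ realized by elements of $\fG_{\rm arith}(f)$ via $gag^{-1}=a^{u}$ has finite index. Here I invoke Pink's analysis of the Chebyshev case \cite{Pink13}: the quotient $\fG_{\rm arith}(f)/\fG_{\rm geom}(f)$ acts on the rotation torus through the $2$-adic cyclotomic character $\chi_K\colon{\rm Gal}(\overline K/K)\to\mZ_2^\times$, so $U_{\rm arith}$ contains the image of $\chi_K$. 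For a number field $K$ this image is open, hence of finite index, in $\mZ_2^\times$ (it is finite index already for $K=\mQ$, and passing to a finite extension alters the index by a finite factor). Therefore $U_{\rm arith}$ has finite index in $\mZ_2^\times\cong W(\overline{\langle a\rangle})$, which is exactly statement (3).

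The hard part will be step (3): the clean structural arguments behind (1) and (2) reduce it to computing the image $U_{\rm arith}$ of $\fG_{\rm arith}(f)$ in $\mZ_2^\times$, and this is where the number theory genuinely enters. One must match Pink's generators of $\fG_{\rm arith}(f)$ with their conjugation action on the torus and verify that this action is precisely the cyclotomic character, after which finiteness of $[\mZ_2^\times:\mathrm{im}\,\chi_K]$ closes the argument. By contrast, identifying the centralizer of the torus and realizing all of $\mZ_2^\times$ as the absolute Weyl group, the inputs feeding (1) and (2), are comparatively routine.
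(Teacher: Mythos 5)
Your proof is correct, and it follows the same overall skeleton as the paper's argument in Section \ref{sec-dihedral}: reduce to Pink's dihedral description of $\fG_{\rm geom}(f)$, identify the unique maximal torus $\overline{\langle a \rangle} = \overline{\langle m \rangle} \cong \ZZ_2$ and the absolute Weyl group with $\ZZ_2^\times$, compute the relative Weyl groups from the coset structure, and invoke Pink for the arithmetic statement (3). The differences lie in which ingredients you prove and which you cite. For (2), the paper's Proposition \ref{prop-dihedral-nds} proves the inclusion $N(\fG_{\rm geom}(f)) \subseteq N(\overline{\langle a \rangle})$ by a dynamical argument (conjugates of minimal elements are minimal by Corollary \ref{Pink-result}, and every element of $\fG_{\rm geom}(f) \setminus \overline{\langle a \rangle}$ has order $2$), and takes the reverse inclusion $N(\overline{\langle a \rangle}) \subseteq N(\fG_{\rm geom}(f))$ --- the one that statement (2) actually requires --- from \cite[Proposition 3.9.5]{Pink13}; you instead prove exactly this inclusion by hand, via the computation that $gng^{-1}$ inverts the torus followed by the centralizer identity $Z(\overline{\langle a \rangle}) = \overline{\langle a \rangle}$ (the paper's Lemma \ref{abelian}), so your treatment of (2) is self-contained where the paper leans on Pink. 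Likewise you re-derive $W(\overline{\langle a \rangle}) \cong \ZZ_2^\times$ from the regular-action picture and multiplication-by-unit maps, where the paper cites Theorem \ref{thm-Zstabilizer} (Pink, Ribes--Zalesskii). For (3), the paper quotes \cite[Corollary 3.10.6(g)]{Pink13} directly for the finite index of $\fG_{\rm arith}(f)/\fG_{\rm geom}(f)$ in $N(\fG_{\rm geom}(f))/\fG_{\rm geom}(f) \cong \ZZ_2^\times/\{\pm 1\}$, while you reconstruct the same finiteness from the cyclotomic-character description of the conjugation action on the torus together with openness of the image of $\chi_K$ in $\ZZ_2^\times$ for a number field $K$; these are equivalent in strength and both ultimately rest on Pink's computation of $\fG_{\rm arith}(f)$, but your route makes the number-theoretic input explicit rather than packaged in a citation.
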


We interpret Theorem \ref{thm-main2} and similar results further in the paper as follows. The absolute Weyl group $W(\cA)$ of a maximal torus $\cA$ is infinite. If  $\cA < \cG$, where $\cG$ is a subgroup of $\Aut(T)$, then we may have three types of the relationship between the absolute Weyl group $W(\cA)$ and the relative Weyl group $W(\cA,\cG)$: either they are equal, $W(\cA,\cG) = W(\cA)$, or $W(\cA,\cG)$ has finite index in $W(\cA)$, or $W(\cA,\cG)$ has infinite index in $W(\cA)$. If $W(\cA,\cG) = W(\cA)$, then all settled elements which are in the normalizer $N(\cA)$ are also in $\cG$, and this \emph{provides good evidence} that $\cG$ contains many settled elements, and so it is likely to be densely settled. If $W(\cA,\cG)$ has finite index in $W(\cA)$, and so $W(\cA,\cG)$ is infinite, then this \emph{provides substantial evidence} that $\cG$ is densely settled. If $W(\cA,\cG)$ has infinite index in $W(\cA)$, then this \emph{provides little evidence} that $\cG$ is densely settled.

For $f(x)$ quadratic and $r = 2$ and $s = 1$ we the following result.

\begin{thm}\label{thm-main25}
Let $f(x)$ be a quadratic PCF polynomial over an algebraic number field $K$ with strictly pre-periodic post-critical orbit of length $r=2$. Then:
\begin{enumerate}
\item \label{st-one}The profinite geometric iterated monodromy group $\fG_{\rm geom}(f)$ is not densely settled. 
\item \label{st-two}The normalizer $N(\fG_{\rm geom}(f))$ is densely settled.
\item \label{st-three} The profinite arithmetic iterated monodromy group $\fG_{\rm arith}(f)$ is densely settled.
\end{enumerate}
\end{thm}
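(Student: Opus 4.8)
\emph{Setup and part (\ref{st-one}).} The plan is to reduce all three statements to the explicit affine picture of $\fG_{\rm geom}(f)$ coming from Pink's description together with Theorems \ref{thm-main1} and \ref{thm-main2}. Since the settled and densely settled properties are invariant under conjugation in $\Aut(T)$ (conjugation preserves levels and the cycle structure on each $V_n$), I may replace all groups by their conjugates. I would fix $a \in {\rm Min}(\fG_{\rm geom}(f))$ and set $\cA = \overline{\langle a\rangle}$, a maximal torus by Lemma \ref{minimal-elements}. As $r=2$ forces $s=1$, $\fG_{\rm geom}(f)$ is conjugate to the closure $\overline G$ of the dihedral group; identifying $\partial T$ with $\cA \cong \mZ_2$ via the free transitive action, I may write $\fG_{\rm geom}(f) = \mZ_2 \rtimes \{\pm 1\}$ inside the normalizer $N(\cA) = {\rm Aff}(\mZ_2) = \mZ_2 \rtimes \mZ_2^\times$, where $\cA = \mZ_2 \rtimes \{1\}$ is the translation subgroup and $W(\cA) = N(\cA)/\cA \cong \mZ_2^\times$; indeed every minimal element of $\overline G$ is translation by a unit, so $\cA$ is exactly the translation torus. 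I also record the elementary remark that a finite-order element of $\Aut(T)$ is never settled: a stable cycle of length $k$ at level $n$ forces cycles of length $d^{m-n}k \to \infty$ at higher levels, impossible for bounded order. For part (\ref{st-one}), the nontrivial coset $\mZ_2 \times \{-1\}$ of $\cA$ consists of the reflections $(t,-1)$, each squaring to the identity, hence all non-settled; this coset is a nonempty open subset of $\fG_{\rm geom}(f)$ (as $\cA$ is open of index $2$), so the settled elements cannot be dense and $\fG_{\rm geom}(f)$ is not densely settled.

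\emph{Part (\ref{st-two}).} Here the goal is to identify $N(\fG_{\rm geom}(f))$ with $N(\cA)$ and invoke Theorem \ref{thm-main1}, which applies since $T$ is binary and $a$ is minimal. Theorem \ref{thm-main2}(2) gives $W(\cA, N(\fG_{\rm geom}(f))) = W(\cA)$, which unwinds to $N(\cA) \cap N(\fG_{\rm geom}(f)) = N(\cA)$, that is $N(\cA) \subseteq N(\fG_{\rm geom}(f))$. For the reverse inclusion I would show $\cA$ is characteristic in $\fG_{\rm geom}(f)$: in $\mZ_2 \rtimes \{\pm 1\}$ the elements of infinite order together with the identity are exactly the translations, so every automorphism of $\fG_{\rm geom}(f)$ preserves $\cA$; hence conjugation by any $g \in N(\fG_{\rm geom}(f))$ preserves $\cA$, giving $N(\fG_{\rm geom}(f)) \subseteq N(\cA)$. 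Thus $N(\fG_{\rm geom}(f)) = N(\cA) = N(\overline{\langle a\rangle})$ is densely settled by Theorem \ref{thm-main1}.

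\emph{Part (\ref{st-three}).} Since $\cA \subseteq \fG_{\rm arith}(f) \subseteq N(\fG_{\rm geom}(f)) = N(\cA)$, Theorem \ref{thm-main2}(3) shows $W(\cA, \fG_{\rm arith}(f)) = \fG_{\rm arith}(f)/\cA$ is a finite-index, hence open, subgroup $H \leq W(\cA) = \mZ_2^\times$, so $\fG_{\rm arith}(f) = \mZ_2 \rtimes H$ is an open finite-index subgroup of ${\rm Aff}(\mZ_2)$. The crux is that the settled elements stay dense after intersecting with this subgroup, and for this I would pin down the settled elements of ${\rm Aff}(\mZ_2)$ directly. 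A nonzero translation $(t,1)$ is a power of a minimal element, hence strongly settled by Lemma \ref{settled-iteration}; an affine map $(t,u)$ with $u \neq \pm 1$ is settled because the order of $u$ modulo $2^n$ tends to infinity and eventually doubles from level to level, so on each fixed $2$-adic sphere the cycles are stable and only a vanishing proportion of vertices (those close to the fixed locus) are excluded in the sense of Definition \ref{defn-settled}. Consequently the non-settled elements are contained in $\{\mathrm{id}\} \cup (\mZ_2 \times \{-1\})$, a closed nowhere-dense subset of ${\rm Aff}(\mZ_2)$. Intersecting with $\fG_{\rm arith}(f) = \mZ_2 \rtimes H$, they lie in $\{\mathrm{id}\} \cup (\mZ_2 \times (\{-1\} \cap H))$, still nowhere dense because $H$ is infinite with no isolated points; hence the settled elements are dense in $\fG_{\rm arith}(f)$.

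\emph{Main obstacle.} I expect the genuine difficulty to be exactly this last step: Theorem \ref{thm-main2}(3) only provides finite index of $W(\cA,\fG_{\rm arith}(f))$ in $W(\cA)$, which in the paper's terminology is merely ``substantial evidence,'' so the density assertion is not formal and demands an honest description of which affine maps are settled together with a Baire-type argument that the non-settled locus is nowhere dense inside the open subgroup. Care is needed to make the level-by-level cycle-doubling claim uniform and to control the behavior on the $2$-adic spheres near the fixed point of $(t,u)$, so that the vanishing-proportion estimate of Definition \ref{defn-settled} truly holds; this is where the computations with orders of units in $(\mZ/2^n)^\times$ carry the weight of the proof.
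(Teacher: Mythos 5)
Your parts (\ref{st-one}) and (\ref{st-two}) are correct and are essentially the paper's own argument (Proposition \ref{prop-dihedral-nds}) transported into the affine model: the paper shows every element of $\fG_{\rm geom}(f)$ outside the torus is a limit of reflections, hence has order $2$ and cannot be settled, and proves $N(\fG_{\rm geom}(f))=N(\overline{\langle a\rangle})$ by noting that a conjugate of $a$ is minimal, hence of infinite order, hence lies in $\overline{\langle a\rangle}$ (your characteristic-subgroup argument is the same idea), with the reverse inclusion quoted from \cite[Proposition 3.9.5]{Pink13}. One caveat: you derive the inclusion $N(\cA)\subseteq N(\fG_{\rm geom}(f))$ from Theorem \ref{thm-main2}(2), but in the paper Theorem \ref{thm-main2} is proved simultaneously with Theorem \ref{thm-main25}, and its part (2) is precisely a restatement of this normalizer equality; to avoid any appearance of circularity you should get this inclusion directly, which in your affine model is a one-line computation (conjugating $(s,\pm 1)$ by $(t,u)$ visibly lands back in $\mZ_2\rtimes\{\pm 1\}$).

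Part (\ref{st-three}) has a genuine gap, located exactly where you placed your ``main obstacle'' --- an obstacle which does not exist. You have already established that $\fG_{\rm arith}(f)$ is a closed finite-index, hence open, subgroup of $N(\fG_{\rm geom}(f))=N(\cA)$, and by part (\ref{st-two}) the settled elements are dense in $N(\cA)$. Density passes to open subgroups for free: any nonempty open subset of $\fG_{\rm arith}(f)$ is open in $N(\cA)$, hence meets the settled elements, and settledness is an intrinsic property of a tree automorphism, independent of which group it sits in. This two-line observation is the paper's entire proof of (\ref{st-three}); no description of which affine maps are settled, and no Baire-type argument, is needed. Your substitute plan --- proving that the non-settled locus of ${\rm Aff}(\mZ_2)$ is exactly $\{\mathrm{id}\}\cup(\mZ_2\times\{-1\})$ --- is moreover left unproven: your cycle-doubling-on-spheres sketch only treats maps $(t,u)$ whose fixed point $t/(1-u)$ lies in $\mZ_2$, i.e.\ when the $2$-adic valuation of $t$ is at least that of $1-u$. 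When it is smaller (e.g.\ $x\mapsto 3x+1$) there is no fixed point and no invariant sphere decomposition; the dynamics instead splits into finitely many clopen minimal pieces, and settledness there requires the full case analysis of Section \ref{DenselyNormalizer} (Lemmas \ref{cycle-0} and \ref{cycle-2} together with the Euler-totient power trick of Proposition \ref{densidad-completa}), extended from the integer parameters $\sigma_{m,k}$ treated in the paper to arbitrary $2$-adic parameters. That is a substantial body of unproven work standing in place of the elementary open-subgroup argument, so as written your proof of (\ref{st-three}) is incomplete.
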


If $r \geq 3$ then $\fG_{\rm geom}(f)$ is conjugate to a branch group, see \cite[Definition 2.2 on p. 73]{Gri11} or \cite[Section 1.8]{Nekr} for a definition and the properties of branch groups.  The dynamics of a branch group acting on the boundary of a rooted tree can be very complicated. Such groups contain non-trivial elements which fix every point in an infinite collection of disjoint clopen subsets in $\partial T$. For profinite iterated monodromy groups associated to $f(x)$ for $r \geq 3$, we obtain results about maximal tori and the Weyl groups, which go part ways to the conjecture by Boston and Jones. We start by showing that there are many maximal tori generated by minimal elements in the normalizer $N(\fG_{\rm geom}(f))$.

\begin{thm}\label{thm-main3}
Let $f(x)$ be a quadratic PCF polynomial with strictly pre-periodic post-critical orbit of length $r\geq 3$ over an algebraic number field $K$.
Then for every coset in $N(\fG_{\rm geom}(f))/\fG_{\rm geom}(f)$ there is a representative $w$ such that:
\begin{enumerate}
\item For every $a \in {\rm Min}(\fG_{\rm geom}(f))$, the element $aw$ is in ${\rm Min}(w\fG_{\rm geom}(f))$. 
\item For a maximal torus $\overline{\langle aw \rangle}$, the intersection $\overline{\langle aw \rangle} \cap \fG_{\rm geom}(f)$ is an index $2$ subgroup of $\overline{\langle aw \rangle}$, and $\overline{\langle aw \rangle} \subset \fG_{\rm geom}(f) \cup  w \fG_{\rm geom}(f)$. 
\item The normalizer  $N(\fG_{\rm geom}(f))$ contains an uncountable number of distinct maximal tori, which may intersect along proper subgroups of finite index.
\item \label{thm3-4} Minimal elements in $\fG_{\rm geom}(f)$, and minimal elements in each coset $w \fG_{\rm geom}(f)$ in $N(\fG_{\rm geom}(f))$ are in bijective correspondence. 
\end{enumerate}
\end{thm}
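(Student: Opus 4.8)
The key reduction is a sign calculus detecting minimality, valid because $f$ is quadratic, so $T$ is the binary tree. For each $n\ge 1$ let $\epsilon_n\colon\Aut(T)\to\mZ/2\mZ$ send $\sigma$ to the parity of the number of vertices $v\in V_{n-1}$ whose two children $\sigma$ interchanges; this is a homomorphism, and together the $\epsilon_n$ give the (continuous) abelianization $\Aut(T)\to\prod_{n\ge 1}\mZ/2\mZ$. Writing $\sigma=(\sigma_0,\sigma_1)s$ in the wreath decomposition one checks $\epsilon_{n+1}(\sigma)=\epsilon_n(\sigma_0\sigma_1)$, while $\sigma$ is transitive on $V_{n+1}$ if and only if $\sigma_0\sigma_1$ is transitive on $V_n$. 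An induction on $n$ then yields the clean statement that $\sigma\in\Aut(T)$ is minimal \emph{if and only if} $\epsilon_n(\sigma)=1$ for all $n\ge 1$. Since each $\epsilon_n$ is additive and $\epsilon_n(\sigma^{-1})=\epsilon_n(\sigma)$, minimality is now a purely additive condition, and the whole theorem reduces to selecting, in each coset, a representative that is invisible to every $\epsilon_n$.

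For the good representatives I would extract from Pink's explicit presentation of $\fG_{\rm geom}(f)$ and of its normalizer \cite{Pink13} the equality of parity images $(\epsilon_n)\bigl(N(\fG_{\rm geom}(f))\bigr)=(\epsilon_n)\bigl(\fG_{\rm geom}(f)\bigr)$, i.e.\ that no element of the normalizer realizes a parity vector not already present in $\fG_{\rm geom}(f)$. Granting this, for a coset with representative $g$ pick $h\in\fG_{\rm geom}(f)$ with $\epsilon_n(h)=\epsilon_n(g)$ for all $n$ and set $w=gh^{-1}$, so that $\epsilon_n(w)=0$ for all $n$. For any $a\in{\rm Min}(\fG_{\rm geom}(f))$ we then get $\epsilon_n(aw)=\epsilon_n(a)+\epsilon_n(w)=1$ for every $n$, so $aw$ is minimal, and $aw=w(w^{-1}aw)\in w\,\fG_{\rm geom}(f)$ by normality; this is (1). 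Statement (4) is then formal: $a\mapsto aw$ maps ${\rm Min}(\fG_{\rm geom}(f))$ into ${\rm Min}(w\,\fG_{\rm geom}(f))$, it is injective, and its inverse $g\mapsto gw^{-1}$ sends ${\rm Min}(w\,\fG_{\rm geom}(f))$ back into ${\rm Min}(\fG_{\rm geom}(f))$, since $gw^{-1}\in\fG_{\rm geom}(f)$ by normality and $\epsilon_n(gw^{-1})=\epsilon_n(g)=1$ for all $n$.

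For (2) I would refine the choice of $w$ so that, beyond being parity-trivial, its class in $N(\fG_{\rm geom}(f))/\fG_{\rm geom}(f)$ is an involution, i.e.\ $w^2\in\fG_{\rm geom}(f)$ while $w\notin\fG_{\rm geom}(f)$ for a nontrivial coset; again this is read off Pink's description of the normalizer quotient. Because $aw$ is minimal, Lemma~\ref{minimal-elements} gives $\overline{\langle aw\rangle}\cong\mZ_2$, and the reduction $\overline{\langle aw\rangle}\to N(\fG_{\rm geom}(f))/\fG_{\rm geom}(f)$ has image $\langle\overline w\rangle\cong\mZ/2\mZ$; its kernel is the unique index-$2$ subgroup $2\mZ_2$, which equals $\overline{\langle aw\rangle}\cap\fG_{\rm geom}(f)$, and the only cosets met are $\fG_{\rm geom}(f)$ and $w\,\fG_{\rm geom}(f)$, proving (2). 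For (3), note that ${\rm Min}(\fG_{\rm geom}(f))$ is a coset of the closed subgroup $\fG_{\rm geom}(f)\cap\bigcap_n\ker\epsilon_n$, which is infinite, so ${\rm Min}(\fG_{\rm geom}(f))$ is uncountable; since $\fG_{\rm geom}(f)$ is a branch group for $r\ge 3$ it is not virtually procyclic, so no torus $\cong\mZ_2$ can have nonempty interior in this set, and the Baire category theorem prevents covering it by countably many tori. Hence $N(\fG_{\rm geom}(f))$ contains uncountably many distinct maximal tori $\overline{\langle a\rangle}$, and as every closed subgroup of $\mZ_2$ is of the form $2^m\mZ_2$, any two distinct such tori meet in a subgroup $2^m\mZ_2$, proper of finite index whenever nontrivial.

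The main obstacle is precisely the input drawn from \cite{Pink13} in paragraphs two and three: proving that every parity vector occurring in $N(\fG_{\rm geom}(f))$ is already realized inside $\fG_{\rm geom}(f)$, and that each nontrivial coset carries a parity-trivial involutive representative. Both demand pushing the sign homomorphisms $\epsilon_n$ through the self-similar recursion defining $\fG_{\rm geom}(f)$ and its normalizer, and it is the branch (self-similar) structure that makes tracking these parities level by level the delicate part of the argument.
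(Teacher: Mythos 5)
Your parity maps $\epsilon_n$ are exactly the sign homomorphisms the paper's proof is built on: one checks by induction that ${\rm sgn}_n=(-1)^{\epsilon_n}$ on $\Aut(T)$, so your minimality criterion is \cite[Proposition 1.6.2]{Pink13}, which is what the paper uses in proving Theorem \ref{thm-moreodometers}. (Your recursion ``$\sigma$ is transitive on $V_{n+1}$ iff $\sigma_0\sigma_1$ is transitive on $V_n$'' is only correct when $\sigma$ also swaps the two children of the root; the repaired induction still gives the criterion.) Granting a sign-trivial coset representative, your deductions of (1) and (4) coincide with the paper's. The genuine gap is precisely the step you defer to ``extracting from Pink'': the equality of parity images $(\epsilon_n)\bigl(N(\fG_{\rm geom}(f))\bigr)=(\epsilon_n)\bigl(\fG_{\rm geom}(f)\bigr)$, equivalently that every coset contains a representative $w$ with $\epsilon_n(w)=0$ for all $n$. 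This is not a side condition that can be quoted; it is the mathematical core of the paper's proof of Theorem \ref{thm-moreodometers}(1), carried out case by case on Pink's recursive generators $w_i$ of the normalizer (Theorem \ref{thm-isonorm}): for $r\geq 4,\ s\geq 2$ and for $r\geq 3,\ s=1$ the natural representatives $\varphi(t)=w_1^{t_1}w_2^{t_2}\cdots$ are sign-trivial because each $w_i$ has the form $(v,v)$ or $(1,v^2)$, but for $r=3,\ s=2$ the representative $w_0w_1^{t_1}\cdots$ satisfies ${\rm sgn}_n(w_0\varphi'(t))={\rm sgn}_n(u_3)$, which is not identically $1$, and one must replace it by $u_3\varphi(t)$, using $u_3\in\fG_{\rm geom}(f)$. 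So the input you grant is true, but proving it \emph{is} the theorem; as written, your proposal is an outline whose heart is missing, as you yourself acknowledge.

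Where you genuinely diverge, the arguments are sound modulo smaller repairs, and one of them is cleaner than the paper's. For (2), instead of the paper's manipulation of the commutators $[w_i,w_j]\in\fG_{\rm geom}(f)$ and limits of truncated products, you use that $N(\fG_{\rm geom}(f))/\fG_{\rm geom}(f)$ is an elementary abelian pro-$2$ group, so the continuous image of the procyclic group $\overline{\langle aw\rangle}\cong\mZ_2$ in the quotient is $\{1,\overline{w}\}$ and the kernel is the unique index-$2$ subgroup; this is correct, and note that $w^2\in\fG_{\rm geom}(f)$ is then automatic for \emph{every} representative, not a refinement you need to arrange. For (3), your Baire-category argument is a different route from the paper's, which simply observes that tori meeting distinct cosets are distinct and that there are uncountably many cosets, each containing minimal elements by (1). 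Your version has two unproven links: that $K=\fG_{\rm geom}(f)\cap\bigcap_n\ker\epsilon_n$ is infinite (easy: it contains $\overline{\langle a^2\rangle}$ for any minimal $a$), and, more seriously, that the failure of virtual procyclicity applies to $K$ rather than to $\fG_{\rm geom}(f)$; the Baire argument only yields that $\overline{\langle a^2\rangle}$ is open in $K$, so you must also show $K$ has finite index in $\fG_{\rm geom}(f)$ (it contains the closed commutator subgroup, which has finite index because the topological generators are involutions, so the abelianization is finite). Without that link, a procyclic subgroup of finite index in $K$ contradicts nothing you have cited.
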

 
Next we consider the properties of the relative Weyl groups in profinite arithmetic iterated monodromy groups. In order to do that we first look at the relative Weyl groups in the normalizer $N(\fG_{\rm geom}(f))$. There are two cases to distinguish: first, when a minimal $a$ is in $\fG_{\rm geom}(f)$, or, second,  when a minimal $g$ is in a coset $w\fG_{\rm geom}(f) \ne \fG_{\rm geom}(f)$ for some $w \in N(\fG_{\rm geom}(f))$. In the first case the results of \cite[Section 3]{Pink13} imply that the absolute Weyl group $W(\overline{\langle a \rangle})$ is contained in $N(\fG_{\rm geom}(f))$ and so we have the equality $W(\overline{\langle a \rangle}) = W(\overline{\langle a \rangle}, N(\fG_{\rm geom}(f)))$ of the absolute and the relative Weyl groups. For the second case we prove the following criterion of the equality of the absolute Weyl group $W(\overline{\langle g \rangle})$ and the relative Weyl group $W(\overline{\langle g \rangle}, N(\fG_{\rm geom}(f))$.

\begin{thm}\label{thm-main35}
Let $f(x)$ be a quadratic PCF polynomial with strictly pre-periodic post-critical orbit of length $r\geq 3$ over an algebraic number field $K$. Then there is a well-defined action 
  $$N(\fG_{\rm geom} (f)) \times {\rm Min}(w \fG_{\rm geom}(f)) \to {\rm Min}(w \fG_{\rm geom}(f)): (z,g) \mapsto z g z^{-1}.$$
If this action is transitive on ${\rm Min}(w \fG_{\rm geom}(f))$ then for any $aw \in {\rm Min}(w \fG_{\rm geom}(f))$ and the corresponding maximal torus $\overline{\langle a w \rangle}$ we have the equality of the Weyl groups
  $$W(\overline{\langle a w\rangle}, N(\fG_{\rm geom}(f))) = W(\overline{\langle a w \rangle}).$$
\end{thm}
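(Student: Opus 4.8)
The plan is to reduce the asserted Weyl-group equality to a single containment and then extract that containment from the transitivity hypothesis. Write $N = N(\fG_{\rm geom}(f))$ and $\cA = \overline{\langle aw \rangle}$ for the maximal torus attached to a fixed $aw \in {\rm Min}(w\fG_{\rm geom}(f))$. Since $W(\cA,N) = (N(\cA)\cap N)/\cA$ is always a subgroup of $W(\cA) = N(\cA)/\cA$, the equality $W(\cA,N) = W(\cA)$ is equivalent to $N(\cA) \subseteq N$. So the entire problem becomes: every $z \in \Aut(T)$ normalizing the torus $\cA$ already normalizes $\fG_{\rm geom}(f)$.

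The first step is to identify the minimal elements of $\cA$. The torus $\cA$ is procyclic, topologically generated by $aw$, and on the binary tree $aw|_{V_n}$ is a single $2^n$-cycle; hence $(aw)^u$ is minimal exactly when $u$ is a $2$-adic unit, so ${\rm Min}(\cA)$ is precisely the complement of the unique index $2$ closed subgroup of $\cA$. By Theorem \ref{thm-main3}(2) that index $2$ subgroup is $\cA \cap \fG_{\rm geom}(f)$ and $\cA \subset \fG_{\rm geom}(f) \cup w\fG_{\rm geom}(f)$, whence ${\rm Min}(\cA) = \cA \cap w\fG_{\rm geom}(f) \subseteq {\rm Min}(w\fG_{\rm geom}(f))$. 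Now take any $z \in N(\cA)$. Because $z$ normalizes $\cA$ we have $g := z(aw)z^{-1} \in \cA$, and because conjugation by a tree automorphism preserves transitivity of the generated cyclic group on each $V_n$, the element $g$ is again minimal; therefore $g \in {\rm Min}(\cA) \subseteq {\rm Min}(w\fG_{\rm geom}(f))$.

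At this point the transitivity hypothesis does the decisive work. Since $aw$ and $g$ both lie in ${\rm Min}(w\fG_{\rm geom}(f))$ and $N$ acts transitively there, I choose $z_0 \in N$ with $z_0 (aw) z_0^{-1} = g = z(aw)z^{-1}$. Then $y := z_0^{-1} z$ commutes with $aw$, and since conjugation by $y$ is continuous and fixes $\langle aw \rangle$ pointwise it fixes its closure $\cA$ pointwise, i.e. $y$ centralizes $\cA$. Invoking the self-centralizing property of a maximal torus, $C_{\Aut(T)}(\cA) = \cA$, gives $y \in \cA$. Finally $\cA \subset \fG_{\rm geom}(f) \cup w\fG_{\rm geom}(f) \subseteq N$, because $\fG_{\rm geom}(f) \subseteq N$ and $w \in N$; hence $y \in N$ and $z = z_0 y \in N$. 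This yields $N(\cA) \subseteq N$, and with it the theorem.

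The step I expect to carry the weight is the self-centralizing equality $C_{\Aut(T)}(\cA) = \cA$, the one genuinely structural input beyond the hypothesis. I would prove it directly from the maximal-torus axioms: fixing $\xi \in \partial T$, the orbit map $\cA \to \partial T$, $b \mapsto b\xi$, is a homeomorphism since $\cA$ acts freely and transitively, and if $h$ commutes with all of $\cA$ then, writing $h(\xi) = b\xi$, commutativity gives $h(a\xi) = a h(\xi) = a b \xi = b a \xi = b(a\xi)$ for all $a \in \cA$, so $h = b \in \cA$ on all of $\partial T$. The remaining points — that conjugation by a tree automorphism preserves minimality, that the fixed set of a continuous conjugation is closed (so it absorbs the closure $\cA$), and the computation that the image of $aw$ in $N/\fG_{\rm geom}(f)$ has order $2$ so that ${\rm Min}(\cA)$ is exactly $\cA \cap w\fG_{\rm geom}(f)$ — are routine, and I would dispatch them quickly.
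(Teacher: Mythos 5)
Your proof of the conditional assertion (transitivity implies $W(\overline{\langle aw\rangle}, N(\fG_{\rm geom}(f))) = W(\overline{\langle aw\rangle})$) is correct, and it is essentially the paper's own argument in Proposition \ref{prop-transitiveweyl}: given $z$ in the normalizer of the torus, the conjugate $z(aw)z^{-1}$ is a minimal element of the torus and hence lies in ${\rm Min}(w\fG_{\rm geom}(f))$; transitivity produces $z_0 \in N(\fG_{\rm geom}(f))$ with the same conjugate; the difference $z_0^{-1}z$ centralizes the torus and so lies in it by the self-centralizing property (the paper's Lemma \ref{abelian}, which you re-prove by the same dense-orbit argument the paper uses); and the torus sits inside $\fG_{\rm geom}(f) \cup w\fG_{\rm geom}(f) \subseteq N(\fG_{\rm geom}(f))$. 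Your identification of ${\rm Min}(\overline{\langle aw\rangle})$ with $\overline{\langle aw\rangle} \cap w\fG_{\rm geom}(f)$ via Theorem \ref{thm-main3}(2) plays exactly the role of the paper's appeal to Theorem \ref{thm-moreodometers} (where the minimal power $(aw)^k$, $k \in \ZZ_2^\times$, is written as $mw$ with $m$ minimal in $\fG_{\rm geom}(f)$), so that step is fine.

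The genuine gap is that the theorem asserts two things and you prove only the second. The first assertion --- that $(z,g) \mapsto zgz^{-1}$ is a \emph{well-defined} action of $N(\fG_{\rm geom}(f))$ on ${\rm Min}(w\fG_{\rm geom}(f))$ --- is nowhere addressed in your proposal, and it is not automatic. Conjugation by $z \in N(\fG_{\rm geom}(f))$ obviously preserves minimality, but one must show that it preserves the coset: $zgz^{-1} \in w\fG_{\rm geom}(f)$ whenever $g \in w\fG_{\rm geom}(f)$. In the paper this is Proposition \ref{prop-normalizer}, whose proof rests on Pink's commutator relations (the representatives $w_i$ satisfy $[w_i,w_j] \in \overline G$, and consequently $[z,w] \in \overline G$ for arbitrary $z \in N(\overline G)$); equivalently, it follows from the fact that $N(\overline G)/\overline G$ is abelian (Theorem \ref{thm-isonorm}), so conjugation by the normalizer acts trivially on cosets. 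Your argument never needs this fact because your particular $g = z(aw)z^{-1}$ stays inside the torus, where coset membership comes from Theorem \ref{thm-main3}(2); but the well-definedness claim concerns arbitrary elements of ${\rm Min}(w\fG_{\rm geom}(f))$ and arbitrary $z \in N(\fG_{\rm geom}(f))$, and that argument is missing from your write-up. To complete the proof you would need to add this coset-preservation step (citing the structure of $N(\overline G)/\overline G$ or reproducing the commutator computation), and then conjugate everything back to $\fG_{\rm geom}(f)$ via \cite[Proposition 1.7.15]{Pink13} as the paper does.
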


We note that Theorems \ref{thm-main3} and \ref{thm-main35} exhibit the differences between the properties of maximal tori in compact connected Lie groups and in $\Aut(T)$, which are discussed in more detail in Theorem \ref{thm-differentthanLie}. To name a few such differences, the Weyl group of a maximal torus in a compact connected Lie group is always finite, while the relative Weyl group of a maximal torus in a subgroup of $\Aut(T)$ may be infinite. Another difference, in a compact Lie group all maximal tori are conjugate within the group, and every element in the group is contained in a maximal torus. Theorem \ref{thm-differentthanLie} shows that in the profinite geometric iterated monodromy groups associated to quadratic polynomials, either there are maximal tori which are not conjugate in the normalizer of this group or there are elements which are not contained in a maximal torus (possibly both). 

We collect the consequences of Theorems \ref{thm-main3} and \ref{thm-main35} for the densely settled property of profinite arithmetic iterated monodromy groups associated to quadratic polynomials in the theorem below.

\begin{thm}\label{thm-main4}
Let $f(x)$ be a quadratic PCF polynomial with strictly pre-periodic post-critical orbit of length $r\geq 3$ over a number field $K$, and let $\fG_{\rm geom}(f)$ and $\fG_{\rm arith}(f)$ denote the profinite geometric and arithmetic iterated monodromy groups associated to $f(x)$. Then the following holds:
\begin{enumerate}
\item For any $w \in  \fG_{\rm arith}(f)$ the sets ${\rm Min}(\fG_{\rm geom}(f))$ and ${\rm Min}( w\fG_{\rm geom}(f))$ are in bijective correspondence. Thus for every $w \in  \fG_{\rm arith}(f)$ the coset $w \fG_{\rm geom}(f)$ contains strongly settled elements. 
\item For any $a \in {\rm Min}(\fG_{\rm geom}(f))$, the index of the relative Weyl group $W( \overline{\langle a \rangle },\fG_{\rm geom}(f))$ in the absolute Weyl group $W( \overline{\langle a \rangle })$ is finite. Thus we have substantial evidence that the conjecture of Boston and Jones holds for $\fG_{\rm geom}(f)$. 
\item For any $a \in {\rm Min}(\fG_{\rm geom}(f))$, the index of the relative Weyl group $W( \overline{\langle a \rangle },\fG_{\rm arith}(f))$ in the absolute Weyl group $W( \overline{\langle a \rangle })$ is finite. Thus we have substantial evidence that the conjecture of Boston and Jones holds for $\fG_{\rm arith}(f)$. 
\end{enumerate}
\end{thm}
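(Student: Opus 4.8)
For (1), the plan is to read it as a statement about cosets. Since $\fG_{\rm arith}(f) \subseteq N(\fG_{\rm geom}(f))$, each $w \in \fG_{\rm arith}(f)$ determines a coset $w\fG_{\rm geom}(f)$ in $N(\fG_{\rm geom}(f))/\fG_{\rm geom}(f)$, and this coset, as a set, is independent of the chosen representative. I would apply Theorem \ref{thm-main3}(4) to this coset to obtain the bijection between ${\rm Min}(\fG_{\rm geom}(f))$ and ${\rm Min}(w\fG_{\rm geom}(f))$. As ${\rm Min}(\fG_{\rm geom}(f))$ is nonempty, so is ${\rm Min}(w\fG_{\rm geom}(f))$; and every element of ${\rm Min}(w\fG_{\rm geom}(f))$ is a minimal element of $\Aut(T)$, hence strongly settled by Example \ref{ex-minimal} and Lemma \ref{settled-iteration}. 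This produces the strongly settled elements in the coset.

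For (2), here $a \in {\rm Min}(\fG_{\rm geom}(f))$, so $\overline{\langle a \rangle} \subseteq \fG_{\rm geom}(f)$, which is the trivial-coset case. I would first invoke the consequence of \cite[Section~3]{Pink13} recorded just before Theorem \ref{thm-main35}, namely $W(\overline{\langle a \rangle}) = W(\overline{\langle a \rangle}, N(\fG_{\rm geom}(f)))$, equivalently $N(\overline{\langle a \rangle}) \subseteq N(\fG_{\rm geom}(f))$ with the normalizer taken in $\Aut(T)$. Since $\fG_{\rm geom}(f)$ is normal in $N(\fG_{\rm geom}(f))$, the reduction $x \mapsto x\fG_{\rm geom}(f)$ restricts to a homomorphism $N(\overline{\langle a \rangle}) \to N(\fG_{\rm geom}(f))/\fG_{\rm geom}(f)$ with kernel $N(\overline{\langle a \rangle}) \cap \fG_{\rm geom}(f)$; dividing by $\overline{\langle a \rangle}$ then gives
\begin{equation*}
[\,W(\overline{\langle a \rangle}) : W(\overline{\langle a \rangle}, \fG_{\rm geom}(f))\,] = [\,N(\overline{\langle a \rangle}) : N(\overline{\langle a \rangle}) \cap \fG_{\rm geom}(f)\,] = |\,N(\overline{\langle a \rangle})\,\fG_{\rm geom}(f)/\fG_{\rm geom}(f)\,|.
\end{equation*}
It therefore suffices to prove that the image of $N(\overline{\langle a \rangle})$ in $N(\fG_{\rm geom}(f))/\fG_{\rm geom}(f)$ is finite, that is, that only finitely many cosets of $\fG_{\rm geom}(f)$ meet the torus normalizer $N(\overline{\langle a \rangle})$. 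This is the step I expect to be the main obstacle; I would establish it from Pink's explicit description of $N(\fG_{\rm geom}(f))$ for $r \geq 3$ and the branch structure of $\fG_{\rm geom}(f)$, using Theorem \ref{thm-main3} to control how minimal elements, and hence tori and the elements normalizing them, distribute across cosets. This is exactly where the branch case $r \geq 3$ departs from $r = 2$: in Theorem \ref{thm-main2} the group $W(\overline{\langle a \rangle}, \fG_{\rm geom}(f))$ was finite and of infinite index, whereas here the largeness of the branch group $\fG_{\rm geom}(f)$ should force finite index. The \emph{substantial evidence} conclusion then follows from the interpretive criterion stated after Theorem \ref{thm-main2}.

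For (3), I would simply intersect the tower $\fG_{\rm geom}(f) \leq \fG_{\rm arith}(f) \leq N(\fG_{\rm geom}(f))$ with $N(\overline{\langle a \rangle})$ and divide by $\overline{\langle a \rangle}$, yielding
\begin{equation*}
W(\overline{\langle a \rangle}, \fG_{\rm geom}(f)) \leq W(\overline{\langle a \rangle}, \fG_{\rm arith}(f)) \leq W(\overline{\langle a \rangle}, N(\fG_{\rm geom}(f))) = W(\overline{\langle a \rangle}).
\end{equation*}
By (2) the leftmost group has finite index in the rightmost, so the intermediate group $W(\overline{\langle a \rangle}, \fG_{\rm arith}(f))$ has finite index in $W(\overline{\langle a \rangle})$ as well, and the \emph{substantial evidence} conclusion again follows from the criterion after Theorem \ref{thm-main2}. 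In summary, the entire difficulty is concentrated in the finiteness claim inside (2); granted that claim and the cited inputs, statements (1) and (3) are formal.
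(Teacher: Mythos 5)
Your treatment of parts (1) and (3) is correct and coincides with the paper's own proof: part (1) is exactly the combination of the inclusion $\fG_{\rm arith}(f) \leq N(\fG_{\rm geom}(f))$ with Theorem \ref{thm-main3},\eqref{thm3-4}, and part (3) is the same sandwich $W(\overline{\langle a \rangle}, \fG_{\rm geom}(f)) \leq W(\overline{\langle a \rangle}, \fG_{\rm arith}(f)) \leq W(\overline{\langle a \rangle})$ that the paper uses. Your reduction of part (2) to the finiteness of the image of $N(\overline{\langle a \rangle})$ in $N(\fG_{\rm geom}(f))/\fG_{\rm geom}(f)$ is also a correct reformulation of what must be shown.

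The problem is that this finiteness claim \emph{is} the content of part (2), and you leave it unproven: saying you ``would establish it from Pink's explicit description and the branch structure'' is a plan, not an argument. Moreover, the mechanism you gesture at is not the one that works. The finite index does not come from ``largeness'' of the branch group, and it cannot be extracted from Theorem \ref{thm-main3}: that theorem shows that minimal elements, and hence maximal tori, spread over uncountably many cosets of $\fG_{\rm geom}(f)$, which says nothing about how many of those cosets a \emph{single} torus normalizer $N(\overline{\langle a \rangle})$ can meet (if anything, it points in the opposite direction). What the paper invokes at this step are Lemmas \ref{lemma-twoparts},\eqref{lemma-twoparts2}, \ref{lemma-twopartsr3s1},\eqref{lemma-twopartsr3s1-2} and \ref{lemma-twopartsr3s2},\eqref{lemma-twopartsr3s2-2}, one for each of the cases $(r \geq 4,\, s \geq 2)$, $(r \geq 3,\, s=1)$ and $(r=3,\, s=2)$. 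Those lemmas rest on Pink's Proposition 3.9.14: under the identifications $W(\overline{\langle a \rangle}) \cong \ZZ_2^\times$ and $N(\overline G)/\overline G \cong \prod_i C_2$, the image of $N(\overline{\langle a \rangle})$ in $N(\overline G)/\overline G$ equals the image of the composite maps \eqref{weyl-24}, \eqref{eq-phitheta2}, \eqref{eq-compr3}, which factor through the homomorphisms $\theta_1, \theta_2: \ZZ_2^\times \to C_2$ of \eqref{proj-maps} and \eqref{proj-maps1}. Hence that image consists of exactly two cosets (cases (a) and (b)) or four cosets (case (c)), so the index is $2$ or $4$; the finiteness is an arithmetic fact about $\ZZ_2^\times$ and these specific characters, not a consequence of the group being large or branch. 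Since these lemmas are established in the paper before Theorem \ref{thm-main4}, citing them closes your gap; without them, or a reproduction of Pink's computation, your proof of (2) — and with it of (3), which depends on it — is incomplete.
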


So far we have discussed the settled elements in $\fG_{\rm arith}(f)$ which arise as elements in normalizers of maximal tori topologically generated by minimal elements. We note that not necessarily all settled elements arise this way; other types of settled elements can be constructed using the method of Lemma \ref{density-1}. Thus a question, precisely which elements of $\Aut(T)$ can be approximated by settled elements in maximal tori and their Weyl groups, is natural. 

Branch groups, and so profinite iterated monodromy groups associated to quadratic polynomials, contain many elements which act trivially on infinite collections of clopen sets. Thus a successful solution to the following problem will constitute a big step towards a full proof of the conjecture by Boston and Jones in the case $r \geq 3$.

\begin{prob}\label{prob-clopensubset}
Let $g \in \Aut(T)$ be such that $g$ is the identity on a proper subset $U$ of $\partial T$, where $U$ is the finite or infinite union of clopen sets. Does there exists a sequence $\{g_n\}$, $n \geq 1$, which converges to $g$, and such that each $g_n$ is a settled element in the normalizer of a maximal torus?
\end{prob}

The proofs of Theorem \ref{thm-main3}, \ref{thm-main35} and \ref{thm-main4} rely on the descriptions of profinite iterated monodromy groups in \cite[Section 3]{Pink13}, and depend on the length and the structure of the post-critical orbit of a polynomial $f(x)$. The proofs do not directly extend to the case when $f(x)$ has a periodic post-critical orbit since the properties of profinite iterated monodromy groups in that case are very different. The following question remains open.

\begin{prob}\label{prob-basilica}
Let $f(x)$ be a quadratic PCF polynomial with periodic post-critical orbit of length $r\geq 2$ over an algebraic number field $K$. Study the maximal tori and their Weyl groups in the associated groups $\fG_{\rm geom}(f)$, $N(\fG_{\rm geom}(f))$ and $\fG_{\rm arith}(f)$.
\end{prob}

Finally, we expect that the properties of maximal tori and their Weyl groups may provide insights into the properties of groups acting on trees, and help classify them. Therefore, the following problem is natural.

\begin{prob}\label{prob-Weyl}
Study the maximal tori and the Weyl groups in profinite groups acting on trees. 
\end{prob}

We do not expect all statements about maximal tori and Weyl groups which are true for compact Lie groups, to carry over verbatim to our setting. Indeed, we have already described some differences in Theorem \ref{thm-differentthanLie}. However, we can always ask how far the similarities go. We remark that looking for notions and concepts for actions on trees, which are similar to those used in smooth dynamics has already proved fruitful, as in  \cite{HL2021}. 

The rest of the paper is organized as follows:  Sections  \ref{sec-arboreal}  and \ref{sec-background}  are devoted to recalling general concepts about arboreal representations and topological dynamics respectively.  In Section \ref{procyclic_groups} we study procyclic subgroups of $\Aut(T)$ from a dynamical point of view.  In Section \ref{sec_normalizer} we study the normalizer $N(\overline{\langle a \rangle})$ of a maximal torus $\overline{\langle a \rangle}$.  In Section \ref{sec-stablesettled} we give a dynamical characterization of settled elements of $\Aut(T)$, in terms of their minimal components, and we study the conditions under which the property of being densely settled is preserved. Finally, in Sections \ref{DenselyNormalizer}  and  \ref{sec-img} we use the previous results to show Theorem \ref{thm-main1} and    Theorems \ref{thm-main2}, \ref{thm-main3}, \ref{thm-main35}, \ref{thm-main4}, respectively.  

{\bf Acknowledgments:} The authors thank Steven Hurder for pointing out the similarities between abelian groups of tree automorphisms and their normalizers in our work, and the notions of the maximal torus and the Weyl group in theory of compact Lie groups. The authors thank Nicole Looper for useful comments on the draft of the paper, and the anonymous referees for the careful reading of the manuscript and providing valuable feedback.

\section{Representations of Galois groups}\label{sec-arboreal}

We briefly recall some background on arboreal representations and profinite iterated monodromy groups, see \cite{Jones2013} and \cite{Pink13} for more details. 

Let $K$ be a number field, that is, $K$ is a finite algebraic extension of the rational numbers $\mQ$. 
Let $f(x)$ be a polynomial of degree $d \geq 2$ with coefficients in $K$. Let $t$ be a transcendental element, then $K(t)$ is the field of rational functions with coefficients in $K$. 

We first define the profinite arithmetic iterated monodromy group. Consider the solutions of the equation $f^n(x) = t $ over $K(t)$. The polynomial $f^n(x)-t$ is separable and irreducible over $K(t)$ for all $n \geq 1$ \cite[Lemma 2.1]{AHM2005}, which means that all $d^n$ roots of $f^n(x) - t = 0$ are distinct, and $f^n(x) - t$ does not factor over $K(t)$. It follows that the Galois group $H_n$ of the extension $K_n$ obtained by adjoining to $K(t)$ the roots of $f^n(x)-t$ acts transitively on the roots. 

The tree $T$ has the vertex set $V = \bigcupdot_{n \geq 0} V_n$, where $V_0=\{t\}$, and $V_n$ are the sets of solutions of $f^n(x) = t$. We join $ \beta \in V_{n+1}$ and $\alpha \in V_n$ by an edge if and only if $f(\beta) = \alpha$. For each $n \geq 1$, the Galois group $H_n$ acts transitively on the roots of $f^n(x) = t$ by field automorphisms, and so induces a permutation of vertices in $V_n$. Since the field extensions satisfy $K_{n} \subset K_{n+1}$, we have a group homomorphism $\lambda^{n+1}_n:H_{n+1} \to H_n$. Taking the inverse limit
   $$\fG_{\rm arith}(f) = \lim_{\longleftarrow}\{\lambda^{n+1}_n: H_{n+1} \to H_n\},$$
we obtain a profinite group called the \emph{arithmetic iterated monodromy group} associated to the polynomial $f(x)$. For $n \geq 1$, the action of $H_n$ on $T$ preserves the connectedness of paths in $T$, and so $\fG_{\rm arith}(f)$ is identified with a subgroup of the automorphism group $\Aut(T)$ of the tree $T$. 

Denote by $\overline{K}$ a separable closure of $K$. An \emph{arboreal representation} of the absolute Galois group ${\rm Gal}(\overline K/K)$ is obtained by a construction similar to the one described above by choosing $\alpha \in K$ and setting $V_n = \{f^{-n}(\alpha)\}$, with $Y_n$ the Galois group of the extension $K(f^{-n}(\alpha))$. The image of the representation is the profinite group ${\ds Y_\infty = \lim_{\longleftarrow}\{Y_{n+1} \to Y_n\}}$. If $\mQ \subset K$, then the polynomial $f^n(x) - \alpha$ is separable. The extension $K(f^{-n}(\alpha))/K$ is normal by construction, and so it is Galois. The tree thus obtained is a $d$-ary tree, and so its automorphism group is isomorphic to $\Aut(T)$, for $T$ defined in the previous paragraph. Therefore, one may think of the groups $Y_n$ as obtained via the \emph{specialization} $t = \alpha$. As explained in \cite{Odoni1985}, Galois groups of polynomials do not increase under such specializations, and certain groups are preserved. In particular, $Y_\infty$ is a subgroup of $\fG_{\rm arith}(f)$, and one can think of $\fG_{\rm arith}(f)$ as $Y_\infty$ for a generic choice of $\alpha$, in a loose sense \cite{Odoni1985,Jones2013}.

A similar construction can be made with iterations of the same polynomial replaced by compositions of distinct polynomials $f_1 \circ \cdots \circ f_n$ such that these compositions are separable and irreducible for $n \geq 1$, see \cite{Ferr2018}.
 
Denote by $\cK$ the extension of $K(t)$ obtained by adding the roots of $f^{n}(x) - t$ for all $n \geq 1$. Let $L = \overline{K} \cap \cK$ be the maximal constant field extension of $K$ in $\cK$, that is, $L$ contains all elements of $\cK$ algebraic over $K$. Then the Galois group $\fG_{\rm geom}(f)$ of the extension $\cK/L(t)$ is a normal subgroup of $\fG_{\rm arith}(f)$, and there is an exact sequence \cite{Pink13,Jones2013}
 \begin{align}\label{exact-1} \xymatrix{ 1 \ar[r] & \fG_{\rm geom}(f) \ar[r] &  \fG_{\rm arith}(f) \ar[r] & \fG(L/K) \ar[r] & 1}. \end{align}
The profinite group $\fG_{\rm geom}(f)$ is called the \emph{geometric iterated monodromy group}, associated to the polynomial $f(x)$. The geometric monodromy group $\fG_{\rm geom}(f)$ does not change under extensions of $L$, so one can calculate $\fG_{\rm geom}(f)$ over $\mC(t)$. The profinite arithmetic iterated monodromy group $\fG_{\rm arith}(f)$ is contained in the normalizer $N = N(\fG_{\rm geom}(f))$ of $\fG_{\rm geom}(f)$.

A relation with iterated monodromy groups in geometric group theory \cite[Chapter 5]{Nekr} is given by the following construction.
Let $\mathbb{P}^1(\mathbb{C})$ be the projective line over $\mathbb{C}$ (the Riemann sphere), and extend the polynomial map $f: \mathbb{C} \to \mathbb{C}$ to $\mathbb{P}^1(\mathbb{C})$ by setting $f(\infty) = \infty$. Then $\infty$ is a critical point of $f(x)$. Let $C$ be the set of all critical points of $f(x)$, and let $P_{C} = \bigcup_{n \geq 1} f^n(C)$ be the set of the forward orbits of the points in $C$, called the \emph{post-critical set}. Suppose $P_{C}$ is finite, then the polynomial $f(x)$ is called \emph{post-critically finite}. 

\begin{remark}
{\rm
The definition of a post-critically finite quadratic polynomial in Definition \ref{defn-pcf} is slightly different to the one we just gave, but the two definitions are equivalent. Indeed, if $f(x)$ is quadratic, then the set $C$ of critical points of $f(x)$ on the Riemann sphere  consists of two points, the point $\infty$ and the critical point $c \in \mC$. Then $P_C = \{\infty\} \cup P_c$, where $P_c$ is the orbit of $c$ under the iterations of $f$. Thus $P_C$ is finite if and only if $P_c$ is finite.}
\end{remark}

If the polynomial $f(x)$ is post-critically finite, then it defines a partial $d$-to-$1$ covering $f: \cM_1 \to \cM$, where $\cM = \mathbb{P}^1(\mathbb{C}) \backslash P_C$ and $\cM_1 = f^{-1}(\cM)$ are spheres with finitely many punctures. An element $s \in \cM$ has $d$ preimages under $f$, and $d^n$ preimages under the $n$-th iterate $f^n$. Define a tree $\widetilde{T}$ so that the $n$-th level vertex set is $\{ f^{-n}(s)\}$, for $n \geq 1$, and there is an edge between $\alpha$ and $\beta$ if and only if $f(\beta) = \alpha$. The fundamental group $\pi_1(\cM, s)$ acts on the vertex sets  of $\widetilde{T}$ via path-lifting. Let ${\rm Ker}$ be the subgroup of $\pi_1(\cM,s)$ consisting of elements which act trivially on \emph{every} vertex level set. The quotient group ${\rm IMG}(f) = \pi_1(\cM,s)/{\rm Ker}$, called the \emph{discrete iterated monodromy group} associated to the partial self-covering $f: \cM_1 \to \cM$, acts on the space of paths of the tree $\widetilde{T}$. If $f(x)$ is post-critically finite, by \cite[Proposition 6.4.2]{Nekr} attributed by Nekrashevych to R. Pink, $\fG_{\rm geom}(f)$ is isomorphic to the closure of the action of ${\rm IMG}(f)$ in $\Aut(\widetilde{T})$.

The actions of discrete iterated monodromy groups ${\rm IMG}(f)$ are well-studied, with many results known and many techniques developed. 
We refer an interested reader to the book by Nekrashevych \cite{Nekr} for details.

\section{Definitions and background in dynamics}\label{sec-background}

\subsection{Dynamical systems}\label{subsec-dynsys} In this article a dynamical system is the action by homeomorphisms of a group $G$ on a compact metric space $X$. The action of an element $g\in G$ on $x\in X$ is denoted by $g x$. The {\it orbit} of $x \in X$ with respect to a subgroup $H$ of $G$ is denoted by $H x$  and it is defined as $Hx = \{y \in X \mid y = h x, \, h \in H\}$. If $H=\langle h \rangle $ for some $h \in G$ then we say that $H x$ is the $h$-orbit of $x$.  If $G=\langle g\rangle$ then the dynamical system given by the action of $G$ on $X$ is denoted by $(X, g)$.

The dynamical system given by the action of $G$ on $X$ is {\it minimal} is $G x$ is dense in $X$ for every $x\in X$. A {\it minimal component} of the dynamical system is a closed subset $Y$ of $X$ such that $Y$ is invariant under the action of $G$ and such that the dynamical system given by the restriction of the action of $G$ to $Y$ is minimal.  In the case that $G=\langle g\rangle$ we say that $g$ is minimal on $Y$, and if further $Y = X$ then we just say that $g$ is minimal (see Lemma \ref{minimal-transitivity} for an alternative characterization of minimality for actions on trees).  See \cite{Aus88} for more details about topological dynamics.

\subsection{Rooted trees}\label{rooted}
For every $n\geq 1$, let  $m_n>1$  be an integer and let $\Sigma_n=\{0,\cdots, m_n-1\}$. The space $\Sigma=\prod_{n\geq 1}\Sigma_n$ is a Cantor set if we endow every $\Sigma_n$ with the discrete topology and $\Sigma$ with the product topology. Then $\Sigma$ is a metric space  with the following metric: let $(x_n)_{n\geq 1}$ and $(y_n)_{n\geq 1}$ be two elements in $\Sigma$, then
\begin{align}\label{eq-metrictree}
d((x_n)_{n\geq 1}, (y_n)_{n\geq 1})=\frac{1}{2^{k-1}} \mbox{ if and only if } k=\min\{n\geq 1: x_n\neq y_n\}.
\end{align}
There is a natural way to identify the elements of $\Sigma$ with the set of infinite paths of a rooted tree.  The  {\it spherically homogeneous tree} associated to $\Sigma$  is the infinite graph $T_{\Sigma}=T$ defined as follows:  
\begin{itemize}
\item The set of vertices $V$ of the tree is equal to the disjoint union $\bigcupdot_{n\geq 0}V_n$, where $V_0$ contains only one element called the \emph{root} of the tree (we identify this element with the empty word) and $V_n=\prod_{i=1}^n\Sigma_i$ is the set of words of length $n$, for every $n\geq 1$.   
 \item  The set of edges $E$ of the tree is equal to the disjoint union $\bigcupdot_{n\geq 1}E_n$, where $E_n$ contains exactly one edge $e$ from $v\in V_{n-1}$ to $vl\in V_{n}$, for every $l\in \Sigma_{n}$ and $n\geq 1$. In this case we say that the source $s(e)$ of $e$ is equal to $v$ and the target $t(e)$ of $e$ is equal to $vl$.
\end{itemize} 
 The sequence $m_T = (m_1,m_2,\ldots)$ is called the \emph{spherical index} of $T$.
 
 \begin{remark}\label{remark-sphhom}
 {\rm
 We remark on the origin of the term the `spherically homogeneous tree' as described in \cite[p. XIX]{BORT1996}. A tree $T$ can be given a \emph{length structure} by assigning length $1$ to each edge. Such a length structure induces a length metric on $T$ (this is a different metric than the one in \eqref{eq-metrictree}), where the distance between two vertices $v$ and $w$ is equal to the infimum of lengths of all paths joining these vertices. Then in a spherically homogeneous rooted tree $T$, each vertex level $V_n$ is a sphere of radius $n$ centered at the root. The maps $p_{n+1}:V_{n+1} \to V_{n}$ that map each vertex $v \in V_{n+1}$ to the vertex $w \in V_{n}$ joined to $v$ by an edge, are surjective fibrations over the spheres $V_n$ whose fibres have constant cardinality, for $n \geq 0$. Thus the tree is spherically homogeneous.
 }
 \end{remark}

The set of infinite paths of $T$ is given by
$$
\partial T=\{ (e_n)_{n\geq 1}\in \prod_{n\geq 1}E_n: t(e_n)=s(e_{n+1}), \forall n\geq 1\}  \cong \{(x_n)_{n \geq 1} \in \Sigma \}. 
$$  
Here the element $(x_n)_{n\geq 1}\in \Sigma$ is identified  with $(e_n)_{n\geq 1}\in \partial T$ given by $t(e_1)=x_1$, $s(e_n)=x_1\cdots x_{n-1}$ and $t(e_n)=x_1\cdots x_n$, for every $n>1$. This identification induces a bijection between $\Sigma$ and $\partial T$,  which is an isometry with respect to the induced metric on $\partial T$.  Thus paths in $\partial T$ can be thought of as infinite sequences $x_1 x_2 \cdots$, where $x_n \in \Sigma_n$ for $n \geq 1$.

When $m_n=2$ for every $n\geq 1$ then we say that $T$ is the {\it binary rooted tree}. If $m_n = d$, for some $d > 2$ and all $n \geq 1$, then we say that $T$ is the {\it $d$-ary rooted tree}.

\subsection{Group of automorphisms}\label{subsec-group} An {\it automorphism } of $T$ is a transformation $\tau: V\to V$ such that $\tau(V_n)=V_n$ and if $x\in V_{n}$ is equal to $yt$ for $y\in V_{n-1}$ and $t\in \Sigma_n$, then there exists $t'\in \Sigma_n$ such that $\tau(x)=\tau(y)t'$, for every $n\geq 1$. In other words, an automorphism of $T$  permutes every $V_n$, $n \geq 1$,  preserving the structure of $T$, that is, two vertices in $V$ are joined by an edge if and only if their images under the automorphism of $T$ are joined by an edge. The collection of all the automorphisms of $T$ is a group denoted by $\Aut(T)$. Observe that an automorphism of $T$ defines an isometry on $\partial T$ with respect to the induced metric on $\partial T$.

\subsubsection{Automorphisms of the binary tree.}\label{subsec-binaryautom}  For the binary tree $T$ and a word $y = y_1 \cdots y_i$ denote by $T_y$ the connected subtree of $T$ with root $V_0^{(y)} = V_0$, the vertex level sets $V_n^{(y)} = \{y_1 \cdots y_n\} \subset V_n$ for $1 \leq n \leq i$, and $V_n^{(y)} = \{yx_{i+1} \cdots x_n \mid x_j \in \Sigma_j, i < j \leq n\}$ for $n > i$. Then the boundary $\partial T_y$ consists of all paths in $\partial T$ which start with the finite word $y$. There is a natural isomorphism $\kappa_y:T_y \to T$, which maps $yx_{i+1} \cdots x_n \in V_n^{(y)}$, for $n \geq i$, to $x_{i+1} \cdots x_n \in V_{n-i}$, and every infinite path $y x_{i+1} x_{i+2} \cdots \in \partial T_y$ to an infinite path $x_{i+1} x_{i+2} \cdots \in \partial T$. It follows that $\Aut(T_y)$ is naturally identified with $\Aut(T)$. 

Using these identifications, we can write $u = (u_0,u_1)$ with $u_0,u_1 \in \Aut(T)$ for $u \in \Aut(T)$, if the restriction $u|V_1$ is the trivial permutation. This means that $u$ acts as $u_0$ on the subtree that lies above the vertex $0$, and as $u_1$ on the subtree that lies above the vertex $1$. 
More generally, let $\eta$ be the non-trivial permutation of $V_1 = \{0,1\}$, and denote also by $\eta$ the automorphism of $T$ given by $i x \mapsto \eta(i) x$ for $i \in \{0,1\}$ and $ix \in V_n$, $n \geq 2$. Then for any $u \in \Aut(T)$ we can write $u = (u_0,u_1)\eta^m$, where $u_0,u_1 \in \Aut(T)$ and $m \in \{0,1\}$. These notation obeys the following simple relations:
 \begin{align}\label{eq-useful} (u_0,u_1) \eta = \eta(u_1,u_0), \quad (u_0,u_1)^{-1} = (u_0^{-1}, u_1^{-1}), \quad (u,v)(z,w) = (uz,vw).\end{align}
The notation $u = (u,b)$, for $u,b \in \Aut(T)$ means that $u$ is defined recursively, namely, $u|V_1$ is trivial, $u|T_0 = u$ and $u|T_1 = b$, that is, $u$ acts as  $b$ on the subtree $T_1$, and as $u$ on the subtree $T_0$. By the recursive formula we have $u|T_{00} = u$ and $u|T_{01} = b$, and, more generally, $u|T_{0^{n}} = u$ and $u|T_{0^{n-1}1} = b$ for $n \geq 1$, where $0^n$ denotes the concatenation of $n$ symbols $0$. Thus $b$ and the formula $u = (u,b)$ completely determine $u$. This way to write automorphism of binary trees is standard in geometric group theory, see for instance \cite{BruSidki97,Gri11,Nekr,Pink13}.

\subsubsection{Topology on the group of automorphisms.}\label{sec-topautom} The group $\Aut(T)$ coincides with the group of isometries of $\partial T$ (or equivalently $\Sigma$)  with respect to the metric on $\partial T$ induced from \eqref{eq-metrictree}.    

We can endow $\Aut(T)$ with the uniform  topology inherited from the group of homeomorphisms of $\partial T$. Since $\Aut(T)$ is a group of isometries, the uniform topology and  the topology of pointwise convergence coincide in $\Aut(T)$. We denote by $D$ the uniform distance in $\Aut(T)$, that is, 
\begin{align}\label{eq-AutTmetric}D(u, w)=\sup_{x\in \partial T}d(u x,w x), \textrm{ for } u, w\in \Aut(T). \end{align}

 We show that the uniform topology on $\Aut(T)$ coincides with the profinite group topology.
For every $n\geq 1$ and $u\in \Aut(T)$, we denote by $u_n = u| V_n$ the restriction of $u$ to $V_n$. The collection 
\begin{align}\label{eq-normalnbhd}N_n=\{u\in \Aut(T): u_n \textrm{ is trivial}\} \end{align}
is a finite index normal subgroup of $\Aut(T)$, which  is open since $\Aut(T)$ is compact \cite[Lemma 2.1.2]{RibZal10}. We note that $\{N_n \mid n \geq 1\}$ forms a system of open neighborhoods of the identity in $\Aut(T)$, making $\Aut(T)$ a profinite group \cite[Theorem 2.1.3]{RibZal10}. On the other hand, $N_n$ coincides with the set of all $u\in \Aut(T)$ which are at distance at most $\frac{1}{2^n}$ from the identity automorphism in $\Aut(T)$ with respect to the metric \eqref{eq-AutTmetric}. Thus the profinite topology and the uniform topology of $\Aut(T)$ coincide. 
    
    We note that the action of any subgroup $\Gamma$ of $\Aut(T)$  on $\partial T$ is automatically effective. That is, for every non-identity $u \in \Gamma$ there is $x \in \partial T$ such that $ u x \ne x$, or, in other words, the homomorphism $\Gamma \to \Homeo(\partial T)$ has trivial kernel. A consequence of this is that if $u, w \in \Aut(T)$ and for all $x \in \partial T$ we have $u x = w x$, then $u = w$.

 For a subset $\Gamma \subset \Aut(T)$, we denote by $\overline{\Gamma}$ the closure of $\Gamma$ with respect to the uniform topology or the topology of pointwise convergence.  Observe that  if $\Gamma$ is a subgroup of $\Aut(T)$ then $\overline{\Gamma}$ corresponds to the {\it enveloping semigroup}  of the dynamical system given by the action of $\Gamma$ on  $\partial T$ (see \cite{Aus88} for details). The following results can be found in \cite{Aus88}, we include the proof here for the  completeness.

 \begin{lemma}\label{basic-dynamics}
 Let $\Gamma$ be a subgroup of $\Aut(T)$ such that the action of $\Gamma$ on $\partial T$ is minimal.  
 \begin{enumerate}
 \item $\overline{\Gamma}x=\partial T$, for every $x\in \partial T$. 
 
 \item If in addition $\Gamma$ is abelian, then the action of $\overline{\Gamma}$ on $\partial T$ is free.
 \end{enumerate}
  \end{lemma}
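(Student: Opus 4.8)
The plan is to deduce both statements from the compactness of $\overline{\Gamma}$ together with the minimality hypothesis, and then to obtain (2) from (1) by exploiting commutativity. The whole argument is formal topological dynamics, so the work lies in invoking the right facts in the right order rather than in any delicate estimate.

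For part (1), I would first observe that $\overline{\Gamma}$, being the closure of a subgroup of the topological group $\Aut(T)$, is itself a closed subgroup, and hence compact since $\Aut(T)$ is compact. Fix $x \in \partial T$. The action map $\overline{\Gamma} \to \partial T$, $g \mapsto g x$, is continuous, so its image $\overline{\Gamma} x$ is a compact, and therefore closed, subset of $\partial T$. On the other hand $\Gamma x \subseteq \overline{\Gamma} x$, and by minimality $\Gamma x$ is dense in $\partial T$. Passing to closures gives $\partial T = \overline{\Gamma x} \subseteq \overline{\Gamma} x$, and since $\overline{\Gamma} x$ is already closed this forces $\overline{\Gamma} x = \partial T$. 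This is exactly the transitivity of the $\overline{\Gamma}$-action.

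For part (2), I would first record that the closure of an abelian subgroup is abelian: for fixed $g \in \Gamma$ the maps $h \mapsto gh$ and $h \mapsto hg$ are continuous and agree on the dense set $\Gamma$, hence on all of $\overline{\Gamma}$; repeating the argument while letting $g$ range over $\overline{\Gamma}$ shows $\overline{\Gamma}$ is abelian. Now suppose $g \in \overline{\Gamma}$ fixes some $x \in \partial T$, i.e. $gx = x$. Given an arbitrary $y \in \partial T$, part (1) supplies $h \in \overline{\Gamma}$ with $hx = y$, and commutativity yields
$$ g y = g(hx) = (gh) x = (hg) x = h (gx) = h x = y. $$
Thus $g$ fixes every point of $\partial T$, and since the action of $\Aut(T)$ on $\partial T$ is effective, $g$ is the identity. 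Hence no nontrivial element of $\overline{\Gamma}$ has a fixed point, which is precisely freeness.

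There is no serious obstacle here; the one point I would make sure to state explicitly is the reduction in part (1), where it is essential to use the compactness of $\overline{\Gamma}$ (and not merely the density coming from $\Gamma$) to conclude that the orbit $\overline{\Gamma} x$ is closed. This is the step that upgrades the purely topological-dynamical density hypothesis into genuine transitivity, and everything in part (2) rests on it, so I would single it out rather than treat it as routine.
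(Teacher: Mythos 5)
Your proof is correct and follows essentially the same approach as the paper: compactness of $\overline{\Gamma}$ plus density of the $\Gamma$-orbit for (1), and commutativity of $\overline{\Gamma}$ together with effectiveness of the action for (2). The only cosmetic differences are that the paper extracts a convergent subsequence where you use that the continuous image of a compact set is closed, and in (2) the paper shows two elements agreeing at a point must agree on the dense orbit $\Gamma x$, while you equivalently show a point stabilizer is trivial by transporting the fixed point around via part (1).
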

\begin{proof}
For (1) note that for every $y\in \partial T$ there exists a subsequence $(\gamma_i x)_{i\geq 0}$ of the $\Gamma$-orbit of $x$ that converges to $y$. By compactness of $\overline{\Gamma}$ there exists a subsequence of $(\gamma_i)_{i\geq 0}$ that converges to some $\gamma$ in $\overline{\Gamma}$. We get $\gamma x=y$.

For (2) suppose there exist $x\in \partial T$, $u$ and $w$ in $\overline{\Gamma}$  such that $u x=w x$. If $\Gamma$ is abelian then $\overline{\Gamma}$ is also abelian. Thus for every $v\in \Gamma$ we have $u v x =v u x= v w x=w v x$. Since the action of $\Gamma$ on $\partial T$ is minimal we have that $u$ and $w$ coincide on a dense subset  $\Gamma x \subset \partial T$, which implies that $u=w$.
\end{proof}

\subsection{Permutations}\label{subsec-permutation} Let $n\geq 1$ and let $\sigma$ be a permutation on a set $S$ of cardinality $n$. We denote by $o(\sigma)$ the order of the permutation $\sigma$.   It is known that if $\sigma$ has $s$ cycles whose lengths are $\ell_1,\cdots, \ell_s$, then $\ell_1+\cdots+\ell_s=n$ and  $o(\sigma)$ is equal to the least common multiple of $\ell_1,\cdots, \ell_s$.  We say that $\sigma$ acts {\it transitively} on $S$ if $\sigma$  has only one cycle of length $n$. In this case we have  $o(\sigma)=n$, however the converse is not necessarily true as it is shown in this example: suppose that $n=30$, $s=9$ and the lengths of the cycles of $\sigma$ are given by $2,2,2, 5,5,5, 3,3,3$. The permutation $\sigma$ does not act transitively on $S$,  but since the least common multiple of $2,5,3$ is $30$, its order is $o(\sigma)=30$.
 
\begin{lemma}\label{permutation-primes}
Let $n\geq 1$, let $p>1$ be a prime number and let $S$ be a set of cardinality $p^n$.   The permutation $\sigma$ acts transitively on $S$ if and only if $o(\sigma)=p^n$.
\end{lemma}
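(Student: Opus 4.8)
The plan is to prove both implications by reducing everything to the formula, recalled just before the statement, that expresses $o(\sigma)$ as the least common multiple of the lengths of the cycles of $\sigma$. The forward direction is essentially definitional: if $\sigma$ acts transitively on $S$, then $\sigma$ consists of a single cycle of length $|S| = p^n$, and a single cycle of length $k$ has order $k$, so $o(\sigma) = p^n$. This needs nothing beyond the discussion preceding the lemma.

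For the converse I would argue by contradiction. Suppose $o(\sigma) = p^n$ but $\sigma$ is not transitive. Write the cycle decomposition of $\sigma$ as cycles of lengths $\ell_1, \ldots, \ell_s$, so that $\ell_1 + \cdots + \ell_s = p^n$ and $o(\sigma) = \mathrm{lcm}(\ell_1, \ldots, \ell_s)$. The key observation is that each $\ell_i$ divides $o(\sigma) = p^n$, since the least common multiple is a common multiple of all the cycle lengths. As $p$ is prime, every divisor of $p^n$ is a power of $p$, so $\ell_i = p^{a_i}$ for some integers $0 \leq a_i \leq n$. Consequently $\mathrm{lcm}(\ell_1, \ldots, \ell_s) = p^{\max_i a_i}$, and the hypothesis $o(\sigma) = p^n$ forces $\max_i a_i = n$; that is, some cycle has length exactly $p^n$. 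But the $\ell_i$ are positive integers summing to $p^n$, so a single cycle already accounts for all of $S$, whence $s = 1$ and $\ell_1 = p^n$. This contradicts the assumption that $\sigma$ is not transitive, and completes the proof.

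The role of the primality of $p$ is exactly to guarantee that the divisors of $p^n$ form a chain $1 \mid p \mid \cdots \mid p^n$, so that the least common multiple of any collection of them is simply the largest one; it is this feature that lets the order $p^n$ propagate to a single cycle length. I do not expect any genuine obstacle here, as the argument is elementary; the only point requiring a little care is to invoke the divisibility $\ell_i \mid o(\sigma)$ explicitly before applying primality, since it is precisely the failure of this chain structure for composite cardinalities that produces counterexamples such as the $n = 30$ example given just above the statement.
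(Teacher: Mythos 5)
Your proposal is correct and follows essentially the same argument as the paper: cycle lengths divide the order, primality forces each length to be a power of $p$, the least common multiple is then the maximum such power, and since the lengths sum to $p^n$ a cycle of length $p^n$ forces a single cycle. The only cosmetic difference is that you wrap the converse in a proof by contradiction, whereas the paper states it directly (and, like you, treats the forward direction as immediate from the discussion preceding the lemma).
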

\begin{proof}
Let $\sigma$ be a permutation on $S$ verifying $o(\sigma)=p^n$. Let $\ell_1,\cdots, \ell_s$ be the lengths of the cycles of $\sigma$. Since $p^n$ is the least common multiple of $\ell_1,\cdots, \ell_s$, every $\ell_i$ is equal to a power of $p$.   It follows that $n$ is equal to the maximum of these powers of $p$,  and so there is a cycle of order $p^n$. But this implies that $s=1$ and then $\sigma$ acts transitively on $S$.  
\end{proof}

\section{Procyclic subgroups of $\Aut(T)$ and maximal tori} \label{procyclic_groups}
In this section,  $T$ is a spherically homogeneous tree with arbitrary spherical index $m_T = (m_1,m_2,\ldots)$, see Section \ref{rooted} for notation.

\subsection{Odometers.}\label{odometer-definition}

Let $\overline{p}=(p_n)_{n\geq 1}$ be an increasing sequence of positive integers such that $p_n$ divides $p_{n+1}$, for every $n\geq 1$. We denote 
  $$\ZZ_{\overline{p}} = \lim_{\longleftarrow}\{\ZZ/p_n\ZZ, \phi_n, n\geq 1\},$$ 
 where  $\phi_n:\ZZ/p_{n+1}\ZZ \to \ZZ/p_n\ZZ$ is the natural projection, for every $n\geq 1$. If there exists a natural number $q$ such that  $p_n=q^n$ for each $n$, then we write $\ZZ_q$ instead of $\ZZ_{\overline{p}}$.   
 
 We consider $\ZZ_{\overline{p}}$ as a topological group, with  the topology induced by the  following metric  \cite{Down05}: let $x=(x_k)_{k\geq 1}$ and $y=(y_k)_{k\geq 1}$ be two elements in $\ZZ_{\overline{p}}$, then
  \begin{align}\label{eq-groupZp}
  d(x,y)=\frac{1}{2^{n-1}} \mbox{ where } n=\min\{k\geq 1: x_k\neq y_k\}. 
  \end{align}

It is not difficult to see that the following collection of clopen sets forms a basis for the topology:
$$
[g]_n=\{(x_k)_{k\geq 1}\in \ZZ_{\overline{p}}: x_n=g\}, \mbox{ where  } g\in \ZZ/p_n\ZZ \mbox{ and }  n\geq 1. 
$$

Observe that this topology coincides with the profinite topology on $\ZZ_{\overline{p}}$, defined similarly to the profinite topology on $\Aut(T)$ in Section \ref{sec-topautom}.

We denote by ${\bf 1}$ the element in $\ZZ_{\overline{p}}$ corresponding to   $(1+p_n\ZZ)_{n\geq 1}$, then  there is a homeomorphism   
$$
c_{\overline{p}}(x)=x+{\bf 1}, \mbox{ for every } x\in \ZZ_{\overline{p}},
$$
which is a minimal isometry. The dynamical system $(\ZZ_{\overline{p}},c_{\overline{p}})$ is known as the {\it odometer} or the {\it adding machine}. 

For a sequence $(m_1,m_2,\ldots)$ of positive integers, it is not difficult to show that when $p_n=m_1\cdots m_n$, for every $n\geq 1$, then  the dynamical system $(\ZZ_{\overline{p}},c_{\overline{p}})$ is conjugate by means of an isometry to $(\Sigma, \alpha_{\Sigma})$, where $\Sigma = \prod_{n \geq 1} \Sigma_n$ with $\Sigma_n = \{0,\ldots,m_n-1\}$, and $\alpha_{\Sigma}$ is  defined as follows: let $x=(x_n)_{n\geq 1} \in \Sigma$. If $x_n=m_n-1$ for every $n\geq 1$ then $\alpha_{\Sigma}(x)=0^{\infty}$,  the infinite sequence of $0$'s. Otherwise, let $k=\min\{n\geq 1: x_n<m_{n}-1\}$ and $y_1\cdots y_k \in \prod_{n=1}^k\Sigma_n$ be the successor of $x_1\cdots x_k$  with respect to the lexicographic order. We define $\alpha_{\Sigma}(x)=y$, where $y=y_1\cdots y_k x_{k+1}x_{k+2}\cdots$.  

By Section \ref{rooted} $\Sigma$ can be identified with the boundary of a spherically homogeneous tree $T$ with spherical index $m_T = (m_1,m_2,\ldots)$, and then the transformation $\alpha_{\Sigma}$ defines an element of $\Aut(T)$. We call $\alpha_{\Sigma}$ the {\it odometer} of $T$. See \cite{Down05} for more details about this class of dynamical systems.

\subsection{Procyclic subgroups of $\Aut(T)$} \label{subsec-procyclicAut}
For every $a\in \Aut(T)$ we denote by $\langle a \rangle$ the cyclic group generated by $a$. We denote by $\overline{\langle a \rangle}$ the closure of the group $\langle a \rangle$ in $\Aut(T)$. 
It is known that $\overline{\langle a \rangle}$  is a  {\it procyclic} group, i.e., it is isomorphic to the inverse limit of finite cyclic groups (see for example \cite{RibZal10}). 

\subsubsection{Maximal tori}
By Section \ref{subsec-dynsys} $a \in \Aut(T)$ is {\it minimal}  if for every $x\in \partial T$ its $a$-orbit   $\langle a \rangle x$ is dense in $\partial T$. Further,  since $a$ is an isometry,  $a$ is minimal if and only if there exists $x\in \partial T$ such that its $a$-orbit is dense in $\partial T$ (see for example \cite{Aus88}). In the setting of actions on trees, $a$ is minimal if and only if for every $n\geq 1$ and $w\in V_n$ the $a$-orbit of any $x$ intersect the cylinder set $[w]=\{(x_i)_{i\geq 1}: x_1\cdots x_n=w\}$.  Recall the notation $a_n = a|V_n$ for the restriction of $a$ to $V_n$, $n \geq 1$. The argument above gives the following: 
 
 \begin{lemma}\label{minimal-transitivity}
 Let $a\in \Aut(T)$. Then $a$ is minimal if and only if $a_n$ acts transitively on $V_n$, for every $n\geq 1$.
 \end{lemma}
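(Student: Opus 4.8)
The plan is to exploit the standard dictionary between the level-$n$ vertex set $V_n$ and the clopen partition of $\partial T$ into cylinder sets $[w] = \{(x_i)_{i\geq 1} : x_1\cdots x_n = w\}$, $w \in V_n$. The first step I would carry out, as a preliminary remark, is to pin down exactly how $a$ acts on these cylinders. Since $a \in \Aut(T)$ preserves the tree structure and fixes each level set $V_n$ setwise, it carries the cylinder $[v]$ onto the cylinder $[a_n(v)]$ for every $v \in V_n$; in other words, the permutation that $a$ induces on the finite clopen partition $\{[w] : w \in V_n\}$ is precisely $a_n$ under the identification $w \leftrightarrow [w]$. From this I get the key computational fact: if $x \in \partial T$ has level-$n$ truncation $v$ (equivalently $x \in [v]$), then $a^k x \in [a_n^k(v)]$ for all $k$, and because distinct cylinders at the same level are disjoint, $a^k x \in [w]$ holds \emph{if and only if} $w = a_n^k(v)$.

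For the forward implication I would assume $a$ minimal, fix $n \geq 1$, and take arbitrary $v, w \in V_n$. Choosing any $x \in [v]$, the density of the orbit $\langle a \rangle x$ forces it to meet the nonempty clopen set $[w]$, so $a^k x \in [w]$ for some $k$; the preliminary fact then yields $a_n^k(v) = w$. As $v$ and $w$ were arbitrary, $a_n$ acts transitively on $V_n$, and this holds for every $n$.

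For the converse I would assume $a_n$ transitive on each $V_n$ and prove minimality. Since the cylinders $\{[w] : w \in V_n,\ n \geq 1\}$ form a basis for the topology of $\partial T$, it suffices to show that the orbit of an arbitrary $x$ meets every cylinder. Given $[w]$ with $w \in V_n$, let $v$ be the level-$n$ truncation of $x$; transitivity supplies a $k$ with $a_n^k(v) = w$, and then $a^k x \in [w]$. Hence every orbit is dense, so $a$ is minimal. Here I would invoke, if needed, the characterization recalled just before the lemma (and Lemma \ref{basic-dynamics}): because $a$ is an isometry, minimality can equivalently be tested on a single point or on all points, so there is no tension between the ``for every $x$'' in the forward direction and the construction above.

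I do not expect a genuine obstacle: the statement is a clean translation between topological density on the Cantor boundary and transitivity at finite levels. The one point deserving care, which I would isolate explicitly, is the identification of the $a$-action on level-$n$ cylinders with the finite permutation $a_n$; once that is stated cleanly, both directions reduce to the observation that the cylinders form a clopen basis together with the orbit-density characterization of minimality.
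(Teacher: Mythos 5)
Your proof is correct and follows exactly the route the paper takes: the paper states the lemma as an immediate consequence of the observation that minimality is equivalent to every $a$-orbit meeting every cylinder set $[w]$, $w \in V_n$, which is precisely your translation via the identification of the $a$-action on level-$n$ cylinders with the finite permutation $a_n$. Your write-up simply makes explicit the details (cylinders form a clopen basis, $a^k x \in [a_n^k(v)]$ for $x \in [v]$) that the paper leaves implicit.
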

 
Lemma \ref{minimal-transitivity} shows that the definitions of minimal elements in Section \ref{subsec-dynsys} and in the Introduction just after Definition \ref{defn-maxtorus} are equivalent. Recall from Definition \ref{defn-maxtorus} that a maximal torus in $\Aut(T)$ is a maximal abelian subgroup of $\Aut(T)$ which acts freely and transitively on $\partial T$. We now can identify some maximal tori in $\Aut(T)$.
 
\begin{lemma}\label{minimal-elements}
If $a$ is minimal then $\overline {\langle a \rangle}$ is a maximal torus in $\Aut(T)$.   
\end{lemma}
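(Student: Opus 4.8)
The plan is to verify, one at a time, the three conditions that make up the definition of a maximal torus in Definition \ref{defn-maxtorus}: that $\overline{\langle a \rangle}$ is a closed abelian subgroup of $\Aut(T)$, that it acts freely and transitively on $\partial T$, and that it is maximal among abelian subgroups. The first condition is immediate: $\langle a \rangle$ is cyclic, hence abelian, and its closure $\overline{\langle a \rangle}$ in the compact group $\Aut(T)$ is a closed abelian (indeed procyclic) subgroup, as recalled in Section \ref{subsec-procyclicAut}.

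For the free and transitive action I would apply Lemma \ref{basic-dynamics} with $\Gamma = \langle a \rangle$. Since $a$ is minimal, the $\langle a \rangle$-orbit of every point is dense in $\partial T$, so the action of $\langle a \rangle$ on $\partial T$ is minimal. Part (1) of Lemma \ref{basic-dynamics} then yields $\overline{\langle a \rangle}\, x = \partial T$ for every $x \in \partial T$, which is exactly transitivity, and, because $\langle a \rangle$ is abelian, part (2) of the same lemma gives that the action of $\overline{\langle a \rangle}$ is free. Thus $\overline{\langle a \rangle}$ acts freely and transitively on $\partial T$.

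The crux is maximality, which I would obtain from transitivity together with commutativity alone, not from the particular procyclic structure. Write $\cA = \overline{\langle a \rangle}$ and suppose $\cB$ is an abelian subgroup of $\Aut(T)$ with $\cA \subseteq \cB$; the goal is to show $\cB = \cA$. Fix a basepoint $x_0 \in \partial T$. Given $b \in \cB$, transitivity of $\cA$ provides some $a' \in \cA$ with $a' x_0 = b x_0$. For an arbitrary $y \in \partial T$ choose $c \in \cA$ with $y = c x_0$; then, using that $\cB$ is abelian (so $bc = cb$) and that $\cA$ is abelian (so $a'c = ca'$),
\[
 b y = b c x_0 = c b x_0 = c a' x_0 = a' c x_0 = a' y .
\]
Hence $b$ and $a'$ agree at every point of $\partial T$, so by the faithfulness of the action recorded in Section \ref{sec-topautom} we conclude $b = a' \in \cA$. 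Therefore $\cB \subseteq \cA$, so $\cA$ is maximal abelian, which completes the verification that $\overline{\langle a \rangle}$ is a maximal torus.

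The only step that is not a direct citation is the maximality argument in the last paragraph, and I expect the main (mild) obstacle to be precisely the observation underlying it: that an abelian group acting transitively and faithfully is automatically maximal abelian, since any commuting element is forced to coincide with the unique element of $\cA$ having the same value at a basepoint. Everything else reduces immediately to Lemma \ref{basic-dynamics} and to the procyclicity of closures of cyclic subgroups, so I anticipate no real difficulty beyond organizing these observations.
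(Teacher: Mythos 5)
Your proof is correct, and its first two parts coincide with the paper's: closedness and abelianness are immediate from procyclicity, and the free, transitive action of $\overline{\langle a\rangle}$ on $\partial T$ is obtained exactly as in the paper by applying Lemma \ref{basic-dynamics} to $\Gamma = \langle a \rangle$. Where you genuinely diverge is the maximality step. The paper takes a closed subgroup $B \supseteq \overline{\langle a\rangle}$ that is assumed to act \emph{freely} on $\partial T$, picks $u \in B$ and $x \in \partial T$, uses transitivity of $\overline{\langle a\rangle}$ to find $w \in \overline{\langle a\rangle}$ with $w(x)=u(x)$, and concludes $w=u$ because $w^{-1}\circ u$ fixes $x$ and the $B$-action is free; notably, that argument never uses that $B$ is abelian. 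You instead take an arbitrary abelian overgroup $\cB$ (not assumed closed, nor assumed to act freely) and use commutativity together with transitivity of $\overline{\langle a\rangle}$ to propagate the single-point agreement $a'x_0 = b x_0$ to all of $\partial T$, finishing by faithfulness of the action. The two arguments thus hinge on different hypotheses about the enlarging group: freeness of its action for the paper, commutativity for you. Your variant buys the stronger conclusion that $\overline{\langle a\rangle}$ is maximal among \emph{all} abelian subgroups of $\Aut(T)$ --- equivalently, that it equals its own centralizer, which is precisely what the paper establishes separately later as Lemma \ref{abelian} --- and in particular it covers the reading of Definition \ref{defn-maxtorus} in which the larger closed abelian subgroup is not a priori assumed to act freely, a case the paper's proof as written handles only implicitly. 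Conversely, the paper's variant shows maximality even within the larger family of closed freely-acting subgroups that need not be abelian. Either argument suffices for the lemma.
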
 

\proof Since $a$ is minimal and $\langle a \rangle$ is abelian, by Lemma \ref{basic-dynamics} $\overline {\langle a \rangle}$ acts transitively and freely on $\partial T$, and we only have to show that $\overline {\langle a \rangle}$ is maximal. Let $B \supset \overline {\langle a \rangle}$ be a closed subgroup of $\Aut(T)$ which acts freely on $\partial T$. Let $u \in  B$, and let $x \in \partial T$. Since $\overline {\langle a \rangle}$ acts transitively on $\partial T$ then there exists $w \in \overline {\langle a \rangle}$ such that $w(x) = u(x)$, which implies that $w^{-1} \circ u(x) = x$, and so $w = u$ since the action of $B$ is free. But this implies that $B = \overline {\langle a \rangle}$, and $\overline {\langle a \rangle}$ is maximal.
\endproof

Two elements $u$ and  $v$ in $\Aut(T)$ are {\it conjugate}   if there exists $w\in \Aut(T)$ such that $u=w v w^{-1}$. Conjugation induces an equivalence relation on $\Aut(T)$, whose equivalence classes are called {\it conjugacy classes}.  It is well known that if $\sigma$ and $\tau$ are conjugate permutations of a finite set $S$ then their cycle structures are the same.  
This implies, together with Lemma \ref{minimal-transitivity}, the next result.

 \begin{cor}\label{Pink-result}
 The family of minimal elements in $\Aut(T)$ coincides with a conjugacy class of $\Aut(T)$. Thus every minimal element of $\Aut(T)$ is conjugate to $\alpha_{\Sigma}$. \end{cor}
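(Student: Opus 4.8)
The plan is to prove Corollary \ref{Pink-result} by showing that the set of minimal elements is exactly one conjugacy class, and then identifying the odometer $\alpha_{\Sigma}$ as a representative of this class. The two assertions are closely linked: once I show that any two minimal elements are conjugate, the second statement follows immediately because $\alpha_{\Sigma}$ is itself minimal (it is the adding machine, whose orbit of any point is dense in $\Sigma \cong \partial T$, as established in Section \ref{odometer-definition}), and hence lies in the common conjugacy class.

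First I would establish the forward direction, namely that conjugation preserves minimality, so that a full conjugacy class consists either entirely of minimal elements or contains none. This is the easy part: if $a$ is minimal and $u = waw^{-1}$, then by Lemma \ref{minimal-transitivity} each $a_n = a|V_n$ acts transitively on $V_n$; since $w$ restricts to a bijection of each $V_n$ (as $w \in \Aut(T)$ preserves levels), the permutation $u_n = w_n a_n w_n^{-1}$ is conjugate to $a_n$ in the symmetric group on $V_n$, hence has the same cycle structure, hence is again a single $|V_n|$-cycle, i.e.\ transitive. So $u$ is minimal by Lemma \ref{minimal-transitivity} again. This shows the minimal elements form a union of conjugacy classes.

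The substantive direction is that any two minimal elements $a$ and $b$ are conjugate in $\Aut(T)$. Here I would use Lemma \ref{minimal-transitivity} to pass to the finite levels: for each $n$ the restrictions $a_n$ and $b_n$ are both full cycles on $V_n$, so there exists a level permutation $w_n$ of $V_n$ with $w_n a_n w_n^{-1} = b_n$. The key point, and what I expect to be the main obstacle, is to assemble these level-wise conjugators $w_n$ into a single \emph{tree automorphism} $w \in \Aut(T)$ --- that is, into a compatible inverse system where $w_{n+1}$ projects to $w_n$ under the level map $p_{n+1}:V_{n+1}\to V_n$ and each $w_n$ respects the tree structure (images of adjacent vertices remain adjacent). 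The level-wise conjugators are not unique (they may be modified by anything centralizing the cyclic permutation $b_n$), and the difficulty is to choose them coherently through all levels. The clean way to handle this is to exploit that a minimal element identifies $\partial T$ with the odometer: the statement before the corollary reduces everything to the single dynamical system $(\Sigma,\alpha_{\Sigma})$, so it suffices to conjugate each minimal $a$ to $\alpha_{\Sigma}$. Concretely, minimality of $a$ means the orbit map $k \mapsto a^k(x_0)$ (for a fixed basepoint $x_0 \in \partial T$) gives a bijection $\ZZ/|V_n|\ZZ \to V_n$ at each level intertwining the shift $+1$ with $a_n$; reading these compatibly across levels produces a topological conjugacy $h:\partial T \to \Sigma$ with $h \circ a = \alpha_{\Sigma} \circ h$, and because both actions are by isometries preserving the tree structure, $h$ is realized by an element $w\in \Aut(T)$.

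With that conjugating automorphism $w$ in hand, I would conclude $w a w^{-1} = \alpha_{\Sigma}$, so every minimal element is conjugate to the fixed odometer $\alpha_{\Sigma}$; combined with the forward direction of the first paragraph this shows the minimal elements form exactly one conjugacy class, proving both assertions of the corollary. The cleanest write-up leans on Lemma \ref{minimal-transitivity} to reduce to cycle structure at each level and on the inverse-limit description of $\Aut(T)$ from Section \ref{sec-topautom} to pass from level-wise compatibility to an honest tree automorphism, so the only real care needed is the coherence of the conjugators across levels.
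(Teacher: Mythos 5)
Your proposal is correct and takes essentially the same route as the paper: the easy direction (conjugation preserves minimality) is exactly the paper's cycle-structure remark combined with Lemma \ref{minimal-transitivity}, and your orbit-map construction of an isometric conjugacy to the odometer is precisely the argument the paper itself gives in Propositions \ref{conjugacy-elements} and \ref{minimal-means-conjugacy}. If anything, you are more explicit than the paper's terse justification about the one real subtlety — why the level-wise conjugators cohere into a genuine element of $\Aut(T)$ — namely that the orbit maps $k \mapsto a^k(x_0)$ commute with the level projections, so they assemble into a tree automorphism conjugating $a$ to $\alpha_{\Sigma}$.
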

 
 Corollary \ref{Pink-result} states that maximal tori in $\Aut(T)$ that are topologically generated by a single element are conjugate in $\Aut(T)$. This does not imply that that \emph{all} maximal tori are conjugate in $\Aut(T)$, since conceivably there may be maximal tori in $\Aut(T)$ with more than one topological generator.

\subsubsection{General procyclic subgroups}
The next proposition gives us a more precise description of procyclic groups in $\Aut(T)$, without an assumption that $a$ is minimal. 
Recall the notation $a_n = a|V_n$ for a restriction to the vertex level set, and $|V_n|$ for the cardinality of a finite set $V_n$, $n \geq 1$.

\begin{prop}\label{conjugacy-elements}
Let $a\in \Aut(T)$ and $\overline{p}=(p_n)_{n\geq 1}$. The following statements are equivalent:
\begin{enumerate}
\item $p_n=o(a_n)$, for every $n\geq 1$. 
\item There exists an isomorphism  $\psi:  \overline{\langle a \rangle }\to \ZZ_{\overline{p}}$, which is an isometry and such that the image of $a$   is equal to ${\bf 1}\in \ZZ_{\overline{p}}$. 
\item  $ \overline{\langle a \rangle }$ is isometrically isomorphic to $\ZZ_{\overline{p}}$.
\end{enumerate}

In addition, if $a$ is minimal, then $o(a_n)=|V_n|$,  for every $n\geq 1$.
\end{prop}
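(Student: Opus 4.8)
The plan is to prove the cyclic chain of implications $(1)\Rightarrow(2)\Rightarrow(3)\Rightarrow(1)$ and then handle the additional assertion separately. The whole argument rests on the profinite description of $\Aut(T)$ from Section \ref{sec-topautom}, namely that $N_n = \{u : u_n \text{ trivial}\}$ from \eqref{eq-normalnbhd} is a basic open normal subgroup and $\{N_n\}$ is a neighborhood basis of the identity.

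For $(1)\Rightarrow(2)$ I would build $\psi$ as an inverse limit of restriction maps. Since $\overline{\langle a\rangle}=\lim_{\longleftarrow}\langle a\rangle/(\langle a\rangle\cap N_n)$ and $\langle a\rangle\cap N_n=\langle a^{o(a_n)}\rangle$, each quotient is $\langle a\rangle/(\langle a\rangle\cap N_n)\cong\ZZ/o(a_n)\ZZ$, which under hypothesis $(1)$ is $\ZZ/p_n\ZZ$ with $a\mapsto 1+p_n\ZZ$. The connecting homomorphism induced by $N_{n+1}\subset N_n$ is reduction modulo $p_n$, that is, exactly $\phi_n$, and it sends $1\mapsto1$; passing to the limit gives an isomorphism $\psi:\overline{\langle a\rangle}\to\ZZ_{\overline p}$ with $\psi(a)={\bf 1}$. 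To see $\psi$ is an isometry I would compare the two metrics levelwise: from \eqref{eq-AutTmetric} and \eqref{eq-metrictree} (using left-invariance of $D$) one checks $D(u,w)\le 1/2^n$ iff $u_n=w_n$, while from \eqref{eq-groupZp} one has $d(x,y)\le 1/2^n$ iff $x\equiv y\pmod{p_n}$. Since by construction the $n$-th coordinate of $\psi(u)$ records $u_n$, we have $\psi(u)_n=\psi(w)_n$ iff $u_n=w_n$, so the two distances agree for every pair. The implication $(2)\Rightarrow(3)$ is immediate.

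The implication $(3)\Rightarrow(1)$ is where I expect the only genuine subtlety, and it is the step I would flag as the main obstacle. The issue is that the \emph{abstract} profinite isomorphism type of $\ZZ_{\overline p}$ does not recover the individual integers $p_n$ (it recovers only the associated supernatural number), so the isometry must be used in an essential way. The plan is to read off each $p_n$ intrinsically from the metric group structure as an index. In $\ZZ_{\overline p}$ the closed ball $\{x:d(x,0)\le 1/2^n\}$ is precisely the kernel of the projection $\ZZ_{\overline p}\to\ZZ/p_n\ZZ$, an open subgroup of index $p_n$. On the other side, the closed ball $\{u:D(u,\mathrm{id})\le 1/2^n\}$ equals $\overline{\langle a\rangle}\cap N_n$, and its index in $\overline{\langle a\rangle}$ is the cardinality of the image of the restriction map to $V_n$, namely $|\langle a_n\rangle|=o(a_n)$. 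An isometric isomorphism carries the identity to $0$ and maps the ball of radius $1/2^n$ bijectively onto the ball of radius $1/2^n$, so the two indices coincide, giving $o(a_n)=p_n$ for every $n$, which is $(1)$.

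Finally, for the additional statement, if $a$ is minimal then by Lemma \ref{minimal-transitivity} each $a_n$ acts transitively on $V_n$; a transitive permutation of a finite set is a single cycle through all $|V_n|$ elements, hence has order $|V_n|$, so $o(a_n)=|V_n|$. In summary, the delicate move is the $(3)\Rightarrow(1)$ direction: rather than attempting to match abstract group structures, for which the sequence $\overline p$ is not an invariant, one recovers $p_n$ as the index of the canonical closed metric ball of radius $1/2^n$, which on the $\overline{\langle a\rangle}$ side is exactly $o(a_n)$.
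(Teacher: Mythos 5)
Your proof is correct and follows essentially the same route as the paper: the same cycle $(1)\Rightarrow(2)\Rightarrow(3)\Rightarrow(1)$, the same isometric isomorphism $a^k\mapsto(k+p_n\ZZ)_{n\geq 1}$ (which you build as an inverse limit of the level quotients $\langle a\rangle/(\langle a\rangle\cap N_n)$ where the paper defines it on the dense cyclic subgroup and extends by continuity), and the same counting idea for $(3)\Rightarrow(1)$. The only cosmetic difference is in that last step: the paper first invokes $(1)\Rightarrow(3)$ to reduce to comparing $\ZZ_{\overline{r}}$ (with $r_n=o(a_n)$) against $\ZZ_{\overline{p}}$ and counts level-$n$ cylinder sets, whereas you count the index of the closed metric ball $\overline{\langle a\rangle}\cap N_n$ directly against the index $p_n$ of the corresponding ball in $\ZZ_{\overline{p}}$ -- the same argument in different clothing.
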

\begin{proof}
For every $n\geq 1$, let $p_n=o(a_n)$. 
For  $k\in\ZZ$ we define $\psi(a^k)=(k+p_n\ZZ)_{n\geq 1}\in \ZZ_{\overline{p}}$.   
Observe that $d(a^k, a^j)=\frac{1}{2^n}$ if and only if $a_n^{k-j}=id$ and $a_{n+1}^{k-j}\neq id$, which is equivalent to $k-j\in p_n\ZZ$   and $k-j\notin p_{n+1}\ZZ$. This implies that $d(\psi(a^k), \psi(a^\ell))=d(a^k, a^\ell)$ for $k,\ell \in \ZZ$.   Then $\psi$ extends to a unique isometry $\psi: \overline{\langle a \rangle }\to \ZZ_{\overline{p}}$, such that $\psi(a)={\bf 1}$.  By construction, $\psi$ is a homomorphism.  It is surjective since $\psi(\langle a\rangle)$ is dense in $\ZZ_{\overline{p}}$. We have shown that (1) implies (2). Statement (3) is a direct consequence of (2).

 Now suppose that $ \overline{\langle a \rangle }$ is isometrically isomorphic to $\ZZ_{\overline{p}}$. Let $\overline{r}=(r_n)_{n\geq 1}$ be such that $o(a_n)=r_n$, for every $n\geq 1$. According to what has been shown above, we have   $ \overline{\langle a \rangle }$ is isometrically isomorphic to $\ZZ_{\overline{r}}$. Thus $\ZZ_{\overline{r}}$ and $\ZZ_{\overline{p}}$ are isometrically isomorphic.  Let $\phi:\ZZ_{\overline{p}}\to  \ZZ_{\overline{r}}$ be an isometric isomorphism. For every $n\geq 1$, the map $\phi$ induces a bijection between the collection of cylinder sets $\{[\ell]_n: \ell\in\ZZ/p_n\ZZ\}$ and $\{[j]_n: j\in\ZZ/r_n\ZZ\}$. This implies that the cardinalities of $\ZZ/p_n\ZZ$ and $\ZZ/r_n\ZZ$ coincide, which is only possible if $p_n=r_n$. This shows that $o(a_n)=p_n$ for every $n\geq 1$. If $a$ is minimal then $o(a_n) = p_n = |V_n|$.
\end{proof}

In the next proposition, we study the case when $a$ is minimal. Recall that $(\ZZ_{\overline{p}},c_{\overline{p}})$ denotes the odometer defined in Section \ref{odometer-definition}, that is, $\ZZ_{\overline{p}}$ is a profinite abelian group, and $c_{\overline{p}}$ is the addition of one.

 \begin{prop}\label{minimal-means-conjugacy}
Let $\overline{p}=(p_n)_{n\geq 1}$ be such that $p_n = |V_n|$, for every $n\geq 1$. Then $a\in \Aut(T)$ is minimal if and only if the dynamical systems $(\partial T, a)$ and $(\ZZ_{\overline{p}},c_{\overline{p}})$ are conjugate by means of an isometry.
 \end{prop}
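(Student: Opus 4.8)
The plan is to prove the two implications separately, with the reverse direction being nearly immediate and the forward direction carrying the substance. For the reverse implication, suppose $h\colon \partial T \to \ZZ_{\overline p}$ is an isometry with $h\circ a = c_{\overline p}\circ h$. Since the odometer $(\ZZ_{\overline p},c_{\overline p})$ is a minimal isometry (Section \ref{odometer-definition}), every $c_{\overline p}$-orbit is dense; as $h$ is a homeomorphism it carries dense orbits to dense orbits, so every $a$-orbit in $\partial T$ is dense and $a$ is minimal.

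For the forward implication, assume $a$ is minimal. By the last line of Proposition \ref{conjugacy-elements} we have $o(a_n)=|V_n|=p_n$ for all $n$, so that statement (2) of Proposition \ref{conjugacy-elements} applies and provides an isometric isomorphism $\psi\colon \overline{\langle a\rangle}\to \ZZ_{\overline p}$ with $\psi(a)={\bf 1}$. By Lemma \ref{minimal-elements} the group $\overline{\langle a\rangle}$ is a maximal torus, so by Definition \ref{defn-maxtorus} it acts freely and transitively on $\partial T$. Fixing a base point $x_0\in\partial T$, the orbit map $\Phi\colon \overline{\langle a\rangle}\to\partial T$, $\Phi(g)=g(x_0)$, is therefore a continuous bijection between compact Hausdorff spaces, hence a homeomorphism. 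I then set $h=\psi\circ\Phi^{-1}\colon\partial T\to\ZZ_{\overline p}$ and claim it is the desired isometric conjugacy.

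That $h$ intertwines $a$ and $c_{\overline p}$ is a short computation: writing $y=g(x_0)$ with $g\in\overline{\langle a\rangle}$, commutativity of $\overline{\langle a\rangle}$ gives $a(y)=(ag)(x_0)$, whence $h(a(y))=\psi(ag)=\psi(a)+\psi(g)={\bf 1}+h(y)=c_{\overline p}(h(y))$. The crux is to show that $\Phi$ is an isometry, since $\psi$ already is; equivalently, for $g,g'\in\overline{\langle a\rangle}$ I must show $D(g,g')=d(\Phi(g),\Phi(g'))$. Using the level description of the metrics recalled in Sections \ref{sec-topautom} and \ref{rooted}, one has $D(g,g')\le 2^{-n}$ iff $g$ and $g'$ have the same restriction $g_n=g'_n$ to $V_n$, while $d(g(x_0),g'(x_0))\le 2^{-n}$ iff $g_n$ and $g'_n$ send the length-$n$ prefix $v_0\in V_n$ of $x_0$ to the same vertex. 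The key point, and the main obstacle, is to upgrade equality of the images of the single vertex $v_0$ to equality of $g_n$ and $g'_n$ on all of $V_n$: here I use that by Lemma \ref{minimal-transitivity} the restriction $a_n$ acts transitively on $V_n$, hence is a single $p_n$-cycle acting freely, and that every element of $\overline{\langle a\rangle}$ restricts on $V_n$ to a power of $a_n$ (the restriction map into the finite group of permutations of $V_n$ has image in $\langle a_n\rangle$). Writing $g_n=a_n^{k}$ and $g'_n=a_n^{k'}$, freeness makes both conditions equivalent to $k\equiv k'\!\pmod{p_n}$, so they coincide for every $n$. This yields $D(g,g')=d(\Phi(g),\Phi(g'))$, so $\Phi$, and therefore $h$, is an isometry, completing the conjugacy.
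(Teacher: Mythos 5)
Your proof is correct and follows essentially the same route as the paper: both directions use the minimality of the odometer for the easy implication, and for the converse both invoke Proposition \ref{conjugacy-elements} to get the isometric isomorphism $\psi\colon \overline{\langle a\rangle}\to\ZZ_{\overline p}$ and then transfer it to $\partial T$ via the orbit map at a base point, so your $h=\psi\circ\Phi^{-1}$ coincides on the dense orbit $\langle a\rangle x_0$ with the paper's conjugating map $w(a^k x)=\psi(a^k)$. The only difference is one of exposition: the paper simply asserts that $w$ extends from the dense orbit to a bijective isometry, whereas you explicitly verify the underlying fact that the orbit map is isometric, using that $\overline{\langle a\rangle}$ restricts at level $n$ to the freely acting cyclic group $\langle a_n\rangle$ — a worthwhile detail that the paper leaves implicit.
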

 \begin{proof}
 If  $(\partial T, a)$ and $(\ZZ_{\overline{p}},c_{\overline{p}})$ are conjugate, then the minimality of $c_{\overline{p}}$ implies that $a$ is minimal.
 
Suppose now that $a$ is minimal. Corollary \ref{minimal-elements} and Proposition \ref{conjugacy-elements} imply that  $ \overline{\langle a \rangle }$ and  $\ZZ_{\overline{p}}$ are isometrically isomorphic.  Let $\psi$ be the isometry from  $ \overline{\langle a \rangle }$ to $\ZZ_{\overline{p}}$ given in  
Proposition \ref{conjugacy-elements}.

 Let $x\in \partial T$. Since the $a$-orbit  of $x$ is dense in $\partial T$, the function $w: \langle a\rangle x\to \ZZ_{\overline{p}}$ given by $w(a^k x)=\psi(a^k)$, for every $k\in\ZZ$,  extends to a bijective isometry $w$ from $\partial T$ to $\ZZ_{\overline{p}}$. Moreover, this isometry verifies $w\circ a=c_{\overline{p}}\circ w$. The map $w$ is in $\Aut(T)$ and it is a conjugating map between $(\partial T, a)$ and $(\ZZ_{\overline{p}},c_{\overline{p}})$.   
 \end{proof}
 
 We highlight the difference in the statements of Proposition \ref{conjugacy-elements} and \ref{minimal-means-conjugacy} in the example below.

\begin{ex}
{\rm
 Let $T$ be a binary tree, so $|V_n| = 2^n$ for $n \geq 1$. Let $a$ be minimal, and let $b = a^2 = (a,a)$. Then each restriction $b|V_n$ consists of two cycles of length $2^{n-1}$, and $o(b_n) = 2^{n-1}$ by the definition in Section \ref{subsec-permutation}. By Proposition \ref{conjugacy-elements} we have that $ \overline{\langle b\rangle }$ is isometrically isomorphic to the dyadic integers $\ZZ_2$. The dynamical system $(\partial T,b)$ is not conjugate to $(\ZZ_2,c_2)$, while each restriction $(\partial T_0, b_0)$ or $(\partial T_1, b_1)$ is conjugate to $(\ZZ_2,c_2)$.
 }
 \end{ex}

From these results and Lemma \ref{permutation-primes} we obtain a characterization of the minimal elements of the group of automorphisms of $p$-ary rooted trees, when $p$ is a prime number. The key step in this case is the characterization of the permutations acting transitively on $V_n$ by means of their orders. 

\begin{prop}\label{prop-equivodom}
Let $p>1$ be a prime number, and suppose that $T$ is the $p$-ary rooted tree, that is, for the spherical index $m_T=(m_1,m_2,\ldots)$ we have $m_n = p$, $n \geq 1$. Let $a\in \Aut(T)$. Then the following statements are equivalent:
\begin{enumerate}
\item $a$ is minimal. 
\item $a$ is conjugate to $\alpha_{\Sigma}$, the odometer of $T$.  
\item $a_n$ acts transitively on $V_n$, for every $n\geq 1$.
\item $o(a_n)=p^n$, for every $n\geq 1$.
\item  $\overline{\langle a \rangle}$ is isometrically isomorphic to $\ZZ_{p}$.
\item The dynamical systems $(\partial T, a)$ and $(\ZZ_{p}, c_{p})$ are conjugate by means of an isometry.  
\end{enumerate}
\end{prop}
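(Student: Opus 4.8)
The plan is to establish all six equivalences by assembling the results already proved, organizing them around the minimality condition (1) as a common hub. The prime hypothesis on $p$ enters at exactly one place, namely through Lemma \ref{permutation-primes}, which is what makes the elementary order condition (4) equivalent to transitivity. First I would record the two equivalences that hold on an arbitrary spherically homogeneous tree and require no primality: the equivalence of (1) and (3) is literally the content of Lemma \ref{minimal-transitivity}, and the equivalence of (1) and (2) is Corollary \ref{Pink-result}, which identifies the minimal elements of $\Aut(T)$ with a single conjugacy class represented by the odometer $\alpha_{\Sigma}$.

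Next I would bring in the prime hypothesis to link the transitivity and order conditions. Since $T$ is the $p$-ary tree we have $|V_n|=p^n$ for every $n\geq 1$, and each restriction $a_n = a|V_n$ is a permutation of the $p^n$-element set $V_n$. Applying Lemma \ref{permutation-primes} separately at each level $n$ shows that $a_n$ acts transitively on $V_n$ if and only if $o(a_n)=p^n$, and quantifying over all $n$ yields the equivalence of (3) and (4). This is the only step that uses that $p$ is prime; it is also the step that would fail for non-prime arity, where transitivity is strictly stronger than attaining the maximal possible order (as illustrated by the cycle-type example preceding Lemma \ref{permutation-primes}).

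For the two remaining conditions I would invoke the structure theorems for procyclic subgroups. Setting $\overline{p}=(p^n)_{n\geq 1}$, so that $\ZZ_{\overline{p}}$ is by definition $\ZZ_p$, Proposition \ref{conjugacy-elements} asserts that the order condition $o(a_n)=p^n$ for all $n$ is equivalent to $\overline{\langle a\rangle}$ being isometrically isomorphic to $\ZZ_{\overline{p}}=\ZZ_p$; this is precisely the equivalence of (4) and (5). Finally, again with $p_n=|V_n|=p^n$, Proposition \ref{minimal-means-conjugacy} states that $a$ is minimal if and only if $(\partial T,a)$ is conjugate by an isometry to $(\ZZ_{\overline{p}},c_{\overline{p}})=(\ZZ_p,c_p)$, which is the equivalence of (1) and (6). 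Chaining these statements through the common node (1) closes the cycle of implications and proves the proposition.

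I do not anticipate a genuine obstacle, since the proposition is a repackaging of Lemmas \ref{minimal-transitivity} and \ref{permutation-primes} and Corollary \ref{Pink-result} together with Propositions \ref{conjugacy-elements} and \ref{minimal-means-conjugacy}. The only points demanding care are the bookkeeping identification $\ZZ_{\overline{p}}=\ZZ_p$ for $\overline{p}=(p^n)$, and making sure each cited result is applied with this particular $\overline{p}$, which is forced by the computation $|V_n|=p^n$ valid for the $p$-ary tree.
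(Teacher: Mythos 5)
Your proposal is correct and matches the paper's intended argument exactly: the paper states this proposition as an immediate consequence of Lemmas \ref{minimal-transitivity} and \ref{permutation-primes}, Corollary \ref{Pink-result}, and Propositions \ref{conjugacy-elements} and \ref{minimal-means-conjugacy}, which is precisely the assembly you carry out, with primality entering only through Lemma \ref{permutation-primes} via $|V_n|=p^n$. Your bookkeeping on the identification $\ZZ_{\overline{p}}=\ZZ_p$ for $\overline{p}=(p^n)_{n\geq 1}$ is also the right point of care and is consistent with the paper's conventions.
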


\begin{remark}
{\rm
In the case $p=2$ statement $(4)$ of Proposition \ref{prop-equivodom} can be replaced by a weaker statement that ${\rm sgn}(a_n) = -1$ for $n \geq 1$, where ${\rm sgn}(a_n)$ denotes the parity of the permutation $a_n$, see \cite[Proposition 1.6.2]{Pink13}.
}
\end{remark}

\section{The normalizer and the absolute Weyl group of a maximal torus} \label{sec_normalizer}

Let $T$ be a rooted tree with spherical index $m_T = (m_1,m_2,\ldots)$, see Section \ref{rooted}. For every $n\geq 1$, we set $p_n=m_1\cdots m_n$.
Let $a\in \Aut(T)$ be a minimal automorphism. Recall that we denote by $\langle a \rangle$ the cyclic group generated by $a$, and by $\overline{\langle a \rangle}$ the closure of $\langle a \rangle$ in $\Aut(T)$.  By Lemma \ref{minimal-elements} $\overline {\langle a \rangle}$ is a maximal torus in $\Aut(T)$, and its normalizer is defined by
  $$N(\overline{\langle a \rangle}) = \{w\in \Aut(T): w \overline{\langle a \rangle}  w^{-1}= \overline {\langle a \rangle} \}.$$  
In this section, we study the structure of the normalizer $N(\overline{\langle a \rangle})$, and of the absolute Weyl group $W(\overline {\langle a \rangle}) = N(\overline {\langle a \rangle})/ \overline {\langle a \rangle}$, especially in the case when the spherical index $m_T$ is bounded.
A spherical index $m_T$ is bounded if there exists $d \geq 2$ such that $m_n \leq d$ for all $n \geq 1$.

\subsection{Elements of the Weyl group} For $a \in \Aut (T)$ minimal with the corresponding maximal torus $\overline {\langle a \rangle}$, the absolute Weyl group is given by $W(\overline{\langle a \rangle}) = N(\overline {\langle a \rangle})/\overline {\langle a \rangle}$. Define 
\begin{align}\label{eq-layers}
G_{\zeta}=\{w\in \Aut(T): w a w^{-1}=\zeta\}, \textrm{ for a minimal }\zeta \in \overline {\langle a \rangle}.
\end{align}
Since $\overline {\langle a \rangle}$ is maximal and abelian, such a set $G_\zeta$ is a coset of $\overline {\langle a \rangle}$ in $\Aut(T)$. We prove that the sets $G_\zeta$ are precisely the cosets of $\overline {\langle a \rangle}$ in $N(\overline {\langle a \rangle})$.

 \begin{lemma}\label{lemma-norm2}
Let $w \in Aut(T)$ and let $a \in \Aut (T)$ be a minimal element. Then $w\in N(\overline{\langle a\rangle})$ if and only if  $waw^{-1}\in \overline{\langle a\rangle} $. Therefore: 
\begin{enumerate}
\item  $
N(\overline{\langle a\rangle}) = \bigcupdot \{G_\zeta : \zeta \in \overline{\langle a\rangle} \textrm{ and } \zeta \textrm{ is minimal }\},
 $
 \item The absolute Weyl group $W(\overline{\langle a\rangle})$ of the maximal torus $\overline{\langle a\rangle}$ is the set of cosets \\ $\{G_\zeta : \zeta\in \overline{\langle a\rangle} \textrm{ and } \zeta \textrm{ is minimal }\}$.
\end{enumerate}
\end{lemma}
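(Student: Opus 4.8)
The plan is to establish the biconditional $w \in N(\overline{\langle a \rangle}) \Leftrightarrow waw^{-1} \in \overline{\langle a \rangle}$ first, and then to obtain statements (1) and (2) as essentially bookkeeping consequences of it.

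The forward implication is immediate: if $w \in N(\overline{\langle a \rangle})$ then $w\overline{\langle a \rangle}w^{-1} = \overline{\langle a \rangle}$, and since $a \in \overline{\langle a \rangle}$ we get $waw^{-1} \in \overline{\langle a \rangle}$. The substance is the converse. Assume $\zeta := waw^{-1} \in \overline{\langle a \rangle}$. First I would note that conjugation by the automorphism $w$ preserves the cycle structure level by level: the restriction $\zeta_n = w_n a_n w_n^{-1}$ is conjugate to $a_n$ in the symmetric group on $V_n$, so it acts transitively on $V_n$ because $a_n$ does. By Lemma \ref{minimal-transitivity}, $\zeta$ is therefore minimal, and by Lemma \ref{minimal-elements} the group $\overline{\langle \zeta \rangle}$ is a maximal torus, in particular a closed subgroup acting freely on $\partial T$. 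Since conjugation by $w$ is a homeomorphism of $\Aut(T)$ it commutes with taking closures, whence $w\overline{\langle a \rangle}w^{-1} = \overline{w\langle a \rangle w^{-1}} = \overline{\langle \zeta \rangle}$. As $\zeta \in \overline{\langle a \rangle}$ and the latter is closed, we have $\overline{\langle \zeta \rangle} \subseteq \overline{\langle a \rangle}$; invoking the maximality proved in Lemma \ref{minimal-elements} for the free action of the larger group $\overline{\langle a \rangle}$ then forces $\overline{\langle \zeta \rangle} = \overline{\langle a \rangle}$. Hence $w\overline{\langle a \rangle}w^{-1} = \overline{\langle a \rangle}$ and $w \in N(\overline{\langle a \rangle})$.

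For (1), I would combine the biconditional with the remark that $waw^{-1}$ is \emph{always} minimal for $w \in \Aut(T)$ (the same cycle-structure argument). Thus $w$ lies in the normalizer exactly when $waw^{-1}$ equals some minimal $\zeta \in \overline{\langle a \rangle}$, i.e., exactly when $w \in G_\zeta$ for such a $\zeta$; this gives the equality of sets. The union is disjoint because the element $waw^{-1}$ determines $\zeta$ uniquely, so membership in two different $G_\zeta$'s is impossible. Statement (2) then follows at once: each $G_\zeta$ is a coset of $\overline{\langle a \rangle}$ (as recorded just before the lemma), $\overline{\langle a \rangle}$ is normal in $N(\overline{\langle a \rangle})$ by the definition of the normalizer, and (1) displays these cosets as precisely the elements of the quotient $W(\overline{\langle a \rangle}) = N(\overline{\langle a \rangle})/\overline{\langle a \rangle}$.

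I expect the only non-formal step to be the passage from the containment $\overline{\langle \zeta \rangle} \subseteq \overline{\langle a \rangle}$ to equality. A purely group-theoretic argument would be inadequate, since a proper closed subgroup of a profinite group can be abstractly isomorphic to the whole group; the equality is genuinely dynamical and rests on the freeness-versus-transitivity dichotomy packaged in Lemma \ref{minimal-elements}, which I would cite rather than re-prove.
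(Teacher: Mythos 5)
Your proposal is correct and follows essentially the same route as the paper: forward direction by definition of the normalizer, and for the converse, minimality of $\zeta = waw^{-1}$ (via preservation of cycle structure, which is the content of Corollary \ref{Pink-result}), the containment $\overline{\langle \zeta \rangle} \subseteq \overline{\langle a \rangle}$, and the maximality from Lemma \ref{minimal-elements} forcing $\overline{\langle \zeta \rangle} = \overline{\langle a \rangle}$, hence $w\overline{\langle a \rangle}w^{-1} = \overline{\langle a \rangle}$. Your write-up is slightly more explicit than the paper's (which ends with ``the rest of the lemma follows'') in spelling out the disjointness of the union in (1) and the coset bookkeeping in (2), and your closing remark that the equality of tori is genuinely dynamical rather than abstract-group-theoretic is an accurate reading of why Lemma \ref{minimal-elements} is the key input.
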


\begin{proof} If $w \in N(\overline{\langle a\rangle})$ then $w a w^{-1} \in \overline{\langle a\rangle}$ by definition and by Corollary \ref{Pink-result} $waw^{-1}$ is minimal. 

Let $\zeta = waw^{-1} \in \overline{\langle a\rangle}$. We show that $w \overline{\langle a\rangle} w^{-1} = \overline{\langle a\rangle}$.

Note that $wa^kw^{-1} = (waw^{-1})^k = \zeta^k$,
and so we have $w \langle a\rangle w^{-1} = \langle \zeta \rangle \subset\overline{\langle a\rangle}$, which implies $w \overline{\langle a\rangle} w^{-1} \subseteq \overline{\langle a\rangle}$.
On the other hand, since $\zeta$ is minimal, then by Lemma \ref{minimal-elements} its closure $\overline{ \langle \zeta \rangle}$ is a maximal torus, which implies that $\overline{ \langle \zeta \rangle} = \overline{\langle a\rangle}$. The rest of the lemma follows.
\end{proof}

Next we observe that if $\zeta=a$ then $G_a$ contains all elements in $\Aut(T)$ that commute with $a$. Denote by $Z(\overline{\langle a\rangle})$ the centralizer of $\overline{\langle a\rangle}$ in $\Aut (T)$, that is, the set of all elements in $\Aut(T)$ which commute with every element in $\overline{\langle a\rangle}$. 

 \begin{lemma}\label{abelian}
For $a \in \Aut(T)$ minimal we have $Z( \overline{\langle a \rangle }) = G_a=  \overline{\langle a \rangle }$.
\end{lemma} 
\begin{proof}
Since $a$ is a topological generator of $ \overline{\langle a \rangle }$, we have that $g \in \Aut(T)$ commutes with $a$ if and only if $g$ commutes with every element in $ \overline{\langle a \rangle }$, so $Z( \overline{\langle a \rangle }) = G_a$.

Next, since $ \overline{\langle a \rangle }$ is abelian, we have $ \overline{\langle a \rangle } \subseteq G_a$. 
Let $x\in \partial T$ and $\sigma \in G_a$.  Since $ \overline{\langle a \rangle }$ is the enveloping semigroup of $\langle a \rangle$, and the action of $\langle a \rangle$ on $\partial T$ is minimal, we have $ \overline{\langle a \rangle } x=\partial T$ for the orbit of $x$ in $\partial T$. Thus there exists $\gamma\in  \overline{\langle a \rangle }$ such that $\sigma x=\gamma x$.  This implies that for every $n\geq 0$ we have $\sigma a^n x=a^n\sigma x =a^n\gamma x=\gamma a^n x$, since $\sigma$ commutes with $a$. This shows that $\sigma$ and $\gamma$ coincide on the $a$-orbit of $x$, and since this orbit is dense, we conclude $\gamma=\sigma$, and so $\sigma \in  \overline{\langle a \rangle }$.
\end{proof}

\begin{remark}\label{remark-weyl-centralizer}
{\rm
Lemma \ref{abelian} implies that if $a \in \Aut(T)$ is minimal, then we can define the Weyl group of the maximal torus $ \overline{\langle a \rangle }$ by $W( \overline{\langle a \rangle }) = N( \overline{\langle a \rangle })/Z( \overline{\langle a \rangle })$. We note that a similar alternative definition can be made for the Weyl groups of maximal tori in compact Lie groups. 
}
\end{remark}

\subsection{Rational elements of the Weyl group and the normalizer of a maximal torus} For a minimal $a \in \Aut(T)$, in Definition \ref{defn-ratspan} below, we introduce the \emph{rational spanning set} in $N( \overline{\langle a \rangle })$. The set is called rational since its elements can be indexed by two integers. We prove that if the spherical index $m_T$ of $T$ is bounded, then the rational spanning set is dense in $N( \overline{\langle a \rangle })$. We give an example of a tree with unbounded spherical index, where the rational spanning set is degenerate and thus it is not dense in $N( \overline{\langle a \rangle })$.

 \subsubsection{Cosets $G_k$ in the Weyl group}
 We consider the set of positive integers which are coprime to every $p_n$, $n \geq 1$, that is,
 \begin{align}\label{eq-coprime}
 I(T)=\{k\geq 1: \gcd\{k, p_n\}=1 \mbox{ for every } n\geq 1\}.
 \end{align}
For $a^k$ with $k\in I(T)$,  we write $G_k$ instead of $G_{a^k}$, where $G_{a^k}$ is defined by \eqref{eq-layers}. We have the following.
 
 \begin{lemma}\label{integer-minimality}
   Let $k\geq 1$.  Then $a^k$ is minimal if and only if $k\in I(T)$.  
 \end{lemma}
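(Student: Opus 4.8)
The plan is to reduce the statement to a level-by-level computation and then apply the elementary description of the powers of a single cycle. First I would recall the criterion of Lemma \ref{minimal-transitivity}: an element $g \in \Aut(T)$ is minimal if and only if its restriction $g_n = g|V_n$ acts transitively on $V_n$ for every $n \geq 1$. I will apply this criterion both to $a$ and to $a^k$. Since $a$ is minimal and $|V_n| = p_n$, the restriction $a_n$ is a transitive permutation of a set of $p_n$ elements, hence a single $p_n$-cycle.

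Next I would invoke the standard fact that if $\sigma$ is a cycle of length $p_n$, then $\sigma^k$ decomposes into exactly $\gcd\{k,p_n\}$ disjoint cycles, each of length $p_n/\gcd\{k,p_n\}$ (this is precisely the kind of cycle-counting recalled in Section \ref{subsec-permutation}). In particular $\sigma^k$ is again a single $p_n$-cycle, equivalently acts transitively, if and only if $\gcd\{k,p_n\}=1$. Applying this with $\sigma = a_n$, and noting that $(a^k)_n = (a_n)^k$, I obtain that $(a^k)_n$ acts transitively on $V_n$ if and only if $\gcd\{k,p_n\}=1$.

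Finally I would chain these equivalences: $a^k$ is minimal if and only if $(a^k)_n$ is transitive for every $n \geq 1$ (by Lemma \ref{minimal-transitivity} applied to $a^k$), if and only if $\gcd\{k,p_n\}=1$ for every $n \geq 1$ (by the previous paragraph), if and only if $k \in I(T)$ by the definition \eqref{eq-coprime}. This yields both directions of the claimed equivalence at once.

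The argument is essentially routine; the only point requiring care is that the quantifier ``for every $n$'' must be handled uniformly, so that the single integer $k$ is required to be coprime to \emph{all} the $p_n$ simultaneously, which is exactly what the definition of $I(T)$ encodes. The cleanness here comes from minimality being detectable level by level, so no global odometer identification is strictly necessary. Alternatively, one could phrase the whole proof by transporting the dynamics to the odometer $(\ZZ_{\overline{p}}, c_{\overline{p}})$ via Proposition \ref{minimal-means-conjugacy} and observing that addition by $k$ topologically generates $\ZZ_{\overline{p}}$ precisely when the image of $k$ is a unit modulo every $p_n$; this reformulation makes the role of $I(T)$ equally transparent.
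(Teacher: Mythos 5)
Your proposal is correct and follows essentially the same route as the paper's proof: both reduce minimality of $a^k$ to level-by-level transitivity via Lemma \ref{minimal-transitivity} and then decide transitivity of $(a_n)^k$ by the condition $\gcd\{k,p_n\}=1$. The only difference is cosmetic: you cite the standard fact that the $k$-th power of a $p_n$-cycle splits into $\gcd\{k,p_n\}$ cycles of equal length, whereas the paper proves exactly this dichotomy inline (B\'ezout's identity for the coprime case, and an explicit computation showing $(a^k)^r$ fixes $V_n$ pointwise with $r=p_n/\gcd\{k,p_n\}<p_n$ in the non-coprime case); note also that this cycle-power fact is not actually stated in Section \ref{subsec-permutation}, which only discusses orders of permutations, so strictly speaking you are importing it from outside the paper.
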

  \begin{proof}
  Let $n\geq 1$. 
  Suppose that $ \gcd\{k, p_n\}=1$. By B\'ezout's identity there exist $m, \ell \in \ZZ$ such that $mk=1+\ell p_n$.  Thus for every $1\leq s\leq p_n$ and every $v\in V_n$ we have
  $$
  (a^k)^{ms}(v)=a^sa^{s \ell p_n}(v)=a^s(v) \in V_n,
  $$
 that is, there exists a power of $a^k$ which maps $v$ to $a^s(v)$ for any $1 \leq s \leq p_n$.  
 Since $a$ is transitive on $V_n$, then $a^k$ is also transitive on $V_n$. Since this is true for every $n\geq 1$, we conclude that $a^k$ is minimal.
  
  Suppose that $ \gcd\{k, p_n\}=\ell>1$. Then there exist $1\leq s<k$ and $1\leq r <p_n$ such that $k=\ell s$ and $p_n=\ell r$. Thus $kr=p_ns$ and for every $v\in V_n$ we have
  $(a^k)^r(v)=(a^{p_n})^s(v)=v$, that is, $(a^k)^r$ fixes every vertex in $V_n$. Since $1\leq r<p_n$ we deduce that $a^k$ is not transitive on $V_n$. This implies that $a^k$ is not minimal. 
  \end{proof}
    
\begin{remark}{\rm  From Lemma \ref{integer-minimality}, we deduce that if $T$ is the $d$-ary tree for some $d\geq 2$,  then $a^k$ is minimal if and only if $ \gcd\{k,d\}=1$. In the case of the binary tree this is equivalent to saying that $a^k$ is minimal if and only if $k$ is odd.}
  \end{remark}

\subsubsection{Rational subsets in cosets $G_k$} Recall from Section \ref{rooted} that the boundary $\partial T$ of a tree $T$ with spherical index $m_T = (m_1,m_2,\ldots)$ is identified with the set $\Sigma = \prod_{n \geq 1} \Sigma_n$, where $|\Sigma_n| = m_n$ for $n \geq 1$. Thus each vertex in $T$ corresponds to a finite word, and each infinite path in $\partial T$ corresponds to an infinite sequence of digits in $\Sigma_n$, $n \geq 1$.
Let $\xi=(\xi_n)_{n\geq 1}\in \partial T$ be the sequence such that $\xi_n=0$ for every $n\geq 1$.  We show that within the cosets $G_k$, for $k \in I(T)$, the elements are determined by their action on $\xi$. As a consequence, they are indexed by elements in $\partial T$.

As earlier in this section, $a$ is a minimal element in $\Aut(T)$, and $G_k$ for $k \in I(T)$ are defined by \eqref{eq-layers}, for $\zeta = a^k$.

\begin{lemma}\label{lemma-initialvalue}
An element $\sigma \in G_k$ is completely  determined by its value at $\xi$.  
\end{lemma}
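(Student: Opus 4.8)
The plan is to translate the defining conjugation relation of $G_k$ into an equivariance property along the $a$-orbit of $\xi$, and then to exploit the minimality of $a$ together with continuity.

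First I would fix $\sigma \in G_k$, so that by definition $\sigma a \sigma^{-1} = a^k$, equivalently $\sigma a = a^k \sigma$. A straightforward induction yields
$$\sigma a^m = a^{km}\,\sigma \quad \textrm{for every } m \in \ZZ,$$
where the negative exponents are handled by rewriting $\sigma a = a^k\sigma$ as $\sigma a^{-1} = a^{-k}\sigma$ and inducting in the same way. Applying both sides to $\xi$ gives the key identity
$$\sigma(a^m \xi) = a^{km}\, \sigma(\xi) \quad \textrm{for all } m \in \ZZ.$$
This shows that the restriction of $\sigma$ to the $a$-orbit $\langle a\rangle \xi = \{a^m\xi : m \in \ZZ\}$ is entirely prescribed by the single value $\sigma(\xi)$.

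Next I would invoke minimality. Since $a$ is minimal, by Lemma \ref{minimal-transitivity} (or directly from the definition in Section \ref{subsec-dynsys}) the orbit $\langle a\rangle \xi$ is dense in $\partial T$. Now suppose $\sigma, \tau \in G_k$ satisfy $\sigma(\xi) = \tau(\xi)$. Then the displayed identity applied to both $\sigma$ and $\tau$ forces $\sigma(a^m\xi) = a^{km}\sigma(\xi) = a^{km}\tau(\xi) = \tau(a^m\xi)$ for every $m$, so $\sigma$ and $\tau$ agree on the dense subset $\langle a \rangle \xi \subset \partial T$. As both are isometries of $\partial T$, and hence continuous, they agree on all of $\partial T$. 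Finally, because the action of $\Aut(T)$ on $\partial T$ is effective (Section \ref{sec-topautom}), agreement on $\partial T$ implies $\sigma = \tau$ as automorphisms of $T$. This proves that $\sigma$ is determined by $\sigma(\xi)$.

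I do not anticipate a serious obstacle here: the entire content is the passage from the conjugacy relation to $a^k$-equivariance of $\sigma$ along the orbit of $\xi$, followed by the standard density-plus-continuity argument. The only points needing a little care are the bookkeeping for negative powers of $a$ in the induction, and the explicit appeal to effectiveness to upgrade agreement on all of $\partial T$ to genuine equality in $\Aut(T)$.
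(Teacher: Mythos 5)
Your proof is correct and follows essentially the same route as the paper's: both derive $\sigma a^m = a^{km}\sigma$ from the defining relation, apply it to $\xi$, and conclude from density of the $a$-orbit of $\xi$ (minimality of $a$) that $\sigma$ is determined everywhere. The only differences are cosmetic — you track negative powers and make the continuity/effectiveness step explicit, whereas the paper works with the forward orbit $\{a^m\xi : m \geq 0\}$ and leaves those routine points implicit.
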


\proof
Let $\sigma\in G_k$. The relation $\sigma a=a^k\sigma$ implies $\sigma a^m=a^{km}\sigma$, for every $m\geq 0$. Thus
$$
\sigma a^m \xi=a^{mk}\sigma \xi,
$$ 
which implies that $\sigma \xi $ determines completely the values of $\sigma$ on the set $S=\{a^m \xi: m\geq 0\}$. Since $a$ is minimal, then $S$ is dense in $\partial T$, which implies that $\sigma$ is completely determined on $\partial T$.  
\endproof

 \begin{lemma}\label{lemma-findelement}
 For any $t \in \partial T$ there exists $\sigma\in G_k$ such that $\sigma \xi=t$. 
 \end{lemma}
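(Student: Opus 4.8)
The plan is to construct $\sigma$ explicitly on the $a$-orbit of $\xi$ and then extend it by continuity. Lemma \ref{lemma-initialvalue} already forces the shape of any candidate: from $\sigma a = a^k\sigma$ one gets $\sigma a^m\xi = a^{km}\sigma\xi = a^{km}t$ for all $m\geq 0$. I would therefore \emph{define} $\sigma$ on the orbit $S=\{a^m\xi : m\geq 0\}$ by $\sigma(a^m\xi)=a^{km}t$. This is well defined because the points $a^m\xi$ are pairwise distinct: $a$ is minimal, hence of infinite order and acting freely on $\partial T$ by Lemma \ref{basic-dynamics}, so $m\mapsto a^m\xi$ is injective. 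The remaining work is to check that this assignment is an isometry of $S$, to extend it to all of $\partial T$, and to verify that the extension lies in $G_k$.

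The heart of the argument is that $\sigma$ is an isometry on $S$, i.e. $d(a^m\xi,a^{m'}\xi)=d(a^{km}t,a^{km'}t)$ for all $m,m'$. Here I would use that $k\in I(T)$, so by Lemma \ref{integer-minimality} the element $a^k$ is again minimal; thus by Lemma \ref{minimal-transitivity} both $a_n$ and $(a^k)_n=(a_n)^k$ act transitively on $V_n$, that is, as single $p_n$-cycles, for every $n\geq 1$. Consequently $a^m\xi$ and $a^{m'}\xi$ agree on all of $V_1,\ldots,V_n$ precisely when $(a_n)^{m-m'}$ fixes the level-$n$ truncation of $\xi$, which for a single $p_n$-cycle happens if and only if $p_n\mid (m-m')$. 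Likewise $a^{km}t=(a^k)^m t$ and $a^{km'}t=(a^k)^{m'}t$ agree through level $n$ if and only if $p_n\mid k(m-m')$, and since $\gcd\{k,p_n\}=1$ this is equivalent to $p_n\mid (m-m')$. Hence the two pairs of points first disagree at exactly the same level, so by the definition \eqref{eq-metrictree} of the metric their distances coincide.

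Having established that $\sigma\colon S\to\partial T$ is an isometric embedding, I would extend it: $S$ is dense in $\partial T$ since $a$ is minimal, so $\sigma$ extends uniquely to an isometry $\sigma\colon\partial T\to\partial T$; its image contains the $a^k$-orbit $\{a^{km}t\}$, which is dense because $a^k$ is minimal, so $\sigma$ is onto and hence a bijective isometry, i.e. $\sigma\in\Aut(T)$ by the identification of $\Aut(T)$ with the isometry group of $\partial T$ in Section \ref{sec-topautom}. Finally, on $S$ we have $\sigma a(a^m\xi)=\sigma(a^{m+1}\xi)=a^{k(m+1)}t=a^k\sigma(a^m\xi)$, so $\sigma a=a^k\sigma$ on the dense set $S$ and therefore on all of $\partial T$; thus $\sigma a\sigma^{-1}=a^k$ and $\sigma\in G_k$, while $\sigma\xi=a^0 t=t$ by construction. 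The main obstacle is the isometry check in the second paragraph: everything hinges on converting the metric equality into the coprimality statement $\gcd\{k,p_n\}=1$, which is exactly where the hypothesis $k\in I(T)$ enters and where the single-cycle structure of minimal restrictions is essential.
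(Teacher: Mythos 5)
Your proof is correct, but it takes a genuinely different route from the paper's. The paper's argument is a two-line orbit computation: it picks any $\tau\in G_k$ (nonemptiness being guaranteed by Corollary \ref{Pink-result} together with Lemma \ref{integer-minimality}, since $a^k$ is minimal and hence conjugate to $a$), notes that $G_k=\tau\,\overline{\langle a \rangle}$ by Lemma \ref{abelian} and maximality, and then uses transitivity of the torus on $\partial T$ (Lemma \ref{basic-dynamics}) to conclude $G_k\,\xi=\tau\,\overline{\langle a \rangle}\,\xi=\tau\,\partial T=\partial T$. You instead build $\sigma$ by hand: you define it on the dense forward orbit $S=\{a^m\xi : m\geq 0\}$ by $\sigma(a^m\xi)=a^{km}t$, verify through the single-cycle structure of $a_n$ on $V_n$ and the coprimality $\gcd\{k,p_n\}=1$ that this is a well-defined isometric embedding, extend by density, and then check surjectivity (via minimality of $a^k$) and the relation $\sigma a=a^k\sigma$ on the dense set $S$. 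Both arguments are sound; the paper's is shorter because it leans on the previously established coset structure of $G_k$ and the transitivity of the maximal torus, while yours is more self-contained and constructive: in particular it does not presuppose $G_k\neq\emptyset$, but actually produces an explicit element of $G_k$, thereby reproving (for exponents $k\in I(T)$) the special case of Corollary \ref{Pink-result} that $a$ and $a^k$ are conjugate in $\Aut(T)$, with an explicit conjugator. The price is the extra analytic work (well-definedness, the level-by-level isometry estimate, the extension and surjectivity steps), all of which you carry out correctly.
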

 
 \proof  
 Let $\tau\in G_k$. Then by Lemma \ref{abelian} we have $G_k=\tau \overline{\langle a \rangle }$. This implies that
 $$
 G_k \xi=\tau \overline{\langle a \rangle } \xi=\tau \partial T=\partial T.  
 $$ 
 Thus, there exists $\sigma\in G_k$ such that $\sigma \xi =t$.
 \endproof

 The two lemmas above motivate the following definition.
 
  \begin{definition}\label{defn-onorbits}
 Let $k\geq 1$ be an integer in $I(T)$. For every integer $m\geq 1$  we define $\sigma_{m,k}$ as the element of $G_k$ given by
 $$
  \sigma_{m,k} \xi=a^m \xi.
 $$
 \end{definition}
 
 Indeed, an element satisfying Definition \ref{defn-onorbits} exists by Lemma \ref{lemma-findelement}, and it is unique by Lemma \ref{lemma-initialvalue}. It is in $N(\overline{\langle a \rangle })$ by Lemma \ref{lemma-norm2}(1).
 
 \begin{defn}\label{defn-ratspan}
 A \emph{rational spanning set} in $N( \overline{\langle a \rangle })$ is the set $\{\sigma_{m,k} \mid k \in I(T), m \geq 1\}$, where $\sigma_{m,k}$ are given by Definition \ref{defn-onorbits}. 
 \end{defn}
 
 \begin{remark}\label{remark-k1}
{\rm  Observe that for $k=1$ we have $G_1= \overline{\langle a \rangle }$, and $\sigma_{m,1}=a^m$ for every $m\geq 1$, so $\{\sigma_{m,1} \mid m \geq 1\}$ coincides with the semi-group of positive powers of $a$.}
  \end{remark}
 
 \subsubsection{Dynamics of elements in the cosets $G_k$.}
 In Lemma \ref{convergence} below, we show that a sequence of elements in a coset $G_k$, $k \in I(T)$, of the Weyl group of $ \overline{\langle a \rangle }$, which converges at a single point $\xi \in \partial T$, converges at every point in $\partial T$, thus giving an element in $\Aut(T)$. Thus for elements in $G_k$ convergence at a point implies convergence in $\Aut(T)$.

 \begin{lemma}\label{convergence}
 Let $G_k \in W( \overline{\langle a \rangle })$ be a coset in the Weyl group of the maximal torus $ \overline{\langle a \rangle }$ topologically generated by a minimal element $a \in \Aut(T)$, for $k \in I(T)$. Let $(\sigma_i)_{i\geq 0}$ be a sequence in $G_{k}$ such that $\lim_{i\to \infty}\sigma_i \xi=\sigma \xi$, where $\sigma\in G_k$. Then $(\sigma_i)_{i\geq 0}$ converges to $\sigma$ in $\Aut(T)$.    
 \end{lemma}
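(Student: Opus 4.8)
The plan is to reduce uniform convergence in $\Aut(T)$ to eventual agreement on each finite level $V_n$, and then to propagate the hypothesized convergence at the single point $\xi$ to all of $V_n$ using the transitivity of $a$ together with the defining relation of the coset $G_k$. Recall from Section \ref{sec-topautom} that the left-invariant uniform metric $D$ satisfies $D(\sigma_i,\sigma)\leq 1/2^n$ as soon as $\sigma_i|V_n=\sigma|V_n$ (equivalently $\sigma_i^{-1}\sigma\in N_n$). Hence it suffices to show that for every $n\geq 1$ there is $i_0$ with $\sigma_i|V_n=\sigma|V_n$ for all $i\geq i_0$; letting $n\to\infty$ then yields $\sigma_i\to\sigma$ in $\Aut(T)$.

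For the key mechanism, I would use that every element of $G_k$ satisfies $\sigma a=a^k\sigma$, hence $\sigma a^m=a^{km}\sigma$, giving
\[ \sigma(a^m\xi)=a^{km}\,\sigma\xi, \qquad \sigma_i(a^m\xi)=a^{km}\,\sigma_i\xi \qquad (m\geq 0). \]
Since $a$ is minimal, $a_n$ is transitive on $V_n$ (Lemma \ref{minimal-transitivity}), so the points $a^m\xi$ for $0\leq m<p_n$ occupy the $p_n=|V_n|$ distinct vertices of $V_n$; that is, $\{[a^m\xi]_n:0\leq m<p_n\}=V_n$, where $[\,\cdot\,]_n$ denotes the level-$n$ vertex (cylinder) of a path. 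Thus to compare the permutations $\sigma_i|V_n$ and $\sigma|V_n$ it is enough to compare, for each such $m$, the vertices $[\sigma_i(a^m\xi)]_n=[a^{km}\sigma_i\xi]_n$ and $[\sigma(a^m\xi)]_n=[a^{km}\sigma\xi]_n$.

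To finish, I would invoke the fact that an automorphism preserves the level structure: the vertex $[a^{km}y]_n$ depends only on the vertex $[y]_n$, because $a^{km}$ permutes $V_n$ and carries the cylinder above $[y]_n$ into a single cylinder. Consequently, once $[\sigma_i\xi]_n=[\sigma\xi]_n$ we obtain $[a^{km}\sigma_i\xi]_n=[a^{km}\sigma\xi]_n$ for every $m$, and therefore $\sigma_i|V_n=\sigma|V_n$. The hypothesis $\lim_i\sigma_i\xi=\sigma\xi$ supplies exactly the remaining input: for $i$ large enough $d(\sigma_i\xi,\sigma\xi)$ is small enough to force $[\sigma_i\xi]_n=[\sigma\xi]_n$. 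This produces the required $i_0$ and completes the argument.

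I expect the only delicate point — the real heart of the lemma — to be this propagation step: a priori we control $\sigma_i$ only at the single seed point $\xi$, whereas we need control of the entire permutation of $V_n$. The relation $\sigma_i a^m=a^{km}\sigma_i$ converts the value at $\xi$ into the values at every point of the orbit $\{a^m\xi\}$, and minimality of $a$ guarantees this orbit already meets every vertex of $V_n$, so no further information is needed. The hypothesis $k\in I(T)$ enters only to ensure that $a^k$ is minimal (Lemma \ref{integer-minimality}), so that $G_k$ is genuinely a coset of $\overline{\langle a\rangle}$ in $N(\overline{\langle a\rangle})$ and the relation $\sigma a=a^k\sigma$ is available; everything else is the routine translation between the uniform metric and agreement on finite levels.
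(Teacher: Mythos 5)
Your proof is correct, and it completes the argument by a genuinely different route than the paper, even though both hinge on the same two ingredients: the commutation relation $\sigma a^m = a^{km}\sigma$ and the minimality of $a$. The paper's proof is metric and dynamical: it first deduces pointwise convergence $\sigma_i(a^m\xi)\to\sigma(a^m\xi)$ on the dense orbit $\{a^m\xi : m\geq 0\}$, then extends this to every $t\in\partial T$ by a triangle-inequality argument using that the $\sigma_i$ are isometries (an equicontinuity-type step), and finally invokes the coincidence of pointwise and uniform convergence in $\Aut(T)$. You instead exploit the profinite structure level by level: since $a|V_n$ is transitive, the vertices $[a^m\xi]_n$, $0\leq m < p_n$, exhaust $V_n$, and since the level-$n$ vertex of $a^{km}y$ depends only on the level-$n$ vertex of $y$, the single equality $[\sigma_i\xi]_n=[\sigma\xi]_n$ forces the \emph{exact} equality of permutations $\sigma_i|V_n=\sigma|V_n$. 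This replaces the paper's limiting/density argument with a finite combinatorial one and yields a sharper quantitative conclusion, namely that $d(\sigma_i\xi,\sigma\xi)\leq 2^{-n}$ already implies $D(\sigma_i,\sigma)\leq 2^{-n}$ (so evaluation at $\xi$ is an isometric identification of $G_k$ with a subset of $\partial T$); the paper's softer argument, on the other hand, would transfer verbatim to any setting of isometries with a dense orbit, without reference to the level structure. Your closing remark about the role of $k\in I(T)$ is also accurate: the relation $\sigma a=a^k\sigma$ holds by the definition of $G_{a^k}$, and coprimality is what makes $a^k$ minimal so that $G_k$ is a nonempty coset of the normalizer.
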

 \begin{proof}
 The relation $\sigma_{i}a=a^k\sigma_{i}$ implies $\sigma_{i}a^m=a^{mk}\sigma_{i}$ for every $m\geq 0$. Thus
 $$
 \sigma_{i}a^m \xi= a^{mk}\sigma_{i} \xi,
 $$
 which implies
 $$
 \lim_{i\to \infty} \sigma_{i}a^m \xi=a^{mk}\sigma \xi=\sigma a^m \xi.
 $$
Thus $(\sigma_{i})_{i\geq 0}$ converges pointwise to $\sigma$ on the set $\{a^m \xi: m\geq 0\}$.   The pointwise convergence on $\partial T$ follows from the density of $\{a^m \xi: m\geq 0\}$ in $\partial T$. Indeed, first observe that for every $m\geq 0$ we have
 \begin{equation}\label{convergence-1} 
 d(\sigma_ia^m \xi, \sigma a^m\xi)= d(a^{km}\sigma_i \xi,  a^{km}\sigma \xi)=d(\sigma_i \xi,  \sigma \xi).
 \end{equation}
 Let $t\in \partial T$ and let $(a^{m_i} \xi)_{i\geq 0}$ be a sequence converging to $t$. We write $a^{m_i} \xi =t_i$, so by assumption $t_i \to_i t$. Then we have 
 \begin{eqnarray*}
 d(\sigma_i t, \sigma t) & \leq & d(\sigma_i t, \sigma_i t_i)+d(\sigma_i t_i, \sigma t_i)+d(  \sigma t_i, \sigma t)\\  
 &=& d(t,t_i)+d(\sigma_i \xi, \sigma \xi)+d(  \sigma t_i, \sigma t),\end{eqnarray*}
 where the second line is a consequence of (\ref{convergence-1}) and the fact that $\sigma_i$ is an isometry.
 This implies that $\lim_{i\to \infty}\sigma_i t=\sigma t$. Since the topology of pointwise convergence in $\Aut(T)$ coincides with the uniform topology and the profinite topology, we conclude that $(\sigma_i)_{i\geq 0}$ converges to $\sigma$ in $\Aut(T)$.   
 \end{proof}
 
 In the following lemma we show that the set of rational elements in $G_k$ is dense in $G_k$.
 
 \begin{lemma}\label{density-0}
  Let $k\geq 1$ be an integer in $I(T)$. The set $\{\sigma_{m,k}: m\geq 1\}$ is dense in $G_k$. 
 \end{lemma}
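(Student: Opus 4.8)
The plan is to reduce density in the coset $G_k$ to the density of a single forward $a$-orbit in $\partial T$. The two ingredients that make this reduction possible are already available: Lemma \ref{lemma-initialvalue}, which says that an element of $G_k$ is pinned down by its value at $\xi$, and Lemma \ref{convergence}, which says that for sequences inside $G_k$, convergence of the values at the single point $\xi$ already forces convergence in $\Aut(T)$. Together these mean that approximating an arbitrary $\sigma \in G_k$ by elements $\sigma_{m,k}$ is the same problem as approximating the point $\sigma\xi \in \partial T$ by points of the form $a^m\xi$.

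Concretely, I would fix an arbitrary $\sigma \in G_k$ and produce a sequence $(\sigma_{m_i,k})_{i \geq 0}$ converging to it. By Definition \ref{defn-onorbits} the element $\sigma_{m,k}$ is the unique member of $G_k$ with $\sigma_{m,k}\xi = a^m\xi$, so it suffices to choose the exponents $m_i$ so that $a^{m_i}\xi \to \sigma\xi$. At this point I invoke the minimality of $a$: the set $\{a^m\xi : m \geq 1\}$ is exactly the forward orbit $\{a^m(a\xi) : m \geq 0\}$ of the point $a\xi$, and this orbit is dense in $\partial T$ because $a$ is minimal. (Alternatively, $\{a^m\xi : m \geq 0\}$ is dense and $\partial T$ has no isolated points, so deleting the single point $\xi$ leaves a dense set.) Hence there is a sequence of positive integers $m_i$ with $a^{m_i}\xi \to \sigma\xi$.

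With this choice we have $\sigma_{m_i,k}\xi = a^{m_i}\xi \to \sigma\xi$, where $\sigma \in G_k$, so Lemma \ref{convergence} applies directly and yields $\sigma_{m_i,k} \to \sigma$ in $\Aut(T)$. Since $\sigma \in G_k$ was arbitrary, this shows $\{\sigma_{m,k} : m \geq 1\}$ is dense in $G_k$, as desired.

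There is no serious obstacle here; the statement is essentially the packaging of the three preceding lemmas. The one small point requiring care is that Definition \ref{defn-onorbits} only provides the elements $\sigma_{m,k}$ for \emph{positive} exponents $m \geq 1$, so one must make sure the approximating points $a^{m_i}\xi$ can be taken with $m_i \geq 1$; this is precisely why the argument appeals to the density of the orbit of $a\xi$ (rather than merely of $\xi$), which again is guaranteed by minimality of $a$.
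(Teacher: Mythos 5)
Your proof is correct and follows essentially the same route as the paper: approximate $\sigma\xi$ by points $a^{m_i}\xi$ with positive exponents (using minimality of $a$, which makes forward orbits dense since $a$ acts transitively on each $V_n$), and then upgrade pointwise convergence at $\xi$ to convergence in $\Aut(T)$ via Lemma \ref{convergence}. The only cosmetic difference is that the paper dispatches $k=1$ separately through Remark \ref{remark-k1}, while your argument treats all $k \in I(T)$ uniformly; both are fine.
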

 \begin{proof}
 For $k=1$ it is clear from Remark \ref{remark-k1}.
 
Let $k>1$ and  let $\sigma\in G_k$. By minimality of $a$, there exists a sequence $\{a^{n_i} \xi : n_{i+1}>n_i>0\}$ that converges to $\sigma \xi$.  Since  $\sigma_{n_i,k}\xi=a^{n_i}\xi$ for every $i\geq 1$ (see Definition \ref{defn-onorbits}),  we have
 $$
 \lim_{i\to \infty}\sigma_{n_i,k} \xi=\sigma \xi.   
 $$
 Then the conclusion follows from Lemma \ref{convergence}.
  \end{proof}

 \subsubsection{Rational spanning set in $N(\overline{\langle a \rangle })$}
 In this section we assume that the spherical index $m_T$ of the tree $T$ is bounded. That is, there exists $d>0$ such that $|m_n|\leq d$ for every $n\geq 1$. Note that this is equivalent to saying that there exist integers  $d_1, \cdots, d_M \geq 2$ such that 
  $k=m_n$ for some $n\geq 1$ if and only if $k\in \{d_1,\cdots, d_M\}$.  Observe that in this case
  $$
  I(T)=\{p\geq 1:  \gcd\{p,d_1,\cdots, d_M\}=1\}.
  $$
 The case when $M=1$ corresponds to the case when $T$ is the $d$-ary tree, with $d=d_1$ and $m_n=d$ for every $n\geq 1$.

 \begin{thm}\label{thm-skeleton}
Let $T$ be a tree with bounded spherical index $m_T$ and let $a \in \Aut(T)$ be minimal. Then the rational spanning set 
 $$
 \bigcup_{\substack{k\geq 1\\  \gcd\{k,d_1,\cdots, d_M\}=1}}\{\sigma_{m,k}: m\geq 1\}
 $$
given by Definition \ref{defn-ratspan} is dense in $N( \overline{\langle a \rangle })$.
\end{thm}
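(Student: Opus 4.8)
The plan is to prove density level by level. Since the subgroups $N_n$ of Section \ref{sec-topautom} show that $D(u,w)\le 1/2^n$ holds exactly when $u$ and $w$ induce the same permutation of $V_n$, it suffices to check that for every $w\in N(\overline{\langle a\rangle})$ and every $n\ge 1$ there are $k\in I(T)$ and $m\ge 1$ with $\sigma_{m,k}|V_n = w|V_n$. First I would record the finite-level data of $w$. By Lemma \ref{lemma-norm2} the element $\zeta:=waw^{-1}$ lies in $\overline{\langle a\rangle}$ and is minimal, and by Lemma \ref{minimal-transitivity} (with Proposition \ref{conjugacy-elements} for the order) the restriction $a_n$ acts transitively on $V_n$, hence is a single cycle of length $p_n=|V_n|$. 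Since the restriction of $\overline{\langle a\rangle}$ to $V_n$ is $\langle a_n\rangle$, we may write $\zeta_n=a_n^{\,j}$ for some $j$, and minimality of $\zeta$ (so that $\zeta_n$ is again transitive) forces $\gcd(j,p_n)=1$.

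The key step, and the only place the boundedness of $m_T$ enters, is to produce an integer $k\in I(T)$ with $k\equiv j \pmod{p_n}$. Because $m_T$ is bounded, only finitely many primes $q_1,\dots,q_r$ divide any $d_i$, and $I(T)$ consists exactly of those $k$ divisible by none of them. For each $q_i$ dividing $p_n$ the congruence $k\equiv j\pmod{p_n}$ already forces $q_i\nmid k$, since $\gcd(j,p_n)=1$; for each $q_i$ coprime to $p_n$ I impose $k\equiv 1\pmod{q_i}$. These conditions have pairwise coprime moduli, so the Chinese Remainder Theorem yields a positive solution $k\in I(T)$. Next, since $a_n$ acts transitively on $V_n$, there is an $m$ (which may be taken $\ge 1$ after adding a multiple of $p_n$) with $a_n^{\,m}(0^n)=w_n(0^n)$; I then take $\sigma_{m,k}$ as in Definition \ref{defn-onorbits}, so that $(\sigma_{m,k})_n(0^n)=a_n^{\,m}(0^n)=w_n(0^n)$.

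It then remains to verify $\sigma_{m,k}|V_n=w|V_n$ by a direct computation on each vertex. Writing an arbitrary $v\in V_n$ as $v=a_n^{\,s}(0^n)$, the relation $wa=\zeta w$ gives $w_n(v)=\zeta_n^{\,s}\,w_n(0^n)=a_n^{\,js}\,w_n(0^n)$, while the defining relation $\sigma_{m,k}a=a^k\sigma_{m,k}$ gives $(\sigma_{m,k})_n(v)=a_n^{\,ks}\,(\sigma_{m,k})_n(0^n)=a_n^{\,ks}\,w_n(0^n)$. Since $k\equiv j\pmod{p_n}$ and $o(a_n)=p_n$, we have $a_n^{\,ks}=a_n^{\,js}$, so the two agree at every $v$. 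Thus $D(\sigma_{m,k},w)\le 1/2^n$, and letting $n\to\infty$ shows that the rational spanning set of Definition \ref{defn-ratspan} is dense in $N(\overline{\langle a\rangle})$. Alternatively one may organize the last two paragraphs as first checking that $\bigcup_{k\in I(T)}G_k$ is dense in $N(\overline{\langle a\rangle})$ and then invoking Lemma \ref{density-0}, but the finite-level computation above subsumes both.

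The main obstacle is the arithmetic step: everything hinges on hitting the prescribed residue class $j \bmod p_n$ with an element of $I(T)$, and this can fail badly when $m_T$ is unbounded. For instance, if every prime occurs among the $m_n$, then $I(T)=\{1\}$, $G_1=\overline{\langle a\rangle}$, and the rational spanning set collapses to the powers of $a$, which are far from dense in a strictly larger normalizer. Thus the substance of the theorem is exactly the interplay between the transitivity of $a_n$ and the richness of $I(T)$ guaranteed by boundedness; the remaining verification is the finite-level analogue of Lemmas \ref{lemma-norm2} and \ref{density-0} and is routine.
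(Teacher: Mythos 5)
Your proof is correct, but it takes a genuinely different route from the paper's. The paper argues in two stages: it first shows that the union of the cosets $G_k$, $k\in I(T)$, is dense in $N(\overline{\langle a \rangle})$ --- by approximating $\zeta=\sigma a\sigma^{-1}$ by powers $a^{k_i}$, observing (this is where boundedness enters there) that once $|V_i|$ is divisible by every $d_j$ the exponents $k_i$ may be assumed coprime to $d_1,\ldots,d_M$, and then running a compactness argument: a subsequential limit $\lambda$ of $\sigma^{-1}\sigma_i$, with $\sigma_i\in G_{k_i}$ arbitrary, commutes with $a$, hence lies in $\overline{\langle a \rangle}$ by Lemma \ref{abelian}, so that $\sigma_i\lambda^{-1}\in G_{k_i}$ converges to $\sigma$ --- and only then invokes Lemma \ref{density-0} to pass from the cosets $G_k$ to the rational elements $\sigma_{m,k}$. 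You instead do a single explicit finite-level matching: given $w$ and $n$, you read off the exponent $j$ with $(waw^{-1})|V_n=a_n^{\,j}$ and $\gcd(j,p_n)=1$, use the Chinese Remainder Theorem to lift the residue class $j \bmod p_n$ to an actual element $k\in I(T)$ (this is exactly where you use boundedness), and choose $m$ so that $\sigma_{m,k}$ and $w$ agree at the vertex $0^n$; the two intertwining relations $wa=\zeta w$ and $\sigma_{m,k}a=a^k\sigma_{m,k}$ then force agreement on all of $V_n$. Both proofs turn on the same two ingredients --- transitivity of $a_n$ and the richness of $I(T)$ guaranteed by a bounded spherical index --- but yours is constructive and quantitative (it exhibits an explicit rational element within distance $2^{-n}$ of $w$, with no subsequence extraction), and it subsumes Lemma \ref{density-0} rather than quoting it; the paper's softer argument, on the other hand, yields as an intermediate statement the density of $\bigcup_{k\in I(T)}G_k$, which is the formulation in terms of cosets of $\overline{\langle a \rangle}$ that matches the Weyl-group bookkeeping used in the later sections.
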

 
\begin{proof} We first show that the union of rational cosets in the Weyl group $$\bigcupdot_{\substack{k\geq 1\\  \gcd\{k,d_1,\cdots, d_M\}=1}}G_{k}$$ is dense in $N( \overline{\langle a \rangle })$.

  Let $\zeta$ be a minimal automorphism in $ \overline{\langle a \rangle }$, and let $\sigma\in G_{\zeta}$. Since $\zeta$ is in $ \overline{\langle a \rangle }$ there exists a sequence $(a^{k_i})_{i\geq 0}$ that converges to $\zeta$, with $k_i \in \ZZ$ for $i \geq 0$. In particular this means that, given $i$, for every sufficiently large $j$ we have $\zeta|_{V_i} = a^{k_j}|_{V_i}$.  Assume that $i$ is large enough so that $|V_i|$ is divisible by all the integers in $\{d_1,\cdots, d_M\}$.  Since $\zeta$ is transitive at each level, we have that every $a^{k_j}$ is transitive on $V_i$, which implies that $ \gcd\{k_j, |V_i|\}=1$  and then $ \gcd\{k_j, d_1,\cdots, d_M\}=1$. Thus we can assume $ \gcd\{k_i, d_1,\cdots, d_M\}=1$ for every $i\geq 0$, which by Lemma \ref{integer-minimality} implies that $a^{k_i}$ is minimal.
  
  For  every $i\geq 0$, let $\sigma_i \in G_{k_i}$. Then we have
  $$
  \lim_{i\to \infty}\sigma_i a \sigma_i^{-1}=\lim_{i\to \infty}a^{k_i}=\zeta=\sigma a\sigma^{-1}.  
  $$   
 Consider the sequence $(\sigma^{-1}\sigma_i)_{i \geq 1}$. Since $\Aut(T)$ is compact, this sequence has a convergent subsequence $(\sigma^{-1}\sigma_{i_s})_{s \geq 1}$ to a limit point $\lambda$, that verifies
 $$
\lambda a \lambda^{-1} =\lim_{s\to \infty}\sigma^{-1}\sigma_{i_s} a \sigma_{i_s}^{-1}\sigma= \lim_{s\to \infty}\sigma^{-1}a^{k_{i_s}}\sigma= \sigma^{-1}(\sigma a \sigma^{-1})\sigma= a.
 $$
 Since $\lambda$ commutes with $a$, by Lemma \ref{abelian} we get $\lambda\in  \overline{\langle a \rangle }$. Then $\sigma_{i} \lambda^{-1} \in G_{k_{i}}$, for every $i\geq 0$. 
  
On the other hand, since $(\sigma_{i_j} \lambda^{-1})$ converges to $\sigma$, we get that  $\sigma$ is in the closure of  $\bigcupdot_{\substack{k\geq 1\\ k \in I(T)}}G_{k}$. Then the statement of the theorem follows by Lemma \ref{density-0}.
 \end{proof}

\begin{remark}\label{remark-degenerate}
{\rm  Theorem \ref{thm-skeleton} need not be true if $m_n$'s are not bounded. For example, suppose that $m_n=n+1$ for each $n\geq 1$. We get that $p_n=(n+1)!$ for every $n\geq 1$, and then $I(T)=\{1\}$.  This implies that
$$
\bigcupdot_{\substack{k\in I(T)}}G_{k}=G_1= \overline{\langle a \rangle }.
$$
On the other hand, since $a^{-1}$ is minimal ($a^{-1}$ is transitive on each $V_n$), Corollary \ref{Pink-result} implies there exists $w\in \Aut(T)$ such that $waw^{-1}=a^{-1}$.  From Lemma \ref{lemma-norm2} we get that $w\in N( \overline{\langle a \rangle })$ and, since $w$ does not commute with $a$, by Lemma \ref{abelian} we deduce that $w\notin  \overline{\langle a \rangle }$. This shows that $G_1= \overline{\langle a \rangle }$ is not dense in $N( \overline{\langle a \rangle })$ because $w\in N( \overline{\langle a \rangle })-  \overline{\langle a \rangle }$ can not be approximated by elements in $ \overline{\langle a \rangle }=G_1$. 
   }
\end{remark}

\section{Stable cycles and settled elements in $\Aut(T)$}\label{sec-stablesettled}

In this section we give a topological characterization of settled elements in $\Aut(T)$, for $T$ a $d$-ary rooted tree. However, most of the definitions and results in this section are valid for more general spherically homogeneous trees.

\subsection{Minimal components of $(\partial T, \sigma)$}\label{min-comp} Let $\sigma \in \Aut(T)$, and recall that $(\partial T, \sigma)$ denotes the dynamical system given by the action of the cyclic group $\langle \sigma \rangle$ on $\partial T$.  Recall from Section \ref{rooted} that we say that $u$ is a \emph{word of length} $k$ if $u$ is a concatenation of $k$ symbols in $\{0,\ldots, d-1\}$. We denote by $\{0,\ldots,d-1\}^k$ the set of all words of length $k$. If $v$ is a vertex in $V_n$ and $u$ is a word of length $k$, then $vu$ is a vertex in $V_{n+k}$. For every $n\geq 1$ and a vertex $v\in V_n$ we denote by
$$
[v]=\{x \in \partial T: x \mbox{ contains } v\}
$$
the cylinder set of paths passing through $v$. We denote by $\sigma^j[v]$ the image of the cylinder set $[v]$ under the power $\sigma^j$.

Recall from Definition \ref{defn-cycles} that we say that $v \in V_n$ is in a {\it cycle} of length $k\geq 1$ of $\sigma$ if $\sigma^k(v)=v$ and $\sigma^{j}(v)\neq v$ for every $1\leq j<k$. We say that $v$ is in a {\it stable cycle} of length $k$ if $v$ is in a cycle of length $k$ and if in addition for every $m>n$ and $w\in \{0,\cdots, d-1\}^{m-n}$, the vertex $vw\in V_m$ is in a cycle of length $d^{m-n}k$.  Observe this is equivalent to the following: for every $w,u\in \{0,\cdots, d-1\}^{m-n}$ and $1\leq i, j<k$ the vertices $\sigma^j(v)w$ and $\sigma^{i}(v)u$ are in the same cycle of length $d^{m-n}k$. 

A minimal component of a dynamical system was defined in Section \ref{subsec-dynsys}.

\begin{lemma}\label{simple}
Let $\sigma\in \Aut(T)$ and $n\geq 1$. The vertex $v\in V_n$ is in a stable cycle of length $k\geq 1$ if and only if the union of the cylinder sets  $X=\bigcup_{j=1}^{k}\sigma^j[v]$  is a minimal component of $(\partial T, \sigma)$.
\end{lemma}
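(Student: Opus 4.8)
The plan is to reduce both the stability of the cycle of $v$ and the minimality of $X$ to a single combinatorial condition: that for each level $m\ge n$ the automorphism $\sigma$ permutes the level-$m$ vertices lying inside $X$ as one cycle. Throughout I take $k$ to be the actual length of the cycle of $\sigma|_{V_n}$ through $v$, which is what makes the reasoning below go through cleanly.

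First I would record the structure of $X$. Since $\sigma$ is an automorphism of $T$, it sends the cylinder $[w]$ onto $[\sigma(w)]$, so $\sigma^j[v]=[\sigma^j(v)]$ for all $j$. As $v$ is in a cycle of length $k$, the vertices $v,\sigma(v),\dots,\sigma^{k-1}(v)$ are pairwise distinct, hence the cylinders $[\sigma^j(v)]$ are pairwise disjoint and $X=\bigcup_{j=0}^{k-1}[\sigma^j(v)]$ is a clopen subset of $\partial T$ (here $\sigma^k[v]=[\sigma^k(v)]=[v]$, so $\bigcup_{j=1}^k$ equals $\bigcup_{j=0}^{k-1}$). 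Because $\sigma^k(v)=v$, the map $\sigma$ cyclically permutes these $k$ cylinders, so $X$ is $\sigma$-invariant. For every $m\ge n$ the set of level-$m$ vertices contained in $X$ is
$$
X\cap V_m=\{\sigma^j(v)\,w : 0\le j<k,\ w\in\{0,\dots,d-1\}^{m-n}\},
$$
which has exactly $kd^{m-n}$ elements.

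Next I would establish a relative version of Lemma \ref{minimal-transitivity}: the restricted system $(X,\sigma)$ is minimal if and only if $\langle\sigma\rangle$ acts transitively on $X\cap V_m$ for every $m\ge n$. This is proved exactly as Lemma \ref{minimal-transitivity}, only relativized to $X$: the traces $[w]\cap X$ with $w\in X\cap V_m$ form a clopen basis of $X$, a $\sigma$-orbit is dense in $X$ if and only if it meets each such $[w]$, and $\sigma^j x\in[w]$ holds if and only if $\sigma^j$ carries the length-$m$ prefix of $x$ to $w$; hence density of every orbit is equivalent to transitivity of $\langle\sigma\rangle$ on $X\cap V_m$ at every level. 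Since $\langle\sigma\rangle$ is cyclic, transitivity on the $kd^{m-n}$-element set $X\cap V_m$ is the same as saying these vertices form a single $\sigma$-cycle of length $kd^{m-n}$. It then remains to match this with stability: by the reformulation of Definition \ref{defn-cycles} recorded just before the lemma, $v$ is in a stable cycle of length $k$ precisely when, for every $m>n$, all the $kd^{m-n}$ vertices $\sigma^j(v)w$ of $X\cap V_m$ lie in one cycle of length $d^{m-n}k$ (the case $m=n$ holding by hypothesis). This is verbatim the single-cycle condition above, so stability of the cycle of $v$ is equivalent to transitivity of $\langle\sigma\rangle$ on $X\cap V_m$ for all $m\ge n$, which in turn is equivalent to minimality of $(X,\sigma)$. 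Combining the two equivalences proves the lemma in both directions.

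The step requiring the most care is the relative minimality criterion: $X$ is not the boundary of a single rooted subtree but a disjoint union of $k$ cylinders cyclically permuted by $\sigma$, so Lemma \ref{minimal-transitivity} cannot be quoted directly and its short argument must be rerun on $X$. One should also keep the convention that $k$ is the genuine length of the cycle through $v$, since this disjointness of the cylinders $\sigma^j[v]$ and the resulting identity $|X\cap V_m|=kd^{m-n}$ are used on both sides of the equivalence.
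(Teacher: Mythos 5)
Your proposal is correct and takes essentially the same route as the paper: both arguments come down to the observation that minimality of $X$ is equivalent to the level-$m$ vertices lying in $X$ forming a single $\sigma$-cycle for every $m\geq n$, which (by the reformulation of Definition \ref{defn-cycles} given before the lemma) is exactly stability of the cycle through $v$. The only difference is organizational -- you factor the argument through an explicit relativized version of Lemma \ref{minimal-transitivity} and a count $|X\cap V_m|=kd^{m-n}$, whereas the paper argues the two implications directly (orbits from one cylinder avoiding another when stability fails, and every orbit meeting every cylinder in $X$ when it holds); your convention that $k$ is the genuine cycle length matches the paper's opening line.
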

\begin{proof}
Let  $k\geq 1$ be such that $v\in V_n$ is in a cycle of length $k$. If the cycle is not stable then there exist $m>n$, words $u,w\in\{0,\cdots, d-1\}^{m-n}$ and integers $1\leq i,j<k$ such that $\sigma^{j}(v)u$ and $\sigma^i(v)w$ are not in the same cycle in $V_m$. This implies that the $\sigma$-orbit of every $x\in [\sigma^j(v)u]$ avoids the cylinder set $[\sigma^{i}(v)w]$, and so it is not minimal in $X$. Thus the minimal components  of the elements in $[\sigma^j(v)u]$ are strictly contained in $X$. On the other hand, if $v\in V_n$ is in a  stable cycle of length $k$, then the orbit of every $x\in X$ is contained in $X$ and intersects  any cylinder set contained in $X$, which implies that $X$ is minimal. 
\end{proof}

\begin{remark}
{\rm  The set $X$ in Lemma \ref{simple} is closed and open (clopen) since it is a finite union of clopen cylinder sets. Observe that for any clopen minimal component $X$ of $(\partial T, \sigma)$  there exist $n\geq 1$ and $v\in V_n$ such that $X=\bigcup_{j=1}^{k}\sigma^j[v]$. Thus Lemma \ref{simple} implies that every clopen minimal component is determined by stable cycles.}
\end{remark}

Recall from Definition \ref{defn-settled} that we say that $\sigma\in \Aut(T)$ is \emph{settled} if 
$$
\lim_{n\to \infty}\frac{|\{v\in V_n: v \mbox{ is in a stable cycle}\}|}{|V_n|}=1.
$$

As a consequence of Lemma \ref{simple} we can give another characterization of settled elements, using the uniform (Bernoulli) measure $\mu$ on $\partial T$, defined for every $n\geq 1$ and $v\in V_n$ by
$$
\mu([v])=\frac{1}{|V_n|}.
$$

\begin{lemma}\label{almosteverywhere}
An automorphism $\sigma\in \Aut(T)$ is settled if and only   there exists a family $\{C_i\}_{i\in I}$ of $\sigma$-minimal clopen sets such that 
$$
\mu\left(\partial T- \bigcup_{i\in I}C_i \right)=0.
$$
\end{lemma}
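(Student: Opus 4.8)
The plan is to reduce the combinatorial density condition defining ``settled'' to a single measure-theoretic equality, and then to match the relevant set with the union of the clopen minimal components of $(\partial T,\sigma)$ via Lemma~\ref{simple} and the Remark following it. First I would fix, for each $n \ge 1$, the set $S_n = \{v \in V_n : v \text{ is in a stable cycle}\}$ and form the clopen set $A_n = \bigcup_{v \in S_n}[v]$ of all paths through a stable vertex at level $n$. Since the cylinders $\{[v] : v \in V_n\}$ are pairwise disjoint with $\mu([v]) = 1/|V_n|$, I obtain $\mu(A_n) = |S_n|/|V_n|$, so by Definition~\ref{defn-settled} the automorphism $\sigma$ is settled precisely when $\mu(A_n) \to 1$. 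The next step records that stability propagates upward: if $v \in V_n$ lies in a stable cycle of length $k$, then every descendant $vw$ does too, because a vertex $vwu$ two steps down is, by stability of $v$, in a cycle of length $d^{|wu|}k = d^{|u|}\cdot d^{|w|}k$, which is exactly the length required for $vw$ (itself in a cycle of length $d^{|w|}k$) to be stable. Consequently $[v] \subseteq A_{n+1}$ for every $v \in S_n$, so the $A_n$ increase with $n$, and $A := \bigcup_{n \ge 1} A_n$ satisfies $\mu(A) = \lim_{n\to\infty}\mu(A_n)$ by continuity of $\mu$ from below.

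The heart of the argument is to show that $A$ coincides with the union of \emph{all} clopen minimal components of $(\partial T,\sigma)$. For one inclusion, if $v \in S_n$ is in a stable cycle of length $k$, then by Lemma~\ref{simple} the set $X_v = \bigcup_{j=1}^{k}\sigma^j[v]$ is a clopen minimal component and $[v] \subseteq X_v$; hence $A$ is contained in the union of the clopen minimal components. For the reverse inclusion, let $C$ be any clopen minimal component. By the Remark following Lemma~\ref{simple} there are $n \ge 1$ and $v \in V_n$ with $C = \bigcup_{j=1}^{k}\sigma^j[v]$, and by Lemma~\ref{simple} this $v$ is in a stable cycle of length $k$. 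Since $\sigma$ is a tree automorphism, $\sigma^j[v] = [\sigma^j(v)]$, and each $\sigma^j(v)$ lies in the same $k$-cycle as $v$, hence is stable; therefore every $\sigma^j[v]$ sits inside $A_n \subseteq A$, giving $C \subseteq A$. This proves $A = \bigcup_i C_i$, where $\{C_i\}$ enumerates the clopen minimal components.

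Both directions now follow. If $\sigma$ is settled, then $\mu(A) = \lim_n \mu(A_n) = 1$, so the family of all clopen minimal components covers $\partial T$ up to a $\mu$-null set. Conversely, suppose a family $\{C_i\}_{i\in I}$ of $\sigma$-minimal clopen sets satisfies $\mu(\partial T - \bigcup_i C_i) = 0$. By the inclusion just proved, each $C_i \subseteq A$, so $\mu(A) \ge \mu(\bigcup_i C_i) = 1$, whence $\mu(A) = 1$ and therefore $\mu(A_n) \to 1$, i.e.\ $\sigma$ is settled. I expect the main obstacle to be this reverse direction, precisely because the given family is arbitrary: one must invoke the Remark following Lemma~\ref{simple} to guarantee that \emph{every} clopen minimal component is assembled from stable vertices, and then use the upward propagation of stability, which makes the level densities $|S_n|/|V_n|$ monotone and hence allows one to pass from $\mu(A)=1$ back to the defining limit.
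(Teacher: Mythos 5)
Your proof is correct and follows essentially the same route as the paper's: both identify the union of the clopen minimal components with the increasing union $\bigcup_n \bigcup_{v \in S_n}[v]$ of cylinders over stable vertices (via Lemma~\ref{simple} and the Remark following it), and then use monotonicity and continuity of $\mu$ to equate $\mu$ of that union with $\lim_n |S_n|/|V_n|$. The paper merely states the set equality and the monotone inclusion without proof, whereas you supply the details (both inclusions and the upward propagation of stability), so your write-up is a fleshed-out version of the same argument.
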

\begin{proof} 
Let $\{C_i\}_{i\in I}$ be the collection of minimal components which are clopen ($I$ could be empty). For every $n\geq 1$ let $J_n$ be the subset of vertices of $V_n$ which are in a stable cycle ($J_n$ could be empty).  We have
$$\bigcup_{i\in I}C_i=\bigcup_{n\geq 1}\bigcup_{v\in J_n}[v].$$
Since $\bigcup_{v\in J_n}[v]\subseteq \bigcup_{w\in J_{n+1}}[w]$, we get
$$
\mu\left( \bigcup_{i\in I}C_i\right)=\lim_{n\to \infty}\frac{|J_n|}{|V_n|},
$$  
 which is equal to $1$ if and only if $\sigma$ is settled. 
\end{proof}
Since $\sigma$ is an isometry, $\partial T$ is equal to the disjoint union of the minimal components of $(\partial T, \sigma)$. Thus Lemma \ref{almosteverywhere} is equivalent to the following.

\begin{prop}
An element $\sigma\in \Aut(T)$ is settled if and only for $\mu$-almost all $x$ in $\partial T$ the closure of its $\sigma$-orbit is clopen in $\partial T$.
\end{prop}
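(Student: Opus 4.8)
The plan is to deduce the statement directly from Lemma \ref{almosteverywhere} together with the observation, recorded just above, that $\partial T$ is the disjoint union of the minimal components of $(\partial T,\sigma)$. Since $\sigma$ acts by isometries, the action is equicontinuous, so every orbit closure $\overline{\langle\sigma\rangle x}$ is itself a minimal component, and it is precisely the unique minimal component containing $x$. The heart of the argument is therefore to identify the set $E=\{x\in\partial T:\overline{\langle\sigma\rangle x}\text{ is clopen}\}$ with the union $\bigcup_{i\in I}C_i$ of the clopen minimal components, after which the two conditions in the proposition collapse to the single equality $\mu(E)=1$.

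First I would record that there are only countably many clopen minimal components. By the remark following Lemma \ref{simple}, each clopen minimal component has the form $\bigcup_{j=1}^{k}\sigma^j[v]$ for some $n\geq 1$ and $v\in V_n$; since the tree $T$ has countably many vertices, the family $\{C_i\}_{i\in I}$ is countable, so $E=\bigcup_{i\in I}C_i$ is a Borel set and $\mu(E)$ is defined.

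Next I would prove the set equality $E=\bigcup_{i\in I}C_i$. For the inclusion $E\subseteq\bigcup_{i\in I}C_i$, if $x\in E$ then its orbit closure $\overline{\langle\sigma\rangle x}$ is a clopen minimal component, hence equals some $C_i$, and thus $x\in C_i$. For the reverse inclusion, if $x\in C_i$ then, since the minimal components are pairwise disjoint and $C_i$ is one of them, the minimal component containing $x$ is $C_i$ itself; as this component is $\overline{\langle\sigma\rangle x}$ and $C_i$ is clopen, we conclude $x\in E$. This step uses only that orbit closures are minimal components (equicontinuity of the isometric action) and that these components partition $\partial T$.

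Finally I would combine these facts. By Lemma \ref{almosteverywhere}, $\sigma$ is settled if and only if $\mu\bigl(\partial T-\bigcup_{i\in I}C_i\bigr)=0$, that is, $\mu(E)=1$, which is exactly the statement that $\overline{\langle\sigma\rangle x}$ is clopen for $\mu$-almost every $x\in\partial T$. I expect no serious obstacle: the only points needing care are the measurability of $E$, handled by the countability noted above, and the clean identification of an orbit closure with the minimal component that contains it, which rests on the equicontinuity of $\sigma$.
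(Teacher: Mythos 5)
Your proposal is correct and takes essentially the same route as the paper: the paper also deduces the proposition directly from Lemma \ref{almosteverywhere} combined with the observation that, since $\sigma$ is an isometry, $\partial T$ is the disjoint union of the minimal components of $(\partial T,\sigma)$, so that orbit closures coincide with minimal components and the settled condition becomes $\mu(E)=1$ for $E=\{x:\overline{\langle\sigma\rangle x}\text{ is clopen}\}$. The only difference is that you spell out details the paper leaves implicit, namely the countability of the family of clopen minimal components (hence measurability of $E$) and the explicit set equality $E=\bigcup_{i\in I}C_i$.
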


\begin{ex}\label{ex-settled-point}
{\rm
We give an example of a settled element with a fixed point. Let $T$ be a binary tree, and let $\eta$ be the non-trivial permutation of $V_1$. Recursively define an odometer $a = (a,1)\eta$ and an element $b = (a,b)$, see Section \ref{subsec-binaryautom} for notation. By construction $b$ acts as the odometer $a$ on the subtree of $T$ which lies above the vertex $0$, and on each subtree lying above the vertex $1^{n-1}0$ for $n \geq 1$, where $1^{n-1}$ denotes the concatenation of $n-1$ digits $1$. Since the restriction $a|V_n$ consists of a single cycle for each $n \geq 1$, every vertex in $V_n$ except for the rightmost vertex $1^n$ is in a stable cycle of $b$. The path in $T$, consisting of the rightmost vertices at each level is a fixed point of $b$. Therefore, $b$ is settled with a single fixed point. 

This construction can be modified to obtain an element with a countable number of fixed points by taking $c = (b,c)$ with $b$ as above. Then $c$ acts as $b$ on the subtree above $0$, and every subtree above $1^{n-1}0$. In each such subtree, a single path is fixed, and every vertex which is not in this path is in a stable cycle. The path through the vertices $1^n$, $n \geq 1$, is fixed by $c$. Thus $c$ is settled with a countable number of fixed points.
}
\end{ex}

Recall from Definition \ref{defn-settled} that a settled element $\sigma\in \Aut(T)$ is \emph{strongly settled} if there exists $n\geq 1$ such that every vertex in $V_n$ is in a stable cycle. 

As a direct consequence of Lemma \ref{simple} and the compactness of $\partial T$ we obtain the characterization of the closures of orbits for strongly settled elements.

\begin{prop}\label{strongly-settled-characterization}
Let $\sigma\in \Aut(T)$. The following are equivalent:
\begin{enumerate}
\item $\sigma$ is strongly settled.
\item For every $x\in \partial T$ the closure of the $\sigma$-orbit of $x$ is clopen.
\item $\sigma$ has  finitely many minimal components.
 \end{enumerate}
\end{prop}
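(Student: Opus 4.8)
The plan is to prove the cyclic chain of implications $(1)\Rightarrow(2)\Rightarrow(3)\Rightarrow(1)$, using Lemma \ref{simple} to pass between stable cycles and clopen minimal components, together with the compactness of $\partial T$ and the fact, recorded just before the statement, that $\partial T$ is the disjoint union of the minimal components of $(\partial T,\sigma)$. The recurring elementary observation is that distinct minimal components are disjoint, and that each $x\in\partial T$ lies in a unique minimal component $C$, where by minimality its orbit is dense, so the closure $\overline{\langle\sigma\rangle x}$ of the $\sigma$-orbit equals $C$.

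For $(1)\Rightarrow(2)$ I would assume $\sigma$ is strongly settled, fix a level $n$ at which every vertex lies in a stable cycle, and for a given $x\in\partial T$ take the unique $v\in V_n$ with $x\in[v]$, of cycle length $k$. By Lemma \ref{simple} the set $X=\bigcup_{j=1}^{k}\sigma^j[v]$ is a minimal component, and it is clopen as a finite union of cylinder sets; since $x\in[v]\subseteq X$, we get $\overline{\langle\sigma\rangle x}=X$, which is clopen. For $(2)\Rightarrow(3)$ I would note that by hypothesis every orbit closure $\overline{\langle\sigma\rangle x}$, equivalently every minimal component, is clopen; these components are open, pairwise disjoint, and cover the compact space $\partial T$, so a finite subcover already exhausts $\partial T$, forcing there to be only finitely many minimal components.

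The step I expect to require the most care is $(3)\Rightarrow(1)$. Here I would start from finitely many minimal components $C_1,\dots,C_r$: being finitely many pairwise disjoint closed sets whose union is $\partial T$, each $C_i$ is also open, hence clopen, hence a finite union of cylinder sets. Choosing a single level $n$ large enough that every $C_i$ is a union of cylinders $[v]$ with $v\in V_n$ yields a partition of $V_n$ among the $C_i$. To finish, fix $v\in V_n$ with $[v]\subseteq C_i$ and let $k$ be the length of the cycle of $v$ under $\sigma|V_n$. Since $\sigma^k(v)=v$ gives $\sigma^k[v]=[v]$, the clopen set $\bigcup_{j=1}^{k}\sigma^j[v]$ is $\sigma$-invariant and contained in $C_i$; by minimality of $C_i$ this nonempty closed invariant subset must coincide with $C_i$, so $\bigcup_{j=1}^{k}\sigma^j[v]$ is a minimal component and Lemma \ref{simple} shows $v$ lies in a stable cycle. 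As $v\in V_n$ is arbitrary, $\sigma$ is strongly settled.

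The main obstacle is precisely this identification of the small invariant set $\bigcup_{j=1}^{k}\sigma^j[v]$ with the whole component $C_i$: it is minimality (the absence of a proper nonempty closed invariant subset) that does the essential work, and one must be careful to verify genuine $\sigma$-invariance, which hinges on the equality $\sigma^k[v]=[v]$ coming from $v$ being in a cycle of length $k$. Everything else is bookkeeping with cylinder sets and compactness.
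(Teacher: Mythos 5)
Your proof is correct and takes exactly the route the paper intends: the paper states this proposition as a direct consequence of Lemma \ref{simple} and the compactness of $\partial T$ (together with the decomposition of $\partial T$ into disjoint minimal components recorded just before the statement), and your cycle of implications $(1)\Rightarrow(2)\Rightarrow(3)\Rightarrow(1)$ fleshes out precisely those ingredients. In particular, your key step in $(3)\Rightarrow(1)$ — identifying the nonempty clopen invariant set $\bigcup_{j=1}^{k}\sigma^j[v]$ with the whole minimal component via minimality, then invoking Lemma \ref{simple} — is sound.
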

 
 \begin{ex}\label{ex-minimal}
 {\rm
 Let $a \in \Aut(T)$ be a minimal element. Then the $a$-orbit of every $x \in \partial T$ is dense in $\partial T$, and so $a$ has a single minimal component. Then $a$ is strongly settled.
 }
 \end{ex}
 
\subsection{Settled elements in $\Aut(T)$}\label{subsec-AutTsettled} We show next that settled elements are abundant in $\Aut(T)$, that is, they form a dense subset of $\Aut(T)$. We assume that $T$ is a $d$-ary tree. The proof is constructive and easily generalizes to the case when $T$ has arbitrary spherical index. Recall that $D(\tau, \sigma)$ is the distance between $\tau, \sigma\in \Aut(T)$ defined in Equation  (\ref{eq-AutTmetric}).
 
\begin{lemma}\label{density-1}
For every $\tau\in \Aut(T)$ and every $n\geq 1$, there exists a strongly settled element $\sigma$ of $\Aut(T)$ such that $D(\tau, \sigma)\leq \frac{1}{2^n}$. 
In other words, the set of (strongly) settled elements of $\Aut(T)$ is dense in $\Aut(T)$.
\end{lemma}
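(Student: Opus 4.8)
The plan is to reduce the statement to a combinatorial approximation at level $n$ and then realize it by a strongly settled automorphism built by hand. First I would unwind the meaning of the estimate $D(\tau,\sigma)\le \frac{1}{2^n}$. Since every element of $\Aut(T)$ acts by isometries of $\partial T$, for $u=\sigma^{-1}\tau$ we have $d(\tau x,\sigma x)=d(\sigma u x,\sigma x)=d(ux,x)$ for all $x\in\partial T$, so $D(\tau,\sigma)=\sup_{x\in\partial T}d(ux,x)$. By the metric \eqref{eq-metrictree} and the description of the subgroups $N_n$ in \eqref{eq-normalnbhd} (both compatible with the metric \eqref{eq-AutTmetric}), this supremum is at most $\frac{1}{2^n}$ exactly when $u\in N_n$, i.e.\ when $\sigma^{-1}\tau$ fixes every vertex of $V_n$. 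Hence it suffices to produce a strongly settled $\sigma$ with $\sigma|V_n=\tau|V_n$ as permutations of $V_n$.

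Next I would construct such a $\sigma$ cycle by cycle. Decompose $\tau|V_n$ into cycles and fix one cycle $C=\{v,\tau(v),\dots,\tau^{k-1}(v)\}$ of length $k$, together with the $k$ subtrees $T_v,T_{\tau(v)},\dots,T_{\tau^{k-1}(v)}$ hanging below its vertices, each canonically isomorphic to $T$ because $T$ is $d$-ary. I would define $\sigma$ to send $T_{\tau^i(v)}$ onto $T_{\tau^{i+1}(v)}$ (indices mod $k$): for $0\le i\le k-2$ by the canonical identification, and for $i=k-1$ by the canonical identification followed by the odometer $\alpha_\Sigma$ of $T$. Doing this over all cycles of $\tau|V_n$ produces a well-defined $\sigma\in\Aut(T)$ with $\sigma|V_n=\tau|V_n$. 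The purpose of the odometer twist on the last subtree is that, up to the canonical identification, the return map is $\sigma^k|_{T_v}=\alpha_\Sigma$, and $\alpha_\Sigma$ is a \emph{minimal} element of $\Aut(T)$ (Example \ref{ex-minimal}).

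Finally I would verify that each cycle of $\tau|V_n$ is a stable cycle of $\sigma$, so that every vertex of $V_n$ lies in a stable cycle and $\sigma$ is strongly settled by Definition \ref{defn-settled} with witnessing level $n$. By Lemma \ref{simple} it is enough to show that $X=\bigcup_{j=1}^{k}\sigma^j[v]=\bigcup_{i=0}^{k-1}\partial T_{\tau^i(v)}$ is a minimal component of $(\partial T,\sigma)$. This set is clopen and $\sigma$-invariant, so minimality reduces to showing that the $\sigma$-orbit of each $x\in\partial T_v$ is dense in $X$. Writing $j=qk+r$ with $0\le r<k$ gives $\sigma^j x=\sigma^r\big((\sigma^k)^q x\big)$; since $\sigma^k|_{T_v}$ is the minimal odometer $\alpha_\Sigma$, the set $\{(\sigma^k)^q x:q\ge 0\}$ is dense in $\partial T_v$, and applying the homeomorphism $\sigma^r$ for $r=0,\dots,k-1$ shows the orbit meets, densely, each $\partial T_{\tau^r(v)}$. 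This single step carries the real content, and is where I expect the main work to lie: checking that turning the $k$-th return map into an odometer simultaneously forces, at every level $m>n$, the $kd^{m-n}$ vertices above $C$ into one cycle. Granting it, $\sigma$ is strongly settled and $\frac{1}{2^n}$-close to $\tau$; as $\tau$ and $n$ are arbitrary, the (strongly) settled elements are dense in $\Aut(T)$.
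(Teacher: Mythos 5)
Your proof is correct, and at its core it constructs essentially the same automorphism as the paper: over each cycle of $\tau|V_n$ you thread an odometer, so that the cycle lifts to a single cycle at every deeper level. The genuine difference lies in the packaging and the verification. The paper works level by level: it sets $\sigma_n=\tau_n$ and, for each $m>n$, writes down an explicit permutation $\sigma_m$ lifting every cycle of $\sigma_{m-1}$ to a single cycle of $d$ times the length (the last symbol is carried unchanged around the cycle and incremented cyclically at a distinguished vertex), so stability of every cycle --- hence strong settledness --- is immediate from the construction, and the bound $D(\tau,\sigma)\le \frac{1}{2^n}$ is read off directly. You instead define $\sigma$ globally, as the shift on the $k$ subtrees over a cycle with the twist $\alpha_\Sigma$ on the last one, and verify stability dynamically: the return map $\sigma^k$ on $\partial T_v$ is (conjugate to) the minimal odometer, so orbits are dense in the clopen tower $X$, and Lemma \ref{simple} converts that minimality into stability of the cycle; this uses a different key lemma than the paper's direct transitivity check. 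You also make explicit the reduction of $D(\tau,\sigma)\le \frac{1}{2^n}$ to the condition $\sigma|V_n=\tau|V_n$ via the subgroups $N_n$, which the paper leaves implicit. Both routes are sound: yours leverages the dynamical machinery already established (Lemma \ref{simple}, Example \ref{ex-minimal}) and makes the mechanism conceptually transparent, while the paper's purely combinatorial lifting is more self-contained and, as the paper notes, generalizes verbatim to trees of arbitrary spherical index --- your version also generalizes, but there one must replace ``canonically isomorphic to $T$'' by ``mutually isomorphic subtrees,'' since the subtree above a level-$n$ vertex of a spherically homogeneous tree is generally not isomorphic to $T$ itself, and use the odometer of that subtree instead of $\alpha_\Sigma$.
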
 

\begin{proof}
Let $\tau\in \Aut(T)$ and $\tau_n$ be the restriction of $\tau$ to $V_n$. Let $n\geq 1$ and let $C_1,\cdots, C_k$ be the cycles of $\tau_n$ in $V_n$. We choose $\sigma\in \Aut(T)$ as follows:  the restriction $\sigma_n$ of $\sigma$ to $V_n$ is equal to $\tau_n$. For every $m>n$, the restriction $\sigma_m$ of $\sigma$ to $V_m$ must verify the following: let $D_{m,1}, \cdots, D_{m,k_m}$ be the cycles of $\sigma_{m-1}$ in $V_{m-1}$. For $1\leq i \leq k_m$, let $v_{1}, \cdots , v_{\ell}$ be the vertices in $D_{m,i}$ ordered such that $\sigma_{m-1}(v_{j})=v_{j+1}$ and $\sigma_{m-1}(v_{\ell})=v_{1}$, for every $1\leq j <\ell$.   We take $\sigma_{m}$ to be the permutation on $V_m$ whose restriction to $\{vs: v\in D_{m,i}, 0\leq s<d\}$, for any $1 \leq i \leq k_m$, is given by
$$
\sigma_m(v_js)=v_{j+1}s \mbox{ for every } 1\leq j <\ell, \quad \sigma_m(v_\ell s)=v_1(s+1), \mbox{ for every } 0\leq s < d-1
$$
and
 $$
\sigma_m(v_j(d-1))=v_{j+1}(d-1) \mbox{ for every } 1\leq j <\ell, \quad \sigma_m(v_\ell(d-1))=v_10.
$$
Observe that the restriction of $\sigma_m$  to $\{vs: v\in D_{m,i}, 0\leq s<d\}$ is transitive, for every $1\leq i\leq k_m$. Since this is true for every $m>n$, we get that every vertex in $V_n$ lies in a stable cycle of $\sigma$, which shows that $\sigma$ is strongly settled.   

On the other hand the construction of $\sigma$ ensures that $d(\sigma x, \tau x)\leq \frac{1}{2^n}$, for every $x\in \partial T$, which means that $\sigma$ is at distance at most $\frac{1}{2^n}$ from $\tau$.
\end{proof}

\begin{remark}\label{nonsettled-aperiodic}
{\rm We say that $x\in \partial T$ is $\sigma$-aperiodic if the orbit of $x$ under the cyclic group generated by $\sigma$ is not periodic. We observe that if $\sigma$ is settled then   $\mu$-almost every $x\in \partial T$ is $\sigma$-aperiodic. If $\sigma$ is strongly settled then every $x\in \partial T$ is $\sigma$-aperiodic. 
}
\end{remark}

We note that it is possible to construct examples of elements in $\Aut(T)$ which are not settled and completely aperiodic. 

\begin{ex}\label{ex-doublecycle}
{\rm
Let $T$ be the binary tree, and let $h\in \Aut(T)$ be such that the length of every cycle doubles at levels $V_n$ for $n$ odd, and stays the same for $n$ even. Such an element $h$ has no stable cycles, which implies that $h$ is not settled. On the other hand, since the length of cycles in $V_n$ is increasing with $n$, the $h$-orbit of every point in $\partial T$ is aperiodic (infinite). We claim that the closure of each such orbit is not open, since it does not contain any cylinder sets. Indeed, let $x = (x_1,x_2, \ldots) \in \partial T$, then $x \in [x_n]$ for $n \geq 1$. Every cylinder set $[x_n]$ is a union of the sets $[x_n0]$ and $[x_n1]$. Suppose that $n$ is odd and for definitiveness, assume that $x_{n+1} = x_n0$. Then the cycle in $V_{n+1}$ which contains $x_{n+1}$ has the same length as the cycle in $V_n$ containing $x_n$. It follows that the $h$-orbit of $x$ never visits the cylinder set $[x_n1]$, and so its closure intersects but does not contain the cylinder set $[x_n]$. Since $n$ is arbitrary and $[x_n]$ for $n$ even contains cylinder sets $[x_m]$, $m > n$ for $m$ odd, this argument shows that the closure of any $h$-orbit in $\partial T$ is not open.
}
\end{ex}

\begin{remark}\label{remark-settled}
{\rm
The proof of Lemma \ref{density-1} implicitly uses the well-known fact that $\Aut(T)$ is isomorphic to the \emph{wreath product} $[S_d]^\infty$ of an infinite number of copies of the symmetric group on $d$ symbols. To recall the definition of the wreath product, let $G$ and $H$ be finite groups acting on the sets $X$ and $Y$ respectively, and let $f: X \to H$ be a function. An element $(g,f) \in G \ltimes H^X$ of the wreath product acts on the product $X \times Y$ in such a way that the action on $X$ is that of $g$, and the action on $\{x\} \times Y$, $x \in X$, is determined by the values of the function $f$ at points in $X$. In particular, $f(x)$ can be any permutation in $H$, and for different $x_1,x_2 \in X$ the values of $f$ at these points are independent.

Let $T$ be a $d$-ary tree with $\Sigma_n = \Sigma_1$, and let $H$ be a finite group acting transitively on $\Sigma_1 = V_1$. Suppose there is $g \in H$ such that the action of the cyclic group $\langle g \rangle$ is transitive on $\Sigma_1$. Then the infinite wreath product $[H]^\infty$ acts minimally on $\partial T = \prod_{n \geq 1} \Sigma$, and the proof of Lemma \ref{density-1} generalizes straightforwardly to show that $[H]^\infty$ is densely settled. It follows that if the image of an arboreal representation $\rho_{f,\alpha}: {\rm Gal}(\overline K/K) \to \Aut(T)$, for a polynomial $f(x)$ of degree $d$ and some $\alpha \in K$, has finite index in such an infinite wreath product $[H]^\infty$, then it is densely settled. Thus the conjecture of Boston and Jones is true for such representations.

In the current literature on arboreal representations, there are many examples of arboreal representations whose image has finite index in the wreath product of groups, as described above. Examples of arboreal representations which have finite index in $\Aut(T)$ can be found, for instance, in the survey \cite{Jones2013} and other publications. If the image of an arboreal representation has infinite index in $\Aut(T)$, then it is more difficult to determine if it is densely settled. We now show how to find examples of arboreal representations which have infinite index in $\Aut(T)$ and which are conjugate to the infinite wreath product $[H]^\infty$ with $H$ containing a transitive permutation. The images of such arboreal representations are densely settled by the argument above.

If a profinite subgroup of $\Aut(T)$ is topologically finitely generated, then it has infinite index in $\Aut(T)$, see for instance \cite{Jones2013}.
A criterion for when that happens for infinite wreath products may be found in \cite{Bondarenko2010}. For example, this criterion is satisfied when 
 the group $H$ has no non-trivial normal subgroups. This is true, for instance, if $H = A_d$ for $d \geq 5$, where $A_d$ denotes the alternating group on $d$ symbols. The group $A_d$ contains a transitive cyclic subgroup, and so a transitive permutation, when $d > 4$ and $d$ is odd, see \cite[Table 1]{LP2012}. Paper \cite{BEK2018} specifies a class of maps, called the normalized (dynamical) Belyi polynomials, for which the profinite geometric iterated monodromy group is isomorphic to the infinite wreath product $[A_d]^\infty$ where $d$ is the degree of the map. Thus by the argument above, if $d > 4$ and $d$ is odd, then the profinite geometric iterated monodromy group of a normalized Belyi polynomial is densely settled. 
 
In \cite[Section 3]{JKMT2015}, the authors give conditions under which the arithmetic iterated monodromy group associated to a rational function $r(x)$ of degree $d \geq 2$ is isomorphic to the infinite wreath product $[H]^\infty$ for $H$ finite (although $H$ is not specified in these results). The study of permutation groups with a transitive cyclic subgroup is in itself a topic of research in Group Theory, and a list of examples and the description of the properties of these groups can be found, for instance in \cite{LP2012}, see also the references therein. If the wreath product of groups in \cite{LP2012} can be obtained as the Galois groups of field extensions in \cite{JKMT2015}, then we will have another class of arboreal representations, for which the conjecture of Boston and Jones is true.
}
\end{remark}

\subsection{Stability of the settled  property}\label{stab-settled}  In general the property of settledness of elements is not very well-behaved, since the product of two settled elements need not be settled, while the product of two elements which are not settled can be settled, see Example \ref{ex-products}. However, as we show in this section, the powers of a settled element are settled, and the settledness is preserved under conjugacy.

Recall from Section \ref{subsec-dynsys} that for $h\in\Aut(T)$ and a point $x \in \partial T$ the $h$-orbit of $x$, denoted by $\langle h \rangle x$, is the orbit of $x$ under the action of the subgroup $\langle h\rangle$ generated by $h$.

\begin{lemma}\label{settled-iteration}
An automorphism $h \in \Aut(T)$ is (strongly) settled if and only if  $h^k$ is (strongly) settled, for every $k\in\mathbb{Z}-\{0\}$.
\end{lemma}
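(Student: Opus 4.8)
The plan is to compare the \emph{clopen} minimal components of the systems $(\partial T, h)$ and $(\partial T, h^k)$ and to show that the unions of these components coincide; the equivalences for both the settled and the strongly settled properties then follow at once from Lemma~\ref{almosteverywhere} and Proposition~\ref{strongly-settled-characterization}. First I would dispose of negative exponents: since $\langle h^k\rangle = \langle h^{-k}\rangle$ as subgroups of $\Aut(T)$, the systems $(\partial T, h^k)$ and $(\partial T, h^{-k})$ have exactly the same orbits, hence the same minimal components, so it suffices to treat $k\geq 1$. Write $U_h$ for the union of all clopen minimal components of $(\partial T,h)$ and $U_{h^k}$ for that of $(\partial T, h^k)$.

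The first key step (splitting) is to show that every clopen minimal component $X$ of $(\partial T, h)$ is a finite disjoint union of clopen minimal components of $(\partial T, h^k)$. Here $X$ is $h$-invariant and $(X, h|_X)$ is minimal; since $h$ is an isometry, this subsystem is equicontinuous, so by the argument of Lemma~\ref{basic-dynamics} applied to $(X,h|_X)$ its enveloping group $\overline{\langle h|_X\rangle}$ is abelian and acts freely and transitively on $X$, and, as in Proposition~\ref{conjugacy-elements}, it is isometrically isomorphic to an odometer group $\ZZ_{\overline q}$. Under this identification the minimal components of $(X, h^k|_X)$ are exactly the orbit closures of the closed subgroup $\overline{\langle h^k|_X\rangle}$, that is, its cosets in $\overline{\langle h|_X\rangle}$. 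This subgroup has finite index (its index divides $k$), so, being a finite-index closed subgroup of a profinite group, it is open; hence there are only finitely many cosets and each is clopen. Thus $X\subseteq U_{h^k}$, giving $U_h \subseteq U_{h^k}$.

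The second key step (grouping) is the converse inclusion. Given a clopen minimal component $X'$ of $(\partial T, h^k)$, set $\widetilde X = \bigcup_{j=0}^{k-1} h^j(X')$; this is a finite union of clopen sets, hence clopen, and it is $h$-invariant because $h^k(X')=X'$. For $x\in X'$ we have $\overline{\langle h\rangle x}\supseteq \overline{\langle h^k\rangle x}=X'$, and since $\overline{\langle h\rangle x}$ is closed and $h$-invariant it must contain every $h^j(X')$, forcing $\overline{\langle h\rangle x}=\widetilde X$; hence $(\widetilde X, h)$ is minimal and $\widetilde X$ is a clopen minimal component of $(\partial T, h)$. Therefore $X'\subseteq \widetilde X\subseteq U_h$, which yields $U_{h^k}\subseteq U_h$ and, combined with the splitting step, the equality $U_h = U_{h^k}$.

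Finally I would read off both equivalences from this equality. For the settled case, Lemma~\ref{almosteverywhere} gives that $\sigma$ is settled iff $\mu$ of the union of its clopen minimal components equals $1$, so $h$ is settled iff $\mu(U_h)=\mu(U_{h^k})=1$ iff $h^k$ is settled. For the strongly settled case, since $\partial T$ is compact and equals the disjoint union of its minimal components, a family of clopen minimal components exhausts $\partial T$ precisely when there are only finitely many minimal components; by Proposition~\ref{strongly-settled-characterization} this is exactly the strongly settled condition, and $U_h=\partial T \iff U_{h^k}=\partial T$ finishes the argument. I expect the main obstacle to be the splitting step: one must verify that raising $h$ to the $k$-th power breaks each minimal component into only \emph{finitely many} pieces, each again \emph{clopen}. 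This finiteness and openness is precisely where the equicontinuity of the action and the procyclic (odometer) structure furnished by Lemma~\ref{basic-dynamics} and Proposition~\ref{conjugacy-elements} are indispensable; a purely combinatorial accounting of stable cycles via Lemma~\ref{simple} would be awkward, because the cycle lengths at each level change under the passage $h\mapsto h^k$.
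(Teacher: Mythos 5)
Your proof is correct, and its overall frame --- establishing the equality $U_h=U_{h^k}$ of the unions of clopen minimal components and then reading off both equivalences from Lemma~\ref{almosteverywhere} and Proposition~\ref{strongly-settled-characterization} --- is the same dynamical viewpoint the paper takes; indeed your grouping step is essentially the paper's argument for the implication ``$h^k$ settled $\Rightarrow h$ settled''. The divergence is in the splitting step, and there the paper is more elementary than you expect: it writes $\langle h\rangle x=\bigcup_{r=0}^{\ell-1}\langle h^{\ell}\rangle h^{r}x$, takes closures, uses the isometry property to see that each $\overline{\langle h^{\ell}\rangle h^{r}x}$ is an $h^{\ell}$-minimal component, and extracts a finite \emph{disjoint} union $\overline{\langle h\rangle x}=\bigcupdot_{r\in I}\overline{\langle h^{\ell}\rangle h^{r}x}$; if the left-hand side is clopen, then each of the finitely many closed pieces is clopen, since its complement inside the clopen union is a finite union of closed sets. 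So the finiteness and openness you flag as the main obstacle come for free from the index-$\ell$ orbit decomposition, and your claim that the enveloping-group/odometer machinery is ``indispensable'' there is overstated. Your coset argument is nonetheless valid and buys a cleaner structural statement the paper does not make explicit: the $h^k$-components of a minimal $h$-component $X$ are precisely the cosets of the clopen subgroup $\overline{\langle h^k|_X\rangle}$ of $\overline{\langle h|_X\rangle}$, of index dividing $k$. To make it fully airtight you should note two things: (i) Lemma~\ref{basic-dynamics} and Proposition~\ref{conjugacy-elements} are stated for actions on $\partial T$, so you need the (routine) remark that their proofs apply verbatim to the restricted system $(X,h|_X)$, using only that $h|_X$ is an isometry of a compact ultrametric Cantor space whose isometry group is again profinite; and (ii) the finite-index assertion actually needs no procyclic structure: in the compact abelian group $A=\overline{\langle h|_X\rangle}$ the quotient $A/\overline{\langle h^k|_X\rangle}$ is topologically generated by an element of order dividing $k$, hence is finite cyclic, and a closed finite-index subgroup is automatically open.
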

\begin{proof}
Suppose that $h$ is (strongly) settled. Let $x\in \partial T$ be and let $\ell\in\ZZ-\{0\}$. Observe that
$$
\langle h\rangle x=\bigcup_{r=0}^{\ell-1}\langle h^\ell \rangle h^rx, 
$$
which implies, since the union is finite, that
$$
\overline{\langle h\rangle x}=\bigcup_{r=0}^{\ell-1}\overline{\langle h^\ell \rangle h^rx}.
$$
Recall from Section \ref{subsec-dynsys} that, for $g \in \Aut(T)$, a minimal component of the action of $\langle g \rangle$ is a closed subset $Y$ of $\partial T$ such that $Y$ is invariant under the action, and such that the $g$-orbit of every $y \in Y$ is dense in $Y$. In a degenerate case, $Y$ may be a fixed point of the action. In our setting, since $h^\ell$ is an isometry, for every $y \in \overline{\langle h^\ell\rangle h^rx}$ the points in the $h^\ell$-orbit of $y$ stay at the same distance from the points in the $h^\ell$-orbit $h^rx$, and so if the $h^\ell$-orbit of $h^rx$ accumulates at a point $z \in \partial T$, then the $h^\ell$-orbit of $y$ accumulates at $z$ as well. Thus
the set $\overline{\langle h^\ell\rangle h^rx}$ is a minimal component of $h^\ell$, for every $0\leq r<\ell$. Then there exists a subset $I\subseteq \{0,\cdots, \ell-1\}$ such that 
$$\overline{\langle h\rangle x}=\bigcupdot_{r\in I}\overline{\langle h^\ell\rangle h^rx}. $$
Note that this union is a disjoint union of sets. 

Now suppose that the set $\overline{\langle h\rangle x}$ is clopen. Then every  $h^\ell$-minimal component contained in the finite disjoint union $\overline{\langle h\rangle x}$ is clopen. This implies that if  $x\in \partial T$ is in a clopen  $h$-minimal component then $x$ is in a clopen $h^\ell$-minimal component. Thus if $h$ is (strongly) settled then $h^\ell$ is (strongly) settled.  

Let $k\in \ZZ-\{0\}$. Suppose that $h^k$ is (strongly) settled. Since for every $x\in \partial T$ the $h^k$-orbit of $x$ is included in the $h$-orbit of $x$, then we get that the $h^k$-minimal component containing $x$ is included in the $h$-minimal component containing $x$. Thus if the minimal component of $x$ with respect to $h^k$ is clopen, the minimal component of $x$ with respect to $h$ is also clopen.  Then by Lemma \ref{almosteverywhere} we conclude that $h$ is settled.
\end{proof}

\begin{ex}\label{ex-products}
{\rm From Lemma \ref{settled-iteration} we have that if $h$ is (strongly) settled, then $h^{-1}$ is also (strongly) settled. Since $h^{-1}h =id$ is not settled, we conclude  that the product of two (strongly) settled elements is not necessarily (strongly) settled. 

On the other hand, the product of two non-settled elements can be (strongly) settled. Let $u_1 = \eta$ be the non-trivial permutation of two elements, and define $u_2 = (u_1,u_2)$. Then both $u_1$ and $u_2$ have order $2$, so they are not settled. However, the product $u_1u_2 = \eta(u_1,u_2)$ is minimal, so it is strongly settled by Example \ref{ex-minimal}. We discuss the action of the group generated by $u_1$ and $u_2$ in detail in Section \ref{sec-dihedral}.}
\end{ex}

The property of being (strongly) settled is preserved under conjugacy in $\Aut(T)$ as shown in the next result.
This result is a direct consequence of the fact that the minimal components of a dynamical systems are preserved under conjugacy. 

\begin{prop} \label{settled-preserved-conjugacy}
Let $w, h\in \Aut(T)$. Then $h$ is (strongly) settled if and only if $whw^{-1}$ is (strongly) settled.
\end{prop}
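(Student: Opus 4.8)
The plan is to exploit the fact that conjugation by $w$ is a topological conjugacy between the dynamical systems $(\partial T, h)$ and $(\partial T, whw^{-1})$, and then to invoke the characterizations of settled and strongly settled elements in terms of minimal components established in Lemma \ref{almosteverywhere} and Proposition \ref{strongly-settled-characterization}. In other words, I would make precise the remark that the minimal components of a dynamical system are preserved under conjugacy, and track how the clopen and measure-theoretic properties transfer.

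First I would set $g = whw^{-1}$ and record the basic orbit identity. For every $x \in \partial T$ and $k \in \ZZ$ we have $g^k(w(x)) = w h^k w^{-1}(w(x)) = w(h^k(x))$, so the $g$-orbit of $w(x)$ is exactly the image under $w$ of the $h$-orbit of $x$. Since $w$ is a homeomorphism of $\partial T$ (indeed an isometry), taking closures yields $\overline{\langle g \rangle w(x)} = w\big(\overline{\langle h \rangle x}\big)$. From this I would deduce that $Y \subseteq \partial T$ is a minimal component of $(\partial T, h)$ if and only if $w(Y)$ is a minimal component of $(\partial T, g)$: invariance transfers since $g(w(Y)) = w(h(Y)) = w(Y)$, and minimality transfers since each orbit closure inside $w(Y)$ is the $w$-image of the corresponding orbit closure inside $Y$. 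Thus $w$ induces a bijection between the minimal components of $h$ and those of $g$, and because $w$ is a homeomorphism this bijection carries clopen minimal components to clopen minimal components.

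For the strongly settled case this already finishes the argument: by Proposition \ref{strongly-settled-characterization}, $h$ is strongly settled if and only if it has finitely many minimal components, and since $w$ gives a bijection on the set of minimal components, $h$ has finitely many if and only if $g$ does. For the settled case I would additionally use that $w$ preserves the uniform measure $\mu$: since $w \in \Aut(T)$ permutes the vertices of each level $V_n$, it maps each cylinder $[v]$ to the cylinder $[w(v)]$ of equal measure, so $w_*\mu = \mu$. Then, applying Lemma \ref{almosteverywhere}, if $\{C_i\}_{i \in I}$ is a family of clopen $h$-minimal components with $\mu(\partial T - \bigcup_i C_i) = 0$, the sets $\{w(C_i)\}_{i \in I}$ are clopen $g$-minimal components with $\mu(\partial T - \bigcup_i w(C_i)) = \mu(w(\partial T - \bigcup_i C_i)) = \mu(\partial T - \bigcup_i C_i) = 0$, so $g$ is settled. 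The reverse implications in both cases follow by symmetry, replacing $(w, h)$ by $(w^{-1}, g)$ and noting $h = w^{-1} g w$.

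There is no serious obstacle here; the result is essentially the statement that topological conjugacy preserves orbit structure, combined with the observation that elements of $\Aut(T)$ are measure-preserving homeomorphisms. The only points requiring care are the verification that $w$ carries minimal components bijectively to minimal components (which rests on the orbit-closure identity above) and the measure-preservation of $w$, which is needed to transfer the full-measure condition in the settled case.
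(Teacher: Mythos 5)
Your proposal is correct and follows essentially the same route as the paper's own proof: the orbit-closure identity $\overline{\langle whw^{-1}\rangle w(x)} = w\bigl(\overline{\langle h\rangle x}\bigr)$, preservation of clopenness because $w$ is a homeomorphism, and invariance of the uniform measure $\mu$ under $\Aut(T)$ to transfer the full-measure condition, with the converse by symmetry. The only cosmetic difference is that you route the argument through the bijection on minimal components and the characterizations in Lemma \ref{almosteverywhere} and Proposition \ref{strongly-settled-characterization}, whereas the paper argues pointwise on orbit closures; the substance is identical.
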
 
\begin{proof}
Let $x\in \partial T$. Since $\langle w h w^{-1}\rangle=w\langle h\rangle w^{-1}$, we have $\langle w h w^{-1}\rangle wx= w \langle h \rangle x$. In other words, the $whw^{-1}$-orbit of $wx$ is equal to the image by $w$ of the $h$-orbit of $x$.  This implies that the closure of the $whw^{-1}$-orbit of $wx$ is equal to the image by $w$ of the closure of the $h$-orbit of $x$. Thus if the closure of the $h$-orbit of $x$ is clopen then the closure of the $whw^{-1}$-orbit of $wx$ is also clopen. Then
$$
w \{x\in \partial T: \overline{\langle h\rangle x} \mbox{ is clopen }\} \subseteq \{ x\in \partial T: \overline{\langle whw^{-1} \rangle x } \mbox{ is clopen}\}.
$$
Since the uniform measure $\mu$ is invariant under the action of $\Aut(T)$ we conclude that if $h$ is (strongly) settled then $w h w^{-1}$ is (strongly) settled.  From this we also get that $h$ is (strongly) settled if and only if $w h w^{-1}$ is strongly settled.  
\end{proof}

\begin{cor}\label{cor-conjugate}
Let $\Gamma\subseteq \Aut(T)$ and $w\in \Aut(T)$. Then $\Gamma$ is densely settled if and only if $w\Gamma w^{-1}$ is densely settled. 
\end{cor}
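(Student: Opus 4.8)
The plan is to realize conjugation by $w$ as a self-homeomorphism of $\Aut(T)$ and to transport the density statement along it, the essential content having already been isolated in Proposition \ref{settled-preserved-conjugacy}. Concretely, let $c_w \colon \Aut(T) \to \Aut(T)$ be the conjugation map $c_w(h) = w h w^{-1}$. First I would verify that $c_w$ is a homeomorphism. It is a group automorphism of $\Aut(T)$, and since the uniform topology coincides with the topology of pointwise convergence (see Section \ref{sec-topautom}), its continuity follows from the fact that $w$ and $w^{-1}$ act as isometries of $\partial T$; its inverse is $c_{w^{-1}}$, continuous by the same reasoning. In particular $c_w$ restricts to a homeomorphism $\Gamma \to w\Gamma w^{-1}$ between these subsets equipped with the induced topology (and it preserves the property of being a closed, hence profinite, subgroup).

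Next I would identify the image under $c_w$ of the settled elements of $\Gamma$. Writing $S_\Gamma$ for the set of settled elements of $\Gamma$, Proposition \ref{settled-preserved-conjugacy} asserts that $h$ is settled if and only if $c_w(h) = w h w^{-1}$ is settled. Hence $c_w$ carries $S_\Gamma$ bijectively onto the set $S_{w\Gamma w^{-1}}$ of settled elements of $w\Gamma w^{-1}$.

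Finally, because a homeomorphism preserves closures, $S_\Gamma$ is dense in $\Gamma$ if and only if its image $c_w(S_\Gamma) = S_{w\Gamma w^{-1}}$ is dense in $c_w(\Gamma) = w\Gamma w^{-1}$. By Definition \ref{defn-denselysettled}, this is exactly the assertion that $\Gamma$ is densely settled if and only if $w\Gamma w^{-1}$ is densely settled, which proves the corollary.

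I do not expect any serious obstacle: the dynamical heart of the argument is already packaged in Proposition \ref{settled-preserved-conjugacy}, and what remains is the routine observation that conjugation is a self-homeomorphism of $\Aut(T)$ and therefore preserves both the subgroup $\Gamma$ and the density of a distinguished subset. The only point deserving a line of care is that \emph{density} is understood within the subspace $\Gamma$ (respectively $w\Gamma w^{-1}$), which is immediate once $c_w$ is seen to restrict to a homeomorphism between these subspaces; the converse direction then follows either by symmetry or by applying the same argument to $c_{w^{-1}}$.
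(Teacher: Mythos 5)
Your proof is correct and matches the paper's (implicit) argument: the corollary is stated in the paper without proof precisely because it follows from Proposition \ref{settled-preserved-conjugacy} in the way you describe, with conjugation by $w$ acting as a self-homeomorphism (indeed an isometry, since $w$ is an isometry of $\partial T$) of $\Aut(T)$ that carries $\Gamma$ onto $w\Gamma w^{-1}$ and settled elements onto settled elements. Your added care about density being taken in the subspace topology of $\Gamma$ is exactly the right point to check, and your argument handles it correctly.
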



\begin{prop}\label{densely-settled-prop}
The following statements are equivalent:
\begin{enumerate}
\item $h$ is (strongly) settled.
\item $\overline{\langle h\rangle}$ contains a (strongly) settled element.
\item The set of (strongly) settled elements of  $\overline{\langle h\rangle}$ is dense in  $\overline{\langle h\rangle}$.
\end{enumerate}
\end{prop}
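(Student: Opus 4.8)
The plan is to establish the cycle of implications $(1)\Rightarrow(3)\Rightarrow(2)\Rightarrow(1)$, treating the ``settled'' and ``strongly settled'' versions in parallel. Throughout I will use the fact, already exploited in the proof of Lemma \ref{settled-iteration}, that for an isometry $\sigma$ the orbit closure satisfies $\overline{\langle\sigma\rangle}x=\overline{\langle\sigma\rangle x}$ and is a minimal component of $(\partial T,\sigma)$, so that settledness is governed by which of these orbit closures are clopen (Lemma \ref{almosteverywhere} and Proposition \ref{strongly-settled-characterization}).

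For $(1)\Rightarrow(3)$: if $h$ is settled then $h$ must have infinite order, since an element of finite order has only finite orbits, whose closures are finite and hence never clopen in the Cantor set $\partial T$. Therefore $\overline{\langle h\rangle}$ is an infinite procyclic group with no isolated points, so that $\{h^k : k\in\ZZ,\ k\neq0\}$ is still dense in $\overline{\langle h\rangle}$. By Lemma \ref{settled-iteration} each such $h^k$ is (strongly) settled, so the (strongly) settled elements of $\overline{\langle h\rangle}$ are dense, which is (3). The implication $(3)\Rightarrow(2)$ is immediate, as a dense subset of the nonempty space $\overline{\langle h\rangle}$ is nonempty.

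The substance lies in $(2)\Rightarrow(1)$, for which I would first prove the dichotomy that each minimal component $M=\overline{\langle h\rangle}x$ of the isometry $h$ is either clopen or nowhere dense. Suppose $M$ has nonempty interior; then it contains a ball $B$, which in the ultrametric space $\partial T$ is a cylinder set satisfying $B=B(x_0,\e)$ for every $x_0\in B$. Fix such an $x_0\in B\subseteq M$. By minimality of the orbit closure, $M=\overline{\langle h\rangle}x_0$, so $\{h^k x_0 : k\in\ZZ\}$ is dense in $M$; and since $h$ is an isometry with $hM=M$, each translate $h^k B=B(h^k x_0,\e)$ is contained in $M$. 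Given any $z\in M$, some $h^k x_0$ lies within $\e$ of $z$, whence $B(z,\e)=B(h^k x_0,\e)\subseteq M$. Thus $M$ is open, hence clopen.

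Finally I would combine the dichotomy with an orbit refinement. Given a (strongly) settled $g\in\overline{\langle h\rangle}$, the inclusion $\langle g\rangle\subseteq\overline{\langle h\rangle}$ yields, writing $M=\overline{\langle h\rangle}x$, that $\overline{\langle g\rangle}x\subseteq M$ for every $x$. If $M$ is not clopen then, by the dichotomy, $M$ is nowhere dense, so its nonempty closed subset $\overline{\langle g\rangle}x$ has empty interior and is therefore not clopen either. Hence $\{x : \overline{\langle h\rangle}x \text{ not clopen}\}\subseteq\{x : \overline{\langle g\rangle}x \text{ not clopen}\}$. If $g$ is settled the right-hand set is $\mu$-null, so the left-hand set is $\mu$-null and $h$ is settled; if $g$ is strongly settled the right-hand set is empty by Proposition \ref{strongly-settled-characterization}, so every $\overline{\langle h\rangle}x$ is clopen and $h$ is strongly settled. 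I expect the dichotomy lemma to be the main obstacle, since it is the one place where the ultrametric and isometric structure must be used carefully; everything else is formal once the orbit closures of $g$ are seen to refine those of $h$.
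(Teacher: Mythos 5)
Your proof is correct, and its skeleton is the same as the paper's: the cycle $(1)\Rightarrow(3)\Rightarrow(2)\Rightarrow(1)$, with $(1)\Rightarrow(3)$ coming from Lemma \ref{settled-iteration}, $(3)\Rightarrow(2)$ trivial, and $(2)\Rightarrow(1)$ resting on the fact that minimal components of any $g\in\overline{\langle h\rangle}$ are contained in those of $h$. The genuine difference is in $(2)\Rightarrow(1)$: the paper simply asserts that this containment transfers non-clopenness (``if the $h$-minimal component of $x$ is not clopen, then neither is the $\zeta$-minimal component''), which as pure set topology would be false, since a non-clopen set can perfectly well contain clopen subsets. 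Your clopen-or-nowhere-dense dichotomy for minimal components of an isometry of an ultrametric space is exactly the missing justification: a clopen $g$-component sitting inside the $h$-component $M$ gives $M$ nonempty interior, and the dichotomy then forces $M$ itself to be clopen. So where the paper writes ``it is not difficult to see \dots\ thus,'' you have actually proved the step, and your argument (translating a single ball around a dense orbit, using that every point of an ultrametric ball is a center) is the right one; note it cannot be replaced by the finite-union trick from Lemma \ref{settled-iteration}, since for general $g\in\overline{\langle h\rangle}$ the $h$-component need not decompose into finitely many $g$-components. Your treatment of $(1)\Rightarrow(3)$ is likewise slightly more careful than the paper's one-line citation, as you record why settledness forces $h$ to have infinite order and why deleting the identity from the dense set of powers preserves density in the perfect group $\overline{\langle h\rangle}$.
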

\begin{proof}
It is obvious that  (3) implies (2). The implication (1) to (3) is consequence of Lemma \ref{settled-iteration}.

It is not difficult to see that if $\zeta \in \overline{\langle h \rangle}$ then the minimal components of $\zeta$ are contained in the minimal components of $h$. Thus, if the $h$-minimal component  of $x$ is not clopen, then neither is the $\zeta$-minimal component of $x$. From this we deduce that if $h$ is not (strongly) settled then $\overline{\langle h\rangle}$ has no (strongly) settled elements. This shows (2) that implies (1).
\end{proof}

\begin{remark}\label{cor-conjugate-bis}{\rm 
Let $\cH_1,\cH_2$ be profinite groups acting effectively on $\partial T$, that is, we have that the homomorphisms $\cH_i \to {\rm Homeo}(\partial T)$ are injective for $i = 1,2$. Then we can identify $\cH_1,\cH_2$ with subgroups of $\Aut(T)$. Next, suppose the actions are conjugate by an isometry as dynamical systems, that is, there exists a topological isomorphism $\Phi: \cH_1 \to \cH_2$ and an isometry $\phi: \partial T \to \partial T$, such that $\phi (h x) = \Phi(h)(\phi(x))$ for all $h \in \cH_1$ and all $x \in \partial T$. We note that this is equivalent to the subgroups $\cH_1,\cH_2 \subset \Aut(T)$ being conjugate in $\Aut(T)$. Indeed, since $\phi$ is an isometry, it must restrict to a permutation on each level $V_n$, and so it must map cylinder sets in $\partial T$ to cylinder sets in $\partial T$. It follows that $\phi$ preserves the structure of the tree $T$ and defines an automorphism $w_\phi \in \Aut(T)$. It follows that $\Phi: \cH_1 \to \cH_2$ is given by the conjugation by $w_\phi$.

We note that the settled property is not preserved under the topological isomorphism of profinite groups. To see an example, suppose $T$ is the binary tree, $a$ is a minimal element in $\Aut(T)$, and $h$ is the non-settled aperiodic automorphism in $\Aut(T)$  introduced   in Example \ref{ex-doublecycle}. The order of the restriction of $h$ to $V_n$ is equal to $2^k$ if $n=2k$ or $n=2k-1$, for each $n\geq 1$. Thus, according to Proposition \ref{conjugacy-elements},  the group $\overline{H}=\overline{\langle h\rangle}$ is isomorphic to the group given by the inverse limit of the following sequence:
$$ 
         \ZZ/2\ZZ\leftarrow \ZZ/2\ZZ\leftarrow \ZZ/2^2\ZZ\leftarrow\ZZ/2^2\ZZ\leftarrow\ZZ/2^3\ZZ\leftarrow\ZZ/2^3\ZZ\leftarrow \cdots. 
$$
This group is topologically isomorphic to $\ZZ_2$, which is by Proposition   \ref{conjugacy-elements}  isomorphic to $ \overline{\langle a \rangle }$. This shows that $ \overline{\langle a \rangle }$ and $\overline{H}$ are topologically isomorphic.  However, their actions on $\partial T$ cannot be conjugate since the action of $ \overline{\langle a \rangle }$ is minimal, while the action of $\overline H$ is not.  By Proposition \ref{densely-settled-prop}, $\overline{H}$  is not densely settled  whereas $ \overline{\langle a \rangle }$ is densely settled. 

In the rest of the article, in all cases when we say that two groups are (topologically) isomorphic they are (topologically) isomorphic \emph{and} their actions are conjugate by an isometry, as in the first paragraph of this remark. The term `isomorphism' in this context is often used to describe the subgroups of $\Aut(T)$ algebraically, for instance, as wreath products or other known algebraic groups, and by construction the isometry in question is often the identity map in $\Aut(T)$.}
\end{remark}

 
 \section{Maximal tori with densely settled normalizers}\label{DenselyNormalizer}
 

 In this section we assume that $d\geq 2$ is a prime number, and $T$ is the $d$-ary tree, see Section \ref{rooted} for the necessary background on trees. In particular, the spherical index of $T$ is $m_T = (d,d, \ldots)$, and for each vertex level set its cardinality is given by the number $p_n= |V_n| = d^n$. Then the set of numbers coprime with the entries of the spherical index of $T$ defined in \eqref{eq-coprime} becomes
 $$
 I(T)=\{k\geq 1: k \mbox{ is not divisible by } d\}.
 $$
 Recall that for $a \in \Aut(T)$ we denote by $\langle a \rangle$ the cyclic subgroup of $\Aut(T)$ generated by $a$, and we denote by $ \overline{\langle a \rangle }$ the closure of $\langle a \rangle$ in $\Aut(T)$. If $a$ is minimal, then by Lemma \ref{minimal-elements} $ \overline{\langle a \rangle }$ is a maximal torus in $\Aut(T)$.
 In this section we study the properties of the normalizer of such a maximal torus, and give a dynamical proof of Theorem \ref{thm-main1}. 
 
 We restate the theorem now for the convenience of the reader.
 
 \begin{thm}\label{thm-Nads}
Let $d>1$ be a prime number and let $T$ be the $d$-ary tree. For every minimal automorphism $a\in \Aut(T)$ the normalizer $N( \overline{\langle a \rangle })$ of the maximal torus $ \overline{\langle a \rangle }$ is densely settled.
\end{thm}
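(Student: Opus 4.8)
The plan is to combine the density of the rational spanning set from Theorem~\ref{thm-skeleton} with a direct verification that every element of that set is settled. Since each $\sigma_{m,k}$ already lies in $N(\overline{\langle a\rangle})$, this will immediately give that the settled elements of $N(\overline{\langle a\rangle})$ are dense in it, which is the assertion of the theorem.

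First I would pass to the odometer model. By Corollary~\ref{Pink-result} the minimal element $a$ is conjugate in $\Aut(T)$ to $\alpha_\Sigma$; by Corollary~\ref{cor-conjugate} the densely settled property is invariant under conjugacy, and conjugation carries $N(\overline{\langle a\rangle})$ onto the normalizer of the conjugated torus. Hence I may assume $a=\alpha_\Sigma$ and, via Proposition~\ref{prop-equivodom}, identify $(\partial T,a)$ isometrically with $(\ZZ_d,c_d)$, so that $a$ becomes $x\mapsto x+1$ and $\xi=0$. In these coordinates the rational elements are explicit: from $\sigma_{m,k}\xi=a^m\xi$ and the defining relation $\sigma_{m,k}\,a=a^{k}\,\sigma_{m,k}$ one gets $\sigma_{m,k}(j)=kj+m$ for every integer $j$, and by density and continuity $\sigma_{m,k}(x)=kx+m$ for all $x\in\ZZ_d$. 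Since $k\in I(T)$ means $\gcd(k,d)=1$, here $k\in\ZZ_d^{\times}$, so this is a genuine affine isometry.

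The core step is to show that each affine map $\sigma(x)=kx+m$ is settled, which by Lemma~\ref{almosteverywhere} amounts to verifying that $\mu$-almost every $\sigma$-orbit closure in $\ZZ_d$ is clopen. For $k=1$ this is $a^{m}$, settled by Lemma~\ref{settled-iteration}. For $k\neq 1$ I would analyze $\overline{\langle\sigma\rangle}\,x$ according to $e=v_d(k-1)$. When $e=0$ the map has the fixed point $x_0=m/(1-k)\in\ZZ_d$, so $\sigma^{t}(x)=x_0+k^{t}(x-x_0)$ and the orbit closure of any $x\neq x_0$ equals $x_0+d^{\,v_d(x-x_0)}\,\overline{\langle k\rangle}\,u$ for a unit $u$; this is clopen precisely because $\overline{\langle k\rangle}$ has finite index in $\ZZ_d^{\times}$. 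This is where primality of $d$ enters: for $d$ odd, $\ZZ_d^{\times}$ is procyclic, and an integer $k\geq 2$ is never a root of unity in $\ZZ_d$, hence has infinite order, so $\overline{\langle k\rangle}$ is open. When $e\geq 1$ (which for $d=2$ is automatic, since then $k$ is odd), writing $s=k^{t}\in 1+d^{e}\ZZ_d$ gives $\sigma^{t}(x)=x+\big((s-1)/(k-1)\big)\big(d^{e}x+m/u_0\big)$ with $k-1=d^{e}u_0$, and a short valuation computation shows every orbit closure is a full coset $x+d^{\,\ast}\ZZ_d$, hence clopen, whether or not a fixed point exists. In all cases every point outside the (measure-zero) set of fixed points lies in a clopen minimal component, so $\sigma$ is settled.

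Finally, since every $\sigma_{m,k}\in N(\overline{\langle a\rangle})$ is settled and the family $\{\sigma_{m,k}\mid k\in I(T),\,m\geq 1\}$ is dense in $N(\overline{\langle a\rangle})$ by Theorem~\ref{thm-skeleton}, the settled elements of $N(\overline{\langle a\rangle})$ are dense, which proves the theorem. I expect the main obstacle to be the orbit-closure computation itself: the finite-index claim for $\overline{\langle k\rangle}\le\ZZ_d^{\times}$ in the case $k\not\equiv 1\pmod d$, and the no-fixed-point subcase $v_d(m)<v_d(k-1)$ when $k\equiv 1\pmod d$, where $\sigma$ must be handled directly rather than through a conjugating fixed point. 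These are exactly the places where the hypothesis that $d$ is prime is essential.
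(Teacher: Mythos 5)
Your overall strategy coincides with the paper's: density of the rational spanning set (Theorem \ref{thm-skeleton}) plus a proof that every $\sigma_{m,k}$ is settled. Your implementation of the second step is genuinely different, though. The paper proves settledness only for $k\equiv 1\pmod d$ ($d$ odd), resp. $k\equiv 1\pmod 4$ ($d=2$), by level-by-level cycle-length counting (Lemmas \ref{density}, \ref{cycle-0}, \ref{cycle-2}, resting on the valuation identity of Lemma \ref{auxiliar-5p}), and then reduces arbitrary $k$ coprime to $d$ to that case by raising $\sigma$ to the power $\varphi(d)$, resp. $\varphi(4)$, and invoking Lemma \ref{settled-iteration} (Proposition \ref{densidad-completa}). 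You instead work directly in the $d$-adic model $\sigma(x)=kx+m$ and compute orbit closures. Your case $e=v_d(k-1)=0$ ($d$ odd), via the fixed point $x_0=m/(1-k)$ and openness of $\overline{\langle k\rangle}$ in $\ZZ_d^\times$, is correct and handles directly a case the paper only reaches through the power trick; your case $e\geq 1$ with $d$ odd, and with $d=2$, $k\equiv 1\pmod 4$, is also correct.

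There is, however, a genuine gap at $d=2$, $k\equiv 3\pmod 4$ (so $e=1$), which your ``$e\geq 1$'' case is supposed to cover: the claim that every orbit closure is a full coset $x+d^{*}\ZZ_d$ is false there. The computation implicitly needs $\{k^t: t\geq 0\}$ to be dense in $1+2^e\ZZ_2$, i.e. $\overline{\langle k\rangle}=1+2^e\ZZ_2$; this holds for $d$ odd, and for $d=2$ when $e\geq 2$, by the $p$-adic logarithm, but $1+2\ZZ_2=\ZZ_2^\times\cong C_2\times\ZZ_2$ is not procyclic, so $\overline{\langle k\rangle}$ is always a proper (though open) subgroup when $k\equiv 3\pmod 4$. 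Concretely, for $\sigma(x)=3x+1$ on $\ZZ_2$ the orbit of $0$ is $0,1,4,13,40,121,\dots$, which modulo $8$ runs through $0,1,4,5$ only; its closure is $4\ZZ_2\cup(1+4\ZZ_2)$, a clopen set that is not a coset of any $2^{j}\ZZ_2$. The conclusion you need (orbit closures clopen, hence $\sigma$ settled via Lemma \ref{almosteverywhere}) is still true, but the ``short valuation computation'' you propose does not deliver it. The gap is fixable in either of two ways: note that the orbit closure equals $x+c\,\overline{R}$ where $\overline{R}$ is a finite union of cosets of $(k+1)\ZZ_2$ (clopen is all that is needed), with the fixed-point subcase ($m$ even) handled exactly as in your $e=0$ argument since $\overline{\langle k\rangle}$ is open in $\ZZ_2^\times$; or simply do what the paper does, namely pass to $\sigma^2=\sigma_{m(1+k),k^2}$, which has $k^2\equiv 1\pmod 8$, and apply Lemma \ref{settled-iteration}.
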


 The proof proceeds by carefully examining elements in the rational spanning set of $N( \overline{\langle a \rangle })$. Recall that the sets $G_k$, for $k \in I(T)$, defined in Section \ref{sec_normalizer}, are the cosets in $N( \overline{\langle a \rangle })$ given by
   $$G_k = \{w \in N( \overline{\langle a \rangle }) :  w a w^{-1} = a^k\},  \, k \in I(T).$$
 A rational spanning set of $N( \overline{\langle a \rangle })$ defined in Definition \ref{defn-ratspan}, consists of the union of countable subsets in $G_k$, $k \in I(T)$. If the spherical index of $T$ is bounded (for $d$-ary trees it is constant), then the rational spanning set is dense in $N( \overline{\langle a \rangle })$. In Remark \ref{remark-degenerate} we gave an example of a tree with unbounded spherical index, for which the rational spanning set is degenerate, that is, it is not dense in $N( \overline{\langle a \rangle })$. Our method to prove Theorem \ref{thm-main1} does not extend to trees with degenerate rational spanning sets. 
 
 Our method also does not extend to the case when the spherical index of $T$ is bounded but not constant, since in that case we do not have an analog of Lemma \ref{auxiliar-5p} below.
 
\subsection{A few technical preliminaries} 

 Let $a \in \Aut(T)$ be minimal. Then $ \overline{\langle a \rangle }$ acts freely and transitively on $\partial T$, and so there is a bijection between $ \overline{\langle a \rangle }$ and $\partial T$ which depends on a choice of a point in $\partial T$. We make this identification explicit. 

As described in Section \ref{rooted}, for every $n\geq 1$  the vertices in $V_n$ are labelled by the words in $\prod_{j=1}^n\{0,\cdots, d-1\}$. Since $a$ is transitive on $V_n$, the map $\phi_n:V_n\to \ZZ/d^n\ZZ$ given by
 $$
 a^j(0^n)=j+d^n\ZZ, \mbox{ for every } j\geq 0,
 $$
 is a well-defined bijection. The action of $a$ on $\ZZ/d^n\ZZ$ reads as
 $$
 a(j+d^n\ZZ)=j+1 +d^n\ZZ,
 $$
 or, which is equivalent,
 $$
 a(j)=j+1 \mod d^n, \mbox{ for every } 0\leq j<d^n.
 $$

 Let $k\in I(T)$ and $\sigma\in G_k$.    For $n\geq 1$, and $v\in V_n$ (or for $v\in \ZZ/d^n\ZZ$, which is equivalent) the relation $\sigma a=a^k\sigma$ together with the identification described above implies that
 \begin{equation}\label{equation-k3}
 \sigma a^{v}(0)=\sigma(v)=\sigma(0)+kv \mod d^n.
 \end{equation}
 
 Applying Equation (\ref{equation-k3}) $p>1$ times we get
 \begin{equation}\label{equation-k4}
 \sigma^p(v)=k^pv+ \sigma(0)\sum_{i=0}^{p-1}k^i=\left\{ \begin{array}{ll}   
                                                                                       k^pv+\sigma(0)\left(\frac{k^p-1}{k-1} \right) \mod d^n & \mbox{ if  } k>1\\
                                                                                        v+\sigma(0)p & \mbox{ if } k=1
                                                                                         \end{array}\right. 
 \end{equation}

 A proof of the next lemma can be found at \cite{MP}.

\begin{lemma}\label{auxiliar-5p}
Let $k > 1$ and $n\geq 1$  be integers. Consider the following geometric series
$$
 r(n): = \sum_{i=0}^{n-1}k^i= \left(\frac{k^n-1}{k-1} \right).
 $$ 
 Then for any $n \geq 1$ the following holds.
 \begin{enumerate}
 \item If $d$ is an odd prime and $k=ds+1$, for some $s\geq 0$, then  the highest power of $d$ dividing $r(n)$ is equal to the highest power of $d$ dividing $n$.
 
 \item if $d=2$ and $k=4s+1$, for some $s\geq 0$, then the highest power of $2$ dividing $r(n)$ is equal to the highest power of $2$ dividing $n$.
  \end{enumerate}
 \end{lemma}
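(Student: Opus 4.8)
The plan is to read this as an instance of the Lifting-the-Exponent phenomenon and to give a self-contained proof by induction on the $d$-adic valuation $v_d(n)$ of $n$, where I write $v_d(x)$ for the exponent of the highest power of $d$ dividing $x$; in this notation the claim in both parts is exactly $v_d(r(n))=v_d(n)$. The driving identity is the block factorization of the geometric sum obtained by splitting the index range $\{0,\ldots,dn-1\}$ into $d$ consecutive blocks of length $n$: for every $n\geq 1$,
\begin{equation}\label{eq-aux5p-block}
r(dn)=\sum_{i=0}^{dn-1}k^i=\left(\sum_{j=0}^{n-1}k^j\right)\left(\sum_{l=0}^{d-1}(k^n)^l\right)=r(n)\,S_n,\qquad S_n:=\sum_{l=0}^{d-1}(k^n)^l.
\end{equation}
This reduces the lemma to two facts: a base case asserting $v_d(r(n))=0$ whenever $d\nmid n$, and a one-step statement $v_d(S_n)=1$, which together with \eqref{eq-aux5p-block} yields the recursion $v_d(r(dn))=v_d(r(n))+1$.

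First I would dispose of the base case. In both parts the hypothesis forces $k\equiv 1\pmod d$, so every summand satisfies $k^i\equiv 1\pmod d$ and hence $r(n)\equiv n\pmod d$. In particular $d\nmid r(n)$ precisely when $d\nmid n$, giving $v_d(r(n))=0$ on that range.

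For the inductive step I would analyse $S_n$ by writing $k^n=1+u$ with $u=k^n-1$ and $e:=v_d(u)\geq 1$, and expanding each term binomially modulo $u^2$:
\begin{equation}\label{eq-aux5p-exp}
S_n\equiv\sum_{l=0}^{d-1}(1+lu)=d+u\cdot\frac{d(d-1)}{2}\pmod{u^2}.
\end{equation}
If $d$ is an odd prime, then $\tfrac{d(d-1)}{2}$ has $d$-adic valuation $1$, since $\tfrac{d-1}{2}\in\mZ$ lies strictly between $0$ and $d$ and is therefore coprime to $d$; hence the correction term $u\cdot\tfrac{d(d-1)}{2}$ has valuation $e+1\geq 2$, the term $u^2$ has valuation $2e\geq 2$, while $v_d(d)=1$, so $v_d(S_n)=1$ exactly. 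Iterating $v_d(r(d\,\cdot))=v_d(r(\cdot))+1$ from the base case, any $n=d^m t$ with $d\nmid t$ gives $v_d(r(n))=m=v_d(n)$, proving part (1). For part (2) the expansion \eqref{eq-aux5p-exp} degenerates because $\tfrac{d(d-1)}{2}=1$, so one must invoke the sharper hypothesis $k\equiv 1\pmod 4$: here $S_n=1+k^n$, and $k\equiv 1\pmod 4$ forces $k^n\equiv 1\pmod 4$, whence $S_n\equiv 2\pmod 4$ and $v_2(S_n)=1$; the same induction then yields $v_2(r(n))=v_2(n)$.

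I expect the only genuine subtlety to be the $d=2$ case, which is exactly where the classical Lifting-the-Exponent lemma demands the stronger congruence $k\equiv 1\pmod 4$ rather than mere oddness of $k$: without it the factor $1+k^n$ could carry $2$-adic valuation larger than $1$ and the clean recursion $v_2(r(2n))=v_2(r(n))+1$ would break down. Tracking this in \eqref{eq-aux5p-exp} — where for $d=2$ the correction term $u$ has valuation $e=v_2(k^n-1)$, which the hypothesis $k\equiv 1\pmod 4$ guarantees is at least $2$ and hence dominates $v_2(d)=1$ — is the one place the argument must be handled with care.
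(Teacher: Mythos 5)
Your proof is correct, but it cannot be compared step-by-step with the paper's own argument for a simple reason: the paper does not prove Lemma \ref{auxiliar-5p} at all --- it delegates the proof to the external web reference \cite{MP} (``Divisors of an $n$-term Geometric Series'', MathPages). Your write-up is therefore a genuine self-contained replacement rather than a rediscovery of the paper's route. The structure is sound throughout: the block factorization $r(dn)=r(n)\,S_n$ with $S_n=\sum_{l=0}^{d-1}(k^n)^l$ is verified by splitting the index range; the base case $r(n)\equiv n\pmod{d}$ when $k\equiv 1\pmod{d}$ correctly gives $v_d(r(n))=0$ for $d\nmid n$; and the key computation $v_d(S_n)=1$ is handled correctly in both regimes --- for odd $d$ via the expansion $S_n\equiv d+u\,\tfrac{d(d-1)}{2}\pmod{u^2}$ with $u=k^n-1$, where $v_d\bigl(\tfrac{d(d-1)}{2}\bigr)=1$ makes every correction term have valuation at least $2$, and for $d=2$ via $S_n=2+u$ with $v_2(u)\geq 2$ forced by $k\equiv 1\pmod 4$. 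The recursion $v_d(r(dn))=v_d(r(n))+1$ then closes the induction on $v_d(n)$. This is the standard lifting-the-exponent mechanism, and it correctly isolates the one delicate point that the lemma's hypotheses are designed around: for $d=2$, oddness of $k$ alone would not suffice, which is exactly why the paper (and the Chebyshev-case arguments that rely on Lemmas \ref{density}--\ref{cycle-2}) requires $k=4s+1$. What your approach buys over the paper's citation is self-containedness and transparency about where each hypothesis enters; the citation buys only brevity, at the cost of resting a load-bearing lemma on an informal web source.
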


 We say that an integer  $k>1$ {\it satisfies the conditions of Lemma \ref{auxiliar-5p}} if $k= 4s+1$ whenever $d=2$ and $k= ds +1$  when $d$ is an odd prime, for some $s \in \ZZ$.
 
  \subsection{Cycle structure of elements in $N( \overline{\langle a \rangle })$}\label{subsec-proofsNa}
 
Recall that for integers  $m\geq 0$ and $k\geq 1$, the element $\sigma_{m,k}\in G_k$ of the rational spanning set is the unique automorphism in $G_k$ verifying  $\sigma_{m,k}\xi=a^m \xi$, where $\xi$ is an infinite sequence of $0$'s. In particular, using \eqref{equation-k3} we get in $\ZZ/d^n \ZZ$ that $\sigma_{m,k}(0) = m$. We now determine what points in $\partial T$ are in stable cycles of $\sigma_{m,k}$, depending on $k$ and $m$.

\begin{lemma}\label{density}
Let $k>1$ be an integer that satisfies the conditions of Lemma \ref{auxiliar-5p}. Let $m\geq 1$ be an integer which is not divisible by $d$.  Then $\sigma_{m,k}$ is minimal and so strongly settled.
\end{lemma}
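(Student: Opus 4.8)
The plan is to establish minimality of $\sigma_{m,k}$ directly through Lemma \ref{minimal-transitivity}, by showing that the restriction $\sigma_{m,k}|V_n$ acts transitively on $V_n$ for every $n \geq 1$; strong settledness is then immediate from Example \ref{ex-minimal}. Fix $n \geq 1$ and work under the identification $\phi_n : V_n \to \ZZ/d^n\ZZ$ introduced at the start of Section \ref{subsec-proofsNa}, for which $a$ becomes translation by $1$ and, since $\sigma_{m,k}\xi = a^m\xi$, one has $\sigma_{m,k}(0) = m \mod d^n$. The cleanest route is to track the orbit of the single vertex $0$. Substituting $\sigma(0) = m$ into formula \eqref{equation-k4} gives
$$
\sigma_{m,k}^p(0) = m\, r(p) \mod d^n, \qquad r(p) = \sum_{i=0}^{p-1} k^i = \frac{k^p-1}{k-1},
$$
so the length of the $\sigma_{m,k}$-orbit of $0$ is the least integer $p \geq 1$ with $m\, r(p) \equiv 0 \mod d^n$.

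Next I would bring in the two hypotheses. Since $m$ is not divisible by the prime $d$, we have $\gcd(m, d^n) = 1$, and therefore $m\, r(p) \equiv 0 \mod d^n$ is equivalent to $d^n \mid r(p)$. This is precisely where the assumption that $k$ satisfies the conditions of Lemma \ref{auxiliar-5p} is used: that lemma identifies the exact power of $d$ dividing $r(p)$ with the exact power of $d$ dividing $p$. Consequently $d^n \mid r(p)$ holds if and only if $d^n \mid p$, so the least positive such $p$ is $p = d^n$. Hence the orbit of $0$ under $\sigma_{m,k}|V_n$ has length exactly $d^n = |V_n|$, which means it exhausts $V_n$; thus $\sigma_{m,k}|V_n$ is a single $d^n$-cycle and acts transitively. (Equivalently, since the computation gives $o(\sigma_{m,k}|V_n) = d^n$ and $d$ is prime, Lemma \ref{permutation-primes} delivers transitivity directly.)

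As $n \geq 1$ was arbitrary, Lemma \ref{minimal-transitivity} shows that $\sigma_{m,k}$ is minimal, and Example \ref{ex-minimal} then gives that it is strongly settled, completing the argument.

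The genuine arithmetic difficulty—controlling the $d$-adic valuation of the geometric sum $r(p)$—is exactly what Lemma \ref{auxiliar-5p} packages away, so once that lemma is granted the remaining argument is essentially bookkeeping. The points that still require care are two: first, the reduction to the orbit of the basepoint $0$, where I must use $\gcd(m,d)=1$ precisely to pass from $m\, r(p)\equiv 0$ to the cleaner divisibility $d^n \mid r(p)$; and second, the observation that a single orbit of full length $d^n$ certifies transitivity (immediate here, since such an orbit is all of $V_n$). I expect the congruence hypothesis on $k$ to be indispensable: without the sharp valuation identity $v_d(r(p)) = v_d(p)$, one would only know that $d^n \mid p$ is \emph{sufficient} for $d^n \mid r(p)$, and the orbit of $0$ could be shorter than $d^n$, so transitivity—and hence the conclusion—could genuinely fail for a $k$ violating its congruence condition.
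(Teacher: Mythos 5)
Your proof is correct and takes essentially the same approach as the paper's: both track the orbit of the vertex $0$ under the identification $V_n \cong \ZZ/d^n\ZZ$, apply \eqref{equation-k4} to get $\sigma_{m,k}^p(0) = m\,r(p) \bmod d^n$, use Lemma \ref{auxiliar-5p} together with $\gcd(m,d)=1$ to conclude the least such $p$ is $d^n$, and deduce transitivity on every level, hence minimality and strong settledness via Example \ref{ex-minimal}. The only difference is cosmetic: you spell out the $\gcd$ reduction and the alternative appeal to Lemma \ref{permutation-primes}, which the paper leaves implicit.
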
 
 
\begin{proof}
 Let $n\geq 1$. Since $m$ is not divisible by $d$, according to \eqref{equation-k4} and Lemma \ref{auxiliar-5p}, for $0\in V_n$ we have that if $p>0$ is the smallest positive integer such that
$$
\sigma_{m,k}^p(0)=m\left(\frac{k^p-1}{k-1} \right)=0 \mod d^n \mbox{ then } p=d^n. 
$$
This implies that the orbit of $0$ in $V_n$ by $\sigma_{m,k}$ is in a cycle of length $d^n$. Since $|V_n| = d^n$, $\sigma_{m,k}$ is transitive in $V_n$, that is, the restriction of $\sigma_{m,k}|V_n$ consists of a single cycle. Since this is true for any $n\geq 1$ we conclude that $\sigma_{m,k}$ is minimal. Therefore, $\sigma_{m,k}$ is strongly settled by Example \ref{ex-minimal}.
\end{proof}

  \begin{remark}
  {\rm  Recall that $\sigma_{m,1}\xi=a^m\xi$ by definition. Thus by Lemma \ref{integer-minimality} we have $\sigma_{m,1}$ is minimal if and only if $m$ is not divisible by $d$.}
  \end{remark}
 
\begin{lemma}\label{cycle-0}
Let $k>1$  be an integer that satisfies the conditions of Lemma \ref{auxiliar-5p}. Let $m\geq 1$ be an integer which is divisible by $d$.   Let  $j\geq 1$ be the biggest integer such that $d^j$ divides $m$. Then  for every $n>j$ the vertex $0$ is in a stable cycle of $\sigma_{m,k}$ of length $d^{n-j}$.
 \end{lemma}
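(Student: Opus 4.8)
The plan is to read off the cycle of the vertex $0$ directly from the explicit formula \eqref{equation-k4} and then to promote the cycle-length computation to a statement about stability by comparing consecutive levels. Since $k>1$, specializing \eqref{equation-k4} to $v=0$ and using $\sigma_{m,k}(0)=m$ gives, in $\ZZ/d^n\ZZ$,
$$\sigma_{m,k}^{\,p}(0)=m\,r(p)\bmod d^n,\qquad r(p)=\frac{k^p-1}{k-1}.$$
Thus the length of the cycle of $0$ in $V_n$ is the least $p>0$ with $d^n\mid m\,r(p)$, and the heart of the lemma reduces to a $d$-adic valuation count.

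For this, write $v_d(x)$ for the exponent of the highest power of $d$ dividing an integer $x$, so that $v_d(m)=j$ by hypothesis. Because $k$ satisfies the conditions of Lemma \ref{auxiliar-5p}, that lemma yields $v_d(r(p))=v_d(p)$ for all $p\geq 1$, whence $v_d(m\,r(p))=j+v_d(p)$. Therefore $d^n\mid m\,r(p)$ holds exactly when $v_d(p)\geq n-j$, and for $n>j$ the least positive $p$ with this property is $p=d^{\,n-j}$. Hence $0$ lies in a cycle of length $d^{\,n-j}$ in $V_n$. (For $n\leq j$ the same count shows $\sigma_{m,k}(0)=m\equiv 0$, i.e. $0$ is fixed, which is why the statement is restricted to $n>j$.)

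It remains to check that this cycle is \emph{stable}, and here I would run the identical computation at every deeper level. For $m>n$ the argument above applied in $V_m$ shows that $0$ lies in a cycle of length $d^{\,m-j}$ there. Now the canonical projection $\pi\colon V_m\to V_n$ is $\sigma_{m,k}$-equivariant, since tree automorphisms commute with the level projections; consequently $\pi$ maps the cycle of $0$ in $V_m$ onto the cycle of $0$ in $V_n$, so the $V_m$-cycle is contained in $\pi^{-1}$ of the $V_n$-cycle. This preimage has exactly $d^{\,n-j}\cdot d^{\,m-n}=d^{\,m-j}$ vertices, which is also the cardinality of the $V_m$-cycle of $0$; the two therefore coincide. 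Thus all $d^{\,m-n}d^{\,n-j}$ vertices of $V_m$ lying above $\{0,\sigma_{m,k}(0),\dots,\sigma_{m,k}^{\,d^{n-j}-1}(0)\}$ form a single cycle of length $d^{\,m-n}d^{\,n-j}$, which is precisely the defining condition of a stable cycle in Definition \ref{defn-cycles}.

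The only genuinely arithmetic input is the valuation identity $v_d(r(p))=v_d(p)$ supplied by Lemma \ref{auxiliar-5p}; granting it, the cycle length at a single level is immediate. The main point to get right is the stability step, where one must combine the level-by-level cycle-length formula with the $\sigma_{m,k}$-equivariance of the projections and a cardinality count to conclude that the lifted cycle does not split.
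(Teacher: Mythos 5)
Your proof is correct and follows essentially the same route as the paper: specialize \eqref{equation-k4} at $v=0$, use Lemma \ref{auxiliar-5p} to compute the $d$-adic valuation of $m\,r(p)$ and hence the cycle length $d^{n-j}$ at each level, then conclude stability from the fact that the length multiplies by exactly $d$ from one level to the next. Your equivariance-plus-cardinality argument for the stability step is simply a more explicit rendering of the paper's terse remark that the cycle ``lifts'' to the next level for every $n>j$.
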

 
 \begin{proof}
Let $p\geq 1$ be such that $\sigma_{m,k}^p(0)=0 \mod d^n$, that is, $0$ is in a cycle of length $p$ in $V_n$. Then \eqref{equation-k4} implies that $d^n$ divides $mr(p) = m\left(\frac{k^{p}-1}{k-1}\right)$. By the choice of $j$ and Lemma \ref{auxiliar-5p}, we get that the smallest $p$ such that $d^{n-j}$ divides $r(p)$ is $d^{n-j}$, and $d^{n-j}$ is the highest power of $d$ which divides $r(d^{n-j})$. This implies that $0$ is in a cycle of length $d^{n-j}$ of $\sigma$ in $V_n$, which lifts to an open path in $V_{n+1}$. Since $n>j$ is arbitrary, we deduce that $0$ is in a stable cycle of $\sigma_{m,k}$ in $V_n$,  for every $n>j$. 
 \end{proof}
 
In Lemma \ref{cycle-0} for a given $\sigma_{m,k}$ with $m \geq 1$ we studied the stability of the cycle containing $0$ for the restriction of $\sigma_{m,k}$ to $V_n \cong \ZZ/d^n\ZZ$. Lemma \ref{cycle-2} below investigates a similar question for an arbitrary vertex in $V_n$.

\begin{lemma}\label{cycle-2}
Let $k>1$  be an integer that satisfies the conditions of Lemma \ref{auxiliar-5p}.  Let $m \geq 1$ be an integer which is divisible by $d$ and let  $j\geq 1$ be the biggest integer such that $d^j$ divides $m$.  Then the automorphism $\sigma_{m,k}$ is  settled, and for every $n \geq 1$ the lengths of cycles of the restriction $\sigma_{m,k}|V_n$ are powers of $d$.

\end{lemma}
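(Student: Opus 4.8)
The plan is to read the entire cycle structure of $\sigma_{m,k}$ off the affine description of its action and then separate the vertices into those forced into stable cycles and a negligible remainder. By \eqref{equation-k3} the identification $V_n \cong \ZZ/d^n\ZZ$ turns $\sigma_{m,k}$ into the affine map $v \mapsto kv + m \bmod d^n$, and \eqref{equation-k4} gives $\sigma_{m,k}^p(v) = k^p v + m\,r(p) \bmod d^n$ with $r(p) = (k^p-1)/(k-1)$. Since $k^p - 1 = (k-1)r(p)$, a vertex $v \in V_n$ lies in a cycle of length $p$ precisely when $p$ is the least positive integer with
\[
 d^n \mid r(p)\,\big((k-1)v + m\big).
\]
Writing $v_d(\cdot)$ for the exponent of the largest power of $d$ dividing a nonzero integer, and invoking Lemma \ref{auxiliar-5p} (applicable since $k$ satisfies its hypotheses) to get $v_d(r(p)) = v_d(p)$, this divisibility becomes the inequality $v_d(p) \geq n - c_n(v)$, where $c_n(v) := \min\{v_d((k-1)v + m),\, n\}$. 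As the least positive integer whose $d$-valuation is at least $t$ equals $d^{\max(0,t)}$, the cycle of $v$ at level $n$ has length $d^{\,n - c_n(v)}$. In particular every cycle length is a power of $d$, which is the second assertion.

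For the settledness claim I would first show that a vertex can fail to lie in a stable cycle only if it is fixed. Suppose $c_n(v) < n$, so that the cycle of $v$ has length at least $d$. Because $c_n(v) < n$, the residue $(k-1)v + m \bmod d^n$ already realizes the valuation $c_n(v)$, which is therefore shared by $(k-1)w + m$ for every vertex $w \in V_\ell$ lying above $v$ (here $\ell > n$ and $w \equiv v \bmod d^n$). Consequently such $w$ has cycle length $d^{\,\ell - c_n(v)} = d^{\,\ell-n}\cdot d^{\,n - c_n(v)}$. Since there are exactly $d^{\,\ell-n}\cdot d^{\,n-c_n(v)}$ vertices of $V_\ell$ lying above the level-$n$ cycle of $v$, they form a single cycle of that length, so by Definition \ref{defn-cycles} the cycle of $v$ is stable. (The special case $v = 0$ is Lemma \ref{cycle-0}.)

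Hence the only vertices of $V_n$ that can fail to be in a stable cycle are the fixed points of $\sigma_{m,k}|V_n$, that is, the solutions of the single congruence $(k-1)v \equiv -m \bmod d^n$. Setting $e = v_d(k-1)$, for $n \geq e$ this congruence has either no solution or exactly $\gcd(k-1,d^n) = d^e$ solutions, so in all cases the number of fixed points is at most the constant $d^e$. Therefore
\[
 \frac{|\{v \in V_n : v \text{ is not in a stable cycle}\}|}{|V_n|} \;\leq\; \frac{d^e}{d^n} \;\longrightarrow\; 0 \qquad (n \to \infty),
\]
so the proportion of vertices of $V_n$ lying in stable cycles tends to $1$, and $\sigma_{m,k}$ is settled by Definition \ref{defn-settled}.

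I expect the technical heart to be the stability step of the second paragraph: the observation that once $c_n(v) < n$ the valuation $v_d((k-1)v + m)$ is \emph{locked in} below level $n$ and therefore remains constant for all descendants, forcing the cycle length to grow by exactly a factor of $d$ at each subsequent level. This is precisely where Lemma \ref{auxiliar-5p} does the real work, by collapsing the valuation of the geometric sum $r(p)$ to that of $p$; without that identity the divisibility condition governing cycle lengths would not reduce to a clean inequality of $d$-valuations, and neither the power-of-$d$ statement nor the stability dichotomy would follow.
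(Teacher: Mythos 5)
Your proof is correct and takes essentially the same route as the paper's: both rest on the affine formula $\sigma_{m,k}^p(v)-v=r(p)\left((k-1)v+m\right)$ and on Lemma \ref{auxiliar-5p} to translate the cycle-length condition into $d$-adic valuations, then show that every non-fixed vertex lies in a stable cycle and bound the number of fixed points at each level by a constant. The only differences are organizational: your single quantity $c_n(v)$ subsumes the paper's three-case analysis of the valuation of $sv$ versus $j-1$, and your fixed-point bound $d^{v_d(k-1)}$ from linear-congruence counting replaces the paper's bound of $k-1$ obtained by a direct inequality.
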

\begin{proof}
  For simplicity we will write $\sigma$ instead of $\sigma_{m,k}$. 
  
  Let $n>j$ and $v\in V_n$. Let $q\geq 1$ be a number not divisible by $d$  such that $m=d^j q$. By Lemma \ref{cycle-0}  we can assume that $v\neq 0$. Let $p\geq 1$ be such that $\sigma^p(v)=v \mod d^n$. Then $d^n$ divides  \begin{align}\label{eq-sigmapv}\sigma^p(v) - v = (m+(k-1)v)\left(\frac{k^{p}-1}{k-1}\right) = (m+(k-1)v) r(p),\end{align} 
  and, since $k$ satisfies Lemma \ref{auxiliar-5p},  $d^{n-1}$ divides 
  $$\left(d^{j-1}q+\frac{(k-1)}{d}v\right)r(p) = (d^{j-1}q + sv) r(p),$$
  where $s$ is such that $k=ds+1$ (the notation for $s$ coincides with the notation in Lemma \ref{auxiliar-5p},(1), i.e. when $d$ is an odd prime; when $d=2$ in Lemma \ref{auxiliar-5p},(2), we have $k = 2(2s)+1$, i.e. $s$ in this proof corresponds to $2s$ in Lemma \ref{auxiliar-5p},(2)).
  
   
   Let $i\geq 0$ be the biggest integer such that $d^i$ divides $sv$. In the arguments below, we use the fact that $sv/d^i$ is not divisible by $d$, and for any $\ell < i$ the integer $sv/d^{\ell}$ is divisible by $d$.
   
  {\it Case 1:}  If $i <j-1$ then $\frac{1}{d^i}(d^{j-1}q+sv)$ is not divisible by $d$, which implies that $d^{n-(i+1)}$ must divide $r(p)$. From Lemma \ref{auxiliar-5p} the smallest $p$ for which that happens is $p = d^{n-(i+1)}$. This implies that $v$ is in a cycle of length $d^{n-(i+1)}$ of $\sigma$ in $V_n$. Since this is true for any $n>j$, we conclude that $v$ is in a stable cycle of $V_n$.

   {\it Case 2:} If $i>j-1$ then $\frac{1}{d^{j-1}}(d^{j-1}q+sv)$
    is not divisible by $d$, which implies that $d^{n-j}$ must divide $r(p)$. From Lemma \ref{auxiliar-5p} we get $p=d^{n-j}$, which implies that $v$ is in a cycle of length $d^{n-j}$ of $\sigma$ in $V_n$. Since this is true for any $n>j$, we conclude that $v$ is in a stable cycle of $V_n$.

   {\it Case 3:} If $i=j-1$ then $sv=d^{j-1}r$, for some positive integer $r$ not divisible by $d$. Let $t\geq 0$ be the biggest integer such that $d^t$ divides the number $(q+r)$. If $t<n-j$ then 
    $d^{n-j-t}$ must divide  $r(p)$. Then from Lemma \ref{auxiliar-5p} we get $p=d^{n-j-t}$. Thus $v$ is in a cycle of length $d^{n-j-t}$ of $\sigma$ in $V_n$. 
   Since this is true for any $n>j$, we conclude that $v$ is in a stable cycle of $V_n$.  

Now suppose $t\geq n-j$, and write $q+r = d^t r'$ for an integer $r' > 0$ non divisible by $d$. Then
$$(d^{j-1}q+sv)r(p) = d^{j-1}(q+r)r(p) = d^{j+t-1}r' r(p).$$
Since $d^{j+t - 1} \geq d^{j+n-j-1} = d^{n-1}$, $d^{n-1}$ divides this expression for any $p$, in particular for $p = 1$. This implies that $v$ is a fixed point of $\sigma$ in $V_n$.

We now estimate the number of fixed points of $\sigma$ in $V_n$. According to Equation (\ref{eq-sigmapv}), the vertex $v\in V_n$ is a fixed point of $\sigma$ if $m+(k-1)v$ is divisible by $d^n$, which is equivalent to $d^{j-1}q + sv=d^{n-1}\ell$ for some $\ell\geq 1$. Thus to estimate the number of fixed points of $\sigma$ in $V_n$,  we need to count the number of possible $\ell$ such that $d^{n-1}\ell$ is equal to $d^{j-1}q + sv$, for some $v\in V_n$. For that let $\ell \geq 1$ be an integer such that $d^{j-1}q+sv=d^{n-1}\ell$. Solving for the second term, since $v < d^n$ we obtain the estimate
  $$0<  -d^{j-1}q +\ell d^{n-1} = sv = \frac{(k-1)}{d}v<\frac{(k-1)}{d}d^n,$$  
which implies  that 
  $$\ell<(k-1)+\frac{q}{d^{n-j}}.$$ 
Since $q$ is fixed we can assume that $n$ is sufficiently large so that $\frac{q}{d^{n-j}}<1$ and then $\ell \in \{1, \ldots , k-1\}$. This implies that in $V_n$ for $n$ large there are at most $k-1$ fixed points of $\sigma$, and then, there are at most $k-1$ vertices which are not in a stable cycle. We conclude that $\sigma$ is settled.
  \end{proof}

Thus Lemmas   \ref{density}, \ref{cycle-0} and \ref{cycle-2} yield the following statement.

\begin{prop}\label{densely-settled-0}
Let $k>1$  be an integer that satisfies the conditions of Lemma \ref{auxiliar-5p}. For every $m\geq 1$ the automorphism $\sigma_{m,k}$ is settled, and for every $n \geq 1$ the lengths of cycles of the restriction $\sigma_{m,k}|V_n$ are powers of $d$.
\end{prop}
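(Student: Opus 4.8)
The plan is to obtain Proposition \ref{densely-settled-0} by assembling the three lemmas that immediately precede it, organizing the argument as a dichotomy on whether $d$ divides $m$. This split is natural because Lemma \ref{density} and Lemma \ref{cycle-2} treat exactly the complementary cases $d \nmid m$ and $d \mid m$, so together they account for every $m \geq 1$.

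First I would handle the case $d \nmid m$. Here Lemma \ref{density} asserts that $\sigma_{m,k}$ is minimal, and Example \ref{ex-minimal} then gives that it is strongly settled, hence in particular settled. For the claim on cycle lengths I would invoke minimality a second time: by Lemma \ref{minimal-transitivity} the restriction $\sigma_{m,k}|V_n$ acts transitively on $V_n$, so it is a single cycle of length $|V_n| = d^n$, a power of $d$, for every $n \geq 1$.

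Next I would handle the case $d \mid m$. Taking $j \geq 1$ to be the largest integer with $d^j \mid m$, as in the hypotheses of Lemmas \ref{cycle-0} and \ref{cycle-2}, the second of these lemmas delivers both conclusions simultaneously: that $\sigma_{m,k}$ is settled, and that for every $n \geq 1$ the cycle lengths of $\sigma_{m,k}|V_n$ are powers of $d$. No separate appeal to Lemma \ref{cycle-0} is required in this branch, since the analysis of the stable cycle through the vertex $0$ is already subsumed in the proof of Lemma \ref{cycle-2}.

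Since the two cases are mutually exclusive and exhaust all $m \geq 1$, combining them completes the proof. I do not expect a genuine obstacle, as the substantive work---the $d$-adic valuation estimates for the geometric series $r(p)$ and the cycle-length computations through \eqref{equation-k4}---has already been carried out in Lemmas \ref{auxiliar-5p}, \ref{density}, \ref{cycle-0}, and \ref{cycle-2}. The only point demanding a little care is that the ``powers of $d$'' conclusion be verified in \emph{both} branches: it is stated outright in Lemma \ref{cycle-2} for $d \mid m$, whereas for $d \nmid m$ it requires the brief extra observation that minimality forces a single cycle of full length $d^n$ at each level.
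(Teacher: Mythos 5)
Your proposal is correct and is essentially the paper's own argument: the paper states the proposition as a direct consequence of Lemmas \ref{density}, \ref{cycle-0} and \ref{cycle-2}, which is exactly your dichotomy on $d \nmid m$ (minimality via Lemma \ref{density}, giving strongly settled and a single cycle of length $d^n$ at each level) versus $d \mid m$ (Lemma \ref{cycle-2}). Your observation that Lemma \ref{cycle-0} need not be invoked separately, being already used inside the proof of Lemma \ref{cycle-2}, is also accurate.
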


\subsection{Proof of the densely settled property of $N( \overline{\langle a \rangle })$} Proposition \ref{densely-settled-0} shows that if $k = ds +1$, for some $s \geq 0$, then $\sigma_{m,k}$ is settled. We now show that $\sigma_{m,k}$ is settled for any $k \geq 1$ such that $\gcd(d,k) = 1$. Since by  Theorem \ref{thm-skeleton} the rational spanning set $\{\sigma_{k,m} \mid m \geq 1, \gcd(d,k) = 1\}$ is dense in $N( \overline{\langle a \rangle })$ this finishes the proof of Theorem 
\ref{thm-Nads}.

\begin{prop}\label{densidad-completa} 
For every integer  $k\geq 1$ such that $k$ is not divisible by $d$, and  every $m\geq 1$, the automorphism $\sigma_{m,k}$ is settled. 
\end{prop}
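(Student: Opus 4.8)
The plan is to reduce the general coprimality case to the special case $k\equiv 1$ modulo $d$ (or modulo $4$) already treated in Proposition \ref{densely-settled-0}, by replacing $\sigma_{m,k}$ with a suitable power. The mechanism is Lemma \ref{settled-iteration}: an element is settled if and only if every non-trivial power of it is settled, so it suffices to exhibit a single power of $\sigma_{m,k}$ that is settled. First I would dispatch the case $k=1$: by Remark \ref{remark-k1} we have $\sigma_{m,1}=a^m$, and since $a$ is minimal it is strongly settled by Example \ref{ex-minimal}; hence $a^m$ is settled by Lemma \ref{settled-iteration}. From now on I would assume $k>1$.

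Next I would compute the power $\sigma_{m,k}^\ell$ and show that it again lies in the rational spanning set. Since $\sigma_{m,k}\in G_k$ satisfies $\sigma_{m,k}\,a\,\sigma_{m,k}^{-1}=a^k$, iterating gives $\sigma_{m,k}^\ell\,a\,\sigma_{m,k}^{-\ell}=a^{k^\ell}$, so $\sigma_{m,k}^\ell\in G_{k^\ell}$. Evaluating at the base vertex $0\in V_n$ via Equation (\ref{equation-k4}) with $v=0$ and $p=\ell$ yields, in $\ZZ/d^n\ZZ$,
$$
\sigma_{m,k}^\ell(0)=m\left(\frac{k^\ell-1}{k-1}\right)=m\sum_{i=0}^{\ell-1}k^i \pmod{d^n},
$$
and since this holds for every $n\geq 1$, the element $\sigma_{m,k}^\ell$ sends $\xi$ to $a^{m'}\xi$, where $m'=m\sum_{i=0}^{\ell-1}k^i$ is a positive integer independent of $n$. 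By the uniqueness in Lemma \ref{lemma-initialvalue} (an element of a coset $G_{k^\ell}$ is determined by its value at $\xi$) together with Definition \ref{defn-onorbits}, this identifies $\sigma_{m,k}^\ell=\sigma_{m',\,k^\ell}$.

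It then remains to choose $\ell$ so that $k^\ell$ satisfies the conditions of Lemma \ref{auxiliar-5p}, that is, $k^\ell\equiv 1 \pmod d$ when $d$ is an odd prime and $k^\ell\equiv 1 \pmod 4$ when $d=2$. Since $\gcd(k,d)=1$, the residue of $k$ is a unit in the finite multiplicative group $(\ZZ/d\ZZ)^\times$ (respectively $(\ZZ/4\ZZ)^\times$), so some positive power of it is trivial; concretely $\ell=d-1$ works for $d$ odd and $\ell=2$ works for $d=2$. For such $\ell$ we have $k^\ell>1$ and $k^\ell$ satisfies Lemma \ref{auxiliar-5p}, so Proposition \ref{densely-settled-0} applied with $(m',k^\ell)$ in place of $(m,k)$ shows that $\sigma_{m',k^\ell}=\sigma_{m,k}^\ell$ is settled. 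Finally, Lemma \ref{settled-iteration} lifts this back to $\sigma_{m,k}$ itself. The only genuinely delicate point is the bookkeeping in the previous paragraph: one must verify that raising an element of the rational spanning set to a power keeps it inside the rational spanning set and correctly tracks the new index $m'$. Once the closed form for $\sigma_{m,k}^\ell(0)$ is in hand, everything else is formal.
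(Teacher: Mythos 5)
Your proof is correct and takes essentially the same route as the paper's: both handle $k=1$ via Remark \ref{remark-k1} and Lemma \ref{settled-iteration}, then for $k>1$ pass to the power $\sigma_{m,k}^\ell$ with $\ell = d-1$ ($d$ odd) or $\ell = 2$ ($d=2$), identify it as the rational element $\sigma_{m',k^\ell}$ with $m' = m(1+k+\cdots+k^{\ell-1})$ via its value at $\xi$, and conclude by Proposition \ref{densely-settled-0} and Lemma \ref{settled-iteration}. The only cosmetic difference is that the paper selects the exponent as $p=\varphi(4)$ or $\varphi(d)$ by Euler's theorem and computes $\sigma^p\xi$ by iterating $\sigma a^r = a^{rk}\sigma$, whereas you invoke Equation (\ref{equation-k4}) at the vertex $0$; these are the same exponents and the same computation.
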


\begin{proof}
If $k = 1$, then $\sigma_{m,1} = a^m$ by Remark \ref{remark-k1}. Since $a$ is minimal, the orbit of every point under $\langle a \rangle$ is dense in $\partial T$. Thus $a$ has a single minimal component, and $a$ is strongly settled by Proposition \ref{strongly-settled-characterization},(3). Then by Lemma \ref{settled-iteration} $\sigma_{m,1}$ is strongly settled. 

For simplicity, we will write $\sigma$ instead of $\sigma_{m,k}$.

Let $k>1$ be an integer which is not divisible by $d$. We set $n=4$ if $d=2$ and $n=d$ if $d$ is an odd prime. Let $p=\varphi(n)$, where $\varphi$ is the Euler function. Then $k^p=ns+1$ for some $s\geq 1$.   

 Observe that $\sigma a =a^k \sigma$ implies
$$
\sigma^p a = a^{k^p} \sigma^p.  
$$
This implies  that $\sigma^p$ is in $G_{k^p}$. In addition,  using that $\sigma a^r=a^{rk}\sigma$ for every $r\geq 1$,  we get
$$
\sigma^p \xi=\sigma^{p-1}(\sigma \xi)=\sigma^{p-1} a^m \xi=\sigma^{p-2}a^{mk}\sigma \xi=\sigma^{p-2}a^{m(k+1)} \xi=a^{m(1+k+\cdots+ k^{p-1})} \xi,  
$$ 
which implies by Lemma \ref{lemma-initialvalue} and Definition \ref{defn-onorbits}, that $\sigma^p=\sigma_{m(1+k+\cdots+ k^{p-1}),k^p}$. From Proposition \ref{densely-settled-0} it follows that $\sigma^p$ is settled. Then by Lemma \ref{settled-iteration}, we conclude that $\sigma$ is settled.
\end{proof}

\subsection{Application:  minimal automorphisms of $N( \overline{\langle a \rangle })$.} 

 In this section we give a characterization of the minimal elements in $N( \overline{\langle a \rangle })$.

\begin{lemma}\label{minimal-final}
Let $k\geq 1$ be an integer which is not divisible by $d$. Let $m\geq 1$.  
\begin{enumerate}
\item If $d=2$ then the following statements are equivalent:
\begin{enumerate}
\item $\sigma_{m,k}$ is transitive on $V_2$.
\item   $m$ is odd and $k=4s+1$, for some $s\geq 0$.
\item $\sigma_{m,k}$ is minimal.
\end{enumerate}
 \item If $d$ is an odd prime then the following statements are equivalent:
 \begin{enumerate}
   \item $\sigma_{m,k}$ is transitive on $V_1$.
  \item   $m$ is not divisible by $d$ and $k=ds+1$, for some $s\geq 0$.
  \item $\sigma_{m,k}$ is minimal.
\end{enumerate}
\end{enumerate}
Thus $\sigma_{m,k}$ is minimal if and only if $m$ is not divisible by $d$ and $k$   satisfies the conditions of Lemma \ref{auxiliar-5p}.
\end{lemma}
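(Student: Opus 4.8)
The plan is to prove each of the two equivalences by the cyclic chain (a) $\Rightarrow$ (b) $\Rightarrow$ (c) $\Rightarrow$ (a); the concluding ``Thus'' statement is then immediate, since minimality (c) is equivalent to (b), and (b) asserts precisely that $d\nmid m$ together with $k=4s+1$ (when $d=2$) or $k=ds+1$ (when $d$ is an odd prime), which is exactly the requirement that $k$ satisfy the conditions of Lemma \ref{auxiliar-5p}. The organizing observation is that, by \eqref{equation-k3} and the normalization $\sigma_{m,k}(0)=m$, the restriction of $\sigma_{m,k}$ to $V_n\cong\ZZ/d^n\ZZ$ is the affine permutation $v\mapsto kv+m \pmod{d^n}$. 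Hence transitivity of $\sigma_{m,k}$ on any fixed level reduces to an elementary question about affine maps of $\ZZ/d^n\ZZ$.

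The main work is the implication (a) $\Rightarrow$ (b). For $d$ an odd prime I would examine $v\mapsto kv+m$ on $V_1\cong\ZZ/d\ZZ$. A transitive permutation of $d\geq 2$ points is a single $d$-cycle and so has no fixed point; but if $k\not\equiv 1\pmod d$ then $k-1$ is a unit of $\ZZ/d\ZZ$ (as $d$ is prime), so $(k-1)v\equiv -m$ is solvable and the map has a fixed point, a contradiction. Thus $k\equiv 1\pmod d$, i.e. $k=ds+1$ with $s\geq 0$, and the map reduces to the translation $v\mapsto v+m$, which is a $d$-cycle if and only if $\gcd(m,d)=1$. This yields (b).

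For $d=2$ the delicate point is that transitivity on $V_1=\ZZ/2\ZZ$ is too weak to force $k\equiv 1\pmod 4$ (every odd $k$ is $\equiv 1\pmod 2$), which is exactly why the statement uses $V_2$; I expect this distinction between working at level $V_1$ (odd $d$) and level $V_2$ ($d=2$) to be the only genuinely delicate feature of the argument. On $V_2\cong\ZZ/4\ZZ$ the map is $v\mapsto kv+m$ with $k$ odd, so $k\equiv 1$ or $3\pmod 4$. If $k\equiv 3\equiv -1\pmod 4$, then $\sigma_{m,k}^2(v)=k^2v+m(k+1)\equiv v\pmod 4$ since $k^2\equiv 1$ and $k+1\equiv 0\pmod 4$ (cf. \eqref{equation-k4}); hence $\sigma_{m,k}|V_2$ has order at most $2$ and cannot be a $4$-cycle, contradicting transitivity. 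Therefore $k\equiv 1\pmod 4$, i.e. $k=4s+1$, the map becomes $v\mapsto v+m$, and this is a $4$-cycle if and only if $\gcd(m,4)=1$, i.e. $m$ is odd. This yields (b).

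It remains to record the easy implications. For (b) $\Rightarrow$ (c): when $k>1$, condition (b) is exactly the hypothesis of Lemma \ref{density}, giving minimality of $\sigma_{m,k}$; when $k=1$ (the case $s=0$) we have $\sigma_{m,1}=a^m$ by Remark \ref{remark-k1}, and Lemma \ref{integer-minimality} shows $a^m$ is minimal if and only if $d\nmid m$, which is precisely what (b) provides. For (c) $\Rightarrow$ (a): by Lemma \ref{minimal-transitivity} a minimal automorphism acts transitively on every $V_n$, in particular on $V_2$ (resp. $V_1$). This closes both chains, and the final equivalence follows as explained above.
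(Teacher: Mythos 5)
Your proof is correct, and its overall architecture matches the paper's: the same cyclic chain (a) $\Rightarrow$ (b) $\Rightarrow$ (c) $\Rightarrow$ (a), with (b) $\Rightarrow$ (c) obtained from Lemma \ref{density}, (c) $\Rightarrow$ (a) from Lemma \ref{minimal-transitivity}, and the real work in (a) $\Rightarrow$ (b). The difference lies in how (a) $\Rightarrow$ (b) is proved when $d$ is an odd prime. The paper tracks the orbit of the vertex $0$: using \eqref{equation-k4}, a binomial expansion of $(ds+r)^\ell$, and Fermat's little theorem, it shows that if $k\equiv r \pmod d$ with $2\leq r<d$ then $\sigma_{m,k}^{d-1}(0)\equiv 0 \pmod d$, so $0$ lies in a cycle of length at most $d-1<d$ and transitivity fails. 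You instead exploit the affine form $v\mapsto kv+m$ of $\sigma_{m,k}|V_1$ directly: if $k\not\equiv 1\pmod d$ then $k-1$ is a unit in $\ZZ/d\ZZ$, so the map has a fixed point, which a $d$-cycle on $d\geq 2$ points cannot have. Your argument is shorter, avoids Fermat's little theorem entirely, and explains conceptually why the paper's computation works (the orbit of $0$ under an affine map with multiplier $k\not\equiv 1$ has length dividing the order of $k$ in $(\ZZ/d\ZZ)^\times$, hence dividing $d-1$). For $d=2$ the two arguments amount to the same computation ($k^2\equiv 1$ and $m(k+1)\equiv 0 \pmod 4$ when $k\equiv 3\pmod 4$), yours phrased as ``the restriction to $V_2$ has order at most $2$,'' the paper's as ``$0$ is fixed by $\sigma^2_{m,k}$ in $V_2$.'' One point where you are more careful than the paper's proof text: the paper cites Lemma \ref{density} for (b) $\Rightarrow$ (c), although that lemma assumes $k>1$; the case $k=1$ is covered there only by the remark following Lemma \ref{density} (via Remark \ref{remark-k1} and Lemma \ref{integer-minimality}), whereas you handle it explicitly inside the proof.
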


\begin{proof}
Both in (1) and (2), Lemma \ref{density} gives the implication (b) to (c). The implication (c) to (a) follows from Lemma \ref{minimal-transitivity}. So it is enough to show the implication (a) to (b), in both cases.

For the case $d=2$, observe that if $\sigma_{m,k}$ is transitive on $V_2$, then it is transitive on $V_1$. For any prime $d$,
observe that if  $m\geq 1$ is divisible by $d$, then formula \eqref{equation-k4} yields $\sigma_{m,k}(0)=m=0 \mod d$.  This implies that $\sigma_{m,k}$ has a fixed point at $0$ in $V_1$. So if $\sigma_{m,k}$ is transitive on $V_1$, then $m$ cannot be divisible by $d$. 

 \medskip

{\it Case $d=2$.} Suppose that $k=4s+3$, for some $s\geq 0$. Then by \eqref{equation-k4} we get
$$
\sigma_{m,k}^2(0)=m\left(\frac{(4s+3)^2-1}{4s+3-1}\right)=m\left(\frac{(4s+3-1)(4s+3+1)}{4s+3-1}\right)=4m(s+1),  
$$ 
which implies that $\sigma^2_{m,k}(0)=0 \mod  2^2 $. Thus $\sigma_{m,k}^2$ has a fixed point at $0$ in $V_2$, and since $|V_2| = 4$, then $\sigma_{m,k}$ is not transitive on $V_2$. Thus if $\sigma_{m,k}$ is transitive on $V_2$ then $k=4s+1$ and $m$ is odd.  

\medskip

{\it Case $d$ an odd prime.} Suppose that $k=ds+r$ for some $2\leq r<d$ and $s\geq 0$. Then by \eqref{equation-k4}
\begin{equation}\label{eq-minimal}
\sigma_{m,k}^{d-1}(0)=m(1+(ds+r)+\cdots + (ds+r)^{d-2}).
\end{equation}
Observe that for every $1\leq \ell \leq d-2$ 
$$
(ds+r)^\ell=\sum_{i=0}^\ell \binom{\ell}{i} (ds)^ir^{\ell-i}=r^\ell+\sum_{i=1}^\ell \binom{\ell}{i}(ds)^ir^{\ell-i}=r^\ell +dN_\ell,
$$
where $N_\ell$ is an integer. Recall that by Fermat’s Little Theorem, for $r\neq 0 \mod d$ we have $r^{d-1}=1 \mod d$. Substituting the formula above and this into \eqref{eq-minimal} we get
$$
\sigma_{m,k}^{d-1}(0)=m(1+r+\cdots r^{d-2}+dN)=m\left(\frac{r^{d-1}-1 }{r-1} + dN\right)=m\left(d\frac{q}{r-1} + dN\right),   
$$
 where $N=\sum_{i=1}^{d-2}N_i$ and $q$ is the integer such that $r^{d-1}-1=dq$. 
  Since $\sigma_{m,k}^{d-1}(0)$ is an integer, $q$ is divisible by $r-1$ and
 $$\sigma_{m,k}^{d-1}(0)=0 \mod d. $$
 Thus $\sigma_{m,k}^{d-1}$ has a fixed point at $0$ in $V_1$, and so $\sigma_{m,k}$ cannot be transitive  on $V_1$.  Thus if $\sigma_{m,k}$ is transitive on $V_1$ then $m$ is not divisible by $d$ and $k=ds+1$, for some $s\geq 0$.
 \end{proof}
 
For $d \geq 2$ prime consider the set  $\mathcal{M}_d$ of all minimal elements in the rational spanning set, namely, by Lemma \ref{minimal-final} 
 $$
 \mathcal{M}_d=\{ \sigma_{m,k}: m >0 \mbox{ is not divisible by } d \mbox{  and } k= 1 \mod d    \}  \mbox{ for } d\geq 3.
 $$ 
 and
 $$
 \mathcal{M}_2=\{ \sigma_{m,k}: m >0 \mbox{ is odd }  \mbox{  and } k= 1 \mod 4    \}. 
 $$
The next proposition characterizes the minimal elements of $N( \overline{\langle a \rangle })$ as those in the closure of $\mathcal{M}_d$.
 
 \begin{prop}\label{prop-applminimal}
 Let $\sigma\in N( \overline{\langle a \rangle })$. The following statements are equivalent:
 \begin{enumerate}
 \item $\sigma$ is minimal.
 
 \item There exists $w\in \Aut(T)$ such that $\sigma=waw^{-1}$.
 
 \item $\sigma\in \overline{\mathcal{M}_d}$.
 \end{enumerate}
 \end{prop}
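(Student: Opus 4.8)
The plan is to establish the cycle $(1)\Leftrightarrow(2)$ together with $(1)\Leftrightarrow(3)$. The equivalence $(1)\Leftrightarrow(2)$ requires no work beyond Corollary \ref{Pink-result}: the minimal elements of $\Aut(T)$ form a single conjugacy class, and $a$ is minimal by hypothesis, so $\sigma$ is minimal precisely when it is conjugate to $a$, that is, when $\sigma=waw^{-1}$ for some $w\in\Aut(T)$.

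For $(3)\Rightarrow(1)$ I would first record that the set of minimal elements of $\Aut(T)$ is closed. Indeed, if a sequence of minimal elements converges to $\sigma$, then, since convergence in $\Aut(T)$ amounts to eventual agreement on each level $V_n$, the restriction $\sigma|V_n$ coincides with a transitive permutation for every $n$; by Lemma \ref{minimal-transitivity} this makes $\sigma$ minimal. As every element of $\mathcal{M}_d$ is minimal by Lemma \ref{minimal-final}, the closure $\overline{\mathcal{M}_d}$ consists of minimal elements, which gives $(3)\Rightarrow(1)$.

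The substance of the proposition is $(1)\Rightarrow(3)$. Let $\sigma\in N(\overline{\langle a\rangle})$ be minimal and fix $n\geq 1$, where I take $n\geq 2$ when $d=2$. By Theorem \ref{thm-skeleton} the rational spanning set is dense in $N(\overline{\langle a\rangle})$, so I may choose $\sigma_{m,k}$ with $D(\sigma,\sigma_{m,k})\leq 2^{-n}$, which is the same as the equality $\sigma_{m,k}|V_n=\sigma|V_n$. Since $\sigma$ is minimal, $\sigma|V_n$ is transitive, hence so is $\sigma_{m,k}|V_n$; projecting to lower levels, $\sigma_{m,k}$ is transitive on $V_1$, and on $V_2$ when $d=2$. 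At this point Lemma \ref{minimal-final} applies and upgrades this single-level transitivity to full minimality, so $\sigma_{m,k}\in\mathcal{M}_d$. Letting $n\to\infty$ yields a sequence in $\mathcal{M}_d$ converging to $\sigma$, whence $\sigma\in\overline{\mathcal{M}_d}$.

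The one genuinely load-bearing step is the final appeal to Lemma \ref{minimal-final}. A generic automorphism that is transitive on $V_1$ is typically very far from minimal, so the argument would collapse without the special structure of the rational spanning set: each $\sigma_{m,k}$ restricts on every level to the affine map $v\mapsto kv+m$ on $\ZZ/d^n\ZZ$, and for these maps transitivity at the bottom level already forces the congruences ($k\equiv 1$ and $d\nmid m$) that propagate transitivity to all higher levels. I expect the only real care needed is the bookkeeping in the case $d=2$, where one must work at $V_2$ rather than $V_1$, since transitivity on $V_1$ alone does not distinguish $k\equiv 1$ from $k\equiv 3\pmod 4$.
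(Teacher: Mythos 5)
Your proof is correct, and its two substantive halves largely coincide with the paper's: the equivalence $(1)\Leftrightarrow(2)$ is quoted from Corollary \ref{Pink-result} in both, and your proof of $(1)\Rightarrow(3)$ --- approximate $\sigma$ by elements $\sigma_{m,k}$ of the rational spanning set via Theorem \ref{thm-skeleton}, note that eventual agreement with $\sigma$ on $V_2$ (resp.\ on $V_1$ for odd $d$) forces transitivity of $\sigma_{m,k}$ there, and invoke Lemma \ref{minimal-final} to upgrade this single-level transitivity to membership in $\mathcal{M}_d$ --- is exactly the paper's argument, including the correct care about working at $V_2$ rather than $V_1$ when $d=2$. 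Where you genuinely diverge is the remaining implication: the paper proves $(3)\Rightarrow(2)$ by a compactness argument, writing each approximant $\sigma_{m_i,k_i}\in\mathcal{M}_d$ as $w_i a w_i^{-1}$, extracting a convergent subsequence $w_{i_j}\to w$ in the compact group $\Aut(T)$, and concluding $\sigma=waw^{-1}$; you instead prove $(3)\Rightarrow(1)$ directly by observing that the set of minimal elements of $\Aut(T)$ is closed, since convergence in $\Aut(T)$ amounts to eventual agreement on each level $V_n$ and minimality is level-wise transitivity (Lemma \ref{minimal-transitivity}). Your route is slightly more elementary (no subsequence extraction), and it isolates a reusable fact --- the closedness of the minimal locus --- whereas the paper's route has the minor advantage of exhibiting the conjugator $w$ for the limit $\sigma$ explicitly as a limit of conjugators. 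Both arguments are complete and close the cycle of equivalences.
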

 \begin{proof}
 The equivalence between (1) and (2) was shown in Corollary \ref{Pink-result}.
 
 To show the implication (3) to (2), suppose that $\sigma\in \overline{\mathcal{M}_d}$. Then there exists a sequence $(\sigma_{m_i, k_i})_{i\geq 0}$ in $\mathcal{M}_d$ that converges to $\sigma$. Since every $\sigma_{m_i,k_i}$ is minimal, there exists $w_i\in \Aut(T)$ such that $\sigma_{m_i,k_i}=w_iaw_i^{-1}$. By compactness of $\Aut(T)$ there exists a subsequence $(w_{i_j})_{j\geq 0}$ of $(w_i)_{i\geq 0}$ that converges to some $w\in \Aut(T)$. This implies that $(\sigma_{m_{i_j}, k_{i_j}})_{j\geq 0}$ converges to $waw^{-1}$, which shows that $\sigma=waw^{-1}$. We conclude that $\sigma$ is minimal.    
 
 Suppose that $\sigma\in N( \overline{\langle a \rangle })$ is minimal. Since the rational spanning set is dense in $N( \overline{\langle a \rangle })$ (see Theorem \ref{thm-skeleton}), there exists a sequence $(\sigma_{m_i,k_i})_{i\geq 0}$ converging to $\sigma$.  We need to show that this sequence can be chosen to be in $\mathcal{M}_d$. Observe that there exists $i_0\geq 0$ such that for every $i\geq i_0$ the restrictions $\sigma|_{V_2}$ and $\sigma_{m_i,k_i}|_{V_2}$ coincide. Since $\sigma$ is transitive on $V_2$ (because it is minimal), $\sigma_{m_i, k_i}$ is transitive in $V_2$ (and then in $V_1$) too. Then Lemma \ref{minimal-final} implies that for every $i\geq i_0$     the automorphism $\sigma_{m_i,k_i}$ is in $\cM_d$, which shows that $\sigma\in \overline{\mathcal{M}_d}$.
 \end{proof}
 
 
 \section{Iterated monodromy groups associated to PCF polynomials} \label{sec-img}
 
 Let $f(x)$ be a quadratic polynomial with strictly pre-periodic post-critical orbit of length $r \geq 2$ with periodic cycle of length $r-s$, for $1 \leq s <r$. That is, $r$ and $s$ are the smallest positive integers such that $r>s \geq 1$ and $f^{r+1}(c) = f^{s+1}(c)$, where $c$ is the critical point of $f(x)$.
 
Consider the associated profinite arithmetic and geometric iterated monodromy groups $\fG_{\rm arith}(f)$ and $\fG_{\rm geom}(f)$, as defined in Section \ref{sec-arboreal}. In this section we study the densely settled property for $\fG_{\rm arith}(f)$ and $\fG_{\rm geom}(f)$ by considering the maximal tori in these groups, and the Weyl groups of these maximal tori. We recall necessary concepts and notation first.

Let $T$ be a binary tree, and recall that for $f(x)$ as above by \cite[Proposition 1.7.15]{Pink13} $\fG_{\rm geom}(f)$  is conjugate in $\Aut(T)$ to the closure of the group generated by the elements 
  \begin{align}\label{eq-gen}u_1 = \eta, u_i = (u_{i-1},1) \textrm{ if }2 \leq i \leq r \textrm{ and }i \ne s+1, u_{s+1} = (u_s,u_r). \end{align}
  The recursive notation used in formula \eqref{eq-gen} was explained in Section \ref{subsec-binaryautom}. We denote the countable group generated by \eqref{eq-gen} by $G$, and its closure in $\Aut(T)$ by $\overline G$. By Corollary \ref{cor-conjugate} $\fG_{\rm geom}(f)$ is densely settled if and only if $\overline G$ is densely settled.
  
As earlier in the paper, given an element $a \in \Aut(T)$ we denote by $\langle a \rangle$ the cyclic group generated by $a$, and by $ \overline{\langle a \rangle }$ the closure of $\langle a \rangle$ in $\Aut(T)$. We denote by $N( \overline{\langle a \rangle })$ the normalizer of $ \overline{\langle a \rangle }$ in $\Aut(T)$.
  
 We claim that $\overline G$ contains maximal tori. Indeed, the product $a_0=u_1u_2\cdots u_r$ is minimal by \cite[Proposition 3.1.3]{Pink13}, and so the closure $ \overline{\langle a_0 \rangle }$ is a maximal torus in $\overline G$. 
 
 Now let $a \in N(\overline G)$ be any minimal element. By Proposition \ref{conjugacy-elements} the maximal torus $ \overline{\langle a \rangle }$ is isometrically isomorphic to the dyadic integers $\ZZ_2$, and by Lemma \ref{lemma-norm2} its absolute Weyl group satisfies
   $$W( \overline{\langle a \rangle }) = \{G_\zeta \mid \zeta \in  \overline{\langle a \rangle } \textrm{ is minimal }\}, \textrm{ where }G_\zeta = \{w \in N( \overline{\langle a \rangle }) \mid w a w^{-1} = \zeta \}.$$
 We recall the following result about the structure of $N(\langle a \rangle)$. Denote by $\ZZ_2^{\times}$ the multiplicative group in $\ZZ_2$, and by $e^\times$ the unit element in $\ZZ_2^{\times}$.
 
 \begin{thm} \cite[Proposition 1.6.3]{Pink13}, \cite[Theorem 4.4.7]{RibZal10}\label{thm-Zstabilizer}
There is a group isomorphism 
  $$N( \overline{\langle a \rangle }) \to \ZZ^\times_2 \ltimes \ZZ_2,$$ 
which maps $ \overline{\langle a \rangle }$ onto $\{e^\times\} \times \ZZ_2$, and induces a group isomorphism 
  $$W( \overline{\langle a \rangle }) \cong \ZZ_2^\times.$$
     \end{thm}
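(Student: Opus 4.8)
The plan is to build the isomorphism explicitly from the two pieces of data a normalizing element carries: the way it twists the torus by conjugation, and where it sends a fixed base point. Fix the base point $\xi = (0,0,\ldots) \in \partial T$ and recall from Proposition \ref{conjugacy-elements} the isometric isomorphism $\psi \colon \overline{\langle a \rangle} \to \ZZ_2$ with $\psi(a) = {\bf 1}$; I will write $a^m$, for $m \in \ZZ_2$, for the element $\psi^{-1}(m)$. Given $w \in N(\overline{\langle a \rangle})$, Lemma \ref{lemma-norm2} gives $waw^{-1} \in \overline{\langle a \rangle}$, and this element is again minimal by Corollary \ref{Pink-result}. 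Since the minimal elements of $\overline{\langle a \rangle} \cong \ZZ_2$ are exactly its topological generators, that is, the images under $\psi^{-1}$ of the units, we obtain a well-defined $k(w) := \psi(waw^{-1}) \in \ZZ_2^\times$, so that $waw^{-1} = a^{k(w)}$. Because $\overline{\langle a \rangle}$ acts freely and transitively on $\partial T$ (Lemma \ref{minimal-elements}), there is a unique $m(w) \in \ZZ_2$ with $w\xi = a^{m(w)}\xi$. I then set $\Phi(w) = (k(w), m(w)) \in \ZZ_2^\times \ltimes \ZZ_2$.

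The key relation driving the homomorphism property is that $waw^{-1} = a^{k(w)}$ rearranges to $w a^m = a^{k(w)m} w$ for $m \in \ZZ$, hence by continuity for all $m \in \ZZ_2$. For the first coordinate, $(w_1 w_2) a (w_1 w_2)^{-1} = w_1 a^{k(w_2)} w_1^{-1} = a^{k(w_1)k(w_2)}$, so $k(w_1 w_2) = k(w_1)k(w_2)$. For the second, $(w_1 w_2)\xi = w_1 a^{m(w_2)}\xi = a^{k(w_1)m(w_2)} w_1 \xi = a^{m(w_1) + k(w_1)m(w_2)}\xi$, so $m(w_1 w_2) = m(w_1) + k(w_1) m(w_2)$. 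These are precisely the coordinates of the semidirect-product law $(k_1,m_1)(k_2,m_2) = (k_1 k_2, m_1 + k_1 m_2)$, so $\Phi$ is a homomorphism; in particular $\Phi(a^m) = (e^\times, m)$, whence $\Phi$ carries $\overline{\langle a \rangle}$ onto $\{e^\times\} \times \ZZ_2$.

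For bijectivity, if $\Phi(w) = (e^\times, 0)$ then $w$ commutes with $a$, so $w \in Z(\overline{\langle a \rangle}) = \overline{\langle a \rangle}$ by Lemma \ref{abelian}; writing $w = a^m$ and using $w\xi = \xi$ with freeness forces $m = 0$, hence $w = {\rm id}$ and $\Phi$ is injective. For surjectivity, given $(k,m)$ note that $a^k$ is minimal, so by Corollary \ref{Pink-result} there is $w_0$ with $w_0 a w_0^{-1} = a^k$; writing $w_0\xi = a^{m_0}\xi$ and replacing $w_0$ by $a^{m-m_0} w_0$, which still conjugates $a$ to $a^k$ since $a^{m-m_0}$ commutes with $a$, yields $w$ with $\Phi(w) = (k,m)$. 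Finally $\Phi$ is continuous: the map $w \mapsto waw^{-1}$ composed with $\psi$ shows $k(w)$ is continuous, while $w \mapsto w\xi$ composed with the homeomorphism $\partial T \to \ZZ_2$, $a^m \xi \mapsto m$, of Proposition \ref{minimal-means-conjugacy} shows $m(w)$ is continuous. Since $N(\overline{\langle a \rangle})$ is a closed subgroup of the compact group $\Aut(T)$ and the target is Hausdorff, $\Phi$ is automatically a homeomorphism, hence a topological isomorphism. Quotienting by $\{e^\times\} \times \ZZ_2 = \Phi(\overline{\langle a \rangle})$ then gives $W(\overline{\langle a \rangle}) \cong \ZZ_2^\times$, realized concretely by $G_{a^k} \mapsto k$.

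I expect the only genuinely delicate point to be the identification in the first paragraph: verifying that conjugation acts on the torus through the full unit group $\ZZ_2^\times$ and nothing larger or smaller. This rests on two facts already in hand, namely that $waw^{-1}$ lands in $\overline{\langle a \rangle}$ and is again minimal (Lemma \ref{lemma-norm2} and Corollary \ref{Pink-result}), and that the continuous automorphisms of $\ZZ_2$ are exactly multiplication by units. Once these are invoked, the remainder is the bookkeeping of the semidirect-product twist $w a^m = a^{km} w$ together with the compactness argument that upgrades the continuous bijection $\Phi$ to a topological isomorphism.
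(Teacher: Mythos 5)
Your proof is correct, but note that the paper does not actually prove Theorem \ref{thm-Zstabilizer}: it is imported as a citation from \cite[Proposition 1.6.3]{Pink13} and \cite[Theorem 4.4.7]{RibZal10}, so there is no in-paper argument to compare against, and what you have produced is a self-contained substitute for that citation. It is worth observing how closely your construction parallels the machinery the paper develops for other purposes in Section \ref{sec_normalizer}: your coordinates $(k(w),m(w))$, defined by $waw^{-1}=a^{k(w)}$ and $w\xi=a^{m(w)}\xi$, are exactly the continuous $2$-adic extension of the paper's rational spanning set parameters $\sigma_{m,k}$ of Definition \ref{defn-onorbits}; your injectivity step is Lemma \ref{lemma-initialvalue} (an element of $G_k$ is determined by its value at $\xi$) combined with Lemma \ref{abelian}, and your surjectivity step is Lemma \ref{lemma-findelement} combined with Corollary \ref{Pink-result}. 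The one delicate point, which you correctly isolate, is that conjugation twists the torus by precisely the unit group: that $k(w)$ is a unit follows from Corollary \ref{Pink-result} together with the fact that translation by $m\in\ZZ_2$ on $\ZZ_2$ is minimal if and only if $m$ topologically generates $\ZZ_2$, i.e.\ $m\in\ZZ_2^\times$ (orbit closures are cosets of the closed subgroup generated by $m$), and that every unit occurs follows from Corollary \ref{Pink-result} again. Your final compactness upgrade is also sound, since $N(\overline{\langle a\rangle})$ is closed in $\Aut(T)$ --- by Lemma \ref{lemma-norm2} it is the preimage of the closed set $\overline{\langle a\rangle}$ under the continuous map $w\mapsto waw^{-1}$ --- so a continuous bijective homomorphism onto the Hausdorff group $\ZZ_2^\times\ltimes\ZZ_2$ is automatically a topological isomorphism. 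In short, your route buys self-containedness within the paper's own dynamical framework at essentially no cost, whereas the paper's citation buys brevity.
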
 
Every $\zeta \in  \overline{\langle a \rangle }$ can be written down as a sequence $\zeta = (a^{k_i})_{i \geq 1}$, where $k_i$ is odd, and $k_{i+1} \mod 2^i = k_i$. An element in $\ZZ^\times_2$ is represented by a sequence $(k_i)_{i \geq 1}$, where each $k_i$ is coprime to $2$, and so  $k_i$ is odd. Thus every equivalence class $G_\zeta$ is represented by a sequence $(k_i)_{i \geq 1} \in \ZZ_2^\times$. For instance, $e^\times = (1)_{i \geq 1}$.
      
 Our approach to investigating the densely settled property of the profinite arithmetic and geometric iterated monodromy groups is based on the study of the Weyl groups of maximal tori in $\overline G$. Recall from Definition \ref{defn-Weyl} that the Weyl group of a maximal torus $ \overline{\langle a \rangle }$ in $\overline G$ is the quotient 
 $$W( \overline{\langle a \rangle },\overline G) = (N( \overline{\langle a \rangle }) \cap \overline G) / \overline{\langle a \rangle }.$$
Similarly, one can define the Weyl group of $ \overline{\langle a \rangle }$ in the normalizer $N(\overline G)$ by
  $$W( \overline{\langle a \rangle },N(\overline G)) = (N( \overline{\langle a \rangle }) \cap N(\overline G)) / \overline{\langle a \rangle }.$$
By Theorem \ref{thm-main1} $N( \overline{\langle a \rangle })$ is densely settled, and by \cite[Proposition 3.9.5]{Pink13} we have that $N( \overline{\langle a \rangle }) \subset N(\overline{G})$, but a priori we do not know whether $N( \overline{\langle a \rangle })$ is contained in $\overline G$. If $g \in N(\overline G) - \overline G$ is a minimal element which is in the normalizer of $\overline G$ but not in $\overline G$, then we do not know if its normalizer $N( \overline{\langle g \rangle })$ is contained in $N(\overline G)$. 
Thus the indices of the Weyl groups $W( \overline{\langle a \rangle },\overline G)$ and $W( \overline{\langle a \rangle }, N(\overline G))$ in the absolute Weyl group $W( \overline{\langle a \rangle })$ contain information about settled elements in $\overline G$ and $N(\overline G)$. The index of the Weyl group $W(\overline{\langle g \rangle},N(\overline G))$ in the absolute Weyl group $W(\overline{\langle g \rangle})$ for a minimal $g \in N(\overline G) - \overline G$, contains information about settled elements in $N(\overline G)$.

Below, we prove Theorems \ref{thm-main2} - \ref{thm-main4}. We consider four cases for different values of $r$ and $s$, since the groups $\overline G$ and the normalizer $N(\overline G)$ have different descriptions in these cases.

 \subsection{The case $s = 1$ and $r = 2$}\label{sec-dihedral}
We prove Theorem \ref{thm-main2} and \ref{thm-main25}, which concern the case $r = 2$ and $s=1$.

For $r = 2$ and $s=1$, the group $G = \langle u_1,u_2\rangle = \langle \eta, (u_1,u_2) \rangle$ is the infinite dihedral group, and $a = u_1u_2$ is an element of infinite order, see for instance \cite{Lukina2021} for a detailed proof. Moreover, $a$ is transitive on each $V_n$, $n \geq 1$, and so it is minimal. 

Set $b = u_1$. Since $u_1$ and $u_2$ have order $2$, then
  $$b a b = u_1 u_1 u_2 u_1 = (u_1u_2)^{-1} = a^{-1},$$
and one obtains another standard presentation of the dihedral group, namely  
  \begin{align}\label{dihedral}G = \langle a,b \mid bab=a^{-1}, b^2 = 1\rangle,\end{align}
were $1$ denotes the identity element in $G$.

The proof of the first part of Proposition \ref{prop-dihedral-nds} below is based on the fact that the only settled elements in $G$ are the powers of $a$, and all other elements are periodic of period $2$.

\begin{prop}\label{prop-dihedral-nds}
The profinite group $\overline{G}$ is not densely settled, while the normalizer $N = N(\overline{G})$ is densely settled.
\end{prop}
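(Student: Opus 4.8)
The plan is to pin down exactly which elements of $\overline G$ are settled, and then to identify the normalizer $N(\overline G)$ with the normalizer of the maximal torus $\overline{\langle a\rangle}$, so that Theorem \ref{thm-Nads} applies. The starting point is the structure of $\overline G$ as a profinite dihedral group. Since $G=\langle a\rangle\rtimes\langle b\rangle$ has the two cosets $\langle a\rangle$ and $b\langle a\rangle$ relative to $\langle a\rangle$, taking closures gives $\overline G=\overline{\langle a\rangle}\sqcup b\,\overline{\langle a\rangle}$; here $\overline{\langle a\rangle}$ is closed of index $2$, hence open, and $b\notin\overline{\langle a\rangle}$ because $\overline{\langle a\rangle}\cong\ZZ_2$ is torsion-free while $b^2=1$. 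Because conjugation by $b$ is continuous and inverts $a$, it inverts all of $\overline{\langle a\rangle}$, so for every $\gamma\in\overline{\langle a\rangle}$ we have $(b\gamma)^2=(b\gamma b)\gamma=\gamma^{-1}\gamma=1$. Thus every element of the coset $b\,\overline{\langle a\rangle}$ is an involution, and these are precisely the order-$2$ elements of $\overline G$.

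For the first assertion I would classify the settled elements. A nontrivial involution $\sigma$ has all cycles of length $\le 2$ on every $V_n$, so no vertex can lie in a stable cycle, since a stable cycle of length $k$ at level $n$ forces cycles of length $2^{m-n}k\to\infty$ higher up; hence the settled fraction of Definition \ref{defn-settled} is $0$, and no element of $b\,\overline{\langle a\rangle}$ is settled. On the other hand, a nonidentity $\gamma\in\overline{\langle a\rangle}$ corresponds, under the identification $\overline{\langle a\rangle}\cong\ZZ_2$ in which $a$ acts as $+1$ (as in Section \ref{DenselyNormalizer}), to a nonzero $2$-adic integer $\lambda=2^{j}u$ with $u$ a unit. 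Then $\gamma$ acts on $V_n\cong\ZZ/2^n\ZZ$ as addition by $\lambda\bmod 2^n$, which for $n>j$ is a union of $2^{j}$ cycles of length $2^{n-j}$, each lifting to a single cycle of length $2^{n+1-j}$ at the next level; equivalently $\gamma=(a^{u})^{2^{j}}$ is a power of a minimal element, so it is strongly settled by Example \ref{ex-minimal} and Lemma \ref{settled-iteration}. Therefore the settled elements of $\overline G$ are exactly $\overline{\langle a\rangle}\setminus\{1\}$, whose closure lies in $\overline{\langle a\rangle}$ and misses the nonempty open coset $b\,\overline{\langle a\rangle}$. Hence the settled elements are not dense, and $\overline G$ is not densely settled.

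For the second assertion the key point is that $\overline{\langle a\rangle}$ is characteristic in $\overline G$. Indeed, the previous paragraph shows that the set of involutions of $\overline G$ is exactly $b\,\overline{\langle a\rangle}$, so its complement $\overline{\langle a\rangle}$ is preserved by every topological automorphism of $\overline G$. If $w\in N(\overline G)$, then conjugation by $w$ restricts to such an automorphism of $\overline G$, whence $w\,\overline{\langle a\rangle}\,w^{-1}=\overline{\langle a\rangle}$ and $w\in N(\overline{\langle a\rangle})$; thus $N(\overline G)\subseteq N(\overline{\langle a\rangle})$. The reverse inclusion $N(\overline{\langle a\rangle})\subseteq N(\overline G)$ is \cite[Proposition 3.9.5]{Pink13}, so in fact $N(\overline G)=N(\overline{\langle a\rangle})$. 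Since $d=2$ is prime and $a$ is minimal, $\overline{\langle a\rangle}$ is a maximal torus by Lemma \ref{minimal-elements}, and Theorem \ref{thm-Nads} gives that $N(\overline{\langle a\rangle})$ is densely settled; hence so is $N(\overline G)$.

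The cycle-stability computation for $\gamma$ and the absence of stable cycles for involutions are routine. The step requiring the most care is the identification $N(\overline G)=N(\overline{\langle a\rangle})$: one must check that the torsion characterization genuinely pins $\overline{\langle a\rangle}$ down as a characteristic subgroup, so that normalizing $\overline G$ forces normalizing $\overline{\langle a\rangle}$, and then correctly invoke Pink's inclusion for the converse. It is this equality that allows the general result on normalizers of maximal tori, Theorem \ref{thm-Nads}, to carry the conclusion.
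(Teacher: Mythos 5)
Your proposal is correct and follows essentially the same route as the paper: both decompose $\overline G=\overline{\langle a\rangle}\sqcup b\,\overline{\langle a\rangle}$, observe that every element outside the (closed, hence open-complemented) torus is an involution and therefore not settled, and then identify $N(\overline G)=N(\overline{\langle a\rangle})$ so that Theorem \ref{thm-Nads} applies. The only cosmetic difference is in the justification of $N(\overline G)\subseteq N(\overline{\langle a\rangle})$: you argue that $\overline{\langle a\rangle}$ is characteristic as the complement of the involutions, while the paper deduces it from conjugation-invariance of minimality (Corollary \ref{Pink-result}) together with Lemma \ref{lemma-norm2}; both are sound.
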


\proof
Every element in $G$ can be presented as a word $b^m a^k$, for some $k \in \ZZ$ and $m \in \{0,1\}$. If $m= 1$, then 
  $$(ba^k)^2 = (ba^k)(ba^k) = b^2 a^{-k}a^k = 1,$$
so each such element has order $2$. Therefore, the only settled elements in $G$ are the powers of $a$.

By Corollary \ref{minimal-elements} the maximal torus $ \overline{\langle a \rangle }$ is isomorphic to the group of the dyadic integers $\ZZ_2$. Since every non-zero element in $\ZZ_2$ is of infinite order, $\overline{G}$ is strictly larger than $ \overline{\langle a \rangle }$. Let $g \in \overline{G} -  \overline{\langle a \rangle }$, and consider a sequence  $(b a^{k_n}) \subset G$ converging to $g$. Then for every $n \geq 1$ there exists $i_n \geq 1$ such that $g|V_n = ba^{k_i}|V_n$, for all $i \geq i_n$. It follows that for any point in $\partial T$, the projection of its orbit under the action of $\langle g \rangle $ onto $V_n$ has length $2$, and so $g$ is periodic with period $2$. Thus any element in $\overline{G} -  \overline{\langle a \rangle }$ is periodic of period $2$.

Since $ \overline{\langle a \rangle }$ is closed, every sequence of elements in $\overline{G}$ converging to $g \in \overline G -  \overline{\langle a \rangle }$ is eventually in $\overline{G} -  \overline{\langle a \rangle }$, which contains only periodic elements which are not settled. Thus $\overline{G}$ is not densely settled.

On the other hand, if $w\in N(\overline{G})$ then $waw^{-1}$ is minimal (Corollary \ref{Pink-result}), which implies that $waw^{-1}$ has infinite order and then $waw^{-1}\in  \overline{\langle a \rangle }$. It follows from Lemma   \ref{lemma-norm2} that $w\in N( \overline{\langle a \rangle })$, which implies that $N(\overline{G})\subseteq N( \overline{\langle a \rangle })$. Then $N( \overline{\langle a \rangle }) = N(\overline{G})$, and Theorem \ref{thm-Nads} implies that $N(\overline{G})$ is densely settled. 
\endproof

\proof \emph{(of Theorems \ref{thm-main2} and \ref{thm-main25})} Let $K$ be a number field, and let $f(x)$ be a quadratic polynomial with post-critical orbit of length $r=2$, which is strictly pre-periodic with $s = 1$. Then the profinite geometric iterated monodromy group $\fG_{\rm geom}(f)$ is conjugate to the group $\overline G$ in Proposition \ref{prop-dihedral-nds} and so it is not densely settled. 
The normalizer $N(\fG_{\rm geom}(f))$ is conjugate to $N(\overline{G})$ and so it is densely settled. This proves statements \eqref{st-one} and \eqref{st-two} in Theorem \ref{thm-main25}.

For any $a \in \fG_{\rm geom}(f)$ minimal we have for the Weyl groups 
  $$W( \overline{\langle a \rangle }) = N(\fG_{\rm geom}(f)) = W( \overline{\langle a \rangle },N(\fG_{\rm geom}(f))), \textrm{ and }W( \overline{\langle a \rangle }, \fG_{\rm geom}(f)) = \fG_{\rm geom}(f) / \overline{\langle a \rangle } = \{-1,1\}.$$
By \cite[Corollary 3.10.6(g)]{Pink13} $\fG_{\rm arith}(f)/\fG_{\rm geom}(f)$ has finite index in 
  $$\ZZ^\times_2 / \{-1,1\} \cong N(\fG_{\rm geom}(f))/\fG_{\rm geom}(f).$$ 
Thus $W( \overline{\langle a \rangle }, \fG_{\rm arith}(f))$ has finite index in $W( \overline{\langle a \rangle })$. This proves Theorem \ref{thm-main2}.

In particular, $\fG_{\rm arith}(f)$ is a finite index closed subgroup of $N(\fG_{\rm geom}(f))$. It follows that $\fG_{\rm arith}(f)$ is also an open subgroup, and so it is densely settled since $N(\fG_{\rm geom}(f))$ is densely settled. This proves statement \eqref{st-three}  in Theorem \ref{thm-main25}.
\endproof

\subsection{Normalizer of $\overline G$ for $r \geq 3$} We recall  from \cite{Pink13} the description of the normalizer of $\overline G$ in $\Aut(T)$ for different values of $r \geq 3$ and $s \geq 1$.

Denote by $C_n$ the additive cyclic group on $n$ symbols, and by $C_n^\times$ the multiplicative group of $C_n$. Recall that $\ZZ_2^\times \cong C_4^\times \times \ZZ_2$, with equivalence classes $[-1]$ and $[5]$ generating the respective factors \cite[Theorem 4.4.7]{RibZal10}. Composing the projection onto the first factor with the isomorphism from $C_4^\times$ to $C_2$, we obtain, as in \cite[Section 3.9]{Pink13}, the homomorphism
  \begin{align}\label{proj-maps} \theta_1: \mZ_2^\times \to C_2 :  & \, k  \mapsto \frac{k-1}{2} \mod 2. 
  \end{align}
Next, notice that the projection of the set of squares $\{k^2 \mid k \in \ZZ_2^\times\}$ to $C_8^\times$ is trivial, and the projection of the same set onto $C_{16}^\times$ is equal to the order $2$ subgroup $\{1,9\}$. Composing squaring with the projection onto $C^\times_{16}$ and the isomorphism of the image subgroup to $C_2$, one obtains the homomorphism
  \begin{align}\label{proj-maps1}
  \theta_2: \mZ_2^\times \to C_2 :  & \, k \mapsto \frac{k^2-1}{8} \mod 2.  
  \end{align}

\begin{lemma} \label{lemma-kernel}
We have the following:
\begin{enumerate}
\item $k = (k_i) \in \ker(\theta_1)$ if and only if $k_i = 4\ell_i+1$ for all $i \geq 2$.
\item $k = (k_i) \in \ker(\theta_2)$ if and only if either $k_i = 8 \ell_i +1$ or $k_i = 8\ell_i +7$, for all $i \geq 4$.
\end{enumerate}
\end{lemma}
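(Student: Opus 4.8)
The plan is to reduce each kernel condition to an explicit congruence on $k$, viewed as a $2$-adic unit, and then transcribe that congruence into the sequence representation $k = (k_i)_{i \geq 1}$ using the compatibility relation $k_{i+1} \equiv k_i \pmod{2^i}$. The underlying dictionary I will use repeatedly is: for a fixed modulus $2^m$, a congruence $k \equiv c \pmod{2^m}$ holds in $\ZZ_2^\times$ if and only if $k_i \equiv c \pmod{2^m}$ for every $i \geq m$, and by the compatibility relation all of these are in turn equivalent to the single congruence $k_m \equiv c \pmod{2^m}$.

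For part (1), since $k$ is a $2$-adic unit, $k-1$ is even and $\frac{k-1}{2}$ is a well-defined $2$-adic integer. Hence $\theta_1(k) = 0$ if and only if $\frac{k-1}{2}$ is even, which is equivalent to $4 \mid (k-1)$, i.e. $k \equiv 1 \pmod 4$. Applying the dictionary with $m = 2$, this says exactly that $k_i \equiv 1 \pmod 4$ for all $i \geq 2$, which is the asserted condition $k_i = 4\ell_i + 1$.

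For part (2), the same first step gives that $\theta_2(k) = 0$ if and only if $\frac{k^2-1}{8}$ is even, i.e. $16 \mid (k^2 - 1)$, i.e. $k^2 \equiv 1 \pmod{16}$. The one genuine computation is to square the odd residues modulo $16$: one finds $1^2,3^2,5^2,\dots,15^2 \equiv 1,9,9,1,1,9,9,1 \pmod{16}$, so $k^2 \equiv 1 \pmod{16}$ holds precisely for $k \equiv 1,7,9,15 \pmod{16}$. These are exactly the residues in $(\ZZ/16\ZZ)^\times$ of the form $8\ell + 1$ or $8\ell + 7$, so the condition at level $i = 4$ reads $k_4 = 8\ell_4 + 1$ or $k_4 = 8\ell_4 + 7$. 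Equivalently, $\{1,7,9,15\}$ is the set of odd residues congruent to $1$ or $7$ modulo $8$, so by the dictionary (now with $m = 3$) the condition $k_i \equiv 1$ or $7 \pmod 8$ holds for all $i \geq 3$; in particular it holds for all $i \geq 4$, matching the statement of the lemma.

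The computations are entirely routine, so I do not anticipate a real obstacle; the only point needing care is index bookkeeping — aligning the modulus $2^m$ built into each $\theta_j$ (here $4 = 2^2$ for $\theta_1$ and $16 = 2^4$ for $\theta_2$) with the range of $i$ appearing in the sequence condition, and verifying that the mod-$8$ description of $\ker(\theta_2)$ is faithfully recovered from the mod-$16$ square calculation.
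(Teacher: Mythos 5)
Your proof is correct, and it is precisely the routine verification that the paper leaves implicit: the lemma is stated there without proof, as an immediate consequence of the definitions of $\theta_1$ and $\theta_2$. Your reduction of $\ker(\theta_1)$ to $k \equiv 1 \pmod{4}$ and of $\ker(\theta_2)$ to $k^2 \equiv 1 \pmod{16}$ (equivalently $k \equiv 1, 7 \pmod{8}$), followed by the transcription into the coordinates $k_i$ via the compatibility relation $k_{i+1} \equiv k_i \pmod{2^i}$, matches the intended argument — indeed the mod-$16$ square computation you carry out is exactly what the paper alludes to when it notes that the squares in $\ZZ_2^\times$ project onto the order-$2$ subgroup $\{1,9\}$ of $C_{16}^\times$.
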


We now consider different cases of $r \geq 3$ and $s \geq 1$. We recursively define the following elements:

$\bullet$~ {\bf (a) Case $r \geq 4$ and $s \geq 2$:} Set 
\begin{align}\label{eq-wis} w_1 = (u_s,u_s), && w_{i+1} = (w_i,w_i) \textrm{ for all } i\geq 1. \end{align}
Since $u_s$ has order $2$, every $w_i$ has order $2$. Also note that $w_i|V_k = id$ for $1 \leq k < i+s$, and so for a fixed $k \geq 1$, if $i > k-s$ then the restriction $w_i|V_k$ is trivial.

$\bullet$~{\bf (b) Case $r \geq 3$ and $s =1$:} Set 
\begin{align}\label{eq-wis1} w_1 = (1,(u_s u_r)^2), && w_{i+1} = (w_i,w_i) \textrm{ for all } i\geq 1. \end{align}
Since by \cite[Proposition 3.1.9]{Pink13} $u_s u_r$ has order $4$ then $(u_s u_r)^2$ has order $2$, and so every $w_i$ has order $2$. Also note that $w_i|V_k = id$ for $1 \leq k < i +1$, and so for a fixed $k \geq 1$, if $i > k -1$, then the restriction $w_i|V_k$ is trivial.

$\bullet$~{\bf (c) Case $r = 3$ and $s =2$:} Set 
\begin{align}\label{eq-wis2} w_0 = u_3 (w_0,w_0), && w_1 = (1,(u_s u_r)^2), && w_{i+1} = (w_i,w_i) \textrm{ for all } i\geq 1. \end{align}
Since by \cite[Proposition 3.1.9]{Pink13} $u_s u_r$ has order $4$, then $(u_s u_r)^2$ has order $2$, and so for $i \geq 1$ $w_i$ has order $2$. Also note that $w_i|V_k = id$ for $1 \leq k < i+2$, and so for a fixed $k \geq 1$, if $i \geq k-2$, then $w_i|V_k$ is trivial. Besides, by \cite[Lemma 3.8.18]{Pink13} $w_0^2 \in \overline G$.

In the cases (a) and (b) define the map
  \begin{align}\label{eq-varphiab}\varphi: \prod_{i \geq 1} C_2 \to N (\overline G): (t_1,t_2,\ldots) \mapsto w_1^{t_1} w_2^{t_2} \cdots,\end{align}
which is continuous but not a homomorphism. In the case (c) set for $i \geq 0$
  $$w_i^t = \left\{\begin{array}{ll} 1, & \textrm{ if } t = 0, \\ w_i & \textrm{ if }t = 1, \end{array} \right.$$
and then define the map (which we denote by the same symbol as the map in \eqref{eq-varphiab})
 \begin{align}\label{eq-varphic}\varphi: \prod_{i\geq 0} C_2 \to N(\overline G): (t_0,t_1,t_2,\ldots) \mapsto w_0^{t_0} w_1^{t_1} w_2^{t_2} \cdots,\end{align}
 which is continuous but not a homomorphism. In the cases (b) and (c) the image of $\varphi$ is in fact contained in a specific subgroup of $N(\overline G)$, but this is not important for our arguments.

\begin{thm}\label{thm-isonorm}
\begin{enumerate}
\item \cite[Lemma 3.6.9, Theorem 3.6.12]{Pink13}: In the case (a), where $r \geq 4$ and $s \geq 2$, we have $[w_i,w_j] \in \overline G$ for all $i,j \geq 1$, and so there is an induced isomorphism $\overline{\varphi}: \prod_{i=1}^\infty C_2 \to N(\overline G)/\overline{G}$.
\item \cite[Lemma 3.7.14, Theorem 3.7.18]{Pink13}: In the case (b), where $r \geq 3$ and $s =1 $, we have $[w_i,w_j] \in \overline G$ for all $i,j \geq 1$, and so there is an induced isomorphism $\overline{\varphi}: \prod_{i=1}^\infty C_2 \to N(\overline G)/\overline{G}$.
\item  \cite[Lemma 3.8.23, Theorem 3.8.27]{Pink13}: In the case (c), where $r = 3$ and $s =2 $, we have $[w_i,w_j], w_0^2 \in \overline G$ for all $i,j \geq 0$, and there is an induced isomorphism $\overline{\varphi}: \prod_{i=0}^\infty C_2 \to N(\overline G)/\overline{G}$.
\end{enumerate}
\end{thm}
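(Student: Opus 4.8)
The plan is to treat the three cases as uniformly as possible, isolating the case-specific input (which is exactly where Pink's computations enter) and organizing everything around three claims: that the relevant commutators, and $w_0^2$ in case (c), lie in $\overline G$; that the continuous map $\varphi$ of \eqref{eq-varphiab}, \eqref{eq-varphic} therefore descends to a homomorphism $\overline{\varphi}$; and that $\overline{\varphi}$ is bijective.

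\textbf{Well-definedness and continuity.} First I would record that the infinite products in \eqref{eq-varphiab} and \eqref{eq-varphic} converge in $\Aut(T)$. For each fixed level $k$ the recursions \eqref{eq-wis}, \eqref{eq-wis1}, \eqref{eq-wis2} show that $w_i|V_k$ is trivial once $i$ is large enough, so on each $V_k$ the product reduces to a finite product and the partial products stabilize; this makes $\varphi$ a well-defined continuous map. That the image lands in $N(\overline G)$ I would verify by checking that conjugation by the base element $w_1$ (resp.\ $w_0$ in case (c)) carries the generators \eqref{eq-gen} into $\overline G$, the recursion $w_{i+1} = (w_i,w_i)$ then propagating membership in $N(\overline G)$ to all $i$.

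\textbf{Commutators in $\overline G$.} The crux of the first claim is the recursion together with the relations \eqref{eq-useful}: computing $[w_{i+1},w_{j+1}] = ([w_i,w_j],[w_i,w_j])$ reduces every commutator to the base commutators $[w_1,w_j]$ (cases (a), (b)), with the extra relation $w_0^2 \in \overline G$ in case (c) supplied by \cite[Lemma 3.8.18]{Pink13}. One then shows the base commutators lie in $\overline G$ by exhibiting them as limits of words in the generators \eqref{eq-gen}, which is precisely the content of \cite[Lemma 3.6.9]{Pink13}, \cite[Lemma 3.7.14]{Pink13} and \cite[Lemma 3.8.23]{Pink13}. Since $\overline G$ is a closed normal subgroup of $N(\overline G)$, closure under the recursion and under passage to limits keeps all commutators inside $\overline G$.

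\textbf{Homomorphism and bijectivity.} Because each $w_i$ has order $2$ (with $w_0^2 \in \overline G$ in case (c)) and the cosets $\overline{w}_i = w_i\overline G$ pairwise commute by the previous step, the subgroup they generate in $N(\overline G)/\overline G$ is elementary abelian, and $\varphi$ descends to a continuous homomorphism $\overline{\varphi}\colon \prod_i C_2 \to N(\overline G)/\overline G$. Surjectivity is Pink's structural statement that $N(\overline G)$ is topologically generated by $\overline G$ and the $w_i$ (\cite[Theorem 3.6.12, Theorem 3.7.18, Theorem 3.8.27]{Pink13}). The main obstacle is \emph{injectivity}: one must rule out any nontrivial product $w_1^{t_1} w_2^{t_2}\cdots$ lying in $\overline G$. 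Here I would use separating invariants on $N(\overline G)$ that vanish on $\overline G$ but successively detect the $w_i$ — the sign-type homomorphisms built from the action on the levels $V_k$, of which \eqref{proj-maps} and \eqref{proj-maps1} together with Lemma \ref{lemma-kernel} are the prototypes — and, exploiting that $w_i|V_k$ becomes trivial for large $i$, run an induction on $k$ forcing $t_1 = t_2 = \cdots = 0$. I expect this independence step, showing the $w_i$ do not collapse modulo $\overline G$, to be the hardest and most case-sensitive part, and it is exactly where Pink's detailed level-by-level computations are indispensable.
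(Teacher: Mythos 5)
The first thing to note is that the paper contains no proof of Theorem \ref{thm-isonorm} at all: the statement is quoted wholesale from Pink, and the bracketed citations \cite[Lemma 3.6.9, Theorem 3.6.12]{Pink13}, \cite[Lemma 3.7.14, Theorem 3.7.18]{Pink13}, \cite[Lemma 3.8.23, Theorem 3.8.27]{Pink13} \emph{are} the proof. Your proposal defers the essential inputs (commutators in $\overline G$, surjectivity, injectivity) to those same citations, which is consistent with how the paper treats the result; the difficulty is that the extra ``glue'' you add on top of the citations is precisely where your argument breaks.

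The gap is in the step you call the crux. The computation $[w_{i+1},w_{j+1}] = ([w_i,w_j],[w_i,w_j])$ is correct, but the inference that $[w_i,w_j] \in \overline G$ forces $([w_i,w_j],[w_i,w_j]) \in \overline G$ (your ``closure under the recursion'') is unjustified: it would require the diagonal embedding $g \mapsto (g,g)$ to carry $\overline G$ into $\overline G$, and the theorem itself refutes this. In case (a), $u_s \in G \subset \overline G$, yet $(u_s,u_s) = w_1$ represents a \emph{nontrivial} coset of $\overline G$ in $N(\overline G)$ --- injectivity of $\overline{\varphi}$ says exactly that $w_1 \notin \overline G$. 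Likewise in case (b), $(u_su_r)^2 \in G$ but $(1,(u_su_r)^2) = w_1 \notin \overline G$, which also invalidates your claim that the recursion ``propagates membership in $N(\overline G)$'': $(g,1) \in \overline G$ does not follow from $g \in \overline G$. To run your induction one needs the stronger hypothesis that the commutators lie in a branching subgroup $K$ (one satisfying $K \times K \subset \overline G$), i.e.\ the regular branch structure of $\overline G$; establishing that is exactly the content of Pink's level-by-level analysis, and his cited lemmas assert $[w_i,w_j] \in \overline G$ for \emph{all} $i,j$ directly, so they must be invoked in that form --- at which point your reduction to ``base commutators'' does nothing. A smaller issue of the same nature: your injectivity plan treats $\theta_1,\theta_2$ of \eqref{proj-maps}, \eqref{proj-maps1} as prototypes of separating invariants on $N(\overline G)$ that vanish on $\overline G$, but these are homomorphisms on $\ZZ^\times_2$, not functions on $N(\overline G)$; producing genuine invariants of the latter kind is essentially equivalent to the theorem, so this step too cannot be had for less than the full strength of Pink's Theorems 3.6.12, 3.7.18 and 3.8.27.
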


Thus every coset in $N(\overline G)/ \overline G$ is represented by $w_1^{t_1} w_2^{t_2} \cdots$ where $t_i \in \{0,1\}$ for $i \geq 1$ in the cases (a) and (b). In the case (c) every coset $N (\overline G)/ \overline G$ is represented by $w_0^{t_0} w_1^{t_1} w_2^{t_2} \cdots$ where $t_i \in \{0,1\}$ for $i \geq 0$.

\subsection{Weyl groups of some maximal tori} In this section we study the Weyl groups of maximal tori contained in $\overline G$. 

We recall again our notation. For $a \in \Aut(T)$ we denote by $\langle a \rangle$ the cyclic group generated by $a$, and by $ \overline{\langle a \rangle }$ its closure in $\Aut(T)$. If $a \in \Aut(T)$ is minimal, then $ \overline{\langle a \rangle }$ is a maximal torus by Lemma \ref{minimal-elements}.

 We denote by $N( \overline{\langle a \rangle })$ the normalizer of $ \overline{\langle a \rangle }$ in $\Aut(T)$. We recall that the absolute Weyl group of $ \overline{\langle a \rangle }$ is the group $W( \overline{\langle a \rangle }) = N( \overline{\langle a \rangle })/ \overline{\langle a \rangle }$, and the relative Weyl group of $ \overline{\langle a \rangle }$ in $\overline G$ is the group $W( \overline{\langle a \rangle }, \overline G) = (N( \overline{\langle a \rangle })\cap \overline G)/ \overline{\langle a \rangle }$.

Choose a minimal element $a \in \overline G$ and fix an isomorphism $N( \overline{\langle a \rangle }) \cong \ZZ^\times_2 \ltimes \ZZ_2 $ given by Theorem \ref{thm-Zstabilizer}, then $W( \overline{\langle a \rangle }) \cong \ZZ_2^\times$.

$\bullet$~ {\bf (a) Case $r \geq 4$ and $s \geq 2$}. Composing the induced map $\overline \varphi$ in Theorem \ref{thm-isonorm} with the map
   $$(\theta_1, \theta_1, \ldots): \ZZ^\times_2 \to \prod_{i\geq1} C_2,$$
   where $\theta_1$ is defined by \eqref{proj-maps},
by \cite[Proposition 3.9.14]{Pink13} we have the map
   \begin{align}\label{weyl-24}\overline \varphi \circ (\theta_1, \theta_1, \ldots): \ZZ^\times_2 \to N(\overline{G})/\overline{G}: k  \mapsto \overline{\varphi}(k)\overline G = (w_1^{\theta_1(k)} w_2^{\theta_1(k)} \cdots )\overline G. \end{align}
 By \cite[Proposition 3.9.14]{Pink13} the image of the absolute Weyl group $W( \overline{\langle a \rangle }) \cong \ZZ^\times_2$ coincides with the image of \eqref{weyl-24}, 
 where the coset $G_{\zeta} \in W( \overline{\langle a \rangle })$ defined by \eqref{eq-layers} is identified with $k = (k_i) \in \ZZ^\times_2$ such that $\zeta = (a^{k_i})$, see the comments after Theorem \ref{thm-Zstabilizer}. Thus the image of $W( \overline{\langle a \rangle })$ in $\prod_{i \geq 1} C_2$ consists of two points, the infinite sequence of $0$'s and the infinite sequence of $1$'s, and the normalizer $N( \overline{\langle a \rangle })$ is contained in the union of two cosets of $\overline G$ in $N(\overline G)$. This gives the following.

\begin{lemma}\label{lemma-twoparts}
Let $r \geq 4$ and $s \geq 2$, and let $a\in \overline{G}$ be a minimal element. Then we have:
\begin{enumerate}
\item $N( \overline{\langle a \rangle })\cap \overline{G}=\overline{\{ \sigma_{m,4\ell+1}: m\geq 1,  \ell \geq 0\} }$.
\item \label{lemma-twoparts2} $W( \overline{\langle a \rangle }, \overline G)$ has index $2$ in the absolute Weyl group $W( \overline{\langle a \rangle })$.
\item If $b \in N( \overline{\langle a \rangle })$ is minimal, then $b \in N( \overline{\langle a \rangle }) \cap \overline G$. 
\end{enumerate}
\end{lemma}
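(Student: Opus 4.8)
The plan is to deduce all three parts from the explicit form of the map $W(\overline{\langle a \rangle}) \to N(\overline{G})/\overline{G}$ recorded just above the lemma, combined with the density of the rational spanning set. Throughout I identify $W(\overline{\langle a \rangle})$ with $\ZZ_2^\times$ via Theorem \ref{thm-Zstabilizer}, so that a coset $G_\zeta$ with $\zeta = (a^{k_i})$ corresponds to $k = (k_i) \in \ZZ_2^\times$. Since $a \in \overline{G}$ we have $\overline{\langle a \rangle} \subset \overline{G}$, so the assignment $w\overline{\langle a \rangle} \mapsto w\overline{G}$ is a well-defined map $W(\overline{\langle a \rangle}) \to N(\overline{G})/\overline{G}$ (using $N(\overline{\langle a \rangle}) \subset N(\overline{G})$), it is constant on each coset $G_\zeta$, and under the identifications above it is $k \mapsto (\theta_1(k), \theta_1(k), \ldots)$ by \eqref{weyl-24}, with image of order $2$.

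I would prove the index statement first. The relative Weyl group $W(\overline{\langle a \rangle}, \overline{G}) = (N(\overline{\langle a \rangle}) \cap \overline{G})/\overline{\langle a \rangle}$ is exactly the kernel of $w\overline{\langle a \rangle} \mapsto w\overline{G}$, since $w \in \overline{G}$ iff $w\overline{G} = \overline{G}$. By \eqref{weyl-24} this kernel corresponds to $\ker\theta_1$, which by Lemma \ref{lemma-kernel} (or because $\theta_1$ maps onto $C_2$, e.g.\ $\theta_1(3) = 1$) has index $2$ in $\ZZ_2^\times$. This proves statement \eqref{lemma-twoparts2}.

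For the description of $N(\overline{\langle a \rangle}) \cap \overline{G}$, note that because the map above is constant on each coset $G_\zeta$ and $G_\zeta$ is a single $\overline{\langle a \rangle}$-coset with $\overline{\langle a \rangle} \subset \overline{G}$, each $G_\zeta$ lies wholly inside or wholly outside $\overline{G}$, and $G_\zeta \subset \overline{G}$ precisely when $\theta_1(k) = 0$, i.e.\ $k \equiv 1 \bmod 4$. As $\theta_1(4\ell+1) = 0$, every $\sigma_{m,4\ell+1}$ lies in such a coset, hence in the closed set $N(\overline{\langle a \rangle}) \cap \overline{G}$, giving $\overline{\{\sigma_{m,4\ell+1}\}} \subseteq N(\overline{\langle a \rangle}) \cap \overline{G}$. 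For the reverse inclusion I would rerun the compactness argument in the proof of Theorem \ref{thm-skeleton}: given $w \in N(\overline{\langle a \rangle}) \cap \overline{G}$ in a coset $G_\zeta$ with $\zeta \leftrightarrow k \in \ker\theta_1$, write $\zeta = \lim_i a^{k_i}$ with integers $k_i$; since $k_i \to k$ in $\ZZ_2$ and $k \equiv 1 \bmod 4$, all but finitely many $k_i$ satisfy $k_i \equiv 1 \bmod 4$. The compactness step then yields $\lambda \in \overline{\langle a \rangle}$ and a subsequence with $\sigma_{i_j}\lambda^{-1} \in G_{k_{i_j}}$ converging to $w$, and Lemma \ref{density-0} lets me replace each $\sigma_{i_j}\lambda^{-1}$ by some $\sigma_{m,k_{i_j}}$ with $k_{i_j} \equiv 1 \bmod 4$; hence $w \in \overline{\{\sigma_{m,4\ell+1}\}}$. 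Finally, for the statement about minimal $b$, Proposition \ref{prop-applminimal} gives $b \in \overline{\mathcal{M}_2}$, and every element of $\mathcal{M}_2$ has $k \equiv 1 \bmod 4$, so lies in $\overline{G}$ by the first part; as $\overline{G}$ is closed, $\overline{\mathcal{M}_2} \subseteq \overline{G}$ and thus $b \in N(\overline{\langle a \rangle}) \cap \overline{G}$.

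The only real obstacle is the reverse inclusion in the first statement: one must guarantee the approximating elements can be taken in the cosets with $k \equiv 1 \bmod 4$. This is exactly where the $2$-adic convergence $k_i \to k \in \ker\theta_1$ is used, forcing the congruence on cofinitely many $k_i$ before the compactness and Lemma \ref{density-0} steps are applied; the index computation and the statement on minimal elements are then purely formal consequences.
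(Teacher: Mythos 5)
Your proof is correct and follows essentially the same route as the paper's: both rest on identifying the coset map $W(\overline{\langle a \rangle}) \to N(\overline{G})/\overline{G}$ with \eqref{weyl-24}, then invoking Lemma \ref{lemma-kernel}(1), the density of the rational spanning set (Theorem \ref{thm-skeleton} and Lemma \ref{density-0}), and, for part (3), Lemma \ref{minimal-final}(1) together with Proposition \ref{prop-applminimal}. The paper's own proof is just a terser citation of these same ingredients; your rerun of the compactness argument from Theorem \ref{thm-skeleton} with the congruence $k_i \equiv 1 \bmod 4$ simply makes explicit the step the paper leaves implicit.
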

\begin{proof}
Since $N( \overline{\langle a \rangle })$ and $\overline G$ are profinite groups, they are closed. Then their intersection is closed, and the first statement follows from Lemma \ref{lemma-kernel}(1) and the fact that the rational spanning set $\{\sigma_{m,k} \mid m \geq 1,k \geq 0\}$ is dense in $N( \overline{\langle a \rangle })$. The second statement follows from the fact that the image of \eqref{weyl-24} consists of two points. The last statement follows by Lemma \ref{minimal-final}(1) and Proposition \ref{prop-applminimal}.
\end{proof}

$\bullet$~ {\bf (b) Case $r \geq 3$ and $s =1$}. Composing the induced map $\overline \varphi$ in Theorem \ref{thm-isonorm} with the map
   $$(\theta_2, \theta_2, \ldots): \ZZ^\times_2 \to \prod_{i \geq 1} C_2,$$
where $\theta_2$ is defined by \eqref{proj-maps1}, by \cite[Proposition 3.9.14]{Pink13} we obtain the map
   \begin{align}\label{eq-phitheta2}\overline \varphi \circ (\theta_2, \theta_2, \ldots): \ZZ^\times_2 \to N(\overline G)/\overline{G}: k  \mapsto \overline{\varphi}(k)\overline G = (w_1^{\theta_2(k)} w_2^{\theta_2(k)} \cdots )\overline G. \end{align}
Again, using the identification of $W( \overline{\langle a \rangle })$ with $\ZZ^\times_2$, we obtain that the image of $W( \overline{\langle a \rangle })$ in $\prod_{i \geq 1} C_2$ consists of two points, the infinite sequence of $0$'s, and the infinite sequence of $1$'s, and $N( \overline{\langle a \rangle })$ is contained in the union of two cosets of $\overline G$ in $N(\overline G)$. More precisely, we have the following.

\begin{lemma}\label{lemma-twopartsr3s1}
Let $r \geq 3$ and $s =1$, and let $a\in \overline{G}$ be a minimal element. Then we have:
\begin{enumerate}
\item $N( \overline{\langle a \rangle })\cap \overline{G}=\overline{\{ \sigma_{m,8 \ell+1}, \sigma_{m,8 \ell+7}: m\geq 1, \ell\geq 0\} }$.
\item \label{lemma-twopartsr3s1-2} $W( \overline{\langle a \rangle }, \overline G)$ has index $2$ in the absolute Weyl group $W( \overline{\langle a \rangle })$.
\item Both cosets of $N( \overline{\langle a \rangle }) \cap \overline G$ in $N( \overline{\langle a \rangle })$ contain minimal elements of $N( \overline{\langle a \rangle })$. 
\end{enumerate}
\end{lemma}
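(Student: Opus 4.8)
The plan is to study the restriction to $N(\overline{\langle a \rangle})$ of the quotient homomorphism $N(\overline G) \to N(\overline G)/\overline G$. By \cite[Proposition 3.9.5]{Pink13} we have $N(\overline{\langle a \rangle}) \subset N(\overline G)$, so this restriction is defined, and since $a \in \overline G$ we have $\overline{\langle a \rangle} \subseteq \overline G$, so the homomorphism kills $\overline{\langle a \rangle}$ and descends to a map $\overline W\colon W(\overline{\langle a \rangle}) \to N(\overline G)/\overline G$ whose kernel is exactly $W(\overline{\langle a \rangle},\overline G) = (N(\overline{\langle a \rangle}) \cap \overline G)/\overline{\langle a \rangle}$. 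Under the identifications $W(\overline{\langle a \rangle}) \cong \ZZ_2^\times$ (Theorem \ref{thm-Zstabilizer}) and $N(\overline G)/\overline G \cong \prod_{i \geq 1} C_2$ (Theorem \ref{thm-isonorm}(2)), \cite[Proposition 3.9.14]{Pink13} identifies $\overline W$ with the map \eqref{eq-phitheta2}, i.e. with $(\theta_2,\theta_2,\ldots)$, so that $\ker \overline W = \ker(\theta_2) \subset \ZZ_2^\times$. Since $\theta_2(1) = 0$ and $\theta_2(3) = 1$, the map $\theta_2$ is onto $C_2$ and $\ker(\theta_2)$ has index $2$; this gives statement (2) at once.

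For statement (1) I would first note that a rational element $\sigma_{m,k}$ has Weyl class equal to the image of the integer $k$ in $\ZZ_2^\times$, since $\sigma_{m,k}\, a\, \sigma_{m,k}^{-1} = a^k$ by \eqref{eq-layers}. A direct computation of $\theta_2(k) = \tfrac{k^2-1}{8} \bmod 2$ for odd $k$ shows $\theta_2(k) = 0$ precisely when $k \equiv 1$ or $k \equiv 7 \pmod 8$, in agreement with Lemma \ref{lemma-kernel}(2). Hence every element of $S := \{\sigma_{m,8\ell+1}, \sigma_{m,8\ell+7} : m \geq 1,\ \ell \geq 0\}$ lies in $\ker \overline W$, i.e. in the closed set $N(\overline{\langle a \rangle}) \cap \overline G$, giving $\overline S \subseteq N(\overline{\langle a \rangle}) \cap \overline G$. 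For the reverse inclusion I take $w \in N(\overline{\langle a \rangle}) \cap \overline G$ and use the density of the rational spanning set (Theorem \ref{thm-skeleton}) to choose $\sigma_{m_i,k_i} \to w$. Conjugation is continuous, so $a^{k_i} = \sigma_{m_i,k_i}\, a\, \sigma_{m_i,k_i}^{-1} \to waw^{-1} =: \zeta \in \overline{\langle a \rangle}$, whose Weyl class $\kappa \in \ker(\theta_2)$ satisfies $\kappa \equiv 1$ or $7 \pmod 8$. Fixing any $n \geq 3$, convergence forces $k_i \equiv \kappa \pmod{2^n}$ for all large $i$ (because $a|V_n$ is a single $2^n$-cycle), hence $k_i \equiv 1$ or $7 \pmod 8$; thus $\sigma_{m_i,k_i} \in S$ eventually and $w \in \overline S$, proving (1).

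For statement (3), recall from (2) that $N(\overline{\langle a \rangle}) \cap \overline G$ has index $2$ in $N(\overline{\langle a \rangle})$, so there are exactly two cosets, distinguished by the value of $\theta_2$ on the Weyl class. The identity coset contains the minimal element $\sigma_{m,1} = a^m$ for any odd $m$ (minimal by Lemma \ref{minimal-final}(1), and lying in $\overline{\langle a \rangle} \subseteq \overline G$ by Remark \ref{remark-k1}). For the nonidentity coset I would take $\sigma_{m,5}$ with $m$ odd: since $5 = 4 \cdot 1 + 1$ it is minimal by Lemma \ref{minimal-final}(1), while $\theta_2(5) = \tfrac{24}{8} = 3 \equiv 1 \pmod 2$ places its Weyl class outside $\ker(\theta_2)$, hence $\sigma_{m,5} \notin \overline G$. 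Therefore both cosets contain minimal elements of $N(\overline{\langle a \rangle})$. I note that this is precisely where case (b) differs from case (a): the minimality condition $k \equiv 1 \pmod 4$ splits as $k \equiv 1$ or $5 \pmod 8$, and $\theta_2$ separates these two residues, whereas $\theta_1$ in case (a) does not.

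The step I expect to be the main obstacle is the reverse inclusion in (1): verifying that an arbitrary element of $N(\overline{\langle a \rangle}) \cap \overline G$ can be approximated by rational elements that \emph{themselves} lie in $\overline G$. This is the only point requiring more than bookkeeping, and it rests on the continuity of conjugation together with the stabilization of $k_i \bmod 8$ under convergence. Everything else reduces to the explicit description of $\ker(\theta_2)$ in Lemma \ref{lemma-kernel}(2), the minimality criterion of Lemma \ref{minimal-final}(1), and the cited identification \cite[Proposition 3.9.14]{Pink13} of $\overline W$ with $(\theta_2,\theta_2,\ldots)$.
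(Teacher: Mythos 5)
Your proposal is correct and takes essentially the same route as the paper's proof: both rest on the identification (via \cite[Proposition 3.9.14]{Pink13}) of the induced map $W(\overline{\langle a \rangle}) \to N(\overline G)/\overline G$ with $(\theta_2,\theta_2,\ldots)$, on Lemma \ref{lemma-kernel}(2), on the density of the rational spanning set (Theorem \ref{thm-skeleton}), and on Lemma \ref{minimal-final}(1). The paper's proof is terser --- it leaves the reverse inclusion in (1) and the residue split $k \equiv 1, 5 \pmod 8$ in (3) implicit --- and your filled-in details (stabilization of $k_i \bmod 8$ under convergence, the explicit minimal elements $\sigma_{m,1}$ and $\sigma_{m,5}$) are exactly what the paper's citations are meant to supply.
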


\proof Since $N( \overline{\langle a \rangle })$ and $\overline G$ are profinite groups, they are closed. Then their intersection is closed, and the first statement follows from Lemma \ref{lemma-kernel}(2) and the fact that the rational spanning set $\{\sigma_{m,k} \mid m \geq 1, k \geq 0\}$ is dense in $N( \overline{\langle a \rangle })$. The second statement follows from the fact that the image of \eqref{eq-phitheta2} consists of two points. The last statement follows by Lemma \ref{minimal-final}(1) and Proposition \ref{prop-applminimal}.
\endproof

$\bullet$~ {\bf (c) Case $r = 3$ and $s =2$}. Composing the induced map $\overline \varphi$ in Theorem \ref{thm-isonorm} with the map
   $$(\theta_1, \theta_2, \theta_2, \ldots): \ZZ^\times_2 \to \prod_{i \geq 0} C_2,$$
where $\theta_1$ is defined by \eqref{proj-maps} and $\theta_2$ is defined by \eqref{proj-maps1}, by \cite[Proposition 3.9.14]{Pink13} we obtain the map
   \begin{align}\label{eq-compr3}\overline \varphi \circ (\theta_1,\theta_2, \theta_2, \ldots): \ZZ^\times_2 \to N/\overline{G}: k  \mapsto \overline{\varphi}(k)\overline G = (w_0^{\theta_1(k)} w_1^{\theta_2(k)} w_2^{\theta_2(k)} \cdots )\overline G. \end{align}
In this case the image of $W( \overline{\langle a \rangle })$ in $\prod_{i \geq 0} C_2$ consists of four points, the infinite sequences $0^\infty$, $0 1^\infty$, $1 0^\infty$ and $1^\infty$, where $x^\infty$ denotes the concatenation of an infinite number of symbols $x$. Then $N( \overline{\langle a \rangle })$ is contained in the union of four cosets of $\overline G$ in $N(\overline G)$. 

\begin{lemma}\label{lemma-twopartsr3s2}
Let $r = 3$ and $s =2$, and let $a \in \overline{G}$ be a minimal element. Then we have:
\begin{enumerate}
\item $N( \overline{\langle a \rangle })\cap \overline{G}=\overline{\{ \sigma_{m,8\ell+1}: m\geq 1, \ell\geq 0\} }$.
\item \label{lemma-twopartsr3s2-2} $W( \overline{\langle a \rangle }, \overline G)$ is a normal subgroup of index $4$ of the absolute Weyl group $W( \overline{\langle a \rangle })$.
\item The cosets of $N( \overline{\langle a \rangle }) \cap \overline G$ in $N( \overline{\langle a \rangle })$ represented by $\overline \varphi(0^\infty)$ and $\overline \varphi(01^\infty)$ contain minimal elements of $N( \overline{\langle a \rangle })$. 
\end{enumerate}
\end{lemma}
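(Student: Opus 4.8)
The plan is to follow the template of the proofs of Lemmas \ref{lemma-twoparts} and \ref{lemma-twopartsr3s1}, the essential new feature being that the composite map \eqref{eq-compr3} mixes the two homomorphisms $\theta_1$ and $\theta_2$: the first coordinate of $\overline\varphi(k)$ is governed by $\theta_1(k)$ and all remaining coordinates by $\theta_2(k)$. Consequently the subgroup of $W(\overline{\langle a \rangle}) \cong \ZZ_2^\times$ relevant to $\overline G$ is $\ker\theta_1 \cap \ker\theta_2$ rather than a single kernel, and the image of $W(\overline{\langle a \rangle})$ in $N(\overline G)/\overline G$ consists of the four sequences $0^\infty$, $01^\infty$, $10^\infty$, $1^\infty$ obtained by letting $(\theta_1(k),\theta_2(k))$ range over $\{0,1\}^2$, as already recorded in the text preceding the lemma.

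For statement (1), I would first combine Lemma \ref{lemma-kernel}(1) and (2): $\theta_1(k)=0$ forces $k \equiv 1 \pmod 4$, while $\theta_2(k)=0$ forces $k \equiv \pm 1 \pmod 8$, and the intersection of these two conditions is exactly $k \equiv 1 \pmod 8$, so that $\ker\theta_1 \cap \ker\theta_2 = 1 + 8\ZZ_2$. Thus $\sigma_{m,k}$ lies in $\overline G$ (that is, $\overline\varphi(k)\overline G = \overline G$) precisely when $k \equiv 1 \pmod 8$. Since $N(\overline{\langle a \rangle})$ and $\overline G$ are closed, so is their intersection, and because the rational spanning set $\{\sigma_{m,k}\}$ is dense in $N(\overline{\langle a \rangle})$ by Theorem \ref{thm-skeleton}, the subfamily with $k = 8\ell + 1$ is dense in $N(\overline{\langle a \rangle}) \cap \overline G$, which gives (1). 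For statement (2), I would note that the natural map $W(\overline{\langle a \rangle}) \to N(\overline G)/\overline G$, $g\overline{\langle a \rangle} \mapsto g\overline G$, is well defined since $N(\overline{\langle a \rangle}) \subset N(\overline G)$ by \cite[Proposition 3.9.5]{Pink13}, and it is a group homomorphism whose kernel is exactly $W(\overline{\langle a \rangle}, \overline G)$; by \cite[Proposition 3.9.14]{Pink13} its image coincides with the image of \eqref{eq-compr3}, which has four elements, so $W(\overline{\langle a \rangle}, \overline G)$ has index $4$, and it is normal because $W(\overline{\langle a \rangle}) \cong \ZZ_2^\times$ is abelian.

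For statement (3), I would use Proposition \ref{prop-applminimal} together with Lemma \ref{minimal-final}(1): every minimal element of $N(\overline{\langle a \rangle})$ lies in the closure of the rational minimal elements $\sigma_{m,k}$ with $m$ odd and $k \equiv 1 \pmod 4$, so minimal elements can only occur in cosets with $\theta_1(k) = 0$, namely $\overline\varphi(0^\infty)$ and $\overline\varphi(01^\infty)$. It then remains to exhibit a minimal element in each: for $\overline\varphi(0^\infty)$ one may take any $k \equiv 1 \pmod 8$, for instance $a = \sigma_{1,1}$ itself or $\sigma_{1,9}$; for $\overline\varphi(01^\infty)$ one needs $\theta_1(k)=0$ and $\theta_2(k)=1$, which forces $k \equiv 5 \pmod 8$, for instance $\sigma_{1,5}$, which is minimal by Lemma \ref{minimal-final}(1) since $m=1$ is odd and $5 = 4\cdot 1 + 1$. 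The only point requiring genuine care throughout is the bookkeeping of residues modulo $8$ and their correspondence to the four cosets — in particular verifying $\ker\theta_1 \cap \ker\theta_2 = 1 + 8\ZZ_2$ and identifying $k \equiv 5 \pmod 8$ as the label of $\overline\varphi(01^\infty)$ — since this mixing of a mod-$4$ condition with a mod-$8$ condition is the substantive difference from the earlier cases, while the remaining density and closedness arguments are identical to those in Lemmas \ref{lemma-twoparts} and \ref{lemma-twopartsr3s1}.
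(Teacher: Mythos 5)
Your proposal is correct and follows essentially the same route as the paper: the paper's proof likewise identifies the kernel of the composite map \eqref{eq-compr3} as $\ker\theta_1\cap\ker\theta_2$ (normal of index $4$ since $W(\overline{\langle a \rangle})\cong\ZZ_2^\times$ is abelian) and then invokes the same closedness/density argument and Lemma \ref{minimal-final}(1) with Proposition \ref{prop-applminimal} as in Lemma \ref{lemma-twopartsr3s1}. Your explicit mod-$8$ bookkeeping (in particular $\ker\theta_1\cap\ker\theta_2 = 1+8\ZZ_2$ and the identification of $k\equiv 5 \pmod 8$ with the coset $\overline\varphi(01^\infty)$, witnessed by $\sigma_{1,5}$) simply spells out details the paper leaves implicit.
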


\begin{proof}
The kernel of \eqref{eq-compr3} is the intersection of normal subgroups $\ker \theta_1 \cap \ker \theta_2$, and so it is a subgroup of index $4$ in $W( \overline{\langle a \rangle })$ which is normal since $W( \overline{\langle a \rangle })$ is abelian. Then arguments similar to the ones in Lemma \ref{lemma-twopartsr3s1} finish the proof.
\end{proof}

Lemmas \ref{lemma-twoparts}, \ref{lemma-twopartsr3s1} and \ref{lemma-twopartsr3s2} show whether, for a given $a \in \overline G$, the maximal tori contained in the normalizer $N( \overline{\langle a \rangle })$ are also entirely contained in $\overline G$. In particular, Lemma \ref{lemma-twoparts} states that for $r \geq 4$ and $s \geq 2$ all maximal tori contained in $N( \overline{\langle a \rangle })$ are entirely contained in $\overline G$. Lemmas \ref{lemma-twopartsr3s1} and \ref{lemma-twopartsr3s2} state that when $r \geq 3$ and $s =1$, or when $r = 3$ and $s = 2$, then some minimal elements in $N( \overline{\langle a \rangle })$ are not in $\overline G$. We will see in Section \ref{subsec-toriincosets} that such elements topologically generate maximal tori, which are not contained in $\overline G$ but nevertheless have a non-empty intersection with $\overline G$. The normalizers of such maximal tori are studied in Section \ref{subsec-normingroup}.

\subsection{Maximal tori in the normalizer $N(\overline{G})$}\label{subsec-toriincosets} As earlier in the paper, for $a \in \Aut(T)$ we denote by $\langle a \rangle$ the cyclic group generated by $a$, and by $ \overline{\langle a \rangle }$ its closure in $\Aut(T)$. If $a \in \Aut(T)$ is minimal, then $ \overline{\langle a \rangle }$ is a maximal torus by Lemma \ref{minimal-elements}.

\begin{thm}\label{thm-moreodometers}
Let $\overline{G}$ be topologically generated by the elements $u_1,\ldots,u_r$ given by \eqref{eq-gen} and let $r \geq 3$. 
Then for every coset in $N(\overline G)/\overline G$ there exists a representative $w$ such that the following holds:
\begin{enumerate}
\item For every minimal $a \in  \overline{G}$, the element  $aw \in w \overline{G}$ is minimal. 
\item For the maximal torus $ \overline{\langle aw \rangle }$ topologically generated by $aw$, the intersection $ \overline{\langle aw \rangle } \cap \overline{G}$ is an index $2$ subgroup of $ \overline{\langle aw \rangle }$, and $ \overline{\langle aw \rangle } \subset \overline{G} \cup  w \overline{G}$. 
\item The normalizer $N(\overline G)$ contains an uncountable number of distinct maximal tori.
\item Minimal elements in $\overline G$ and in each coset $w \overline G \in N(\overline G)/\overline G$ are in bijective correspondence. 
\end{enumerate}
\end{thm}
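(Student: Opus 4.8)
The plan is to reduce all four assertions to a single parity criterion together with one structural fact about coset representatives. Since $T$ is the binary tree, the remark following Proposition \ref{prop-equivodom} says that $g \in \Aut(T)$ is minimal if and only if ${\rm sgn}(g|V_n) = -1$ for every $n \geq 1$; that is, minimality is detected level by level through the homomorphisms ${\rm sgn}_n \colon \Aut(T) \to \{\pm 1\}$, $g \mapsto {\rm sgn}(g|V_n)$. Call $g$ \emph{even} if it lies in the kernel of every ${\rm sgn}_n$. The heart of the argument is the claim that \emph{every coset of $\overline G$ in $N(\overline G)$ has a representative $w$ that is even on all levels}. Granting this, (1) and (4) are immediate, while (2) and (3) follow with little extra work.

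To produce even representatives I would show the total sign map ${\rm sgn} = ({\rm sgn}_n)_n \colon N(\overline G) \to \prod_{n\ge1}\{\pm1\}$ has the same image on $\overline G$ as on $N(\overline G)$. By Theorem \ref{thm-isonorm} the group $N(\overline G)/\overline G$ is generated by the classes of the $w_i$ (and $w_0$ in case (c)), so it suffices to check each of these generators has a sign pattern already realized inside $\overline G$. Here the recursion $w_{i+1}=(w_i,w_i)$ is decisive: any automorphism of the form $(x,x)$ restricts on each level to two disjoint copies of one permutation, hence is even on every level. A short computation with \eqref{eq-gen} and \eqref{eq-useful} shows $w_1$ is even in every case as well, in cases (b),(c) because $(u_su_r)^2$ reduces to an element that is even on every level. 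The genuinely delicate point, and the main obstacle, is the extra generator $w_0 = u_3(w_0,w_0)$ in case (c): a recursive sign computation shows $w_0$ is \emph{not} even (it is odd on $V_n$ for all $n\ge 3$), so one must exhibit an element of $\overline G$ with the same sign pattern. I expect this to be $u_3$ itself, whose level signs coincide with those of $w_0$; since $u_3 \in G \subset \overline G$, the class of $w_0$ also lies in ${\rm sgn}(\overline G)$, giving ${\rm sgn}(N(\overline G)) = {\rm sgn}(\overline G)$. Then for any $w \in N(\overline G)$ one picks $g \in \overline G$ with ${\rm sgn}(g)={\rm sgn}(w)$ and replaces $w$ by $wg^{-1}$, an even representative of the same coset.

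With an even representative $w$ of a (nontrivial) coset fixed, (1) and (4) fall out. For any minimal $a \in \overline G$ we get ${\rm sgn}_n(aw) = {\rm sgn}_n(a)\,{\rm sgn}_n(w) = -1$ for all $n$, so $aw$ is minimal, and $aw \in w\overline G$ since $\overline G \trianglelefteq N(\overline G)$; this is (1). For (4) I would check that $a \mapsto aw$ is a bijection ${\rm Min}(\overline G) \to {\rm Min}(w\overline G)$: injectivity is clear, and given minimal $g \in w\overline G$ its preimage $gw^{-1}$ lies in $\overline G$ because the class of $w$ has order $2$ in the elementary abelian $2$-group $N(\overline G)/\overline G$ (so $w^2 \in \overline G$), and it is minimal because $w^{-1}$ is even.

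For (2), $aw$ is minimal, so by Proposition \ref{conjugacy-elements} the torus $\overline{\langle aw \rangle}$ is isometrically isomorphic to $\ZZ_2$. The continuous quotient $N(\overline G) \to N(\overline G)/\overline G$ sends $aw$ to the order-$2$ class $w\overline G$, so the image of $\overline{\langle aw \rangle}$ is a copy of $C_2$, its kernel $\overline{\langle aw \rangle}\cap \overline G$ has index $2$, and every element of $\overline{\langle aw \rangle}$ maps into $\{\overline G, w\overline G\}$, giving $\overline{\langle aw \rangle} \subset \overline G \cup w\overline G$. Finally, for (3) I would use that $N(\overline G)/\overline G \cong \prod_i C_2$ is uncountable: each of its uncountably many nontrivial cosets $w\overline G$ carries a torus $\overline{\langle a_0 w \rangle}$ meeting $w\overline G$ in its odd powers. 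If two such tori for distinct cosets $w\overline G \neq w'\overline G$ coincided, the common torus would lie in $(\overline G \cup w\overline G)\cap(\overline G \cup w'\overline G) = \overline G$, contradicting that it meets $w\overline G$. Hence these tori are pairwise distinct and uncountable in number, while two tori for the \emph{same} coset can meet only inside $\overline G$, i.e. along a finite-index subgroup, which accounts for the last clause.
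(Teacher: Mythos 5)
Your proposal is correct, and for statements (1), (3) and (4) it is essentially the paper's own argument: both rest on the parity homomorphisms ${\rm sgn}_n$, on the observation that the recursion $w_{i+1}=(w_i,w_i)$ produces elements that are even at every level, and on correcting the extra generator $w_0$ in the case $r=3$, $s=2$ by $u_3$ (the paper takes the representative $u_3\varphi(t)$; your $\varphi(t)u_3^{-1}$ differs immaterially). Your parenthetical sign computation for $w_0$ is also right: since $u_3=(u_2,u_3)$ in this case, one gets ${\rm sgn}_n(w_0)={\rm sgn}_n(u_3)=-1$ for all $n\geq 3$. Where you genuinely depart from the paper is statement (2). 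The paper proves $\overline{\langle aw\rangle}\subset\overline{G}\cup w\overline{G}$ by computing the powers $(aw)^m|V_n$ explicitly, using Pink's lemmas that $[w_i,w_j]\in\overline{G}$ (and $w_0^2\in\overline{G}$ in case (c)), the order-$2$ property of the $w_i$, the equality of left and right cosets, and finally a limit argument using that $\overline{G}$ and its cosets are closed. You instead push the compact torus through the continuous quotient map $\pi\colon N(\overline{G})\to N(\overline{G})/\overline{G}$ and use that $\pi(aw)=\pi(w)$ has order $2$ by Theorem \ref{thm-isonorm}, so that $\pi(\overline{\langle aw\rangle})=\{1,\pi(w)\}$; the index-$2$ claim and the containment follow at once, uniformly in the three cases (a)--(c). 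This is cleaner and avoids the commutator bookkeeping; the price is that it invokes the full statement of Theorem \ref{thm-isonorm} (the quotient being an elementary abelian pro-$2$ group), whereas the paper's computation draws only on the individual lemmas of Pink behind it.

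One step you should make precise: you say the classes of the $w_i$ ``generate'' $N(\overline{G})/\overline{G}$, so that it suffices to check generators. They generate it only topologically; a general coset representative is an infinite product $\varphi(t)=w_1^{t_1}w_2^{t_2}\cdots$, and the sign of an infinite product is not formally the product of the signs. The repair is routine and is exactly what the paper records: for fixed $n$, all but finitely many $w_i$ restrict trivially to $V_n$, so $\varphi(t)|V_n$ is a finite product and ${\rm sgn}_n(\varphi(t))$ can be computed factor by factor. Alternatively, the recursive structure shows the infinite product in case (a) is itself of the form $(x,x)$, and in cases (b), (c) it is $w_1^{t_1}(x,x)$, so your observation that elements of the form $(x,x)$ are even applies to it directly. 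With that point filled in, your even representative exists in every coset and the rest of your argument goes through.
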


\proof Following \cite[Section 1.5]{Pink13}, for $n \geq 1$ define a homomorphism
 \begin{align*} {\rm sgn}_n: Aut(T) \to \{\pm 1\}\end{align*} 
which assigns to each $g \in Aut(T)$ the parity of the permutation $g|V_n$. Recall that by \cite[Proposition 1.6.2]{Pink13} $a$ is minimal if and only if ${\rm sgn}_n(a) = -1$ for all $n \geq 1$. 
Note that if $g = (h_1,h_2)$, then ${\rm sgn}_n(g) = {\rm sgn}_n(h_1){\rm sgn}_n(h_2)$ since $h_1$ and $h_2$ induce permutations of disjoint sets of vertices on $V_n$ and ${\rm sgn}_n$ is a homomorphism. We start by proving statement (1).

$\bullet$ ~ {\bf (a) Case $s \geq 2$ and $r \geq 4$.} Let $t \in \prod_{i \geq 1} C_2$, and let $w = \varphi(t) \in N(\overline G)$, where $\varphi$ is given by \eqref{eq-varphiab}. By \eqref{eq-wis} we have
  $$w = w_1^{t_1} w_2^{t_2} \cdots = (u_s^{t_1} w_1^{t_2} \cdots, u_s^{t_1} w_1^{t_2} \cdots),$$
and in particular  
  $${\rm sgn}_n(w|V_n) = {\rm sgn}_n^2(u_s^{t_1} w_1^{t_2} \cdots) = 1.$$
Let $a \in \overline G$ be minimal, then for $aw$ we have for $n \geq 1$
  $${\rm sgn}_n(aw|V_n) = {\rm sgn}_n(a|V_n) \, {\rm sgn}_n(w|V_n)=  {\rm sgn}_n(a|V_n) = -1,$$
and so $aw \in w \overline G$ is a minimal element. Throughout the paper, we use the same notation $w \overline{G}$ for the left and the right cosets of $\overline G$ in $N(\overline G)$, since they are equal. 

$\bullet$ ~{\bf (b) Case $s =1$ and $r \geq 3$.} 
First note that for $n \geq 1$ we have
 $${\rm sgn}_n(w_1) = {\rm sgn}_n((1, (u_su_r)^2)) = ({\rm sgn}_n (u_s u_r))^2 = 1. $$
For $i \geq 2$ and $n \geq 1$ we have for $w_i$ defined by \eqref{eq-wis1}
 $${\rm sgn}_n(w_{i+1}) = {\rm sgn}_n((w_i,w_i)) = ({\rm sgn}_n(w_{i}))^2 = 1,$$
so clearly for any finite product ${\rm sgn}_n(w_{i_1} \cdots w_{i_k}) = 1$ for $n \geq 1$. Next, let $\varphi(t) = w = w_1^{t_1} w_2^{t_2} \cdots$ be an infinite product. Since for $i > n -1$ the restriction $w_i|V_n$ is a trivial permutation, then for $n \geq 1$
  $$w|V_n = w_{1}^{t_1} \cdots w_{n-1}^{t_{n-1}}|V_n,$$
and ${\rm sgn}_n w = 1$ for all $ n \geq 1$. Now let $a$ be a minimal element in $\overline G$. Then 
  $${\rm sgn}_n(aw) = {\rm sgn}_n(a) {\rm sgn}_n(w) = {\rm sgn}_n(a) = -1$$
for $n \geq 1$ and so $aw$ is minimal.

$\bullet$ ~{\bf (c) Case $s =2$ and $r = 3$.} 
The choice of a suitable $w$ is more complicated in this case. Since $w_i$ for $i \geq 1$ are defined by the same formulas as in Case (b), by the same argument as in (b) one obtains that ${\rm sgn}_n(w_i) = 1$ for all $i \geq 1$ and $n \geq 1$, and so ${\rm sgn}_n(w_1^{t_1} w_2^{t_2}\cdots)=1$ for any $n \geq 1$ and any combination of $t_i $, $i\geq 1$. However, it is easy to see that ${\rm sgn}_n(w_0) = -1$ at least for some $n \geq 0$. 

If for $t \in \prod_{i\geq 0} C_2$ we have $t_0 = 0$, then we set $w = \varphi(t)$ with $\varphi$ defined by \eqref{eq-varphic} and the argument showing that $aw$ is minimal if $a$ is minimal proceeds as in Case (b). So assume that $t_0 =1$ and so $\varphi(t) = w_0 w_1^{t_1} w_2^{t_2} \cdots$. Then for $n \geq 1$ we have
  $${\rm sgn}_n(\varphi(t)) = {\rm sgn}_n(w_0){\rm sgn}_n(w_1^{t_1} w_2^{t_2} \cdots)={\rm sgn}_n(w_0)= {\rm sgn}_n(u_3(w_0,w_0)) = {\rm sgn}_n(u_3).$$
Now set $w = u_3 \varphi(t)$. Since $w$ and $\varphi(t)$ differ by $u_3 \in \overline G$, they represent the same coset. For $n \geq 1$
  $${\rm sgn}_n(w) = {\rm sgn}_n(u_3) {\rm sgn}_n(\varphi(t)) = ({\rm sgn}_n(u_3))^2 = 1.$$
Now let $a$ be a minimal element in $\overline G$, then $aw \in \varphi(t)\overline G$, and $aw$ is minimal since for $n \geq 1$
  $${\rm sgn}_n(aw) = {\rm sgn}_n(a) {\rm sgn}_n(w) = {\rm sgn}_n(a) = -1.$$
This finishes the proof of statement (1).

We now prove statement (2). In our arguments we repeatedly use the fact that the left and the right cosets of $\overline G$ in $N(\overline G)$ are equal, so given $g \in \overline G$ and $w \in N(\overline G)$, there exists $g' \in \overline G$ such that $gw = wg'$.

The proof proceeds similarly for the cases (a) and (b). The case (c) has some differences and we prove it separately. In the proofs, $w$ is a representative of a coset in $N(\overline G)$ chosen as in the proof of statement (1) above.

{\bf Cases (a) and (b).} Consider the cyclic subgroup $\langle aw \rangle$. Let $m \geq 1$ be an integer. Note that for $i > n -s$ the restriction $w_i|V_n$ is a trivial permutation (recall that $s = 1$ in the case (b)). Then 
  $$(aw)^m |V_n = (a w_1^{t_1} \cdots w_{n-s}^{t_{n-s}})^m|V_n = (w_1^{t_1} \cdots w_{n-s}^{t_{n-s}} g_n)^m|V_n$$
for some $g_n \in \overline{G}$. Since for any $w_i,w_j$ the commutator $[w_i,w_j] = w_iw_jw_i^{-1}w_j^{-1} \in \overline{G}$ by \cite[Lemma 3.6.9, Lemma 3.7.14]{Pink13}, and the left and the right cosets of $\overline{G}$ in $N(\overline G)$ are equal, then
 $$(aw)^m|V_n = (w_1^{t_1} \cdots w_{n-s}^{t_{n-s}} g_n)^m|V_n = w_1^{mt_1} \cdots w_{n-s}^{mt_{n-s}} g_n ' |V_n$$
for some $g_n' \in \overline{G}$. Since $w_i$ has order $2$ for any $i \geq 1$, then for $m' \geq 1$
\begin{align*}(aw)^{2m'}|V_n = g_n'  \in \overline{G}, && (aw)^{2m'-1}|V_n =  w_1^{t_1} \cdots w_{n-s}^{t_{n-s}} g_n' \in w_1^{t_1} \cdots w_{n-s}^{t_{n-s}}\overline{G}. \end{align*}
We obtain that 
  $$(aw)^{2m'} = \lim_{n \to \infty} g_n',$$
and since $\overline{G}$ is a closed subgroup of $Aut(T)$, then $(aw)^{2m'} \in \overline{G}$. On the other hand,
 $$(a w)^{2m'-1} = \lim_{n \to \infty}w_1^{t_1} \cdots w_{n-s}^{t_{n-s}} g_n',$$
where the sequence on the right converges to a point in the coset $w \overline{G}$. Here we use that the cosets of $\overline G$ are closed since $\overline G$ is closed and group multiplication in $\Aut(T)$ is a homeomorphism. 

We have shown that $\langle aw \rangle \subset \overline{G} \cup w \overline{G}$, and the intersection $\langle aw \rangle \cap \overline G$ is an index two subgroup of $\langle aw \rangle$, which implies statement (2). 
Next note that the maximal tori intersecting distinct cosets in $N(\overline G)/\overline{G}$ are distinct. 
Since $\prod_{i \geq 1} C_2$ is uncountable and this space is in bijection with $N(\overline G)/\overline{G}$, there is an uncountable number of distinct maximal tori in $N(\overline G)$, and statement (3) follows as well. Finally note that $aw = a'w$ for $a,a' \in \overline G$ implies that $a = a'$, which proves statement (4).

{\bf Case (c).} Let $t \in \prod_{i = 0}^\infty C_2$. If $t_0 = 0$, then the proof proceeds precisely as in the case (b), so suppose that $t_0 = 0$. Then $ \varphi(t) = w_0 w_1^{t_1} w_2^{t_2} \cdots$. Let $w = u_3 \varphi(t)$, and consider the cyclic subgroup $\langle aw \rangle$. Since for $i \geq 1$ $w_i|V_n$ is trivial if $i > n-2$, and $u_3 \in \overline G$, we compute that
  $$(aw)^m |V_n = (a u_3 w_0 w_1^{t_1} \cdots w_{n-2}^{t_{n-2}})^m|V_n = w_0^m w_1^{mt_1} \cdots w_{n-2}^{mt_{n-2}} g_n'|V_n$$
for some $g_n' \in \overline G$. Since $w^2_0 \in \overline G$, we use the equality of left and right cosets of $\overline G$ to obtain
  $$(aw)^m |V_n = w_0^{t_0} w_1^{mt_1} \cdots w_{n-2}^{mt_{n-2}} g_n''|V_n$$
for $t_0 = m \mod 2$ and some $g_n'' \in \overline G$. Then for $m' \geq 1$
\begin{align*}(aw)^{2m'}|V_n = g_n''  \in \overline{G}, && (aw)^{2m'-1}|V_n =  w_0 w_1^{t_1} \cdots w_{n-s}^{t_{n-2}} g_n'' \in w_0 w_1^{t_1} \cdots w_{n-s}^{t_{n-2}}\overline{G}. \end{align*}
The rest of the proof proceeds as in cases (a) and (b).
\endproof

\proof \emph{(of Theorem \ref{thm-main3})}
Let $f(x)$ a quadratic polynomial with strictly pre-periodic post-critical orbit of length $r\geq 3$. By \cite[Proposition 1.7.15]{Pink13} the profinite geometric iterated monodromy group $\fG_{\rm geom}(f)$ associated to $f(x)$ is conjugate to the group generated by \eqref{eq-gen}, and it follows that its normalizer is conjugate to $N(\overline G)$. Moreover, the conjugation maps the cosets of $N(\overline G)$ to the cosets of $\fG_{\rm geom}(f)$ in its normalizer. Then the proof follows from Theorem \ref{thm-moreodometers}.
\endproof

\subsection{Weyl groups of maximal tori in $N(\overline G)$}\label{subsec-normingroup} Theorem \ref{thm-moreodometers} shows that the normalizer $N(\overline G)$ contains more maximal tori than given by Lemmas \ref{lemma-twoparts}, \ref{lemma-twopartsr3s1} and \ref{lemma-twopartsr3s2}, especially in the case $r \geq 4$ and $s \geq 2$. If a maximal torus $ \overline{\langle a \rangle }$ is contained in $\overline G$, we know that its normalizer $N( \overline{\langle a \rangle })$ is contained in $N(\overline G)$. In this section we formulate a criterion which ensures that the normalizer of a maximal torus, topologically generated by a minimal element and not entirely contained in $\overline G$, is in $N(\overline G)$.

As before, for $w\overline G \in N(\overline G)/\overline G$ denote by ${\rm Min}(w \overline G)$ the set of all minimal elements in $w \overline G$. 

\begin{prop}\label{prop-normalizer}
Let $\overline{G}$ be topologically generated by the elements $u_1,\ldots,u_r$ given by \eqref{eq-gen}, and let $r \geq 3$. 
Then there is a well-defined action 
  \begin{align}\label{eq-normaction}N(\overline G) \times {\rm Min}(w \overline G) \to {\rm Min}(w \overline G): (z,aw) \mapsto z (aw) z^{-1}.\end{align}
That is, if $z \in N(\overline G)$ satisfies $z a_1 z^{-1} = a_2$ for $a_1,a_2 \in {\rm Min}(\overline G)$, then we have in $w \overline G$
  \begin{align}\label{eq-zcommutator}z (a_1w) z^{-1} = (a_2 \, [z,w])\, w \in {\rm Min}(w \overline G). \end{align}
\end{prop}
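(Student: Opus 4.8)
The plan is to reduce everything to two facts already available in the excerpt: that $\overline{G}$ is normal in its normalizer with abelian quotient $N(\overline{G})/\overline{G}$ (Theorem \ref{thm-isonorm}), and that minimality is invariant under conjugation in $\Aut(T)$ (Corollary \ref{Pink-result}). The group-action axioms for conjugation — that the identity acts trivially and that $(z_1z_2)\cdot g = z_1\cdot(z_2\cdot g)$ — hold automatically, so the only substantive point, and the meaning of ``well-defined'' here, is that the map lands in ${\rm Min}(w\overline{G})$.

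First I would record the algebraic identity. Fix $z \in N(\overline{G})$ and $aw \in {\rm Min}(w\overline{G})$ with $a \in {\rm Min}(\overline{G})$, and set $a_2 := zaz^{-1}$. Expanding the conjugation,
$$z(aw)z^{-1} = (zaz^{-1})(zwz^{-1}) = a_2\,(zwz^{-1}w^{-1})\,w = a_2\,[z,w]\,w = (a_2[z,w])w,$$
which is precisely \eqref{eq-zcommutator} (with $a_1 = a$); here I use the paper's commutator convention $[z,w]=zwz^{-1}w^{-1}$, so that $zwz^{-1}=[z,w]w$.

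Next I would verify that $a_2 \in {\rm Min}(\overline{G})$ and that $(a_2[z,w])w$ lies in the coset $w\overline{G}$. Since $\overline{G}$ is normal in $N(\overline{G})$ by definition of the normalizer, conjugation by $z$ preserves $\overline{G}$, so $a_2 = zaz^{-1}\in\overline{G}$; and $a_2$ is minimal by Corollary \ref{Pink-result}, whence $a_2 \in {\rm Min}(\overline{G})$ (this also confirms that the hypothesis $a_2\in{\rm Min}(\overline{G})$ in the statement is automatic). For the coset membership the key observation is that $[z,w]\in\overline{G}$: the quotient $N(\overline{G})/\overline{G}$ is a group, and by Theorem \ref{thm-isonorm} it is isomorphic to a product of copies of $C_2$ in all the cases $r\geq 3$, hence abelian, so the image of $[z,w]$ in the quotient is trivial. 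Therefore $a_2[z,w]\in\overline{G}$, and $(a_2[z,w])w \in \overline{G}\,w = w\overline{G}$, using the equality of the left and right cosets of the normal subgroup $\overline{G}$.

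Finally, minimality of $z(aw)z^{-1}$ follows directly from Corollary \ref{Pink-result}, since it is a conjugate in $\Aut(T)$ of the minimal element $aw$. Combining these points yields $z(aw)z^{-1}\in{\rm Min}(w\overline{G})$, so the map \eqref{eq-normaction} is well-defined. I do not expect a genuine obstacle: once the abelianness of $N(\overline{G})/\overline{G}$ is invoked through Theorem \ref{thm-isonorm}, the argument is a short computation, and the only care needed is in bookkeeping the commutator convention and the normality of $\overline{G}$.
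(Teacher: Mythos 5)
Your proof is correct, and the core computation is the same as the paper's: both reduce \eqref{eq-zcommutator} to the identity $z(a_1w)z^{-1} = (z a_1 z^{-1})(zwz^{-1}) = a_2[z,w]w$, both get minimality of the conjugate from Corollary \ref{Pink-result}, and both get coset membership from the containment $[z,w] \in \overline G$. Where you differ is in how that containment is justified. The paper proves it by hand: it takes arbitrary elements of $N(\overline G)$ written as $h w$ and $g\widehat w$ with $h,g \in \overline G$ and $w, \widehat w$ products of the generators $w_i$ from \eqref{eq-wis}--\eqref{eq-wis2}, and then reduces $[hw, g\widehat w]$ to $g'[w,\widehat w]$ using the equality of left and right cosets, invoking Pink's Lemmas 3.6.9, 3.7.14 and 3.8.23 for $[w_i,w_j] \in \overline G$. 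You instead observe that Theorem \ref{thm-isonorm} already asserts that $N(\overline G)/\overline G$ is isomorphic as a group to a product of copies of $C_2$, hence abelian, so the commutator subgroup of $N(\overline G)$ lies in $\overline G$ with no further computation. This is a genuine streamlining: it replaces roughly half of the paper's proof with a one-line structural argument, and it makes visible that nothing case-specific (the distinction between $r\geq 4, s\geq 2$, versus $s=1$, versus $r=3,s=2$) is needed at this point, since all three cases of Theorem \ref{thm-isonorm} give an abelian quotient. The two arguments rest on the same underlying results of Pink --- Theorem \ref{thm-isonorm} is itself proved from those commutator lemmas --- so yours is not logically more general, but it is cleaner and less error-prone. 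One further small remark in your favor: your coset-membership argument never actually uses minimality of the element $a$ in the decomposition $g = aw$ (only $a \in \overline G$ matters there), so unlike the paper's proof you do not need to invoke the bijective correspondence of Theorem \ref{thm-main3}\eqref{thm3-4} to put $g$ in the form $aw$ with $a$ minimal; minimality of $a_1$ enters only to match the precise phrasing of \eqref{eq-zcommutator}.
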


\proof By \cite[Theorem 3.9.4]{Pink13} all minimal elements in $\overline G$ are conjugate under $N(\overline G)$, so the action \eqref{eq-normaction} is well-defined and transitive on ${\rm Min}(\overline G)$. We now show that it is well-defined on other cosets in $N(\overline G)$. 

Note that minimal elements in $\overline G$ and $w \overline G$ are in bijective correspondence by Theorem \ref{thm-main3}, so for any  $g \in {\rm Min}(w \overline G)$ and any representative $w$ satisfying Theorem \ref{thm-main3} we can find $a \in {\rm Min}(\overline G)$ such that $g = aw$. Since $aw$ is minimal, its conjugate $z(aw)z^{-1}$ is minimal, and we only must prove that this element is in the same coset as $aw$.

Recall that for the elements $w_i$, defined by \eqref{eq-wis}, \eqref{eq-wis1} or \eqref{eq-wis2}, their commutators are in $\overline G$ by \cite[Lemma 3.6.9, Lemma 3.7.14, Lemma 3.8.23]{Pink13}. We note that a similar statement holds for any two elements in $N(\overline G)$. Indeed, for any $w, \widehat w$ given by \eqref{eq-wis}, \eqref{eq-wis1} or \eqref{eq-wis2}, consider the elements $hw$ and $g \widehat w$ with $h,g \in \overline G$. Then, using the fact that the right cosets are equal to the left cosets we have for some $g' \in \overline G$
  \begin{align*} [hw,g \widehat w] & = (hw)(g \widehat w)(hw)^{-1} (g \widehat w)^{-1} = g' [w, \widehat w] \in \overline G. \end{align*}

Now suppose $z a_1z^{-1} = a_2$ for $a_1,a_2 \in {\rm Min}(\overline G)$ and $z \in N(\overline G)$. Multiplying $za_1 = a_2z$ on the left by $w$ and using the commutator relation, we obtain
$$z a_1 w = a_2 z w = (a_2 [z,w]) w z.$$
Further multiplying on the right by $z^{-1}$ gives \eqref{eq-zcommutator}. Since $a_2 [z,w] \in \overline G$, then $ (a_2 [z,w]) w \in w \overline G$.
\endproof

We now give a criterion for when the Weyl group of a maximal torus generated by an element in ${\rm Min}(N(\overline G))$ is contained in $N(\overline G)$.

\begin{prop}\label{prop-transitiveweyl}
Let $\overline{G}$ be topologically generated by the elements $u_1,\ldots,u_r$ given by \eqref{eq-gen}, and let $r \geq 3$. Let $aw \in {\rm Min}(w \overline G) \subset N(\overline G)$ and let $ \overline{\langle aw \rangle }$ be the corresponding maximal torus. If the action $N(\overline G) \times {\rm Min}(w \overline G) \to {\rm Min}(w \overline G)$ defined by \eqref{eq-normaction} is transitive, then we have the equality of the Weyl groups 
$$ W( \overline{\langle aw \rangle }, N(\overline G))  = W( \overline{\langle aw \rangle }).$$
\end{prop}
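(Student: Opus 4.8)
The plan is to reduce the asserted equality of Weyl groups to a statement about cosets and then feed the transitivity hypothesis into it. Write $g = aw$, so that $g \in \mathrm{Min}(w\overline G)$ and, by Lemma \ref{minimal-elements}, $\overline{\langle g\rangle}$ is a maximal torus. Since $g \in N(\overline G)$ and $N(\overline G)$ is closed, we have $\overline{\langle g\rangle} \subseteq N(\overline G)$, so $W(\overline{\langle g\rangle}, N(\overline G)) = (N(\overline{\langle g\rangle}) \cap N(\overline G))/\overline{\langle g\rangle}$ is genuinely a subgroup of $W(\overline{\langle g\rangle}) = N(\overline{\langle g\rangle})/\overline{\langle g\rangle}$, and the inclusion $W(\overline{\langle g\rangle}, N(\overline G)) \subseteq W(\overline{\langle g\rangle})$ is automatic; the content is the reverse inclusion. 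By Lemma \ref{lemma-norm2} the absolute Weyl group is $W(\overline{\langle g\rangle}) = \{G_\zeta : \zeta \in \overline{\langle g\rangle} \text{ minimal}\}$, where each $G_\zeta = \{v : v g v^{-1} = \zeta\}$ is a full coset of $\overline{\langle g\rangle}$. Because $\overline{\langle g\rangle} \subseteq N(\overline G)$, any coset $G_\zeta$ that meets $N(\overline G)$ is entirely contained in $N(\overline G)$, so it suffices to show that for every minimal $\zeta \in \overline{\langle g\rangle}$ the intersection $G_\zeta \cap N(\overline G)$ is nonempty.

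Next I would locate the minimal elements of the torus $\overline{\langle g\rangle}$. Since $g$ is minimal on the binary tree, Proposition \ref{conjugacy-elements} identifies $\overline{\langle g\rangle}$ isometrically with $\ZZ_2$, and by Lemma \ref{integer-minimality} the element $g^k$ is minimal exactly when $k$ is odd, i.e. when $k \in \ZZ_2^\times$. On the other hand, the computation in the proof of Theorem \ref{thm-moreodometers}(2) gives $g^{2m'} \in \overline G$ and $g^{2m'-1} \in w\overline G$ for all $m' \geq 1$. Passing to closures, and using that the cosets of $\overline G$ are closed, every minimal $\zeta = g^k$ with $k \in \ZZ_2^\times$ is a limit of odd integer powers of $g$ and hence lies in $w\overline G$. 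Being both minimal and contained in $w\overline G$, each such $\zeta$ belongs to $\mathrm{Min}(w\overline G)$.

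Finally I would invoke the transitivity hypothesis. Both $g = aw$ and any minimal $\zeta \in \overline{\langle g\rangle}$ now lie in $\mathrm{Min}(w\overline G)$, so transitivity of the action \eqref{eq-normaction} yields some $z \in N(\overline G)$ with $z g z^{-1} = \zeta$. Since $\zeta \in \overline{\langle g\rangle}$, Lemma \ref{lemma-norm2} gives $z \in N(\overline{\langle g\rangle})$, whence $z \in G_\zeta \cap N(\overline G)$ and this intersection is nonempty. By the first paragraph this forces $G_\zeta \subseteq N(\overline{\langle g\rangle}) \cap N(\overline G)$ for every minimal $\zeta$, so $W(\overline{\langle g\rangle}) \subseteq W(\overline{\langle g\rangle}, N(\overline G))$ and the two Weyl groups coincide.

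The crux — and the only place where the structure theory for $r \geq 3$ and the transitivity assumption actually enter — is the identification in the second step of \emph{all} minimal elements of $\overline{\langle g\rangle}$ with the single coset $w\overline G$; this is what collapses the equality of Weyl groups to the transitivity of the conjugation action on $\mathrm{Min}(w\overline G)$ that the hypothesis supplies. The coset bookkeeping in the first step is routine, but I expect the care needed there — ensuring that merely \emph{meeting} $N(\overline G)$ upgrades to full \emph{containment} in $N(\overline G)$, which relies on $\overline{\langle g\rangle} \subseteq N(\overline G)$ — to be the easiest place to slip.
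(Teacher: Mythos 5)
Your proposal is correct and is essentially the paper's own argument: both identify the minimal elements of $\overline{\langle aw\rangle}$ with the $2$-adic-unit powers $(aw)^k$, place them in ${\rm Min}(w\overline G)$ via Theorem \ref{thm-moreodometers} and closedness of the coset, invoke transitivity of \eqref{eq-normaction} to produce a conjugator in $N(\overline G)$, and then use the centralizer/coset structure of $N(\overline{\langle aw\rangle})$ (Lemma \ref{abelian}, equivalently Lemma \ref{lemma-norm2}) to conclude $N(\overline{\langle aw\rangle}) \subseteq N(\overline G)$. The only difference is bookkeeping: the paper takes an arbitrary $g \in N(\overline{\langle aw\rangle})$ and writes it as $h\cdot(h^{-1}g)$ with $h \in N(\overline G)$ and $h^{-1}g$ in the torus, whereas you argue coset-by-coset that each $G_\zeta$ meets, hence lies in, $N(\overline G)$ — the same argument in different packaging.
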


\proof The inclusion $W( \overline{\langle aw \rangle }, N(\overline G)) \subset W( \overline{\langle aw \rangle })$ holds by definition. We only have to show the reverse inclusion.

 Let $aw \in {\rm Min}(w \overline G)$, and let $g \in N( \overline{\langle aw \rangle })$. Then $g(aw)g^{-1} = ((aw)^k) \in  \overline{\langle aw \rangle }$ for some $k \in \ZZ_2^\times$. By Theorem \ref{thm-moreodometers} we have $((aw)^k) = mw$ for some $m \in {\rm Min}(\overline G)$. If the action  \eqref{eq-normaction} is transitive, then there exists $h \in N(\overline G) $ such that 
   $$h (aw) h^{-1} = mw = g(aw) g^{-1}.$$
This implies that $g^{-1}h$ commutes with $aw$. Then by Lemma \ref{abelian} $g^{-1}h \in  \overline{\langle a w\rangle }$. Thus $g$ is a product of elements in $N(\overline G)$ and must be in $N(\overline G)$. It follows that $W( \overline{\langle a w\rangle }) = W( \overline{\langle aw \rangle }, N(\overline G))$. 
\endproof

We finally collect the results proved in this section together to obtain a proof of Theorem \ref{thm-main35}.

\proof \emph{(of Theorem \ref{thm-main35})}
Let $f(x)$ be a quadratic polynomial with strictly pre-periodic post-critical orbit of length $r\geq 3$. By \cite[Proposition 1.7.15]{Pink13} the profinite iterated monodromy group $\fG_{\rm geom}(f)$ associated to $f(x)$ is conjugate to the group generated by \eqref{eq-gen}, and it follows that the normalizer $N(\fG_{\rm geom}(f))$ of $\fG_{\rm geom}(f)$ in $\Aut(T)$ is conjugate to $N(\overline G)$. Then the proof follows from Propositions \ref{prop-normalizer} and \ref{prop-transitiveweyl}.
\endproof

We note that Theorems \ref{thm-main3} and \ref{thm-main35} highlight the differences between the properties of maximal tori and Weyl groups in subgroups of $\Aut(T)$ and in compact connected Lie groups. For instance, it is known that the Weyl group of a maximal torus in a compact Lie group is always finite, all maximal tori are conjugate in the group, and every element in the group is contained in a maximal torus. In the theorem below, we show that similar statements need not hold for subgroups of $\Aut(T)$.

\begin{thm}\label{thm-differentthanLie}
Let $f(x)$ be a quadratic polynomial with strictly pre-periodic post-critical orbit of length $r\geq 3$. Let $\fG_{\rm geom}(f)$ be the associated profinite geometric iterated monodromy group, and let $N(\fG_{\rm geom}(f))$ be its normalizer in $\Aut(T)$. Then the following holds.
\begin{enumerate}
\item For a maximal torus $ \overline{\langle a \rangle } \subset \fG_{\rm geom}(f)$, the Weyl group $W( \overline{\langle a \rangle }, \fG_{\rm geom}(f))$ is infinite.
\item For $N(\fG_{\rm geom}(f))$ and $\fG_{\rm geom}(f)$, at least one of the following alternatives is true: either there are maximal tori in $\fG_{\rm geom}(f)$ which are not conjugate in $N(\fG_{\rm geom})$, or there are elements in $\fG_{\rm geom}(f)$ which are not contained in a maximal torus.
\item There are maximal tori in $N(\fG_{\rm geom}(f))$ which are not conjugate in $N(\fG_{\rm geom}(f))$. 
\end{enumerate}
\end{thm}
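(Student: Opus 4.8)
The plan is to prove the three statements separately, first transferring everything from $\fG_{\rm geom}(f)$ to the explicit model $\overline G$ of \eqref{eq-gen} via the conjugacy of \cite[Proposition 1.7.15]{Pink13}. Since conjugation in $\Aut(T)$ carries maximal tori to maximal tori, carries normalizers to normalizers, and carries the normalizer of $\fG_{\rm geom}(f)$ onto $N(\overline G)$, it induces isomorphisms of the relevant absolute and relative Weyl groups and preserves the property of having a fixed point. Hence it suffices to argue inside $\overline G$ and $N(\overline G)$.

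For statement (1), I would fix a minimal $a \in \overline G$ and recall from Theorem \ref{thm-Zstabilizer} that the absolute Weyl group satisfies $W(\overline{\langle a \rangle}) \cong \ZZ_2^\times$, which is infinite. According to the case $(r,s)$, Lemma \ref{lemma-twoparts}\eqref{lemma-twoparts2}, Lemma \ref{lemma-twopartsr3s1}\eqref{lemma-twopartsr3s1-2} or Lemma \ref{lemma-twopartsr3s2}\eqref{lemma-twopartsr3s2-2} states that the relative Weyl group $W(\overline{\langle a \rangle},\overline G)$ has index $2$ or $4$ in $W(\overline{\langle a \rangle})$. A finite-index subgroup of an infinite group is infinite, so $W(\overline{\langle a \rangle},\overline G)$ is infinite, and (1) follows after transfer to $\fG_{\rm geom}(f)$.

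For statement (2), the key observation is that by Definition \ref{defn-maxtorus} a maximal torus acts \emph{freely} on $\partial T$, so every non-identity element of a maximal torus is fixed-point-free. On the other hand, since $r \geq 3$ there are $r-2 \geq 1$ indices $i \in \{2,\ldots,r\}$ with $i \neq s+1$, for which \eqref{eq-gen} gives $u_i = (u_{i-1},1)$; this $u_i$ acts trivially on the subtree $T_1$, hence fixes every path of $\partial T_1$, while $u_{i-1} \neq 1$ makes $u_i$ non-trivial. Thus $u_i \in \overline G$ is a non-identity element with fixed points, so by the free-action property it lies in no maximal torus; this already exhibits the second alternative. Equivalently, in the form matching the statement: if one assumes both that all maximal tori in $\overline G$ are conjugate in $N(\overline G)$ and that every element of $\overline G$ lies in a maximal torus, then every element of $\overline G$ is $N(\overline G)$-conjugate into $\overline{\langle a \rangle}$ and hence is identity or fixed-point-free, contradicting the existence of $u_i$; so at least one of the two alternatives must fail.

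For statement (3), I would produce two maximal tori in $N(\overline G)$ that cannot be conjugate there, exploiting the normality $\overline G \trianglelefteq N(\overline G)$. Taking a minimal $a \in \overline G$, the torus $\overline{\langle a \rangle} \subseteq \overline G$ lies in $N(\overline G)$ because $N(\overline{\langle a \rangle}) \subseteq N(\overline G)$ by \cite[Proposition 3.9.5]{Pink13}. Since $N(\overline G)/\overline G$ is nontrivial by Theorem \ref{thm-isonorm}, I pick $w \notin \overline G$, and by Theorem \ref{thm-moreodometers} there is a minimal $aw \in w\overline G$ whose torus $\overline{\langle aw \rangle}$ meets $w\overline G$ and is therefore not contained in $\overline G$. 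If some $z \in N(\overline G)$ satisfied $z\overline{\langle a \rangle}z^{-1} = \overline{\langle aw \rangle}$, then normality would force $\overline{\langle aw \rangle} = z\overline{\langle a \rangle}z^{-1} \subseteq z\overline G z^{-1} = \overline G$, a contradiction; hence these tori are not conjugate in $N(\overline G)$. The main obstacle I anticipate is statement (2): the argument rests on pairing the freeness half of Definition \ref{defn-maxtorus} with a concrete fixed-point element guaranteed to exist in $\overline G$ for \emph{every} admissible $(r,s)$ with $r \geq 3$, and on checking that the conjugacy from $\fG_{\rm geom}(f)$ to $\overline G$ preserves both the family of maximal tori and the existence of fixed points; parts (1) and (3) are essentially bookkeeping on top of Lemmas \ref{lemma-twoparts}--\ref{lemma-twopartsr3s2}, Theorem \ref{thm-isonorm}, Theorem \ref{thm-moreodometers} and the normality of $\overline G$ in its normalizer.
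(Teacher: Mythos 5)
Your proposal is correct, and while your part (1) coincides with the paper's argument (the absolute Weyl group is $\ZZ_2^\times$ by Theorem \ref{thm-Zstabilizer}, hence infinite, and the relative Weyl group has finite index in it by Lemmas \ref{lemma-twoparts}, \ref{lemma-twopartsr3s1} and \ref{lemma-twopartsr3s2}), your parts (2) and (3) take genuinely different routes. For (2), the paper argues via torsion: elements of order $2$ (conjugates of the generators $u_i$) cannot lie in a torus topologically generated by a minimal element, since such a torus is isomorphic to $\ZZ_2$ and hence torsion-free; this yields only the stated dichotomy, because the paper does not rule out that an order-$2$ element might lie in a maximal torus of some other kind (one not topologically generated by a minimal element). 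You instead exploit the freeness requirement built into Definition \ref{defn-maxtorus}: a non-identity element with a fixed point lies in \emph{no} maximal torus whatsoever, and $u_i=(u_{i-1},1)$, which exists whenever $r\geq 3$ and fixes $\partial T_1$ pointwise, is such an element. This is sharper --- you prove the second alternative unconditionally, which of course implies the disjunction in the statement; the paper's torsion argument would survive a weakening of the definition of maximal torus that dropped freeness, but as the definitions stand your route is both shorter and stronger. For (3), the paper invokes Proposition \ref{prop-normalizer} (conjugation by elements of $N(\overline G)$ preserves the coset of a minimal element) together with the uncountability of $N(\overline G)/\overline G$ from Theorem \ref{thm-moreodometers}, thereby exhibiting uncountably many pairwise non-conjugate tori; you use only the normality of $\overline G$ in $N(\overline G)$: a torus contained in the normal subgroup $\overline G$ cannot be $N(\overline G)$-conjugate to $\overline{\langle aw\rangle}$, which by Theorem \ref{thm-moreodometers}(2) is not contained in $\overline G$. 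Your argument is more elementary, bypassing Proposition \ref{prop-normalizer} entirely, but it produces a single non-conjugate pair, whereas the paper's argument shows that the tori attached to distinct cosets are pairwise non-conjugate. One small redundancy: in (3) your appeal to \cite[Proposition 3.9.5]{Pink13} is unnecessary, since $\overline{\langle a \rangle}\subseteq \overline G\subseteq N(\overline G)$ already places the first torus inside $N(\overline G)$.
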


\proof For a maximal torus $ \overline{\langle a \rangle } \subset \fG_{\rm geom}(f)$, its absolute Weyl group $W( \overline{\langle a \rangle })$ is isomorphic to $\ZZ_2^\times$, which is an infinite group, see Theorem \ref{thm-Zstabilizer}. By Lemmas \ref{lemma-twoparts}, \ref{lemma-twopartsr3s1} and \ref{lemma-twopartsr3s2} for $r \geq 3$ $W( \overline{\langle a \rangle },\fG_{\rm geom}(f))$ has finite index in $W( \overline{\langle a \rangle })$ and so it must be infinite. This shows (1).

To see (2), note that if all maximal tori in $\fG_{\rm geom}(f)$ are conjugate, then they are necessarily generated by minimal elements. Since 
 $\fG_{\rm geom}(f)$ contains elements of order $2$ (for instance, elements conjugate to the generators $u_1,\ldots, u_r$ in \eqref{eq-gen}), it then cannot be the union of maximal tori.  On the other hand, if every element of $\fG_{\rm geom}(f)$ is contained in a maximal torus, then there must be maximal tori which are not the closures of cyclic groups generated by minimal elements. Such tori cannot be conjugate to the torus $ \overline{\langle a \rangle }$ topologically generated by a minimal element $ a $, since all elements in $ \overline{\langle a \rangle }$ are of infinite order. 
 
To see (3)  note that Proposition \ref{prop-normalizer} implies that a conjugate of any minimal $g \in N(\fG_{\rm geom}(f))$ by an element of $N(\fG_{\rm geom}(f))$ is contained in the same coset of $\fG_{\rm geom}(f)$ in $N(\fG_{\rm geom}(f))$ as $g$. By Theorem \ref{thm-moreodometers} there is an uncountable number of cosets in $N(\fG_{\rm geom}(f))/\fG_{\rm geom}(f)$ and each contains minimal elements. Minimal elements in different cosets are conjugate in $\Aut(T)$ but the conjugating element cannot be in $N(\fG_{\rm geom}(f))$.
\endproof

\subsection{The profinite arithmetic iterated monodromy group}\label{subsec-arith}

We prove Theorem \ref{thm-main4} which provides evidence that the conjecture by Boston and Jones (see Conjecture \ref{conj-BJ} and Problem \ref{prob-main}) holds for the profinite arithmetic and geometric iterated monodromy groups associated to quadratic PCF polynomials with strictly pre-periodic post-critical orbit of length at least $3$.

\proof \emph{(of Theorem \ref{thm-main4})}. Denote by $\overline K$ a separable closure of $K$, and by $\fG(\overline K/K)$ the absolute Galois group of $K$. 

By \cite[Theorem 3.10.2]{Pink13} there exists $\omega \in \Aut(T)$ such that $\fG_{\rm geom}(f) = \omega \overline G \omega^{-1}$ for $\overline G$ topologically generated by \eqref{eq-gen}, and the elements $\omega u_k \omega^{-1}$, where $u_k$ are given in \eqref{eq-gen}, provide a generating set of $\fG_{\rm geom}(f)$ up to a conjugation by (possibly different for different $u_k$) elements $\fG_{\rm geom}(f)$. Then by relabelling the vertices in the tree $T$ by the automorphism $\omega$ we may assume that $\fG_{\rm geom}(f) = \overline G$. Then there is a subgroup inclusion $\fG_{\rm arith}(f) \leq N(\fG_{\rm geom}(f))$.

By \cite[Section 1.7]{Pink13} there is a surjective homomorphism
  $$\overline \rho: \fG(\overline K/K) \to \fG_{\rm arith}(f)/\fG_{\rm geom}(f) \subset N(\fG_{\rm arith}(f))/\fG_{\rm geom}(f).$$
By \cite[Lemma 3.10.4]{Pink13} $\fG_{\rm geom}(f)$ contains a minimal element $b_\infty$, which can be obtained, for instance, as the product of the generators of $\fG_{\rm geom}(f)$, and for any $\tau \in \fG(\overline K/K)$ we have $\overline \rho(\tau) = w \fG_{\rm geom}(f)$ for some $w \in N(\fG_{\rm geom}(f))$ satisfying $w b_\infty w^{-1} = b^{c(\tau)}_\infty$, where $c: \fG(\overline K/K) \to \ZZ_2^\times$ is the cyclotomic character. By \cite[Theorem 3.10.5]{Pink13} the profinite group $\fG_{\rm arith}(f)$ is contained in the image of the map \eqref{weyl-24} if $r \geq 4$ and $s \geq 2$, the map \eqref{eq-phitheta2} if $r \geq 3$ and $s = 1$, and the map \eqref{eq-compr3} if $r = 3$ and $s=2$. 

 Then the statement that for any $w \in  \fG_{\rm arith}(f)$ the sets of minimal elements ${\rm Min}(\fG_{\rm geom}(f))$ and ${\rm Min}( w\fG_{\rm geom}(f))$ are in bijective correspondence follows from the subgroup inclusion $\fG_{\rm arith}(f) \leq N(\fG_{\rm geom}(f))$ and Theorem \ref{thm-main3},\eqref{thm3-4}. 
 
 The statement that for any $a \in {\rm Min}(\fG_{\rm geom}(f))$ the relative Weyl group $W(\overline{\langle a \rangle},(\fG_{\rm geom}(f)))$ has finite index in the absolute Weyl group $W(\overline{\langle a \rangle})$ is given by Lemma \ref{lemma-twoparts},\eqref{lemma-twoparts2}, for $r \geq 4$ and $r \geq 2$, Lemma \ref{lemma-twopartsr3s1}, \ref{lemma-twopartsr3s1-2}, for $r \geq 3$ and $s = 1$, and Lemma \ref{lemma-twopartsr3s2},\ref{lemma-twopartsr3s2-2}, for $r=3$ and $s=2$. Thus we have substantial evidence that the conjecture by Boston and Jones holds for $\fG_{\rm geom}(f)$.

Finally, by \cite[Corollary 3.10.6]{Pink13} $\fG_{\rm geom}(f)$ has finite index in $\fG_{\rm arith}(f)$. The inclusion of groups 
  $$W(\langle a \rangle,\fG_{\rm geom}(f)) \leq W(\langle a \rangle,\fG_{\rm arith}(f)) \leq W(\overline{\langle a \rangle})$$
implies that the relative Weyl group $W(\langle a \rangle,\fG_{\rm arith}(f))$ has finite index in the absolute Weyl group $W(\overline{\langle a \rangle})$.  Depending on the field $K$, there may also be the equalities of the relative Weyl groups $W(\langle a \rangle,\fG_{\rm geom}(f)) = W(\langle a \rangle,\fG_{\rm arith}(f))$, or of the absolute and relative Weyl groups $W(\langle a \rangle) = W(\langle a \rangle,\fG_{\rm arith}(f))$, or neither. Thus we have substantial to good evidence that the conjecture by Boston and Jones holds for $\fG_{\rm arith}(f)$.
\endproof


\end{document}